\documentclass[11pt]{article}
\usepackage{geometry}
\usepackage{graphicx}
\usepackage{caption}
\usepackage{subcaption}
\usepackage{amssymb}
\usepackage{amsthm}
\usepackage{csquotes}
\usepackage{amsmath}
\usepackage{mathtools}
\usepackage{float}
\usepackage{physics}
\usepackage{enumitem}
\usepackage[hidelinks]{hyperref}

\MakeOuterQuote{"}
\newtheorem{theorem}{Theorem}[section]
\newtheorem{definition}[theorem]{Definition}
\newtheorem{proposition}[theorem]{Proposition}
\newtheorem{lemma}[theorem]{Lemma}
\newtheorem{corollary}[theorem]{Corollary}
\newtheorem{hypothesis}[theorem]{Hypothesis}
\theoremstyle{remark}
\newtheorem{remark}[theorem]{Remark}

\newcommand{\R}{\mathbb{R}}
\newcommand{\C}{\mathbb{C}}
\newcommand{\Z}{\mathbb{Z}}
\newcommand{\N}{\mathbb{N}}

\newcommand{\KBS}{K_{BS}}
\newcommand{\Knear}{K_{BS}^{\mathrm{nr}}}
\newcommand{\Kfar}{K_{BS}^{\mathrm{fr}}}
\newcommand{\er}{\textbf{e}_r}
\newcommand{\etheta}{\textbf{e}_\theta}
\newcommand{\DT}{\mathcal{D}_T}
\newcommand{\LipO}{\mathrm{Lip}_0}

\newcommand{\Ren}{\Upsilon}
\newcommand{\Circ}{\Gamma}
\newcommand{\jap}[1]{\langle #1\rangle}
\newcommand{\Ip}[2]{I_{#1}(#2)}
\newcommand{\pout}{p^{\mathrm{out}}}
\newcommand{\pin}{p^{\mathrm{in}}}
\newcommand{\Qm}{Q_m}

\newcommand{\Orot}{\mathsf{O}}
\DeclareMathOperator{\sgn}{sgn}
\DeclareMathOperator{\divop}{div}
\DeclareMathOperator{\supp}{supp}
\DeclareMathOperator{\spec}{spec}

\newcommand{\xiL}{\xi_L}
\newcommand{\xia}{\xi_a}
\newcommand{\xib}{\xi_b}
\newcommand{\xiR}{\xi_R}
\newcommand{\prof}{\varpi}
\newcommand{\omc}{\overline{\omega}_{\mathrm{c}}}%
\newcommand{\Rc}{R_{\mathrm{c}}}
\newcommand{\Gone}{\overline{G}}

\newcommand{\bomega}{\overline{\omega}}

\newcommand{\Lin}{L}
\newcommand{\Lsec}[1]{L_{#1}}
\newcommand{\Green}{\mathcal{G}}
\newcommand{\kcoef}{k}
\newcommand{\Vric}{V}
\newcommand{\Evans}{\mathcal{E}}
\newcommand{\Wr}{\mathcal{W}}
\newcommand{\Mres}{M}
\newcommand{\lamst}{\lambda_*}
\newcommand{\phist}{\phi_*}

\newcommand{\ellst}{\ell_*}
\newcommand{\Riesz}{\Pi}
\newcommand{\Jsimp}{J}
\newcommand{\Thl}[1]{\Theta_{#1}}
\newcommand{\epsst}{\epsilon_*}
\newcommand{\Zedge}{Z}

\newcommand{\sigb}{\sigma_\flat}

\newcommand{\sbar}{\bar s}

\newcommand{\Cls}{C_{\ell_*}}

\newcommand{\Cp}{C_{\mathrm{p}}}
\newcommand{\Cnab}{C_{\nabla}}
\newcommand{\cout}{c_{\mathrm{out}}}

\newcommand{\Tstar}{T_\star}

\title{Non-Uniqueness of Weak Solutions to the Unforced\\
Two-Dimensional Euler Equations\\ with Vorticity in $L^1\cap L^p$,\\
I: A Compactness Criterion and the Linear Analysis}
\hypersetup{pdftitle={Non-Uniqueness of Weak Solutions to the Unforced
Two-Dimensional Euler Equations with Vorticity in L1 and Lp,
I: A Compactness Criterion and the Linear Analysis},
pdfauthor={Matei P. Coiculescu}}
\author{Matei P. Coiculescu\thanks{
Department of Mathematics, New York University,
Email: \texttt{m.coiculescu@nyu.edu}
}}

\begin{document}
\maketitle

\begin{abstract}
This paper and its companion Part II prove that the incompressible Euler
equations in the plane, without external force, admit two distinct renormalized
weak solutions with the same initial vorticity $f$, where $f$ is radial and
nonnegative and lies in $(L^1\cap L^q)(\mathbb{R}^2)$ for every $q<\infty$ but is
not bounded. One solution is radially symmetric and stationary. The other is
$3$-fold rotationally symmetric, is equimeasurable with $f$ at every time, and
departs from $f$ in finite time; the two solutions are distinct in
$L^\infty_t(L^1\cap L^q)_x$ on $[0,T]\times\mathbb{R}^2$ for every $q\in(4/3,\infty)$
and every sufficiently large $T$. In this first part we prove two ingredients. One is a
compactness criterion: for a radial datum, a sequence of $3$-fold rotationally
symmetric solutions with converging data, whose non-radial moment is bounded
below at some bounded time, has a subsequence converging to a second distinct
renormalized weak solution. The other is the linear analysis for the building
blocks of our multiscale datum. We show that the linearization of the Euler
equations around the basic building block plus a small point vortex has an
unstable eigenvalue in a space of $3$-fold rotationally symmetric
perturbations. This latter argument depends on rigorous interval arithmetic
that we discuss more fully in Part II.
\end{abstract}

\section{Introduction}

We consider the incompressible Euler equations in the plane, in vorticity form:
\begin{equation}
\label{eq:euler}
\partial_t \omega + v\cdot\nabla \omega = 0,\qquad
v = \KBS\ast \omega,\qquad
\omega(t=0) = \omega_0 .
\end{equation}
Here $\KBS(x)=x^\perp/(2\pi|x|^2)$, $x^\perp=(-x_2,x_1)$, denotes the
Biot-Savart kernel, so that $v$ is divergence free and
$\partial_1v_2-\partial_2v_1=\omega$. There is no external force, and we call
$\omega_0$ the datum.

For a smooth decaying datum, Equation~\eqref{eq:euler} has a unique global
smooth solution (Wolibner \cite{Wolibner}, H\"older \cite{Holder}; see Kato
\cite{Kato1967}; a self-contained proof for compactly supported data is in
Appendix~B of Part~II~\cite{PartII}). Yudovich proved in \cite{Yudovich1963}, for
bounded domains, that a weak solution with bounded vorticity is unique: its
velocity is log-Lipschitz, which closes an energy estimate for the difference of
two solutions. For the
plane, with vorticity in $(L^1\cap L^\infty)(\R^2)$ and square-integrable
velocity, a complete proof is given in Theorem~0.3 and Appendix~B.2 of \cite{ABCDGJK}.
For a datum in $(L^1\cap L^p)(\Omega)$, $1<p<\infty$, global weak solutions exist
(Yudovich for a bounded domain $\Omega$ \cite{Yudovich1963}, see the introduction of
\cite{Yudovich1995}; DiPerna and Majda for $\Omega=\R^2$ \cite{DiPernaMajda}). When $p>4/3$ the product $v\,\omega$ is
locally integrable and Equation~\eqref{eq:euler} has a weak formulation
(Lemma~\ref{lem:velocity}, Definition~\ref{def:weak}), but the velocity is only
Sobolev regular and Yudovich's uniqueness argument does not apply. Whether his theorem
extends to vorticity in $L^\infty_tL^p_x$, $p<\infty$, is the question of
Section~1.1.1 of \cite{BrueColomboKumar}. This paper and Part~II~\cite{PartII} prove that, without force in the
plane, uniqueness fails among renormalized weak solutions with vorticity in
$L^\infty_t(L^1\cap L^q)_x$, for every $q\in(4/3,\infty)$ at once and for an
explicit radial datum. The velocity of our datum is not square integrable (see
below), whereas the planar theorem of \cite{ABCDGJK} assumes
it is, and the question of \cite{BrueColomboKumar} is posed on $\mathbb T^2$.

Uniqueness is known in several classes of unbounded vorticity. Yudovich proved
in Theorem~1 of \cite{Yudovich1995}, in a bounded domain $D$, uniqueness among
solutions whose vorticity lies in every $L^p(D)$ with $p<\infty$, with $\|\omega(t)\|_{L^p(D)}$ bounded, for all large $p$, by a constant
multiple of a growth function of $p$ satisfying a divergence condition of Osgood
type. Taniuchi introduced uniformly local versions of these spaces on $\R^2$
and constructed global solutions in them \cite{Taniuchi}; uniqueness there
under the same kind of divergence condition is proved by Chen, Miao and Zheng
\cite{ChenMiaoZheng} and stated as Theorem~1.4 of \cite{CrippaStefani}, both
crediting Taniuchi, and Crippa and Stefani gave a Lagrangian proof of
uniqueness in localized Yudovich spaces (Theorem~1.6 of \cite{CrippaStefani}).
Vishik proved uniqueness in Besov-type spaces containing the local space $\mathrm{bmo}$, under an
additional integrability condition \cite{Vishik1999}. Bernicot and Keraani
\cite{BernicotKeraani}, Bernicot and Hmidi \cite{BernicotHmidi}, and Chen, Miao
and Zheng \cite{ChenMiaoZheng} gave spaces of unbounded vorticity with global
existence, uniqueness or propagation of the class; Elgindi, Murray and Said
\cite{ElgindiMurraySaid} gave a local-in-time existence and uniqueness class
whose solutions can leave it in finite time. Our datum satisfies none of these
uniqueness criteria: its $L^p(\R^2)$ norms grow exponentially in $p$, it is not in
$\mathrm{BMO}$, its dyadic blocks grow too fast for the spaces of
\cite{Vishik1999}, and it is not odd in the angle. We leave the verification of
these facts to the interested reader. So Theorem~\ref{thm:main} contradicts
none of these results.

With a radial force, Vishik \cite{Vishik1}, \cite{Vishik2} proved
non-uniqueness with vorticity in an $L^p(\R^2)$ space, breaking the radial symmetry. In the
account of Albritton, Bru\`e, Colombo, De Lellis, Giri, Janisch and Kwon
\cite{ABCDGJK}, for every $p\in(2,\infty)$ there are a datum in $(L^1\cap L^p)(\R^2)$
(possibly zero) with velocity in $L^2(\R^2)$ and a force in $L^1_t(L^1\cap L^p)_x$ whose
Biot-Savart velocity lies in $L^1_tL^2_x$, with uncountably many solutions with vorticity in
$L^\infty_t(L^1\cap L^p)_x$ and velocity in $L^\infty_tL^2_x$. The unstable
manifold of a steady vortex gives non-uniqueness only at time $-\infty$;
Vishik's self-similar background moves time $-\infty$ to time $0$, but it is
not an exact solution and requires the force (Section~0.1 of \cite{ABCDGJK}).
Without force, convex integration gives non-uniqueness for vorticity in $L^p(\R^2)$ and
$L^p(\mathbb T^2)$:
Mengual \cite{Mengual} showed, for $2<p<\infty$, that a truncated power-law
vortex, a steady solution with vorticity in $(L^1\cap L^p)(\R^2)$, is the datum of
infinitely many bounded admissible weak solutions, with velocity in $C_tL^2_x$.
In that result only the initial vorticity lies in $L^p$: at positive times the
vorticity of these solutions is known only as a distribution inside a growing disk,
so they are not known to lie in the class $L^\infty_tL^p_x$ of the question above.
Both solutions of Theorem~\ref{thm:main}, in contrast, have vorticity in
$L^\infty_t(L^1\cap L^q)_x$ for every $q\in(4/3,\infty)$, so Theorem~\ref{thm:main}
answers that question in the plane, in the class in which it is posed. Bru\`e,
Colombo and Kumar (Theorem~1.2 of \cite{BrueColomboKumar}) obtained, on
$\mathbb T^2$, for some $p>1$ and a dense set of initial velocities, infinitely
many weak solutions with vorticity in $C([0,1];L^p(\mathbb T^2))$, which do not conserve
energy. Outside the $L^p$ scale, convex integration also gives unforced
non-uniqueness: \cite{BrueColombo}
($L^{1,\infty}$ vorticity, bounded energy) and \cite{BuckModena} (Hardy spaces
$H^p(\R^2)$, $2/3<p<1$).

In an instability construction without force, the background must be an exact
solution. Ours is radial, hence stationary: infinitely many rescaled copies of
one unstable vortex ring on disjoint annuli accumulating at the origin, with
amplitudes increasing toward it. Inner rings rotate faster, and the task is to
carry an instability from an inner ring outward through every ring in a time
independent of the depth, uniformly in the number of interacting rings.

For $T>0$ and $1\le q\le\infty$ we write $X_q(T)$ for the space
$L^\infty_t(L^1\cap L^q)_x$ of functions on $(0,T)\times\R^2$. Renormalized weak solutions in $X_q(T)$, $q>4/3$ (after
DiPerna and Lions \cite{DiPernaLions}), and equimeasurability are defined in
Definitions~\ref{def:weak} and~\ref{def:equimeasurable}. The ring $\Phi$ of
Definition~\ref{def:ring} is a closed formula in $\log|y|$ times a fixed smooth
cutoff; it is radial, $\Phi\in C_c^\infty(\R^2)$, $0\le\Phi<1$,
$\Phi\not\equiv0$, and $\supp\Phi\subset\{a\le|y|\le1\}$ for an explicit
$a\in(0,1)$ (Lemma~\ref{lem:ring-facts}). For $A_1>0$, $\mu\in(1,2]$ and
$\sbar\in(0,a/16]$ we put
\begin{equation}
\label{eq:datum}
\begin{gathered}
f=\sum_{k\ge1}A_k\,\Phi(\cdot/r_k),\qquad A_k=A_1\mu^{k-1},\\
r_1=1,\qquad r_{k+1}=s_kr_k,\qquad s_k=\sbar(2\mu)^{-k}\qquad(k\ge1).
\end{gathered}
\end{equation}
The main theorem of the two papers is the following.

\begin{theorem}
\label{thm:main}
Let $\Phi$ and $a$ be as in Definition~\ref{def:ring}. There are
$\mu\in(1,\frac{17}{16}]$ and $\sbar\in(0,a/16]$ such that, for every $A_1>0$,
the function $f$ of Equation~\eqref{eq:datum} has the following properties. It
is radial and nonnegative, $f\in (L^1\cap L^q)(\R^2)$ for every $q\in[1,\infty)$,
and $f\notin L^\infty(\R^2)$. There is $\Tstar\in(0,\infty)$ such that, for
every $T>\Tstar$ and every $q\in(4/3,\infty)$, Equation~\eqref{eq:euler} with
$\omega_0=f$ has two distinct renormalized weak solutions in $X_q(T)$, in the
sense of Definition~\ref{def:weak}:
\begin{enumerate}[label=\textup{(\roman*)}]
\item the stationary solution $\omega(t)=f$, $t\in[0,T]$;
\item a solution $\omega_\infty$ such that
  $\omega_\infty\in C([0,\infty);L^p(\R^2))$ for every $p\in[1,\infty)$ and,
  for every $t\ge0$, $\omega_\infty(t)$ is equimeasurable with $f$ and
  $3$-fold rotationally symmetric.
\end{enumerate}
More precisely, there is $t_0\in[0,\Tstar)$ such that $\omega_\infty$ and $f$
are distinct elements of $X_q(T')$ for every $T'>t_0$ and every
$q\in(4/3,\infty)$.
\end{theorem}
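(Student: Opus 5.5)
The proof splits into three parts: elementary properties of $f$, the stationary solution, and the construction of $\omega_\infty$ as a limit of smooth $3$-fold symmetric solutions, using the compactness criterion of this paper and the linear instability of the building block.

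\emph{Properties of $f$ and solution (i).} The rings $\Phi(\cdot/r_k)$ have disjoint supports, since $\supp\Phi(\cdot/r_k)\subset\{ar_k\le|y|\le r_k\}$ and $r_{k+1}=s_kr_k\le (a/16) r_k<ar_k$. Hence
\[
\|f\|_{L^q}^q=\|\Phi\|_{L^q}^q\sum_k A_k^q r_k^2 .
\]
Now $r_k^2$ decays superexponentially, like $\sbar^{2k}(2\mu)^{-k^2}$, while $A_k^q=A_1^q\mu^{q(k-1)}$ grows only geometrically, so the series converges for every $q<\infty$. On the other hand $\sup f\ge A_k\max\Phi\to\infty$, so $f\notin L^\infty$. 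Radiality and nonnegativity are immediate.

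A radial vorticity gives the azimuthal velocity $v=\frac{m(r)}{2\pi r}\,e_\theta$, where $m$ is the enclosed mass. Then $v\cdot\nabla f=0$ pointwise. Testing the weak formulation of Definition~\ref{def:weak} (with a renormalization $\beta(f)$, which is again radial) shows that $\omega(t)=f$ is a renormalized weak solution in $X_q(T)$ for every $q>4/3$. The product $vf$ is integrable by Lemma~\ref{lem:velocity}.

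\emph{Solution (ii).} Truncate the datum: $f_N=\sum_{k\le N}A_k\Phi(\cdot/r_k)$ plus a small $3$-fold symmetric perturbation $\epsilon_N g_N$ of the innermost ring. Let $\omega_N$ be the global smooth solution, which is $3$-fold symmetric by uniqueness and symmetry of the equation. Since $f_N\to f$ in every $L^q$, the compactness criterion proved in this part yields a subsequence converging to a renormalized weak solution $\omega_\infty$ with datum $f$. It is $3$-fold symmetric and equimeasurable with $f$, since transport preserves distribution functions and strong convergence passes this to the limit. Continuity in $C([0,\infty);L^p)$ follows from the criterion as well. The criterion also gives $\omega_\infty\neq f$, provided the non-radial moment of $\omega_N$ is bounded below by some $\delta>0$ at a time $t_N\le t_0<\Tstar$ independent of $N$. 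Everything therefore reduces to this uniform lower bound. Distinctness in $X_q(T')$ for $T'>t_0$ then follows because the moment is continuous in $L^1$.

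\emph{Main obstacle: the uniform-in-depth growth.} The plan is a multiscale induction on rings, from the innermost outward. At ring $k$, rescaled to unit size, the inner rings act (by Newton's theorem, since their mass is essentially radial) as a point vortex of relative strength $\sim\sum_{j>k}A_jr_j^2/(A_kr_k^2)$. This strength is small because the $s_k$ decay. The linear analysis of this part supplies an unstable eigenvalue $\lambda_*$ with $\mathrm{Re}\,\lambda_*>0$ for the block plus a small point vortex, in the $3$-fold sector; we use its Riesz projector and semigroup bounds. Ring $k$ has natural time scale $A_k^{-1}$. A perturbation of size $\eta$ at ring $k$, produced by ring $k+1$ through its non-radial far field, which decays like $(r_{k+1}/r_k)^3=s_k^3$ for mode $3$, grows to $O(1)$ in time about $A_k^{-1}\log(1/\eta)$. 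The total time is then
\[
\sum_k A_k^{-1}\log(1/s_k^3)\sim\sum_k \mu^{-k}\,k\log(2\mu),
\]
which converges. This convergence is the reason for the choice $s_k=\sbar(2\mu)^{-k}$ against $A_k=A_1\mu^{k-1}$.

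The hard step is the nonlinear bootstrap showing that forcing from inner rings enters the unstable mode of ring $k$ with a nondegenerate projection. It also requires that the unstable mode remain dominant over the stable and continuous spectrum until saturation, with error constants uniform in $k$ and $N$; here the near/far splitting of the Biot-Savart kernel is used. This would mostly be deferred to Part~II, with the estimates of this part supplying the uniform linear bounds.
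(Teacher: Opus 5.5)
Your plan follows the paper's route: disjointness of the rings and Lemma~\ref{lem:radial-stationary} give the properties of $f$ and solution (i), and (ii) comes from Corollary~\ref{cor:defect} (Theorem~\ref{thm:conditional}) applied to seeded truncations, once a mode-$3$ moment is bounded below uniformly in the depth at a bounded time; the paper likewise obtains that bound only through the multiscale relay and bootstrap of Part~II, built on Theorem~\ref{thm:linear}. Three small corrections to your heuristic: take the seed of fixed size relative to the innermost ring, since its growth time $A_N^{-1}\log(1/\epsilon_N)$ must stay bounded in $N$ (this costs nothing, because that ring is itself small in $L^1\cap L^2$); the kick from ring $k+1$ scales like $s_k^5$, not $s_k^3$ (area $\sim s_k^2$ times weight $|\bar y|^3\sim s_k^3$); and the decay of $s_k$ is not what makes $\sum_kA_k^{-1}\log(1/s_k)$ converge, since constant $s_k$ would do as well, so its role lies elsewhere in Part~II's bootstrap.
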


We do not treat $q\le4/3$. No symmetry is
imposed on the class; the $3$-fold rotational symmetry of $\omega_\infty$ comes from the
construction, where it removes the angular modes $\ell\notin3\Z$, among them the
modes $\ell=\pm1$ that contain the translations of a ring, and leaves the
modes $\ell\in3\Z$, whose spectra we control under Hypothesis~\ref{hyp:N}.

Theorem~\ref{thm:main} concerns one explicit family of data. It says nothing about the kinetic
energy or the vanishing viscosity limit. The velocity of $f$ is not square integrable: by
Lemma~\ref{lem:radial-stationary}, since $\supp f$ lies in the closed unit disk,
$|\KBS\ast f(x)|=(2\pi|x|)^{-1}\int f$ for almost every $x$ with $|x|\ge1$, and
$\int f>0$.

The proof of Theorem~\ref{thm:main} is divided between this paper and
Part~II~\cite{PartII}. This paper proves a non-uniqueness criterion
(Section~\ref{sec:compact}) and the linear analysis of the ring
(Section~\ref{sec:ring}), and it uses no machine computation. Part~II proves
that \textup{(N1)}--\textup{(N3)} of Hypothesis~\ref{hyp:N} imply
Theorem~\ref{thm:main} (Proposition~1.2 there), by a nonlinear construction
followed by the criterion of Section~\ref{sec:compact}, and it proves
\textup{(N1)}--\textup{(N3)} (Theorem~1.3 there): \textup{(N1)} and
\textup{(N2)} with rigorous interval arithmetic and \textup{(N3)} by hand, in
its Appendix~A. Its Appendix~B gives the existence theory for the global smooth solutions
that the proof uses.

Section~\ref{sec:compact} follows the compactness theory of DiPerna and Lions for
sequences of renormalized solutions, in a self-contained form for vorticity in
$L^\infty_t(L^1\cap L^2)_x$, and proves a compactness criterion for
non-uniqueness (Theorem~\ref{thm:criterion}). The form of it that Part~II uses is
the following conditional non-uniqueness theorem (Corollary~\ref{cor:defect} and
Remark~\ref{rem:conditional}).

\begin{theorem}[Conditional non-uniqueness]
\label{thm:conditional}
Let $f$ be radial with $f\in (L^1\cap L^q)(\R^2)$ for every $q\in[1,\infty)$.
Suppose that there are data $f^{(n)}\to f$ in $(L^1\cap L^2)(\R^2)$ and, for each
$n$, a $3$-fold rotationally symmetric renormalized weak solution $\omega^{(n)}$ of
Equation~\eqref{eq:euler} on $[0,\infty)$ with datum $f^{(n)}$ and vorticity in
$L^\infty_t(L^1\cap L^2)_x$, with the following property: for some integer
$m\ge1$, some real $\chi\in C_c^\infty((0,\infty))$, some $c_0>0$ and
$\bar T\in(0,\infty)$, and some times $T^{(n)}\in[0,\bar T]$,
\[
\Big|\int_{\R^2}\omega^{(n)}(T^{(n)},x)\,\chi(|x|)\,(\bar x)^m\dd x\Big|\ge c_0
\qquad\text{for all large }n,
\]
with $x$ written as a complex number. Then Equation~\eqref{eq:euler} with
$\omega_0=f$ has, besides the stationary solution $\omega(t)=f$, a renormalized
weak solution $\omega_\infty$ with vorticity in $L^\infty_t(L^1\cap L^2)_x$ that is
$3$-fold rotationally symmetric and equimeasurable with $f$ at every time and lies
in $C([0,\infty);L^q(\R^2))$ for every $q\in[1,\infty)$. For some
$t_0\in[0,\bar T)$, $\omega_\infty$ and $f$ are distinct elements of $X_q(T')$ for
every $T'>t_0$ and every $q\in(4/3,\infty)$.
\end{theorem}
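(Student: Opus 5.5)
The proof is a DiPerna--Lions compactness argument followed by a rigidity step that identifies the defect. The first step is uniform bounds. Each $\omega^{(n)}$ is a renormalized solution, so all its $L^q$ norms are transported. Hence $\|\omega^{(n)}(t)\|_{L^1\cap L^2}\le\|f^{(n)}\|_{L^1\cap L^2}$, which is uniformly bounded. Because $f^{(n)}\to f$ in $L^1\cap L^2$, the family $\{\beta(f^{(n)})\}$ is also uniformly integrable. The velocities $v^{(n)}=\KBS\ast\omega^{(n)}$ are then bounded in $L^\infty_tW^{1,2}_{\mathrm{loc}}$ by Calderón--Zygmund, with locally uniform control from the $L^1$ part (Lemma~\ref{lem:velocity}). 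The weak formulation bounds $\partial_t\omega^{(n)}$ in $L^\infty_tH^{-s}_{\mathrm{loc}}$. Aubin--Lions then gives a subsequence with $\omega^{(n)}\to\omega_\infty$ weakly-$*$ in $L^\infty_t(L^1\cap L^2)$ and in $C([0,T];L^2_w)$ for every $T$, and with $v^{(n)}\to v_\infty$ strongly in $L^2_{\mathrm{loc}}$. The products $v^{(n)}\omega^{(n)}$ therefore converge in distributions. So $\omega_\infty$ is a weak solution with datum $f$, and it inherits $3$-fold symmetry because that property is preserved under weak limits. Tightness at infinity, needed to keep $\int\omega$ from escaping, I would get from a uniform moment or cutoff estimate: test the equation against $\psi(|x|/R)$ and use $|v^{(n)}|\lesssim$ local $L^p$ bounds plus $|x|^{-1}\|\omega\|_{L^1}$ at large $|x|$.

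The second step, which I expect to be the main obstacle, is strong convergence and renormalization. I would follow the self-contained DiPerna--Lions argument of Section~\ref{sec:compact}. Let $\overline{\beta(\omega)}$ be the weak limit of $\beta(\omega^{(n)})$ for bounded, Lipschitz, convex $\beta$. Passing to the limit in the renormalized equations shows that $\overline{\beta(\omega)}$ solves the transport equation with field $v_\infty\in L^\infty_tW^{1,2}_{\mathrm{loc}}$. The commutator lemma then shows that $\omega_\infty$ is renormalized, and in particular that $\beta(\omega_\infty)$ solves the same equation. Consider the defect $\overline{\beta(\omega)}-\beta(\omega_\infty)\ge0$. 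It is transported by $v_\infty$ and vanishes at $t=0$, because the data converge strongly. Uniqueness for the linear transport equation with a Sobolev field then makes the defect vanish for all $t$. Strict convexity (say $\beta(s)=s^2$ after truncation) gives $\omega^{(n)}\to\omega_\infty$ strongly in $C([0,T];L^2_{\mathrm{loc}})$. Combined with tightness and interpolation against the transported $L^q$ bounds of the limit, this yields convergence in $C([0,T];L^q)$. The same argument shows that the distribution function of $\omega_\infty(t)$ equals that of $f$, since $\int\beta(\omega_\infty(t))=\int\beta(f)$ for every $\beta$ in a determining family. This gives equimeasurability. Equimeasurability then places $\omega_\infty(t)$ in every $L^q$ with the norms of $f$, and this upgrades $\omega_\infty$ to $X_q(T)$ and to $C([0,\infty);L^q)$ for all $q<\infty$. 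The delicate points are the commutator estimate with only $L^2$ vorticity and the lack of decay of $v_\infty$ at infinity. For the latter I would localize with cutoffs and control the flux through large circles using the $|x|^{-1}$ decay.

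The third step is distinctness. Pass to a further subsequence with $T^{(n)}\to T_1\in[0,\bar T]$. Strong convergence in $C([0,\bar T];L^1)$ together with equicontinuity in time gives
\[
\Big|\int\omega_\infty(T_1,x)\chi(|x|)\bar x^m\dd x\Big|\ge c_0 .
\]
For radial $f$ and $m\ge1$, the same integral vanishes by angular averaging. Hence $\omega_\infty(T_1)\ne f$. By continuity in time, $\omega_\infty(t)\ne f$ on an open interval around $T_1$. Set $t_0=\inf\{t:\omega_\infty(t)\ne f\}<T_1\le\bar T$; if necessary replace $\bar T$ by a slightly larger value so that $t_0<\bar T$. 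Then for every $T'>t_0$ the set of $t\in(0,T')$ where $\omega_\infty(t)\ne f$ has positive measure, and so $\omega_\infty\ne f$ in $X_q(T')$ for every $q\in(4/3,\infty)$. The stationary solution $\omega(t)=f$ is a renormalized weak solution by Lemma~\ref{lem:radial-stationary}: its velocity is azimuthal and $v\cdot\nabla f=0$. This completes the plan.
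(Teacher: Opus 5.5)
Your proposal is correct, and its overall structure is the paper's: weak compactness, strong local convergence of the velocities, renormalization of the limit by the commutator lemma (its field lies in $L^\infty_tW^{1,2}_{\mathrm{loc}}$), equimeasurability from conservation of $\int\Ren(\omega_\infty(t))$, 3-fold symmetry passing to weak limits, and the vanishing of the moment for radial $f$. Two steps, however, are done differently.

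\emph{Strong convergence.} You run the full DiPerna--Lions defect argument: transport $\overline{\beta(\omega)}-\beta(\omega_\infty)\ge0$ and invoke strict convexity. The paper uses a Hilbert-space shortcut instead. By Lemma~\ref{lem:renorm-all-t}, $\|\omega^{(n)}(t_i)\|_{L^2}=\|f^{(n)}\|_{L^2}\to\|f\|_{L^2}=\|\omega_\infty(t)\|_{L^2}$. Weak convergence together with convergence of norms then gives strong $L^2$ convergence along every $t_i\to t$, hence uniformly on $[0,T]$, and the tail bound~\eqref{eq:tail} upgrades this to $L^1$. This is shorter. It also supplies the uniformity in time behind your claimed convergence in $C([0,T];L^2_{\mathrm{loc}})$; the defect argument as you sketch it only gives convergence in space-time $L^1_{\mathrm{loc}}$, i.e.\ for a.e.\ $t$.

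\emph{Distinctness.} You pass the moment itself to the limit at $T_1=\lim T^{(n)}$. The paper (Corollary~\ref{cor:defect}) instead turns the moment bound into the $L^1$ defect $\limsup_n\sup_{t\le\bar T}\|\omega^{(n)}(t)-f\|_{L^1}\ge c_0/\|\Psi\|_{L^\infty}$, and then needs the uniform strong convergence of Theorem~\ref{thm:criterion}(iv).

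Your route is more economical. Since $\Psi\in L^2$, the uniform weak convergence from your first step and the weak continuity of $\omega_\infty$ already give $|\int\omega_\infty(T_1)\Psi|\ge c_0$. So for this theorem your second step, and the tightness estimate for the sequence, can be dropped entirely. Every claimed property of $\omega_\infty$ follows from the limit being a weak solution with a $W^{1,2}_{\mathrm{loc}}$ field. The paper's route buys generality instead: a criterion for an arbitrary $L^1$ defect, not only a weakly continuous moment, plus strong convergence of the subsequence and the identity $\sup_{t\le T}\|\omega_\infty(t)-f\|_{L^1}=c$.

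Two small corrections.
\begin{enumerate}
\item A convex $\beta$ that is bounded on $\R$ is constant, so take $\beta$ convex and Lipschitz with $\beta(0)=0$.
\item You need not, and may not, enlarge the given $\bar T$. Since $\omega_\infty(0)=f$ has zero moment, $T_1>0$, and openness of $\{t:\omega_\infty(t)\ne f\}$ already gives $t_0<T_1\le\bar T$.
\end{enumerate}
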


Since the moment in Theorem~\ref{thm:conditional} vanishes for the radial
function $f$, its hypothesis says that the approximate solutions leave $f$ by a
fixed amount, in one non-radial moment, within a time bounded independently of
$n$. The second solution is a limit of a subsequence of the $\omega^{(n)}$.

For the datum $f$ of Theorem~\ref{thm:main}, Part~II takes for $f^{(n)}$ the partial
sum of the first $n$ rings of $f$, with the innermost ring replaced by a seeded
ring: a $3$-fold rotationally symmetric copy of $\Phi$, equimeasurable with it and
close to it, that carries a small component along the unstable eigenfunction of
Theorem~\ref{thm:linear} below. Figure~\ref{fig:datum} shows, for $n=10$, the
partial sum $f_{10}=\sum_{k=1}^{10}A_k\Phi(\cdot/r_k)$ of the actual datum $f$ from
which non-uniqueness occurs; $f^{(10)}$ differs from it only by the seed, which is
invisible at this scale.

\begin{figure}[ht]
\centering
\includegraphics[width=0.95\textwidth]{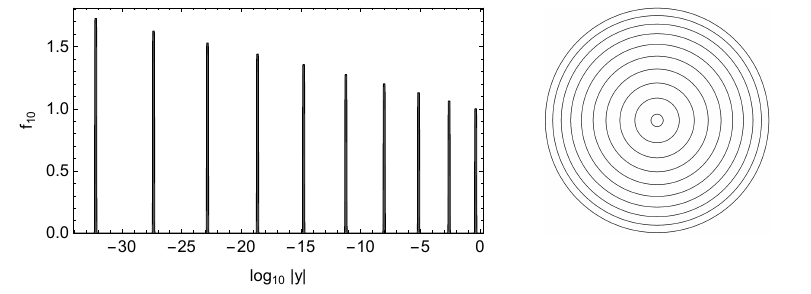}
\caption[The partial sum of ten rings of the datum]{The partial sum $f_{10}=\sum_{k=1}^{10}A_k\Phi(\cdot/r_k)$ of the datum $f$ of
Equation~\eqref{eq:datum}, for the illustrative values $A_1=1$, $\mu=\frac{17}{16}$ and
$\sbar=a/16$ (Theorem~\ref{thm:main} uses some $\mu\in(1,\frac{17}{16}]$ and
$\sbar\in(0,a/16]$ that are not computed; a smaller $\sbar$ separates the scales
further), and for the function $\chi$ constructed in the proof of
Lemma~\ref{lem:ring-facts}. Left: $f_{10}$ against $\log_{10}|y|$; the rings sit at
the scales $r_1=1>r_2>\dots>r_{10}\approx10^{-32}$, and their amplitudes
$A_k=\mu^{k-1}$ grow toward the origin. Right: the same ten rings in a logarithmic
polar picture, in which the distance from the center is an affine function of
$\log|y|$, so that all of them are visible; each ring is drawn without its amplitude,
darker where $\Phi(\cdot/r_k)$ is larger. Drawn with Mathematica from the exact
formulas; the figure is an illustration and is not used in any proof.}
\label{fig:datum}
\end{figure}

The proof of Theorem~\ref{thm:main} uses three explicit statements, collected in Section~\ref{sec:ring}
as Hypothesis~\ref{hyp:N}: \textup{(N1)} and \textup{(N2)} locate the zeros in
the mode $\ell=3$ of the Evans function of the ring, an explicit holomorphic
function, for every $G\in[0,\frac1{50}]$, where $2\pi G$ is the circulation of
a point vortex at the origin in the reference units of
Definition~\ref{def:ring}; \textup{(N3)} is an inequality between explicit
numbers controlling the modes $\ell=\pm6$ and $|\ell|\ge9$. Apart from the
thresholds stated in \textup{(N1)}--\textup{(N3)}, the only explicit numbers that
the computer-assisted proof of these statements uses are the constants of the ring
in Definition~\ref{def:ring}, which fix the building block of the multiscale datum;
all other constants of this paper are used only through their existence. Without
this hypothesis, Section~\ref{sec:ring} decomposes the linearization into angular
modes (Lemma~\ref{lem:sector}), reduces each mode to a linear second-order
equation in $\xi=\log r$, and defines the Evans function of each mode
$\ell\ge1$ (Definition~\ref{def:evans}). The Evans function is, up to sign, the Wronskian of
the solutions of this equation that decay at the two ends
(Lemma~\ref{lem:evans-basic}), and its zeros in $\{\Re\lambda>0\}$ are exactly
the eigenvalues there (Lemma~\ref{lem:resolvent}). A simple zero of the Evans
function gives an algebraically simple eigenvalue with an explicit Riesz
projection of rank one (Theorem~\ref{thm:evans-simple}).
Section~\ref{sec:ring} also bounds the real part of every unstable eigenvalue
of the mode $\ell$, for a radial vorticity that increases and then decreases
in $r$, by half the square root of an explicit double integral of the radial
derivative of the
vorticity over the region where it increases and the region where it decreases,
which is nonincreasing in $\ell$ (Lemma~\ref{lem:edge}). For the ring this
integral is computed in closed form (Lemma~\ref{lem:closed-form}), and
\textup{(N3)} makes the bound smaller than $\frac9{200}$ for $\ell=\pm6$ and
$|\ell|\ge9$. The section also proves a semigroup bound, uniform in $G$, for all
sufficiently large $|\ell|$ (Proposition~\ref{prop:large-sectors}), and constructs the
semigroup of the linearization (Lemma~\ref{lem:semigroup-construct}).

The main result of the linear analysis is the linear instability of the ring,
in the ring units of Definition~\ref{def:ring}, in the presence of a point vortex
of small circulation $2\pi G$ at the origin, which stands for everything inside
the ring. For an integer $s\ge0$ let $X^s$ be the space of $3$-fold rotationally
symmetric $w\in H^s(\R^2;\C)$ supported in the annulus $\{\frac a2\le|y|\le2\}$.

\begin{theorem}[Linear instability of the ring]
\label{thm:linear}
Assume \textup{(N1)}--\textup{(N3)} of Hypothesis~\ref{hyp:N}, which Part~II
proves. For $G\ge0$ let
\[
\Lin(G)w=-\Big(\KBS\ast\Phi+\frac{G\,y^\perp}{|y|^2}\Big)\cdot\nabla w-(\KBS\ast w)\cdot\nabla\Phi
\]
be the linearization of Equation~\eqref{eq:euler} at the steady state $\Phi$ plus a
point vortex of circulation $2\pi G$ at the origin. Let $\Gone\approx0.006024$ be
the constant of Definition~\ref{def:ring}. Then, for every $G\in[0,\Gone]$ and
every integer $s\ge0$:
\begin{enumerate}[label=\textup{(\roman*)}]
\item $\Lin(G)$, defined on $X^{s+1}$, extends to the generator of a
  $C_0$-semigroup $e^{t\Lin(G)}$ on $X^s$. Its spectrum in
  $\{\Re\lambda>\frac9{200}\}$ is $\{\lamst(G),\overline{\lamst(G)}\}$, with
  $\Re\lamst(G)>\frac{211}{2500}$. The eigenvalue $\lamst(G)$ has the
  eigenfunction $\phist(G)=g(|y|)e^{3i\theta}$, with $g$ smooth and supported in
  $[a,1]$ and $\|\phist(G)\|_{L^2(\R^2)}=1$, and it is algebraically simple on the
  functions in $X^s$ of angular mode $3$; $\overline{\lamst(G)}$ has the
  eigenfunction $\overline{\phist(G)}$, of angular mode $-3$.
\item $\lamst(G)$ depends continuously on $G$.
\item There is $C$, depending only on $s$, such that
  $\|e^{t\Lin(G)}\|_{X^s\to X^s}\le Ce^{\Re\lamst(G)\,t}$ for $t\ge0$. For every
  $\sigma'\in(\frac9{200},\min_{G'}\Re\lamst(G'))$ there is $C'$, depending only
  on $s$ and $\sigma'$, such that
  $\|e^{t\Lin(G)}(1-\Riesz(G))\|_{X^s\to X^s}\le C'e^{\sigma't}$ for $t\ge0$, where
  $\Riesz(G)$ is the spectral projection of $\Lin(G)$ for
  $\{\lamst(G),\overline{\lamst(G)}\}$, whose range is spanned by $\phist(G)$ and
  $\overline{\phist(G)}$.
\item The moment $\int_{\R^2}\phist(G)(y)\,(\bar y)^3\dd y$, with $y$ written as
  a complex number, has modulus at least a constant $c>0$ independent of $G$.
\end{enumerate}
\end{theorem}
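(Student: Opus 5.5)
The proof assembles the tools of Section~\ref{sec:ring} mode by mode. On $X^s$, $3$-fold symmetry restricts the Fourier modes to $\ell\in3\Z$. By Lemma~\ref{lem:sector}, $\Lin(G)$ commutes with rotations and decomposes as $\bigoplus_{\ell\in3\Z}\Lsec{\ell}$.

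The mode $\ell=0$ is pure transport. There the perturbation term vanishes, because $\KBS\ast w$ is azimuthal for radial $w$ and $\nabla\Phi$ is radial. Hence $\Lsec{0}$ generates an isometry group on every $H^s$, since the angular velocity is smooth on the support annulus and $G\,y^\perp/|y|^2$ is smooth away from the origin.

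For $\ell=\pm3$ I would invoke (N1)--(N2). They say that for $G\in[0,\frac1{50}]\supset[0,\Gone]$ the Evans function of Definition~\ref{def:evans} has, in $\{\Re\lambda>\frac9{200}\}$, exactly one zero $\lamst(G)$. This zero is simple and has $\Re\lamst>\frac{211}{2500}$. By Lemma~\ref{lem:resolvent} it is the only eigenvalue of $\Lsec{3}$ there. By Theorem~\ref{thm:evans-simple} it is algebraically simple, with a rank-one Riesz projection. The mode $-3$ follows by complex conjugation, since $\Lin(G)$ has real coefficients, which gives $\overline{\lamst},\overline{\phist}$.

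The eigenfunction is $g(\xi)e^{3i\theta}$, with $g$ solving the Rayleigh-type equation in $\xi=\log r$. I expect $g$ to be smooth and supported in $[a,1]$. The reason is that the eigen-equation reads $(\lambda+i\ell\Omega)w=-(\KBS\ast w)\cdot\nabla\Phi$, and $\lambda+i\ell\Omega\neq0$ for $\Re\lambda>0$, so $w$ is a multiple of $\nabla\Phi$-weighted quantities and inherits the support and smoothness of $\Phi'$. I then normalize it in $L^2$.

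Continuity in $G$, part (ii), follows from holomorphy of the Evans function jointly continuous in $(\lambda,G)$ together with simplicity of the zero, via Hurwitz or Rouché.

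For the remaining modes: $\ell=\pm6$ and $|\ell|\ge9$ are handled by Lemma~\ref{lem:edge} and Lemma~\ref{lem:closed-form}, where (N3) confines all their unstable eigenvalues to $\Re\lambda<\frac9{200}$. For $|\ell|$ larger than some $\ell_*$, Proposition~\ref{prop:large-sectors} gives a semigroup bound uniform in $G$ and $\ell$ with growth below $\frac9{200}$. The finitely many modes $9\le|\ell|\le\ell_*$ need resolvent bounds on $\{\Re\lambda\ge\sigma'\}$. I get these from the Evans-function representation of the Green's function: the resolvent is compact-perturbation-of-transport, and for $|\lambda|$ large it is controlled by the transport part. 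On the compact remaining region the resolvent is bounded, uniformly in $G$ by continuity and compactness of $[0,\Gone]$.

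Part (i)'s generation statement comes from Lemma~\ref{lem:semigroup-construct}. The operator is transport plus a perturbation $w\mapsto(\KBS\ast w)\cdot\nabla\Phi$, which is bounded, indeed smoothing, on $X^s$. Bounded perturbation of a $C_0$-group then gives the semigroup, and the spectrum statement follows by assembling the modes.

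The main obstacle is part (iii): upgrading spectral information to semigroup bounds, uniformly in $G$ and in $s$, with a finite sum over modes. My plan is to work mode by mode. On $(1-\Riesz)$, restricted to each finite set of modes, I would write $e^{t\Lsec{\ell}}$ through the Duhamel formula around the transport group, $e^{tL}=e^{tT}+\int_0^te^{(t-\tau)L}Be^{\tau T}\dd\tau$, where $B$ is compact. The essential spectral growth bound is then $0$, and the spectral bound gives the growth rate $\sigma'$ by the Gearhart–Prüss/compact-perturbation argument. Uniformity in $G$ comes from compactness of $[0,\Gone]$ and continuity of the resolvents.

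Summing over the modes, I use Proposition~\ref{prop:large-sectors} for the tail, in $H^s$ via the commutation of $\partial_\theta$ and weighted $\partial_r$ estimates. This yields $C'e^{\sigma't}$. The $e^{\Re\lamst t}$ bound then follows by adding the rank-one part.

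Finally, part (iv): the moment equals $\int_a^1 g(r)r^{4}\dd r$ up to a factor $2\pi$. It is nonzero at each $G$ because the nonzero-moment condition is part of (N1)--(N2), or else because it is computable from the Wronskian representation. Continuity of $\phist(G)$ in $G$, which follows from the continuity of the rank-one projection, and compactness of $[0,\Gone]$ then give a uniform lower bound $c$.
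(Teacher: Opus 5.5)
Your overall architecture is the paper's: mode decomposition, the Evans function with \textup{(N1)}--\textup{(N2)} for $\ell=\pm3$, Lemmas~\ref{lem:edge} and~\ref{lem:closed-form} with \textup{(N3)} for $\ell=\pm6$ and all $|\ell|\ge9$, Proposition~\ref{prop:large-sectors} for the tail, Dyson--Phillips for generation, and the argument principle for (ii).

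\textbf{The gap is in (iv).} The nonvanishing of $\int\phist(G)(\bar y)^3\dd y$ is \emph{not} contained in \textup{(N1)}--\textup{(N2)}, which only count zeros of $\Evans_3$. ``Computable from the Wronskian representation'' is not an argument either. The missing idea is that this moment is, up to a nonzero factor, the coefficient of the decaying exterior stream function of the eigenfunction, and that coefficient cannot vanish.
\begin{itemize}
\item Take $\psi_*=\psi_3(\cdot;\lamst(G),G)$ in ring units and $g_*=\Mres_{\lamst}\kcoef_3\psi_*(\log r)$. Then $\Green_3g_*=\psi_*(\log r)$, and for $r\ge r_2=1$ the formula for $\Green_3$ gives $\Green_3g_*(r)=-\frac16r^{-3}\int_0^\infty\rho^4g_*\dd\rho$.
\item On the other hand, $\Evans^\Phi_3(\lamst;G)=0$ means $\partial_\xi\psi_*(0)=-3\psi_*(0)$. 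Hence $\psi_*(\xi)=\psi_*(0)e^{-3\xi}$ for $\xi\ge0$, and so $\int\rho^4g_*\dd\rho=-6\psi_*(0)$ (Lemma~\ref{lem:adjoint}(2)).
\item If $\psi_*(0)=0$, both Cauchy data of $\psi_*$ at $\xi=0$ vanish. ODE uniqueness then gives $\psi_*\equiv0$, contradicting $\psi_*(\xi_1)=1$ (Theorem~\ref{thm:evans-simple}(1)).
\item So the moment equals $-12\pi\psi_*(0)/n_*(G)\neq0$ for each $G$, with $n_*(G)=\|g_*e^{3i\theta}\|_{L^2(\R^2)}$. It is continuous in $G$, because $\psi_*$ is continuous in $(\xi,\lambda,G)$ and $\lamst(G)$ is continuous. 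Compactness of $[0,\Gone]$ then gives $c>0$.
\end{itemize}

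\textbf{Two smaller points.}

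First, \textup{(N1)}--\textup{(N2)} are stated in reference units, while $\Lin(G)$ is in ring units. What you need is $\Evans^\Phi_3(\lambda;G)=\Evans_3(\lambda;\Rc^2G)$ (Equation~\eqref{eq:evans-units}). The constant $\Gone=\frac1{50}\Rc^{-2}$ is chosen precisely so that $\Rc^2G\in[0,\frac1{50}]$. The inclusion $[0,\frac1{50}]\supset[0,\Gone]$ that you invoke is not the relevant fact, and applying the hypothesis at the same $G$ concerns a different operator.

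Second, for (iii) the compact-perturbation/Gearhart--Pr\"uss machinery is unnecessary. On each mode, $\Lsec{\ell}(G)$ is bounded on $Y^s_\ell$, so the paper simply writes $e^{t\Lsec{\ell}}(1-\Riesz_\ell)=\frac1{2\pi i}\oint_\Gamma e^{t\lambda}(\lambda-\Lsec{\ell}(G))^{-1}\dd\lambda$. Here $\Gamma$ is a fixed rectangle with right edge on $\Re\lambda=\sigma''<\sigma'$, enclosing all the spectrum except $\lamst$ or $\overline{\lamst}$. Uniformity in $G$ then follows directly from continuity of inversion on the compact set $\Gamma\times[0,\Gone]$, using that $\Lsec{\ell}(G)$ is affine in $G$. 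Your route can be made to work, but getting $G$-uniform constants out of Gearhart--Pr\"uss requires its quantitative form. You also need a $G$-uniform bound on $\|\Riesz(G)\|$ before adding the rank-one part to get the $e^{\Re\lamst t}$ bound.
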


Theorem~\ref{thm:linear} collects Proposition~\ref{prop:SU},
Lemma~\ref{lem:resolvent}(2), Propositions~\ref{prop:U} and~\ref{prop:semigroup},
and Lemma~\ref{lem:uniform}. In Section~\ref{sec:ring} these are stated with the
precise eigendata that Part~II uses (Definition~\ref{def:eigendata}).

Part~II uses these conclusions in a nonlinear construction, which we describe
heuristically. Each ring of $f$ sees the rings inside it essentially as a point
vortex of small circulation, so its linearization is the operator $\Lin(G)$ of
Theorem~\ref{thm:linear} for a small $G$, and a perturbation along its unstable eigenfunction grows at a rate
proportional to the amplitude of the ring: inner rings are faster. An approximation of $f$ that keeps finitely many rings
carries a small multiple of the unstable eigenfunction on its innermost ring.
This perturbation grows until the ring saturates; the non-radial moment of the saturated ring then
kicks the next ring outward along its unstable eigenfunction, and that ring
grows and saturates in turn. Ring $k$ needs a time of order $k/A_k$ to saturate, and $A_k$ grows
geometrically toward the origin, so this relay reaches the outermost ring in a
time bounded independently of the number of rings, and one bootstrap keeps every ring close to its
expected evolution. This is why the datum is multiscale. A single unstable ring
would not suffice: a perturbation of size $\varepsilon$ of a steady state needs a
time of order $\log(1/\varepsilon)$ to reach a fixed size, which tends to infinity
as $\varepsilon\to0$, so approximations that converge to the steady datum would
not leave it in a bounded time; this is the reason why the unstable manifold of a
steady vortex gives non-uniqueness only at time $-\infty$. In our datum the
approximation differs from $f$ only at the scale of its innermost ring, where the
seed has a fixed size relative to the ring; it is small in $(L^1\cap L^q)(\R^2)$
only because that ring is small. So the instability starts at order one on the
time scale of that ring, which tends to $0$ with the depth, and the relay carries
it to the outermost ring in a bounded time. The instability is what produces a non-radial moment bounded
away from zero. At saturation the perturbation of the outermost ring is, to
leading order, a multiple of fixed size of its unstable eigenfunction and its
conjugate, which lie in the angular modes $3$ and $-3$. The moment
$\int\omega\Psi$ with $\Psi(x)=\chi(|x|)(\bar x)^3$, where $\chi=1$ on the annulus
of the ring, sees only the angular mode $3$, like $\pout_3$
(Definition~\ref{def:multipole}), and on the unstable eigenfunction it is the
moment of Theorem~\ref{thm:linear}(iv), which is bounded away from zero uniformly
in $G$. So this moment of the outermost ring has modulus at least a constant
$c_0>0$ independent of the number of rings, while it vanishes for the radial datum
$f$. This is the hypothesis of Theorem~\ref{thm:conditional}, which gives the
second solution as a limit of the approximate solutions. Section~1.2 of Part~II lists the definitions and results of this paper
that it uses.

The main idea of a linear instability in physical coordinates driving the
non-uniqueness mechanism, the use of non-monotone unstable radial vortices and of
a multiscale datum, and the overall strategy of the proof are due to the author.
Within this strategy, the detailed proofs and estimates, the code for the
computer-assisted verification in Part~II and the text were largely produced with
Claude Opus 5.5 (Anthropic), under the direction of the author. The author has
checked all proofs by hand and takes full responsibility for the paper.

\paragraph{Acknowledgments.} I am supported at New York University by a Junior
Fellowship of the Simons Society of Fellows, funded by Simons Foundation
International and administered by the Simons Foundation. I would also like to thank
Tristan Buckmaster for generously offering his time and advice on this and many
other projects.

\section{Preliminaries}
\label{sec:prelim}

\subsection{Conventions}

We identify $\R^2$ with $\C$, writing $x=(x_1,x_2)=x_1+ix_2$ when convenient,
and $\bar x$ for the complex conjugate; $|x|$ is the Euclidean norm; for a matrix or an array of numbers, $|\cdot|$ is the
Euclidean norm of the array of its entries, and for a matrix $M$ we write
$\|M\|_{\mathrm{op}}$ for its operator norm. For $r>0$, $B_r=\{|x|<r\}$ and
$\overline B_r$ is its closure; $|S|$ is the Lebesgue measure of a measurable
set $S\subset\R^2$. The overline thus denotes complex conjugation, of numbers and
of functions, and the closure of a set; it is also part of the names of some fixed
objects, such as the radial background $\bomega$ of Section~\ref{sec:ring} and the
constants $\Gone$, $\sbar$ and $\bar T$. Which meaning is intended will always be
clear from the context. We use polar
coordinates $x=r(\cos\theta,\sin\theta)$, with $\er=(\cos\theta,\sin\theta)$
and $\etheta=(-\sin\theta,\cos\theta)$. For $\theta\in\R$, $\Orot_\theta$ is
the rotation of $\R^2$ by the angle $\theta$ (multiplication by $e^{i\theta}$),
and $\Orot=\Orot_{2\pi/3}$. For an integer $m\ge1$, a function $g$ on $\R^2$ is
\textit{$m$-fold rotationally symmetric} if and only if $g\circ\Orot_{2\pi/m}=g$ almost
everywhere, and \textit{radial} if and only if $g\circ\Orot_\theta=g$ almost
everywhere for all $\theta$. If $\theta\mapsto g(re^{i\theta})$ is integrable
on $[0,2\pi]$ for almost every $r>0$ (by Fubini's theorem in polar
coordinates, this holds for every $g\in L^1_{\mathrm{loc}}(\R^2)$) and
$\ell\in\Z$, the \textbf{angular mode} $\ell$ of $g$ is
$g_\ell(r)=\frac1{2\pi}\int_0^{2\pi}g(re^{i\theta})e^{-i\ell\theta}\dd\theta$,
defined for almost every $r>0$. If $g\in L^2(\R^2)$, then
$g=\sum_{\ell\in\Z}g_\ell(r)e^{i\ell\theta}$ in $L^2(\R^2)$, and $g$ is
$m$-fold rotationally symmetric if and only if $g_\ell=0$ for all $\ell\notin m\Z$.

The Biot-Savart kernel is $\KBS(x)=x^\perp/(2\pi|x|^2)$ with
$x^\perp=(-x_2,x_1)$, and $\nabla^\perp=(-\partial_2,\partial_1)$. For a
vorticity $\omega$ we write $v=\KBS\ast\omega$ whenever the integral converges.
If $\omega\in C_c^\infty(\R^2)$, then the \textbf{stream function}
$\psi(x)=\frac1{2\pi}\int\log|x-y|\,\omega(y)\dd y$ satisfies
$v=\nabla^\perp\psi$, $\Delta\psi=\omega$ and $\partial_1v_2-\partial_2v_1=\omega$
(Lemma~\ref{lem:BS-basic}(d) below). In polar coordinates these sign
conventions read $v\cdot\er=-r^{-1}\partial_\theta\psi$ and
$v\cdot\etheta=\partial_r\psi$. For a vector field $u$ we write $\nabla u$ for
the matrix $(\partial_lu_i)_{i,l=1,2}$, so that $\operatorname{tr}\nabla
u=\divop u$.

Functions are complex valued unless we say otherwise; vorticities and
velocities are real. For a multi-index $\gamma=(\gamma_1,\gamma_2)\in\N_0^2$ we
write $\partial^\gamma=\partial_1^{\gamma_1}\partial_2^{\gamma_2}$,
$|\gamma|=\gamma_1+\gamma_2$, $\zeta^{2\gamma}=\zeta_1^{2\gamma_1}\zeta_2^{2\gamma_2}$
for $\zeta\in\R^2$,
$\beta\le\gamma$ if $\beta_1\le\gamma_1$ and $\beta_2\le\gamma_2$, and
$\binom\gamma\beta=\binom{\gamma_1}{\beta_1}\binom{\gamma_2}{\beta_2}$; here
$\N=\{1,2,\dots\}$, $\N_0=\N\cup\{0\}$ and $m\N=\{m,2m,\dots\}$. For an integer
$k\ge0$ and an open set $V\subset\R^2$, $H^k(V)$ is the Sobolev space with norm
$\|g\|_{H^k(V)}^2=\sum_{|\gamma|\le k}\|\partial^\gamma g\|_{L^2(V)}^2$, and
$H^k=H^k(\R^2)$. Sobolev norms of vector and matrix fields are summed over the
entries; for instance $\|\nabla u\|_{H^k}^2=\sum_{i,l}\|\partial_lu_i\|_{H^k}^2$.
For a vector or matrix field $F$ on $V$ and $1\le q\le\infty$ we put
$\|F\|_{L^q(V)}=\|\,|F|\,\|_{L^q(V)}$, where $|F|$ is the Euclidean norm of the
array of entries; for $q=2$ this agrees with the sum over the entries. For a
vector field $F$ on $V$ we put
$\|F\|_{W^{1,\infty}(V)}=\operatorname{ess\,sup}_V|F|+\operatorname{ess\,sup}_V|\nabla F|$.

For a function or vector field $h$ of class $C^q$ near a point $x$, we write
$\nabla^qh(x)$ for the array of all partial derivatives of order $q$ of the
components of $h$ at $x$, one entry for each ordered $q$-tuple of directions,
so that $|\nabla^qh(x)|$ is its Euclidean norm. For an integer $k\ge0$, a closed
set $U\subset\R^2$ and $h$ of class $C^k$ on a neighborhood of $U$, we put
$\|h\|_{C^k(U)}=\sum_{q=0}^k\sup_{x\in U}|\nabla^qh(x)|$;
in particular $|\partial^\gamma h(x)|\le\|h\|_{C^k(U)}$ for $x\in U$ and
$|\gamma|\le k$, and $C^k(U)$ denotes the space of such $h$ with finite norm.
The Fourier transform is $\hat g(\zeta)=\int g(x)e^{-ix\cdot\zeta}\dd x$, so that
$\|\partial^\gamma g\|_{L^2(\R^2)}^2=(2\pi)^{-2}\int\zeta^{2\gamma}|\hat g(\zeta)|^2\dd\zeta$
by Plancherel's theorem.

We put $\jap{t}=(1+t^2)^{1/2}$ for $t\in\R$ and $\log_+x=\max\{\log x,0\}$ for
$x>0$. For $x>0$ and an integer $p\ge0$ we put
\begin{equation}
\label{eq:Ip}
\Ip{p}{x}=\int_0^\infty e^{-xu}(1+u)^p\dd u,
\end{equation}
a finite (as $x>0$), decreasing function of $x$.

Constants that enter a later choice of parameters carry their own names, and
their dependence is stated where they are defined. The letter $C$ denotes a
constant that enters no later choice; its dependence is stated each time, and
within one proof its value may change from line to line. For a function $u$
and a nonnegative function $g$, $u=O(g)$ as $r\to r_0$ means that $|u|\le Cg$ in
a neighborhood of $r_0$ for some constant $C$.

Let $I$ be an interval and $X$ a Banach space. A function $F\colon I\to X$ is
\textit{strongly measurable} if it is an almost everywhere limit of simple
functions $\sum_{i=1}^k\mathbf 1_{E_i}(t)x_i$, with $E_i\subset I$ measurable
and $x_i\in X$, and \textit{weakly measurable} if $t\mapsto\Lambda(F(t))$ is
measurable for every $\Lambda\in X^*$. We use Pettis's theorem \cite{Pettis}:
if $X$ is separable, then $F$ is strongly measurable if and only if it is weakly
measurable. The Bochner spaces $L^\infty(I;X)$ and $L^1(I;X)$ consist of the
strongly measurable $F$ for which $t\mapsto\|F(t)\|_X$ is essentially bounded,
respectively integrable, modulo equality almost everywhere.

Let $\Omega\subset\R^2$ be a measurable set. For
$b\in[1,\infty)$, $L^\infty(I;L^b(\Omega))$ and $L^1(I;L^b(\Omega))$ are the
Bochner spaces, whose elements we identify with measurable functions on
$I\times\Omega$; a measurable $u$ on $I\times\Omega$ lies in them if and only if
$t\mapsto\|u(t)\|_{L^b(\Omega)}$ is essentially bounded, respectively
integrable, by Pettis's theorem: $L^b(\Omega)$ is separable, and for
$\phi\in L^{b'}(\Omega)$, $1/b+1/b'=1$, the function $t\mapsto\int u(t)\phi$,
defined for almost every $t$, is measurable by Tonelli's theorem applied to the
positive and negative parts of the real and imaginary parts of $u\phi$. We write $L^\infty(I;L^\infty(\Omega))$
for $L^\infty(I\times\Omega)$, and $L^1(I;L^\infty(\Omega))$ for the set of
measurable $u$ on $I\times\Omega$ with $\int_I\|u(t)\|_{L^\infty(\Omega)}\dd t<\infty$.
Intersections such as $L^\infty(I;(L^1\cap L^\infty)(\Omega))$ are intersections of these
spaces; a space with $W^{1,a}(\Omega)$ in place of $L^b(\Omega)$ consists of the
$w$ for which $w$ and its distributional gradient in $x$ lie in the
corresponding space with $L^a(\Omega)$; the subscript $\mathrm{loc}$ on the time
interval means that the condition holds on $J\times\Omega$ for every compact
interval $J\subset I$, on $\R^2$ that it holds on $I\times B_R$ for every $R>0$,
and on both that it holds on $J\times B_R$ for all such $J$ and $R$.

For $T>0$ and $1\le q\le\infty$, the space $X_q(T)$ of Section~1, written
$L^\infty_t(L^1\cap L^q)_x$ there, is the Bochner space
$L^\infty\big(0,T;(L^1\cap L^q)(\R^2)\big)$ just described, with
$\|\omega\|_{(L^1\cap L^q)(\R^2)}=\|\omega\|_{L^1(\R^2)}+\|\omega\|_{L^q(\R^2)}$; and
$\DT=\{\varphi\in C_c^\infty(\R\times\R^2):\supp\varphi\subset(-\infty,T)\times\R^2\}$.
$\LipO$ is the set of Lipschitz functions $\Ren\colon\R\to\R$ with $\Ren(0)=0$.
For $g\in L^1(\R^2)$ we put $\Circ_g(r)=\int_{B_r}g$.

A $C_0$-semigroup $(S(t))_{t\ge0}$ on a Banach space $X$ is a family of bounded
operators with $S(0)$ the identity, $S(t+s)=S(t)S(s)$, and $t\mapsto S(t)x$ continuous for
all $x\in X$; its generator $A$ is $Ax=\lim_{h\downarrow0}h^{-1}(S(h)x-x)$ on the
set $D(A)$ of $x$ for which the limit exists.

\subsection{The Biot-Savart Law}

We split $\KBS=\Knear+\Kfar$ with $\Knear=\mathbf 1_{B_1}\KBS$. By
Lemma~\ref{lem:BS-basic}(a) below,
$\|\Knear\|_{L^s(\R^2)}^s=(2\pi)^{1-s}/(2-s)$ for $s\in[1,2)$, and
$|\Kfar(z)|\le\min\{1,|z|^{-1}\}/(2\pi)$ for $z\in\R^2$.

\begin{lemma}
\label{lem:BS-basic}
The kernel $\KBS$ has the following properties.
\begin{enumerate}[label=\textup{(\alph*)}]
\item For $z\ne0$, $i,l\in\{1,2\}$ and $\rho>0$, $|\KBS(z)|=(2\pi|z|)^{-1}$,
  $|\partial_l(\KBS)_i(z)|\le(2\pi|z|^2)^{-1}$ and
  $\int_{B_\rho}|\KBS|\dd z=\rho$.
\item Let $g\in (L^1\cap L^\infty)(\R^2)$. Then for every $x\in\R^2$,
  \[
  \big(|\KBS|\ast|g|\big)(x)\le\Big(\frac{2\|g\|_{L^1(\R^2)}\|g\|_{L^\infty(\R^2)}}{\pi}\Big)^{1/2} .
  \]
  In particular the integral defining $\KBS\ast g(x)$ converges absolutely for
  every $x$, and
  $\|\KBS\ast g\|_{L^\infty(\R^2)}\le\big(2\|g\|_{L^1(\R^2)}\|g\|_{L^\infty(\R^2)}/\pi\big)^{1/2}$.
\item Let $\rho,R>0$ and let $g\in L^2(\R^2)$ vanish outside $B_\rho$. Then
  $\KBS\ast g$ is defined almost everywhere, and
  $\|\KBS\ast g\|_{L^2(B_R)}\le(R+\rho)\|g\|_{L^2(\R^2)}$.
\item Let $g\in C_c^\infty(\R^2)$ and $u=\KBS\ast g$. Then $u\in C^\infty(\R^2)$,
  $\partial^\gamma u=\KBS\ast\partial^\gamma g$ for every $\gamma$,
  $u=\nabla^\perp\psi$ with $\psi(x)=\frac1{2\pi}\int\log|x-y|\,g(y)\dd y$,
  $\Delta\psi=g$, $\divop u=0$ and $\partial_1u_2-\partial_2u_1=g$. Moreover
  $\sum_{i,l=1}^2\|\partial_l\partial^\gamma u_i\|_{L^2(\R^2)}^2=\|\partial^\gamma g\|_{L^2(\R^2)}^2$
  for every $\gamma$, and hence $\|\nabla u\|_{H^k}=\|g\|_{H^k}$ for every
  integer $k\ge0$.
\item Let $g\in L^1(\R^2)$ and $R>0$. Then
  $\int_{B_R}\big(|\KBS|\ast|g|\big)\dd x\le R\,\|g\|_{L^1(\R^2)}$. In particular the
  integral defining $\KBS\ast g$ converges absolutely almost everywhere,
  $\KBS\ast g\in L^1_{\mathrm{loc}}(\R^2)$, and $\divop(\KBS\ast g)=0$ in
  $\mathcal D'(\R^2)$.
\end{enumerate}
\end{lemma}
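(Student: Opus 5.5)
The plan is to prove the five items in the order (a), (b), (e), (c), (d). Each of the first four reduces to an elementary kernel estimate plus Tonelli or Young. Item (d) is the classical smooth theory.

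\emph{Item (a).} Since $|z^\perp|=|z|$, we get $|\KBS(z)|=(2\pi|z|)^{-1}$. Differentiating $\KBS_i(z)=\epsilon_i z_{i'}/(2\pi|z|^2)$, where $(z^\perp)_1=-z_2$ and $(z^\perp)_2=z_1$, gives entries of the form $\pm(|z|^2\delta-2z_{i'}z_l)/(2\pi|z|^4)$. The numerator has modulus at most $|z|^2$: the relevant quantities are $|z_2^2-z_1^2|$ and $2|z_1z_2|$, both at most $|z|^2$. Finally, in polar coordinates $\int_{B_\rho}|\KBS|=\int_0^\rho(2\pi r)^{-1}2\pi r\,dr=\rho$.

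\emph{Item (b).} Split the integral $\int|\KBS(x-y)||g(y)|\,dy$ at radius $\rho$ around $x$. Using (a), the inner part is at most $\rho\|g\|_\infty$. The outer part is at most $(2\pi\rho)^{-1}\|g\|_1$. Optimizing at $\rho^2=\|g\|_1/(2\pi\|g\|_\infty)$ gives $2\sqrt{\|g\|_1\|g\|_\infty/(2\pi)}=(2\|g\|_1\|g\|_\infty/\pi)^{1/2}$. If $g=0$ a.e. there is nothing to prove. Absolute convergence and the $L^\infty$ bound follow at once.

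\emph{Item (e).} Apply Tonelli:
\[
\int_{B_R}(|\KBS|\ast|g|)\,dx=\int|g(y)|\int_{B_R}|\KBS(x-y)|\,dx\,dy .
\]
The inner integral is at most $\int_{B_R}|\KBS|=R$ by (a). This is the one place needing an argument: a rearrangement fact, namely that among sets of measure $|B_R|$ the integral of the radially decreasing function $|\KBS|$ is maximized by the centered ball. Finiteness a.e. and local integrability follow. For $\divop(\KBS\ast g)=0$, pair with $\varphi\in C_c^\infty$ and use Fubini, which is justified by the $L^1_{\mathrm{loc}}$ bound since $\varphi$ is compactly supported. This reduces the claim to $\int\KBS(x-y)\cdot\nabla\varphi(x)\,dx=0$ for each $y$. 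To prove that, cut out $B_\epsilon(y)$ and integrate by parts. The field $\KBS$ is divergence free away from $0$, and the boundary term is $\int_{\partial B_\epsilon}\varphi\,\KBS\cdot n=0$ because $\KBS\perp n$ there.

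\emph{Item (c).} Since $g$ vanishes outside $B_\rho$, for $x\in B_R$ only $|x-y|<R+\rho$ matters. Hence on $B_R$ we have $\KBS\ast g=(\mathbf 1_{B_{R+\rho}}\KBS)\ast g$. Young's inequality with the $L^1$ norm $R+\rho$ from (a) gives the bound. Definedness a.e. follows from (e), since $g\in L^1$ by Cauchy--Schwarz on $B_\rho$.

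\emph{Item (d).} Write $\KBS=\nabla^\perp\frac1{2\pi}\log|z|$ for $z\ne0$. Since the log kernel is locally integrable, $\psi=\frac1{2\pi}\log|\cdot|\ast g$ is $C^\infty$ with $\partial^\gamma\psi=\frac1{2\pi}\log|\cdot|\ast\partial^\gamma g$, by differentiating under the integral. Then $\nabla^\perp\psi=\frac1{2\pi}\log|\cdot|\ast\nabla^\perp g$. Integrating by parts off a small ball, with the boundary term $O(\epsilon\log\epsilon)\to0$, turns this into $\KBS\ast g$. The same manipulation gives $\partial^\gamma u=\KBS\ast\partial^\gamma g$. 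The identity $\Delta\psi=g$ is the fundamental-solution property of $\frac1{2\pi}\log|z|$, proved by the standard Green computation on $\{|z|>\epsilon\}$. From it follow $\divop u=0$ and $\partial_1u_2-\partial_2u_1=\Delta\psi=g$.

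For the $L^2$ identity, note that the Fourier transform of $\partial_lu_i$ equals $\hat g$ times a symbol. For example, $\widehat{\partial_l\partial^\perp_i\psi}=\zeta_l\zeta^\perp_i|\zeta|^{-2}\hat g$ up to sign, and $\sum_{i,l}\zeta_l^2(\zeta^\perp_i)^2=|\zeta|^4$. The same holds with $\partial^\gamma g$ in place of $g$. Plancherel then gives the identity. The one subtlety is that $\psi$ need not be in $L^2$. To avoid this, I would apply Plancherel directly to $\partial_l u_i=\KBS_i\ast\partial_l g$, which lies in $L^2$ because $u$ decays like $|x|^{-1}$ and $\nabla u$ like $|x|^{-2}$. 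Alternatively, integrate by parts $\int|\nabla u|^2=\int|\divop u|^2+|\mathrm{curl}\,u|^2$ on large balls, with boundary terms $O(R^{-2})$. I expect this last step to be the main, though still routine, obstacle.

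Summing the identity over $|\gamma|\le k$ yields $\|\nabla u\|_{H^k}=\|g\|_{H^k}$.
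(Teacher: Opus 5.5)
Your proposal is correct and follows the paper's proof in substance. It uses the same kernel computations in (a), the same near/far split and optimization in (b), Young's inequality in (c) (the paper's Minkowski inequality), and the same comparison of $|\KBS|$ on a shifted disk with the centered ball plus Tonelli in (e); your second route to the $L^2$ identity in (d), integrating by parts on large balls using the $|x|^{-1}$ and $|x|^{-2}$ decay, is exactly what the paper does with $\psi$. The only variations are that you prove $\divop(\KBS\ast g)=0$ directly, via Fubini and $\divop\KBS=0$ off the origin with $\KBS\perp n$ on small circles, where the paper approximates $g$ in $L^1$ by $C_c^\infty$ functions and invokes (d); and that you obtain $\partial_1u_2-\partial_2u_1=g$ from the fundamental-solution property rather than the paper's direct polar-coordinate computation. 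Both variations are valid.
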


\begin{proof}
(a) The first identity holds because $|x^\perp|=|x|$, and integration in polar
coordinates gives $\int_{B_\rho}(2\pi|z|)^{-1}\dd z=\rho$. Since
$2\pi\KBS(z)=(-z_2,z_1)/|z|^2$, the derivatives $2\pi\partial_l(\KBS)_i$ are,
up to sign, the numbers
$\partial_l(z_k/|z|^2)=\delta_{lk}|z|^{-2}-2z_lz_k|z|^{-4}$, whose modulus is
$|z_{k'}^2-z_k^2|/|z|^4\le|z|^{-2}$ if $l=k$ (where $k'\ne k$) and
$2|z_1z_2|/|z|^4\le|z|^{-2}$ if $l\ne k$.

(b) We may assume $g\ne0$. For arbitrary $r>0$, (a) gives, after splitting the
integral into the near field $\{|z|<r\}$ and the far field $\{|z|\ge r\}$,
\[
\begin{aligned}
2\pi\big(|\KBS|\ast|g|\big)(x)&\le\|g\|_{L^\infty(\R^2)}\int_{B_r}\frac{\dd z}{|z|}
+\frac1r\int_{|z|\ge r}|g(x-z)|\dd z\\
&\le2\pi r\|g\|_{L^\infty(\R^2)}+\frac{\|g\|_{L^1(\R^2)}}{r},
\end{aligned}
\]
and choosing $r=(\|g\|_{L^1(\R^2)}/(2\pi\|g\|_{L^\infty(\R^2)}))^{1/2}$ gives
\[
2\pi(|\KBS|\ast|g|)(x)\le2(2\pi\|g\|_{L^1(\R^2)}\|g\|_{L^\infty(\R^2)})^{1/2},
\]
which is the claim.
The rest follows from $|\KBS\ast g|\le|\KBS|\ast|g|$.

(c) For $|x|<R$ and $|x-z|<\rho$, we have $|z|<\rho+R$, so, as $g$ vanishes
outside $B_\rho$, $\KBS\ast g(x)=\int_{B_{R+\rho}}\KBS(z)g(x-z)\dd z$ for $x\in B_R$, and Minkowski's
integral inequality and (a) give
$\big\|\int_{B_{R+\rho}}|\KBS(z)||g(\cdot-z)|\dd z\big\|_{L^2(B_R)}\le(R+\rho)\|g\|_{L^2(\R^2)}$.
In particular the integral converges absolutely almost everywhere on $B_R$, for
every $R>0$.

(d) Let $\supp g\subset B_\rho$. Since $\KBS$ and $\log|\cdot|$ are locally
integrable, differentiation under the integral sign shows that $u,\psi\in
C^\infty$, $\partial^\gamma u=\KBS\ast\partial^\gamma g$ and
$\partial^\gamma\psi=\frac1{2\pi}\log|\cdot|\ast\partial^\gamma g$. Writing
$\partial_jg(x-z)=-\partial_{z_j}[g(x-z)]$ and integrating by parts on
$\{|z|>\varepsilon\}$, with a boundary term at most
$\varepsilon|\log\varepsilon|\sup|g|\to0$, we get
$\partial_j\psi(x)=\int\frac{z_j}{2\pi|z|^2}g(x-z)\dd z$. Hence
$u=\nabla^\perp\psi$, so $\divop u=-\partial_1\partial_2\psi+\partial_2\partial_1\psi=0$
and $\Delta\psi=\partial_1u_2-\partial_2u_1$. Fix $x$, put
$h(z)=g(x-z)$, and use polar coordinates $z=re^{i\theta}$, in which
$z\cdot\nabla_z=r\partial_r$.
Since $\partial_lu=\KBS\ast\partial_lg$ and $(\partial_lg)(x-z)=-\partial_{z_l}h(z)$,
\[
(\partial_1u_2-\partial_2u_1)(x)=-\int\frac{z\cdot\nabla_zh}{2\pi|z|^2}\dd z
=-\frac1{2\pi}\int_0^{2\pi}\!\!\int_0^\infty\partial_rh\dd r\dd\theta=g(x),
\]
because $(\KBS)_2\partial_1h-(\KBS)_1\partial_2h=z\cdot\nabla_zh/(2\pi|z|^2)$ and
$\int_0^\infty\partial_rh\dd r=-h(0)=-g(x)$; the integrand is integrable, as
$|\partial_rh|\le|\nabla h|$ and $h$ has compact support. Hence $\Delta\psi=g$.

For $|x|\ge2\rho$ and $|y|<\rho$ we have $|x-y|\ge|x|/2$; differentiating under
the integral sign, with $|\nabla\log|z||=|z|^{-1}$ and
$|\partial_i\partial_j\log|z||=|\partial_j(z_i/|z|^2)|\le|z|^{-2}$ (proof of
(a)), gives $|\nabla\psi(x)|\le\|g\|_{L^1(\R^2)}/(\pi|x|)$ and
$|\partial_i\partial_j\psi(x)|\le2\|g\|_{L^1(\R^2)}/(\pi|x|^2)$ for $|x|\ge2\rho$.
Two integrations by parts on $B_r$
give $\int_{B_r}\sum_{i,j}(\partial_i\partial_j\psi)^2\dd x
=\int_{B_r}(\Delta\psi)^2\dd x+\mathcal B_r$, where $\mathcal B_r$ is a sum of
integrals over $\partial B_r$ of products of a first and a second derivative of
$\psi$; for $r\ge2\rho$ the bounds above give
$|\mathcal B_r|\le Cr\cdot r^{-1}r^{-2}\|g\|_{L^1(\R^2)}^2$ with $C$ absolute. Letting
$r\to\infty$ gives $\sum_{i,j}\|\partial_i\partial_j\psi\|_{L^2(\R^2)}^2=\|g\|_{L^2(\R^2)}^2$.
The entries of $\nabla u$ are, up to sign, the second derivatives of $\psi$, so
$\sum_{i,l}\|\partial_lu_i\|_{L^2(\R^2)}^2=\|g\|_{L^2(\R^2)}^2$. Replacing $g$ by
$\partial^\gamma g$, whose velocity is $\partial^\gamma u$, and summing over
$|\gamma|\le k$ gives the rest. For complex $g$ we apply all of this to the
real and imaginary parts.

(e) Let $D$ be a disk of radius $R$. Then $|D\setminus B_R|=|B_R\setminus D|$.
Since $|\KBS(z)|=(2\pi|z|)^{-1}$ by (a), $|\KBS|\le(2\pi R)^{-1}$ on $D\setminus B_R$
and $|\KBS|>(2\pi R)^{-1}$ on $B_R\setminus D$; so
$\int_D|\KBS|\le\int_{B_R}|\KBS|=R$, by (a). For fixed $y$, the substitution
$z=x-y$ maps $B_R$ onto the disk $D_y=\{z:|z+y|<R\}$ of radius $R$, so Tonelli's
theorem gives
$\int_{B_R}(|\KBS|\ast|g|)\dd x=\int|g(y)|\int_{D_y}|\KBS(z)|\dd z\dd y\le R\|g\|_{L^1(\R^2)}$.
Hence $|\KBS|\ast|g|$ is finite almost everywhere, and
$\KBS\ast g\in L^1_{\mathrm{loc}}(\R^2)$ because $|\KBS\ast g|\le|\KBS|\ast|g|$.
Take $g_n\in C_c^\infty(\R^2)$ with $g_n\to g$ in $L^1(\R^2)$. This bound, applied to
$g_n-g$, gives $\KBS\ast g_n\to\KBS\ast g$ in $L^1(B_R)$ for every $R>0$, hence
in $\mathcal D'(\R^2)$; since $\divop(\KBS\ast g_n)=0$ by (d),
$\divop(\KBS\ast g)=0$ in $\mathcal D'(\R^2)$.
\end{proof}

\begin{lemma}
\label{lem:velocity}
Let $4/3<p<\infty$, $p'=p/(p-1)$, and $R>0$. There is $C=C(p,R)$ such that
the following hold.
\begin{enumerate}[label=\textup{(\alph*)}]
\item For $\omega\in (L^1\cap L^p)(\R^2)$ the integral defining $\KBS\ast\omega$
  converges absolutely almost everywhere and, with $v=\KBS\ast\omega$,
  \[
  \|v\|_{L^{p'}(B_R)}+\|v\|_{L^2(B_R)}\le C\|\omega\|_{(L^1\cap L^p)(\R^2)},\qquad
  \int_{\R^2}|\omega|\,\big(|\KBS|\ast|\omega|\big)\dd x\le C\|\omega\|_{(L^1\cap L^p)(\R^2)}^2 ;
  \]
  the constant in the second inequality can be chosen depending only on $p$,
  independently of $R$.
\item Let $\omega_n$ be real and $\omega_n\rightharpoonup\omega$ weakly in $L^p(\R^2)$, with
  $\sup_n\|\omega_n\|_{L^1(\R^2)}\le M$. Then $\|\omega\|_{L^1(\R^2)}\le M$ and
  $\KBS\ast\omega_n\to\KBS\ast\omega$ strongly in $L^{p'}(B_R)$.
\item If $\omega\in (L^1\cap L^2)(\R^2)$, then $v=\KBS\ast\omega$ satisfies $\nabla v\in L^2(\R^2)$ and
  $\|\nabla v\|_{L^2(\R^2)}=\|\omega\|_{L^2(\R^2)}$.
\end{enumerate}
\end{lemma}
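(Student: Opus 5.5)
The plan is to split the kernel as $\KBS=\Knear+\Kfar$ and treat the two pieces with Young's inequality, using the integrability facts recorded after the splitting: $\Knear\in L^s(\R^2)$ for every $s\in[1,2)$, and $|\Kfar|\le 1/(2\pi)$ everywhere.

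\textbf{Part (a).} For the velocity bound, the near part gives $\Knear\ast\omega\in L^{b}(\R^2)$ whenever $1+1/b=1/s+1/p$ with $s<2$. Since $p>4/3$ we have $1/p<3/4$, so we may take $s$ close to $2$ and obtain every $b$ with $1/b>1/p-1/2$. In particular both $b=p'$ and $b=2$ are admissible, because $1/p'=1-1/p>1/p-1/2$ exactly when $p>4/3$. For small $b$ we can instead use $s=1$, $\omega\in L^p$, and interpolation between $L^1$ and $L^p$. The far part satisfies $\|\Kfar\ast\omega\|_{L^\infty}\le\|\omega\|_{L^1}/(2\pi)$, and on $B_R$ this bounds the $L^{p'}$ and $L^2$ norms by $C(R)\|\omega\|_{L^1}$. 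Absolute convergence almost everywhere follows from the same estimates applied to $|\KBS|\ast|\omega|$, or from Lemma~\ref{lem:BS-basic}(e).

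For the quadratic bound we again split the kernel. The far part contributes at most $\|\omega\|_{L^1}^2/(2\pi)$. For the near part, Hölder gives $\int|\omega|(|\Knear|\ast|\omega|)\le\|\omega\|_{L^{b'}}\||\Knear|\ast|\omega|\|_{L^{b}}$ with $b$ as above. We then need $b'\in[1,p]$, i.e.\ $b\ge p'$, which is possible since $b=p'$ is admissible. Interpolation places $\|\omega\|_{L^{p'}}$ between the $L^1$ and $L^p$ norms; note $p'\le p$ when $p\ge2$. If $p<2$, then $p'>p$ and this fails, so instead choose $b$ large with $b'\le p$. The requirement is $1/b\ge 1/p-1/2$ together with $1-1/b\le1/p$, i.e.\ $1/b\ge 1-1/p$, and $1-1/p<1/p-1/2+$slack holds precisely when $p>4/3$. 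All constants here are independent of $R$.

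\textbf{Part (b).} The bound $\|\omega\|_{L^1}\le M$ follows by testing against $\phi\in C_c$ with $|\phi|\le1$ and using weak lower semicontinuity. For the convergence, the near part $\Knear\ast\cdot$ maps $L^p$ to $L^{p'}$ boundedly. Take a cutoff $\Knear^\delta=\mathbf 1_{\{|z|>\delta\}}\Knear$. The difference has $L^s$ norm $O(\delta^{2/s-1})\to0$, uniformly in $n$, by the Young bound above. For fixed $\delta$ the kernel is bounded with compact support, so $(\Knear^\delta\ast\omega_n)(x)=\int\Knear^\delta(x-y)\omega_n(y)\,dy\to(\Knear^\delta\ast\omega)(x)$ pointwise by weak convergence, with a uniform bound, and dominated convergence on $B_R$ gives strong $L^{p'}(B_R)$ convergence. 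The far part needs tightness to pass to the limit. Split $\Kfar\ast\omega_n$ into a piece with $\Kfar$ truncated to $|z|<L$, which is bounded and compactly supported and handled as above, and a remainder bounded pointwise by $M/(2\pi L)$ uniformly, which is small.

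\textbf{Part (c).} Approximate $\omega$ by $g_n\in C_c^\infty$ in $L^1\cap L^2$. Lemma~\ref{lem:BS-basic}(d) gives $\|\nabla(\KBS\ast g_n)\|_{L^2}=\|g_n\|_{L^2}$, so the gradients are Cauchy in $L^2$. The velocities converge in $L^1_{\mathrm{loc}}$ by Lemma~\ref{lem:BS-basic}(e), which identifies the distributional gradient of $v$ as the $L^2$ limit, and the isometry passes to the limit.

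The main obstacle is the exponent bookkeeping in the quadratic estimate of (a) for $p$ near $4/3$, where a single Hölder pairing must be chosen compatibly.
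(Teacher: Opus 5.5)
Your proposal is correct and follows the paper's proof essentially step by step. In (a) you split $\KBS=\Knear+\Kfar$ and use Young's inequality with $\Knear\in L^s$, $s<2$, plus H\"older and interpolation; in (b) you truncate the kernel near the origin, on a bounded middle annulus (pointwise convergence from weak convergence, then dominated convergence) and at infinity (a uniform $O(M/L)$ bound); in (c) you approximate by $C_c^\infty$ data and use Lemma~\ref{lem:BS-basic}(d),(e).

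The only slips are in the exponent bookkeeping, and none affects the argument:
\begin{itemize}
\item With $b=p'$ the H\"older partner is $\|\omega\|_{L^{b'}}=\|\omega\|_{L^p}$. For $p<2$, Young with $s=p/(2p-2)\in(1,2)$ gives $|\Knear|\ast|\omega|\in L^{p'}$ directly, so the interpolation of $\omega$ into $L^{p'}$ that you say fails is never needed.
\item The condition you wrote, $1-1/b\le 1/p$, is reversed. You need $1/b\le 1/p'$, which together with $1/p-1/2<1/b\le 1/p$ from Young leaves a nonempty window exactly when $p>4/3$; the paper takes $1/b=\min\{1/p,1/p'\}$.
\item In (b), when $p>2$, $\Knear\ast$ is bounded from $L^p$ only into $L^{p'}(B_R)$, not into $L^{p'}(\R^2)$. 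Since you only use it on $B_R$, this does not matter.
\end{itemize}
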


\begin{proof}
(a) Put $r(p)=p'$ and $s(p)=p/(2p-2)$ if $p\le2$, and $r(p)=p$ and $s(p)=1$ if
$p>2$. Since $p>4/3$, we have $s(p)\in[1,2)$, so $\Knear\in L^{s(p)}(\R^2)$. Moreover
$1+1/r(p)=1/s(p)+1/p$, so Young's inequality gives
$\||\Knear|\ast|\omega|\|_{L^{r(p)}(\R^2)}\le\|\Knear\|_{L^{s(p)}(\R^2)}\|\omega\|_{L^p(\R^2)}$.
Also $\||\Kfar|\ast|\omega|\|_{L^\infty(\R^2)}\le\|\omega\|_{L^1(\R^2)}/(2\pi)$. Hence
$|\KBS|\ast|\omega|$ is finite almost everywhere. Since
$r(p)\ge\max\{2,p'\}$, H\"older's inequality on $B_R$ gives the velocity bound.
For the last bound, $r(p)'\in[1,p]$, so
$\|\omega\|_{L^{r(p)'}(\R^2)}\le\|\omega\|_{(L^1\cap L^p)(\R^2)}$ by interpolation, and
$\int|\omega|(|\KBS|\ast|\omega|)\le\|\omega\|_{L^{r(p)'}(\R^2)}\||\Knear|\ast|\omega|\|_{L^{r(p)}(\R^2)}
+\|\omega\|_{L^1(\R^2)}^2/(2\pi)$.

(b) For $L>0$ and $g\in L^\infty(B_L)$, the function $g\mathbf 1_{B_L}$ lies in
$L^{p'}(\R^2)$, so $|\int_{B_L}\omega g|=\lim_n|\int_{B_L}\omega_ng|\le
M\|g\|_{L^\infty(\R^2)}$. The limit $\omega$ is real, as a weak limit of real functions.
Taking $g=\sgn\omega$ (with $\sgn s=s/|s|$ for $s\ne0$ and $\sgn0=0$) and
letting $L\to\infty$ gives $\|\omega\|_{L^1(\R^2)}\le M$.

By hypothesis and the bound just proved, $\omega_n$ and $\omega$ lie in
$(L^1\cap L^p)(\R^2)$, with $L^1(\R^2)$ norms at most $M$. Put
$N=\sup_n\|\omega_n\|_{L^p(\R^2)}+\|\omega\|_{L^p(\R^2)}<\infty$ (weakly convergent
sequences are bounded), let $0<\varepsilon<1<L$, and split
$\KBS=K^{(1)}+K^{(2)}+K^{(3)}$ with $K^{(1)}=\KBS\mathbf 1_{B_\varepsilon}$,
$K^{(2)}=\KBS\mathbf 1_{B_L\setminus B_\varepsilon}$ and
$K^{(3)}=\KBS\mathbf 1_{\R^2\setminus B_L}$. By Young's inequality, as in (a), and
H\"older's inequality on $B_R$,
$\|K^{(1)}\ast(\omega_n-\omega)\|_{L^{p'}(B_R)}\le|B_R|^{1/p'-1/r(p)}\|K^{(1)}\|_{L^{s(p)}(\R^2)}N$,
and $\|K^{(1)}\|_{L^{s(p)}(\R^2)}\to0$ as $\varepsilon\to0$ because $s(p)<2$. Next,
$|K^{(3)}\ast(\omega_n-\omega)|\le2M/(2\pi L)$ everywhere, since
$\|\omega_n\|_{L^1(\R^2)},\|\omega\|_{L^1(\R^2)}\le M$; so its $L^{p'}(B_R)$ norm is at most
$|B_R|^{1/p'}M/(\pi L)$, uniformly in $n$. Finally, for
$x\in B_R$ the function $K^{(2)}(x-\cdot)$ is bounded by $(2\pi\varepsilon)^{-1}$,
since $|\KBS(z)|=(2\pi|z|)^{-1}$ and $|z|\ge\varepsilon$ on the support of
$K^{(2)}$, and vanishes outside $\{y:|x-y|<L\}$; hence it lies in $L^{p'}(\R^2)$, so $K^{(2)}\ast\omega_n(x)\to K^{(2)}\ast\omega(x)$, with
$|K^{(2)}\ast\omega_n(x)|\le\|K^{(2)}\|_{L^{p'}(\R^2)}N$; dominated convergence gives
$K^{(2)}\ast\omega_n\to K^{(2)}\ast\omega$ in $L^{p'}(B_R)$. Letting first
$n\to\infty$ and then $\varepsilon\to0$ and $L\to\infty$ gives (b).

(c) Choose $\omega_j\in C_c^\infty(\R^2)$ with $\omega_j\to\omega$ in $L^1(\R^2)$ and in
$L^2(\R^2)$, and put $v_j=\KBS\ast\omega_j$. By Lemma~\ref{lem:BS-basic}(d), applied
to $\omega_j-\omega_i$ and to $\omega_j$, $\|\nabla v_j-\nabla v_i\|_{L^2(\R^2)}=\|\omega_j-\omega_i\|_{L^2(\R^2)}$
and $\|\nabla v_j\|_{L^2(\R^2)}=\|\omega_j\|_{L^2(\R^2)}$, so $\nabla v_j\to F$ in $L^2(\R^2)$ with
$\|F\|_{L^2(\R^2)}=\lim_j\|\omega_j\|_{L^2(\R^2)}=\|\omega\|_{L^2(\R^2)}$. By
Lemma~\ref{lem:BS-basic}(e), $v_j\to v$ in $L^1_{\mathrm{loc}}(\R^2)$, so
$\nabla v_j\to\nabla v$ in $\mathcal D'$, and $\nabla v=F$.
\end{proof}

\begin{definition}
\label{def:multipole}
Let $m\ge1$ be an integer and let $g\in L^1(\R^2;\C)$ have compact support in
$\R^2\setminus\{0\}$. Writing $y\in\R^2$ as a complex number, we put
\[
\pout_m[g]=\frac1{2\pi}\int g(y)\,(\bar y)^m\dd y,\quad
\pin_m[g]=\frac1{2\pi}\int g(y)\,y^{-m}\dd y,\quad
\Qm[g]=\int g(y)\,y^{m}\dd y .
\]
\end{definition}

These are the multipole moments of $g$. In polar coordinates $y=se^{i\vartheta}$,
since $(\bar y)^m=s^me^{-im\vartheta}$, $y^{-m}=s^{-m}e^{-im\vartheta}$ and
$y^m=s^me^{im\vartheta}$, they read, with the angular modes $g_\ell$ of
Section~\ref{sec:prelim},
\[
\pout_m[g]=\int_0^\infty g_m(s)\,s^{m+1}\dd s,\qquad
\pin_m[g]=\int_0^\infty g_m(s)\,s^{1-m}\dd s,\qquad
\Qm[g]=2\pi\int_0^\infty g_{-m}(s)\,s^{m+1}\dd s ;
\]
so they see only the angular modes $\pm m$ of $g$. Let $g$ be real with
$\supp g\subset\{\rho_1\le|y|\le\rho_2\}$, $0<\rho_1\le\rho_2$, and let
$\psi(x)=\frac1{2\pi}\int\log|x-y|\,g(y)\dd y$ be its stream function. Expanding
$\log|x-y|=\log|x|+\Re\log(1-y/x)$ for $|x|>\rho_2$, and
$\log|x-y|=\log|y|+\Re\log(1-x/y)$ for $|x|<\rho_1$, in powers of $y/x$ and
$x/y$ gives
\[
\begin{aligned}
\psi(x)&=\frac{\int g}{2\pi}\log|x|-\Re\sum_{m\ge1}\frac{\pout_m[g]}m(\bar x)^{-m}&&(|x|>\rho_2),\\
\psi(x)&=\frac1{2\pi}\int g(y)\log|y|\dd y-\Re\sum_{m\ge1}\frac{\pin_m[g]}m\,x^m&&(|x|<\rho_1),
\end{aligned}
\]
since $\overline{\Qm[g]}=2\pi\pout_m[g]$ for real $g$. Thus $\pout_m[g]$ and
$\pin_m[g]$ give the field of $g$ in the angular modes $\pm m$ outside and inside its
support.

\subsection{Weak Solutions}

\begin{definition}
\label{def:equimeasurable}
Two real functions $g,h\in L^1(\R^2)$ are \textit{equimeasurable} if and only if
$|\{g>s\}|=|\{h>s\}|$ and $|\{g<-s\}|=|\{h<-s\}|$ for every $s>0$.
\end{definition}

\begin{definition}
\label{def:weak}
Let $4/3<q<\infty$, $T>0$ and $\omega^{\mathrm{in}}\in (L^1\cap L^q)(\R^2)$. A
\textit{weak solution with datum $\omega^{\mathrm{in}}$} in $X_q(T)$ is a function
$\omega\in X_q(T)$ such that, with $v(t)=\KBS\ast\omega(t)$,
\begin{equation}
\label{eq:weak}
\int_0^T\!\!\int\omega\,(\partial_t\varphi+v\cdot\nabla\varphi)\dd x\dd t
+\int\omega^{\mathrm{in}}\varphi(0,\cdot)\dd x=0
\qquad\text{for all }\varphi\in\DT .
\end{equation}
It is \textit{renormalized} if and only if, in addition, for every
$\Ren\in C^1(\R)$ with $\Ren'$ bounded and $\Ren(0)=0$,
\begin{equation}
\label{eq:renorm}
\int_0^T\!\!\int\Ren(\omega)\,(\partial_t\varphi+v\cdot\nabla\varphi)\dd x\dd t
+\int\Ren(\omega^{\mathrm{in}})\varphi(0,\cdot)\dd x=0
\qquad\text{for all }\varphi\in\DT .
\end{equation}
For $T_0\in(0,\infty]$, a (renormalized) weak solution \textit{on $[0,T_0)$} is a
function on $[0,T_0)$ whose restriction to $[0,T)$ is a (renormalized) weak
solution in $X_q(T)$ for every $T<T_0$.
\end{definition}

The integrals in Equations~\eqref{eq:weak} and~\eqref{eq:renorm} converge
absolutely, because $|\Ren(\omega)|\le\|\Ren'\|_{L^\infty(\R)}|\omega|$ and
$|\omega v|\le|\omega|(|\KBS|\ast|\omega|)$, and Lemma~\ref{lem:velocity}(a),
with $p=q$ and $R=1$, gives
$\|\omega(t)v(t)\|_{L^1(\R^2)}\le C(q,1)\operatorname{ess\,sup}_{0<s<T}\|\omega(s)\|_{(L^1\cap L^q)(\R^2)}^2$
for almost every $t\in(0,T)$.

\section{Compactness and a Non-Uniqueness Criterion}
\label{sec:compact}

Compactness results for sequences of renormalized solutions go back to DiPerna and Lions
\cite{DiPernaLions}; we give a self-contained proof of the one we need. We prove
it only for vorticity in $L^\infty_t(L^1\cap L^2)_x$, which is all that the proof
of Theorem~\ref{thm:main} needs. The arguments control $\nabla v$ by $\omega$:
for vorticity in $L^2(\R^2)$ this is the identity $\|\nabla v\|_{L^2(\R^2)}=\|\omega\|_{L^2(\R^2)}$
of Lemma~\ref{lem:velocity}(c), while for vorticity in $L^q(\R^2)$, $q\ne2$, it would
require the $L^q(\R^2)$ bounds of the Calder\'on-Zygmund (Riesz transform) estimate. Equations~\eqref{eq:weak}
and~\eqref{eq:renorm} do not involve $q$, so, for $q\in(4/3,\infty)$, a
renormalized weak solution in $X_2(T)$ that lies in $X_q(T)$, with datum in $(L^1\cap L^q)(\R^2)$, is a
renormalized weak solution in $X_q(T)$; Theorem~\ref{thm:criterion}(ii) gives
this membership.

All functions in this section are real valued, except $\Psi$ in
Corollary~\ref{cor:defect}. Let $\varsigma\in C_c^\infty(B_2)$ with
$0\le\varsigma\le1$ and $\varsigma=1$ on $B_1$, and put
$\varsigma_R=\varsigma(\cdot/R)$, so that
$|\nabla\varsigma_R|\le\|\nabla\varsigma\|_{L^\infty(\R^2)}/R$. Let
$\rho\in C_c^\infty(B_1)$ and $\rho^{\mathrm t}\in C_c^\infty((-1,1))$ be
nonnegative with integral $1$, and put
$\rho_\varepsilon=\varepsilon^{-2}\rho(\cdot/\varepsilon)$ and
$\rho^{\mathrm t}_\delta=\delta^{-1}\rho^{\mathrm t}(\cdot/\delta)$; convolution
with $\rho_\varepsilon$ acts in the space variable only, and convolution with
$\rho^{\mathrm t}_\delta$ in the time variable only. Let
$\varkappa\in C^\infty(\R)$ be nondecreasing with $\varkappa=0$ on
$(-\infty,0]$ and $\varkappa=1$ on $[1,\infty)$. We write $C_{\mathrm v}$ for
the constant $C(2,1)$ of Lemma~\ref{lem:velocity}(a); thus
$\int|g|(|\KBS|\ast|g|)\dd x\le C_{\mathrm v}\|g\|_{(L^1\cap L^2)(\R^2)}^2$ for
$g\in (L^1\cap L^2)(\R^2)$.

\subsection{The Linear Transport Equation}

Let $T>0$ and $t_0\in[0,T)$, let $w$ be a real measurable vector field and $u$ a
real measurable function on $(t_0,T)\times\R^2$, and let
$u_0\in L^1_{\mathrm{loc}}(\R^2)$ be real, such that $u$, $w$ and $uw$ are
integrable on $(t_0,T')\times B_R$ for all $T'<T$ and $R>0$. We say that $u$
satisfies Equation~\eqref{eq:te} on $[t_0,T)$ with field $w$ and datum $u_0$ if
\begin{equation}
\label{eq:te}
\int_{t_0}^T\!\!\int u\,(\partial_t\varphi+w\cdot\nabla\varphi)\dd x\dd t
+\int u_0\,\varphi(t_0,\cdot)\dd x=0
\qquad\text{for all }\varphi\in\DT ;
\end{equation}
every integral here converges. Thus $\omega\in X_2(T)$ is a weak solution of the
Euler equations with datum
$\omega^{\mathrm{in}}\in (L^1\cap L^2)(\R^2)$ if and only if it satisfies
Equation~\eqref{eq:te} on $[0,T)$ with field $v=\KBS\ast\omega$ and datum
$\omega^{\mathrm{in}}$, and it is renormalized if and only if, in addition,
$\Ren(\omega)$ satisfies Equation~\eqref{eq:te} on $[0,T)$ with field $v$ and
datum $\Ren(\omega^{\mathrm{in}})$ for every $\Ren\in C^1(\R)$ with $\Ren'$
bounded and $\Ren(0)=0$. The integrability conditions hold by
Lemma~\ref{lem:velocity}(a).

\begin{lemma}
\label{lem:dbr}
Let $t_0<T$, $B\in L^\infty(t_0,T)$ and $b_0\in\R$, and suppose that, for
every $\alpha\in C_c^\infty((-\infty,T))$,
$\int_{t_0}^T\alpha'(t)B(t)\dd t+\alpha(t_0)b_0=0$. Then $B=b_0$ almost
everywhere on $(t_0,T)$.
\end{lemma}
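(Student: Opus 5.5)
The plan is to show that the distributional derivative of $B$ on $(t_0,T)$ is zero, so that $B$ agrees a.e. with a constant, and then to read off from the boundary term that this constant equals $b_0$.

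First I restrict to test functions $\alpha\in C_c^\infty((t_0,T))$. For these $\alpha(t_0)=0$, and the hypothesis becomes $\int_{t_0}^T\alpha' B\dd t=0$. Thus the distributional derivative of $B$ on the open interval $(t_0,T)$ vanishes. By the standard fact that a distribution on an interval with zero derivative is a constant, $B=c$ almost everywhere on $(t_0,T)$ for some $c\in\R$. To keep the argument self-contained, I would prove this fact directly. Fix $\eta\in C_c^\infty((t_0,T))$ with $\int\eta=1$. For any $\beta\in C_c^\infty((t_0,T))$, set $\tilde\beta=\beta-(\int\beta)\eta$. Then $\tilde\beta$ has integral zero, so it equals $\alpha'$ with $\alpha(t)=\int_{t_0}^t\tilde\beta$, and this $\alpha$ is smooth with compact support in $(t_0,T)$. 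Consequently $\int\beta B=(\int\beta)\int\eta B$ for every such $\beta$, which gives $B=c:=\int\eta B$ almost everywhere.

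Next I determine $c$ using a test function that does not vanish at $t_0$. Choose $\alpha\in C_c^\infty((-\infty,T))$ with $\alpha(t_0)=1$; a cutoff equal to $1$ near $t_0$ and vanishing near $T$ will do. Since $B=c$ a.e.,
\[
\int_{t_0}^T\alpha'(t)B(t)\dd t=c\,\big(\alpha(T^-)-\alpha(t_0)\big)=-c,
\]
because $\alpha$ vanishes near $T$. The hypothesis then gives $-c+b_0=0$, that is, $c=b_0$.

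No step is a real obstacle. The only point needing care is the construction of a primitive with compact support inside $(t_0,T)$ in the first step, and the mean-zero correction by $\eta$ handles it. Everything is well defined because $B\in L^\infty(t_0,T)$ and $T<\infty$, so $\alpha'B$ is integrable.
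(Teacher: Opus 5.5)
Your proof is correct and has the same two-step structure as the paper's. The second step, testing with an $\alpha\in C_c^\infty((-\infty,T))$ with $\alpha(t_0)=1$ to get $-c+b_0=0$, is exactly the paper's final step.

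The difference lies in how you show that $B$ is a.e.\ constant once $\int\alpha' B=0$ for all $\alpha\in C_c^\infty((t_0,T))$. The paper mollifies in time. For $t\in(t_0+\delta,T-\delta)$ the function $s\mapsto\rho^{\mathrm t}_\delta(t-s)$ is an admissible test function, so $B_\delta=B\ast\rho^{\mathrm t}_\delta$ has zero derivative and is constant on $(t_0+\delta,T-\delta)$. Then $B_\delta\to B$ in $L^1_{\mathrm{loc}}$ forces these constants to converge to some $c$ with $B=c$ a.e.

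You instead use the classical du Bois-Reymond argument. Subtracting $(\int\beta)\eta$ makes any test function $\beta$ the derivative of a test function compactly supported in $(t_0,T)$. This gives $\int\beta\,(B-c)=0$ for all $\beta\in C_c^\infty((t_0,T))$, and the fundamental lemma of the calculus of variations then yields $B=c$ a.e. You should cite that lemma explicitly, since it is the one place where measure theory enters your argument.

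What each route buys:
\begin{itemize}
\item Your route is purely algebraic on the test functions and isolates the single analytic input in a standard lemma.
\item The paper's route reuses the time mollifier $\rho^{\mathrm t}_\delta$ already fixed for that section and produces the constant directly as a limit of the mollified functions. The convergence $B_\delta\to B$ in $L^1_{\mathrm{loc}}$ plays the role of the fundamental lemma.
\end{itemize}
Both arguments are complete.
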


\begin{proof}
For $0<\delta<(T-t_0)/2$ and $t\in(t_0+\delta,T-\delta)$ put
$B_\delta(t)=\int B(s)\rho^{\mathrm t}_\delta(t-s)\dd s$. The function
$\alpha(s)=\rho^{\mathrm t}_\delta(t-s)$ vanishes outside $(t-\delta,t+\delta)$,
which lies in $(t_0,T)$ because $t-t_0>\delta$ and $T-t>\delta$; so
$\alpha\in C_c^\infty((t_0,T))$ and $\alpha(t_0)=0$, and the hypothesis gives
$B_\delta'(t)=-\int B(s)\,\partial_s\big(\rho^{\mathrm t}_\delta(t-s)\big)\dd s=0$,
so $B_\delta$ is constant on $(t_0+\delta,T-\delta)$. Since $B_\delta\to B$ in
$L^1_{\mathrm{loc}}((t_0,T))$ as $\delta\to0$, the constants converge to some
$c\in\R$ and $B=c$ almost everywhere. For $\alpha$ with $\alpha(t_0)=1$ we have
$\int_{t_0}^T\alpha'c\dd t=-c$, because $\alpha$ vanishes near $T$; so the
hypothesis gives $b_0=c$.
\end{proof}

\begin{lemma}
\label{lem:weak-cont}
Let $T>0$. Let $w$ be a vector field
with $w\in L^\infty(0,T;L^2(B_R))$ for every $R>0$, let
$u\in L^\infty(0,T;L^2(\R^2))$ and $u_0\in L^2(\R^2)$, and suppose that
$u$ satisfies Equation~\eqref{eq:te} on $[0,T)$ with field $w$ and datum $u_0$.
Put $N=\operatorname{ess\,sup}_t\|u(t)\|_{L^2(\R^2)}$.
Then $u$ has a representative $\tilde u\colon[0,T]\to L^2(\R^2)$ with the following
properties.
\begin{enumerate}[label=\textup{(\alph*)}]
\item $\tilde u$ is weakly continuous, $\tilde u(0)=u_0$, and
  $\|\tilde u(t)\|_{L^2(\R^2)}\le N$ for every $t$. If also
  $u\in L^\infty(0,T;L^1(\R^2))$, then
  $\|\tilde u(t)\|_{L^1(\R^2)}\le\operatorname{ess\,sup}_s\|u(s)\|_{L^1(\R^2)}$ for every $t$.
\item For $\psi\in C_c^\infty(\R^2)$ with $\supp\psi\subset B_R$ and every
  $t\in[0,T]$,
  $\int\tilde u(t)\psi=\int u_0\psi+\int_0^t\!\int u\,w\cdot\nabla\psi$. In
  particular, for $s,t\in[0,T]$,
  \[
  \Big|\int(\tilde u(t)-\tilde u(s))\psi\Big|
  \le N\,\|w\|_{L^\infty(0,T;L^2(B_R))}\|\nabla\psi\|_{L^\infty(\R^2)}|t-s| .
  \]
\item For every $t_0\in[0,T)$, $u$ satisfies Equation~\eqref{eq:te} on $[t_0,T)$ with
  field $w$ and datum $\tilde u(t_0)$.
\end{enumerate}
\end{lemma}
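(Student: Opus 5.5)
The plan is to test Equation~\eqref{eq:te} with product test functions $\varphi(t,x)=\alpha(t)\psi(x)$, where $\alpha\in C_c^\infty((-\infty,T))$ and $\psi\in C_c^\infty(\R^2)$ with $\supp\psi\subset B_R$, and then apply Lemma~\ref{lem:dbr}. Put
\[
B_\psi(t)=\int u(t)\psi-\int_0^t\!\!\int u\,w\cdot\nabla\psi .
\]
The integrand of the double integral is integrable, with $|\int u(s)w(s)\cdot\nabla\psi|\le N\|w\|_{L^\infty(0,T;L^2(B_R))}\|\nabla\psi\|_{L^\infty}$ for almost every $s$. Hence the second term is Lipschitz in $t$, and $B_\psi\in L^\infty(0,T)$.

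Integrating by parts in time (Fubini), the identity $\int\alpha' u\psi+\int\alpha\, u w\cdot\nabla\psi+\alpha(0)\int u_0\psi=0$ becomes $\int_0^T\alpha'B_\psi\dd t+\alpha(0)\int u_0\psi=0$. Lemma~\ref{lem:dbr} then gives $B_\psi=\int u_0\psi$ almost everywhere. So $\int u(t)\psi$ agrees almost everywhere with the Lipschitz function $F_\psi(t)=\int u_0\psi+\int_0^t\int uw\cdot\nabla\psi$.

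Next I fix a countable set $\mathcal C\subset C_c^\infty(\R^2)$ that is dense in $L^2$ and contains, for each $R$, a subset that is dense in the $C^1$-sense among test functions supported in $B_R$ (for instance, a countable dense subset of $C_c^\infty(B_R)$ in the $C^1$ norm, over integer $R$). There is then a full-measure set $E\subset(0,T)$ on which $\|u(t)\|_{L^2}\le N$ and $\int u(t)\psi=F_\psi(t)$ for all $\psi\in\mathcal C$ simultaneously. For $t\notin E$, I take $t_k\in E$ with $t_k\to t$. The sequence $u(t_k)$ is bounded by $N$ in $L^2$, and $\int u(t_k)\psi\to F_\psi(t)$ for $\psi\in\mathcal C$. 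By density and the uniform bound, $u(t_k)$ converges weakly to some $\tilde u(t)$ with $\|\tilde u(t)\|\le N$ (weak lower semicontinuity), and the limit is independent of the chosen sequence. On $E$ I set $\tilde u=u$.

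The identity $\int\tilde u(t)\psi=F_\psi(t)$ then extends from $\mathcal C$ to all $\psi\in C_c^\infty$ with support in $B_R$. This uses the bound $|F_\psi-F_{\psi'}|\le N\|\psi-\psi'\|_{L^2}+tN\|w\|\,\|\nabla(\psi-\psi')\|_\infty$, together with $C^1$ approximation within a fixed ball. This gives (b), including the Lipschitz estimate. Weak continuity in (a) follows from (b) on the dense set plus the uniform bound $N$, and $\tilde u(0)=u_0$ because $F_\psi(0)=\int u_0\psi$. For the $L^1$ bound I test against $\psi\in C_c^\infty$ with $|\psi|\le1$: on $E$ one has $|\int u(t)\psi|\le\operatorname{ess\,sup}\|u\|_{L^1}$, and weak continuity passes this to every $t$. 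Taking the supremum over such $\psi$ gives the $L^1$ norm, since these $\psi$ approximate $\sgn\tilde u(t)$ on balls.

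For (c), given $\varphi\in\DT$, I first treat product test functions. Applying the identity in (b) at time $t_0$ and subtracting shows that $\int_{t_0}^T\alpha'\int u\psi+\int_{t_0}^T\alpha\int uw\cdot\nabla\psi+\alpha(t_0)\int\tilde u(t_0)\psi=0$. This follows from $\int u(t)\psi=\int\tilde u(t_0)\psi+\int_{t_0}^t\int uw\cdot\nabla\psi$ for almost every $t>t_0$, after integrating by parts in $t$. General $\varphi\in\DT$ are handled by approximation: I restrict $\varphi$ to a fixed compact set $[-1,T']\times\overline{B_R}$ and approximate it in $C^1$ there by finite sums of products $\alpha_j(t)\psi_j(x)$ supported in the same region. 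One way to build these is via a tensor-product Fourier or polynomial approximation multiplied by fixed cutoffs. The integrability of $u$, $w$ and $uw$ on $(0,T')\times B_R$ then passes the identity to the limit.

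The main obstacle is the bookkeeping of the null sets: choosing a single representative that works for all $\psi$ and all $t$ at once. The countable-dense-family argument above handles it.
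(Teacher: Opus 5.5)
Your proof is correct. For (a) and (b) it is essentially the paper's argument. The paper also tests with $\alpha(t)\psi(x)$ and applies Lemma~\ref{lem:dbr}, to $a_\psi-A_\psi$ with $b_0=0$, which is equivalent to your use on $B_\psi$. It also works with a countable family $\bigcup_j\mathcal F_j$, $\mathcal F_j\subset C_c^\infty(B_j)$ dense in $C^1$, and fills in the exceptional null set by weak limits. For the $L^1$ bound the paper applies Lemma~\ref{lem:velocity}(b) to $u(t_j)\rightharpoonup\tilde u(t)$. You instead use continuity of $t\mapsto\int\tilde u(t)\psi$ and duality against $|\psi|\le1$, which amounts to the same thing.

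There is one small slip that does not affect the argument. In your bound for $|F_\psi-F_{\psi'}|$ the datum term is controlled by $\|u_0\|_{L^2(\R^2)}$, not by $N$, unless you first prove $\tilde u(0)=u_0$. Also, $\|w\|$ must be taken on a ball containing both supports.

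The genuine difference is in (c). The paper does not pass through separated test functions there. It tests the original equation with $\varkappa_h\varphi$, where $\varkappa_h(t)=\varkappa((t-t_0)/h)$ vanishes for $t\le t_0$. The datum term then disappears and an extra term $\int_0^T\varkappa_h'(t)g(t)\,dt$ appears, with $g(t)=\int\tilde u(t)\varphi(t)\,dx$. This $g$ is continuous by the weak continuity of $\tilde u$ and the bound $N$, so the extra term tends to $g(t_0)$. The remaining term converges by dominated convergence. This handles every $\varphi\in\DT$ at once and needs nothing beyond (a).

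Your route gets (c) for $\varphi=\alpha\psi$ as a formal consequence of the identity in (b), which is clean. But it then needs a separate density statement: finite sums $\sum_j\alpha_j(t)\psi_j(x)$ must approximate $\varphi$ in $C^1$, with supports in a fixed compact subset of $(-\infty,T)\times\R^2$. This is true and standard. For example, first multiply $\varphi$ by a time cutoff equal to $1$ on $[0,\infty)$ and $0$ on $(-\infty,-1]$, which leaves the identity on $[t_0,T)$ unchanged. Then periodize, truncate the Fourier series (it converges in $C^1$), and multiply by fixed cutoffs $\chi_1(t)\chi_2(x)$ equal to $1$ on $\supp\varphi$. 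Still, it is an extra ingredient that the paper's cutoff-in-time argument avoids.
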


\begin{proof}
For $j\in\N$, $C_c^\infty(B_j)$ is a subset of the separable Banach space
$C^1(\overline{B_j})$, hence separable: there is a countable set
$\mathcal F_j\subset C_c^\infty(B_j)$ such that every $\psi\in C_c^\infty(B_j)$ is the
limit in $C^1$ of elements of $\mathcal F_j$. The countable union
$\mathcal F=\bigcup_{j\in\N}\mathcal F_j\subset C_c^\infty(\R^2)$ does not depend on $j$,
and every $\psi\in C_c^\infty(B_j)$, $j\in\N$, is the limit in $C^1$ of elements of
$\mathcal F\cap C_c^\infty(B_j)$.
$C_c^\infty$ is dense in $L^2(\R^2)$, and convergence in
$C^1$ with supports in a fixed $B_j$ implies convergence in $L^2(\R^2)$; so
$\mathcal F$ is dense in $L^2(\R^2)$. Write
$W_R=\|w\|_{L^\infty(0,T;L^2(B_R))}$.

For $\psi\in\mathcal F$ with $\supp\psi\subset B_R$ put
$a_\psi(t)=\int u(t)\psi$ and $c_\psi(t)=\int u(t)\,w(t)\cdot\nabla\psi$. These
are measurable by Fubini's theorem, and $|c_\psi|\le NW_R\|\nabla\psi\|_{L^\infty(\R^2)}$
almost everywhere by H\"older's inequality, so
$A_\psi(t)=\int u_0\psi+\int_0^tc_\psi$ is Lipschitz on $[0,T]$. For
$\alpha\in C_c^\infty((-\infty,T))$, Equation~\eqref{eq:te} with the test
function $\alpha(t)\psi(x)$ gives
$\int_0^T\alpha'a_\psi+\int_0^T\alpha c_\psi+\alpha(0)\int u_0\psi=0$. Since
$\alpha$ vanishes near $T$ and $A_\psi'=c_\psi$ almost everywhere, integration
by parts gives
$\int_0^T\alpha c_\psi=-\int_0^T\alpha'A_\psi-\alpha(0)\int u_0\psi$. Hence
$\int_0^T\alpha'(a_\psi-A_\psi)=0$, and Lemma~\ref{lem:dbr}, with $t_0=0$ and
$b_0=0$, gives $a_\psi=A_\psi$ almost everywhere. Let $\mathcal N$ be the union
of $\{0,T\}$, of the exceptional null sets of these identities for
$\psi\in\mathcal F$, of the set of $t$ with $u(t)\notin L^2(\R^2)$ or
$\|u(t)\|_{L^2(\R^2)}>N$, and, if $u\in L^\infty(0,T;L^1(\R^2))$, of the set of $t$ with
$\|u(t)\|_{L^1(\R^2)}>N_1:=\operatorname{ess\,sup}_s\|u(s)\|_{L^1(\R^2)}$. As $\mathcal F$
is countable, $\mathcal N$ is a null set.

For $t\notin\mathcal N$ put $\tilde u(t)=u(t)$. For $t\in\mathcal N$ take
$t_j\notin\mathcal N$ with $t_j\to t$. Then $\|u(t_j)\|_{L^2(\R^2)}\le N$ and
$\int u(t_j)\psi=A_\psi(t_j)\to A_\psi(t)$ for $\psi\in\mathcal F$. Since
$\mathcal F$ is dense in $L^2(\R^2)$, $\int u(t_j)\phi$ converges for every
$\phi\in L^2(\R^2)$, to a linear functional of norm at most $N$; by the Riesz
representation theorem in $L^2(\R^2)$, it is $\phi\mapsto\int\tilde u(t)\phi$ for a unique
$\tilde u(t)\in L^2(\R^2)$, which does not depend on the sequence, because
$\int\tilde u(t)\psi=A_\psi(t)$ for $\psi\in\mathcal F$. Thus $\tilde u=u$ almost
everywhere, $\|\tilde u(t)\|_{L^2(\R^2)}\le N$ for every $t$, and
$\int\tilde u(t)\psi=A_\psi(t)$ for all $t\in[0,T]$ and $\psi\in\mathcal F$.

We prove (a) and (b). For $\phi\in L^2(\R^2)$, $\psi\in\mathcal F$ and $s,t\in[0,T]$,
$|\int(\tilde u(t)-\tilde u(s))\phi|\le2N\|\phi-\psi\|_{L^2(\R^2)}+|A_\psi(t)-A_\psi(s)|$.
Given $\phi$ and $\eta>0$, first choose $\psi\in\mathcal F$ with
$2N\|\phi-\psi\|_{L^2(\R^2)}<\eta/2$, which is possible as $\mathcal F$ is dense in $L^2(\R^2)$;
then, since $A_\psi$ is Lipschitz, $|A_\psi(t)-A_\psi(s)|<\eta/2$ whenever
$|t-s|$ is small enough. So $t\mapsto\int\tilde u(t)\phi$ is continuous for every
$\phi\in L^2(\R^2)$, that is, $\tilde u$ is weakly continuous; and $\tilde u(0)=u_0$, because
$A_\psi(0)=\int u_0\psi$. If $u\in L^\infty(0,T;L^1(\R^2))$ and $t\in\mathcal N$, then
$u(t_j)\rightharpoonup\tilde u(t)$ weakly in $L^2(\R^2)$ and $\|u(t_j)\|_{L^1(\R^2)}\le N_1$,
so $\|\tilde u(t)\|_{L^1(\R^2)}\le N_1$ by Lemma~\ref{lem:velocity}(b) with $p=2$. For
(b), let $\psi\in C_c^\infty(B_j)$ and $\psi_i\in\mathcal F\cap C_c^\infty(B_j)$
with $\psi_i\to\psi$ in $C^1$. Then $\int\tilde u(t)\psi_i\to\int\tilde u(t)\psi$,
since $\|\psi_i-\psi\|_{L^2(\R^2)}\le|B_j|^{1/2}\|\psi_i-\psi\|_{L^\infty(\R^2)}$, and
$A_{\psi_i}(t)\to\int u_0\psi+\int_0^t\!\int u\,w\cdot\nabla\psi$, since
$\int u_0\psi_i\to\int u_0\psi$ and
$|\int_0^t\!\int u\,w\cdot\nabla(\psi_i-\psi)|\le TNW_j\|\nabla(\psi_i-\psi)\|_{L^\infty(\R^2)}$. This is the
identity in (b), and the bound on $c_\psi$ gives the Lipschitz bound.

For (c), the case $t_0=0$ is the hypothesis, since $\tilde u(0)=u_0$. Let $t_0\in(0,T)$ and
$\varphi\in\DT$, and for $0<h<T-t_0$ put $\varkappa_h(t)=\varkappa((t-t_0)/h)$.
Equation~\eqref{eq:te} with the test function $\varkappa_h\varphi\in\DT$, which
vanishes for $t\le t_0$, gives
\[
\int_0^T\!\!\int\varkappa_h\,u\,(\partial_t\varphi+w\cdot\nabla\varphi)\dd x\dd t
+\int_0^T\varkappa_h'(t)\,g(t)\dd t=0,\qquad g(t)=\int\tilde u(t)\varphi(t)\dd x .
\]
The function $g$ is continuous, since
$|g(t)-g(s)|\le N\|\varphi(t)-\varphi(s)\|_{L^2(\R^2)}+|\int(\tilde u(t)-\tilde u(s))\varphi(s)|$
and $\tilde u$ is weakly continuous. As $h\to0$, the second term tends to $g(t_0)$: since $\varkappa_h'\ge0$,
$\int\varkappa_h'=1$ and $\supp\varkappa_h'\subset[t_0,t_0+h]$, the functions
$\varkappa_h'$ converge to the Dirac mass at $t_0$ in the sense of distributions,
indeed against every continuous function, and $g$ is continuous; and, since
$\varkappa_h\to\mathbf 1_{(t_0,\infty)}$ pointwise, the first term tends to $\int_{t_0}^T\!\int u\,(\partial_t\varphi+w\cdot\nabla\varphi)$ by
dominated convergence, with the integrable dominating function
$|u|(|\partial_t\varphi|+|w||\nabla\varphi|)$. This is (c).
\end{proof}

\subsection{Weakly Continuous Representatives}
\label{sec:weak-rep}

Let $\omega$ be a weak solution in $X_2(T)$ with datum
$\omega^{\mathrm{in}}$, and put $v=\KBS\ast\omega$. By
Lemma~\ref{lem:velocity}(a), $g\mapsto\KBS\ast g$ is linear and bounded from
$(L^1\cap L^2)(\R^2)$ to $L^2(B_R)$, and such a map preserves strong
measurability; so $v\in L^\infty(0,T;L^2(B_R))$ for every
$R>0$, and $\|\omega(t)v(t)\|_{L^1(\R^2)}\le C_{\mathrm v}\|\omega(t)\|_{(L^1\cap L^2)(\R^2)}^2$ for
almost every $t$, so $\omega v\in L^1((0,T)\times\R^2)$. Hence Lemma~\ref{lem:weak-cont} applies with
$u=\omega$, $w=v$ and $u_0=\omega^{\mathrm{in}}$. From now on every such weak
solution is identified with its weakly continuous representative, so that
$\omega(t)$ is defined for every $t\in[0,T]$, $\omega(0)=\omega^{\mathrm{in}}$,
and $\|\omega(t)\|_{L^1(\R^2)}$ and $\|\omega(t)\|_{L^2(\R^2)}$ are bounded by the
corresponding essential suprema for every $t$. Two weakly continuous functions
that agree for almost every $t$ agree for every $t$; hence the representatives
on the intervals $[0,T]$, $T<T_0$, of a weak solution on $[0,T_0)$ with $q=2$
define a weakly continuous representative on $[0,T_0)$. Moreover
$\|\nabla v(t)\|_{L^2(\R^2)}=\|\omega(t)\|_{L^2(\R^2)}$
by Lemma~\ref{lem:velocity}(c) and $\divop v(t)=0$ by
Lemma~\ref{lem:BS-basic}(e); thus $v\in L^\infty(0,T;W^{1,2}(B_R))$ for every
$R>0$, and $v$ is divergence-free.

\subsection{Renormalization}

For an open interval $I$, a vector field $w$ and a function $u$ on
$I\times\R^2$ with $u,wu\in L^1_{\mathrm{loc}}(I\times\R^2)$, put
\[
C_\varepsilon[u,w]=w\cdot\nabla(u\ast\rho_\varepsilon)-\big(\divop(wu)\big)\ast\rho_\varepsilon ,
\]
where $\big(\divop(wu)\big)\ast\rho_\varepsilon(t,x)=\int(wu)(t,y)\cdot\nabla\rho_\varepsilon(x-y)\dd y$.
Both terms of $C_\varepsilon$ are defined by these formulas, not derived: the
divergence $\divop(wu)$ is a distribution,
and its convolution in $x$ with $\rho_\varepsilon$, which acts by pairing it with
$\rho_\varepsilon(x-\cdot)$, is the function displayed above, since $wu\in L^1_{\mathrm{loc}}(I\times\R^2)$.
The function $u\ast\rho_\varepsilon(t,\cdot)$ is smooth, and
$w\cdot\nabla(u\ast\rho_\varepsilon)$ is its pointwise product with $w$. Writing
$w(t,x)\cdot\nabla(u\ast\rho_\varepsilon)(t,x)=\int u(t,y)\,w(t,x)\cdot\nabla\rho_\varepsilon(x-y)\dd y$
and substituting $y=x-\varepsilon z$, we get, for almost every $(t,x)$,
\begin{equation}
\label{eq:comm-formula}
\begin{aligned}
C_\varepsilon[u,w](t,x)&=\int u(t,y)\,\big(w(t,x)-w(t,y)\big)\cdot\nabla\rho_\varepsilon(x-y)\dd y\\
&=\int_{B_1}u(t,x-\varepsilon z)\,\frac{w(t,x)-w(t,x-\varepsilon z)}{\varepsilon}\cdot\nabla\rho(z)\dd z ,
\end{aligned}
\end{equation}
because $\nabla\rho_\varepsilon(\varepsilon z)=\varepsilon^{-3}\nabla\rho(z)$. The map
$(u,w)\mapsto C_\varepsilon[u,w]$ is bilinear.

\begin{lemma}[cf. Lemma II.1 in \cite{DiPernaLions}]
\label{lem:commutator}
Let $I\subset\R$ be an open interval, let
$w\in L^1_{\mathrm{loc}}(I;W^{1,2}_{\mathrm{loc}}(\R^2;\R^2))$ with
$\divop w=0$, and let $u\in L^\infty_{\mathrm{loc}}(I;L^2_{\mathrm{loc}}(\R^2))$.
Then $C_\varepsilon[u,w]\to0$ in $L^1_{\mathrm{loc}}(I\times\R^2)$ as
$\varepsilon\to0$.
\end{lemma}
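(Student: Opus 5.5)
The plan is the standard DiPerna--Lions argument: a uniform bound on the commutator, followed by density. Fix a compact set $J\times\overline{B_R}\subset I\times\R^2$ with $J$ a compact interval, and take $\varepsilon\in(0,1)$. We work from the second expression in Equation~\eqref{eq:comm-formula}. For almost every $t$, $w(t)\in W^{1,2}(B_{R+1})$, and for such $t$ the difference quotient satisfies
\[
\Big\|\frac{w(t,\cdot)-w(t,\cdot-\varepsilon z)}{\varepsilon}\Big\|_{L^2(B_R)}\le|z|\,\|\nabla w(t)\|_{L^2(B_{R+1})},\qquad |z|<1 .
\]
This follows from $w(x)-w(x-\varepsilon z)=\int_0^1\varepsilon z\cdot\nabla w(x-s\varepsilon z)\dd s$ for smooth $w$, then Minkowski's inequality, then density of smooth functions in $W^{1,2}(B_{R+1})$. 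We also have $\|u(t,\cdot-\varepsilon z)\|_{L^2(B_R)}\le\|u(t)\|_{L^2(B_{R+1})}$. Applying Cauchy--Schwarz in $x$ and then Minkowski in $z$ gives the key bound
\[
\|C_\varepsilon[u,w](t)\|_{L^1(B_R)}\le\|\nabla\rho\|_{L^1(\R^2)}\,\|u(t)\|_{L^2(B_{R+1})}\,\|\nabla w(t)\|_{L^2(B_{R+1})},
\]
uniformly in $\varepsilon$. Integrating over $J$ yields $\|C_\varepsilon[u,w]\|_{L^1(J\times B_R)}\le C\|u\|_{L^\infty(J;L^2(B_{R+1}))}\|\nabla w\|_{L^1(J;L^2(B_{R+1}))}$. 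Since the map $(u,w)\mapsto C_\varepsilon[u,w]$ is bilinear and this bound is uniform in $\varepsilon$, it suffices to prove convergence to $0$ for $u$ in a dense subclass. We keep $w$ fixed, because the divergence-free condition must not be disturbed.

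\textbf{Density in $u$.} The natural density would be in $L^\infty(J;L^2)$, but that is not available for a general $u$. We handle this by using the pointwise-in-time bound directly. Fix $t$ and approximate $u(t)$ in $L^2(B_{R+1})$ by a smooth function. For smooth $u(t)$ the convergence is computed explicitly below. Then the difference $C_\varepsilon[u(t)-\tilde u,w(t)]$ is controlled by the bound above, applied at time $t$. This shows that $\|C_\varepsilon[u,w](t)\|_{L^1(B_R)}\to0$ for almost every $t$. Dominated convergence in $t$ then concludes, with the integrable majorant $C\|u\|_{L^\infty(J;L^2(B_{R+1}))}\|\nabla w(t)\|_{L^2(B_{R+1})}$.

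\textbf{Smooth $u$ at a fixed time.} Let $\tilde u\in C_c^\infty$. As $\varepsilon\to0$, we have $\tilde u(x-\varepsilon z)\to\tilde u(x)$ uniformly. The difference quotient $(w(x)-w(x-\varepsilon z))/\varepsilon$ converges to $z\cdot\nabla w(x)$ in $L^2(B_R\times B_1)$, with respect to $\dd x\,\dd z$; this follows from continuity of translations applied to the integral representation above. The limit of $C_\varepsilon[\tilde u,w(t)](x)$ is therefore
\[
\tilde u(x)\int_{B_1}(z\cdot\nabla)w(x)\cdot\nabla\rho(z)\dd z
=\tilde u(x)\sum_{i,l}\partial_lw_i(x)\int z_l\,\partial_i\rho(z)\dd z .
\]
Integration by parts gives $\int z_l\partial_i\rho=-\delta_{il}$, so the limit equals $-\tilde u\,\divop w(t)=0$. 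This is exactly where the hypothesis $\divop w=0$ enters.

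\textbf{Main obstacle.} The main obstacle is the measurability and bookkeeping needed to make these steps rigorous. Specifically, one must check three things: that $\divop w(t)=0$ holds for almost every $t$, which follows from the distributional statement on $I\times\R^2$ by testing against product test functions; that the difference-quotient convergence holds in $L^2$ for $W^{1,2}$ functions that are only local; and that the formula~\eqref{eq:comm-formula} identifies $C_\varepsilon$ almost everywhere. We take the latter as already established in the text preceding the lemma.
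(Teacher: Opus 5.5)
Your proof is correct, and its key step takes a different route from the paper's. Both proofs start from the same $\varepsilon$-uniform bound. You get it directly from the $L^2$ difference-quotient estimate for $W^{1,2}(B_{R+1})$ functions, with radius $R+1$ and constant $\|\nabla\rho\|_{L^1(\R^2)}$. The paper proves it for $C^1$ fields and passes to $w$ through $w_\delta=w\ast\rho_\delta$ and a crude fixed-$\varepsilon$ estimate, with radius $R+2$. Both then approximate only $u$ and keep $w$ fixed.

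The paper approximates $u$ by $u_\delta=u\ast\rho_\delta$ and uses $\divop w=0$ at once, writing $\divop(wu_\delta)=w\cdot\nabla u_\delta$. This turns $C_\varepsilon[u_\delta,w]$ into the sum of $w\cdot((\nabla u_\delta)\ast\rho_\varepsilon-\nabla u_\delta)$ and $w\cdot\nabla u_\delta-(w\cdot\nabla u_\delta)\ast\rho_\varepsilon$. Each tends to $0$ by elementary mollifier facts, using only $w\in L^1_{\mathrm{loc}}$ and the uniform continuity of $\nabla u_\delta$. The paper then closes at the space-time level with the error $\eta(\delta)$, to which it applies dominated convergence in $t$.

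You instead compute the limit for a smooth $\tilde u$ explicitly. You use the strong $L^2$ convergence of the difference quotients of $w$ to $(z\cdot\nabla)w$ and the moment identity $\int z_l\partial_i\rho\,\dd z=-\delta_{il}$. You then obtain convergence for almost every $t$ and conclude by dominated convergence in $t$. This is an equivalent way around the lack of density in $L^\infty(J;L^2)$.

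Your route needs $\nabla w$ in the convergence step as well, but it identifies the limit. The same argument shows $C_\varepsilon[u,w]\to-u\,\divop w$ in $L^1_{\mathrm{loc}}$ for every $w\in L^1_{\mathrm{loc}}(I;W^{1,2}_{\mathrm{loc}})$: the extra term $(u-\tilde u)\divop w$ is controlled by the same majorant. So $\divop w=0$ enters only in your last line. The paper's route avoids both the difference-quotient convergence and the identification of the limit.
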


\begin{proof}
Note that $wu\in L^1_{\mathrm{loc}}(I\times\R^2)$, by the Cauchy-Schwarz
inequality in $x$. Fix a compact interval
$J\subset I$ and $R>0$, and write $\|\cdot\|_{q,r}$ for the norm of $L^q(B_r)$.

We first prove a bound that is uniform in $\varepsilon\in(0,1]$ and does not use
$\divop w=0$. If $w(t,\cdot)\in C^1(\R^2;\R^2)$, then
$w(x)-w(x-\varepsilon z)=\varepsilon\int_0^1\nabla w(x-s\varepsilon z)z\dd s$,
and Equation~\eqref{eq:comm-formula} with $|z|\le1$ and the Cauchy-Schwarz
inequality in $x$ give
\begin{align*}
\|C_\varepsilon[u,w](t)\|_{1,R}
&\le\|\nabla\rho\|_{L^\infty(\R^2)}\int_{B_1}\!\int_0^1\!\int_{B_R}
|u(t,x-\varepsilon z)|\,|\nabla w(t,x-s\varepsilon z)|\dd x\dd s\dd z\\
&\le\pi\|\nabla\rho\|_{L^\infty(\R^2)}\|u(t)\|_{2,R+1}\|\nabla w(t)\|_{2,R+1}.
\end{align*}
For general $w$, apply this to $w_\delta=w\ast\rho_\delta$, $\delta\in(0,1]$,
for which $\|\nabla w_\delta(t)\|_{2,R+1}\le\|\nabla w(t)\|_{2,R+2}$ by Young's
inequality. For fixed $\varepsilon$, the first line of
Equation~\eqref{eq:comm-formula} and the inequalities of H\"older and Young,
with $\|\nabla\rho_\varepsilon\|_{L^1(\R^2)}=\varepsilon^{-1}\|\nabla\rho\|_{L^1(\R^2)}$,
give
\[
\|C_\varepsilon[u,w-w_\delta](t)\|_{1,R}\le2\varepsilon^{-1}\|\nabla\rho\|_{L^1(\R^2)}
\|u(t)\|_{2,R+1}\|(w-w_\delta)(t)\|_{2,R+1},
\]
which tends to $0$ as
$\delta\to0$ for almost every $t$, because mollification converges in
$L^2_{\mathrm{loc}}(\R^2)$. Hence, for almost every $t\in J$ and all
$\varepsilon\in(0,1]$,
\begin{equation}
\label{eq:comm-bound}
\|C_\varepsilon[u,w](t)\|_{1,R}\le C_\rho\,\|u(t)\|_{2,R+1}\,\|\nabla w(t)\|_{2,R+2},
\qquad C_\rho=\pi\|\nabla\rho\|_{L^\infty(\R^2)}.
\end{equation}

We now use $\divop w=0$. For $\delta\in(0,1]$ put $u_\delta(t)=u(t)\ast\rho_\delta$.
For almost every $t\in J$, $u_\delta(t)$ is smooth with
$\|\nabla u_\delta(t)\|_{\infty,R+1}\le\|\nabla\rho_\delta\|_{L^\infty(\R^2)}\|u(t)\|_{1,R+2}$,
and, for $\phi\in C_c^\infty(\R^2)$,
$\int wu_\delta\cdot\nabla\phi=\int w\cdot\nabla(u_\delta\phi)-\int\phi\,w\cdot\nabla u_\delta=-\int\phi\,w\cdot\nabla u_\delta$,
because $\divop w(t)=0$. Thus $\divop(wu_\delta)=w\cdot\nabla u_\delta$, and
\[
C_\varepsilon[u_\delta,w]=w\cdot\big((\nabla u_\delta)\ast\rho_\varepsilon-\nabla u_\delta\big)
+\big(w\cdot\nabla u_\delta-(w\cdot\nabla u_\delta)\ast\rho_\varepsilon\big).
\]
As $\varepsilon\to0$, for almost every $t\in J$ the first term tends to $0$ in
$L^1(B_R)$, because $\nabla u_\delta(t)$ is uniformly continuous on
$\overline{B_{R+1}}$, and so does the second, because
$w\cdot\nabla u_\delta(t)\in L^1(B_{R+1})$. Since
$\|C_\varepsilon[u_\delta,w](t)\|_{1,R}\le4\|w(t)\|_{1,R+1}\|\nabla u_\delta(t)\|_{\infty,R+1}$,
which is integrable on $J$ because $w\in L^1(J;L^1(B_{R+1}))$ and
$u\in L^\infty(J;L^1(B_{R+2}))$, $C_\varepsilon[u_\delta,w]\to0$ in
$L^1(J\times B_R)$ for each $\delta$. Finally, by bilinearity and
Equation~\eqref{eq:comm-bound},
\[
\sup_{\varepsilon\le1}\|C_\varepsilon[u,w]-C_\varepsilon[u_\delta,w]\|_{L^1(J\times B_R)}
\le C_\rho\int_J\|(u-u_\delta)(t)\|_{2,R+1}\|\nabla w(t)\|_{2,R+2}\dd t ,
\]
Call the right side $\eta(\delta)$; it does not depend on $\varepsilon$, and
$\eta(\delta)\to0$ as $\delta\to0$ by dominated convergence, since
$\|(u-u_\delta)(t)\|_{2,R+1}\to0$ for almost every $t$ and is at most
$2\|u\|_{L^\infty(J;L^2(B_{R+2}))}$. Thus
$\|C_\varepsilon[u,w]\|_{L^1(J\times B_R)}\le\|C_\varepsilon[u_\delta,w]\|_{L^1(J\times B_R)}+\eta(\delta)$
for all $\varepsilon\in(0,1]$ and $\delta\in(0,1]$. Letting first $\varepsilon\to0$
with $\delta$ fixed gives
$\limsup_{\varepsilon\to0}\|C_\varepsilon[u,w]\|_{L^1(J\times B_R)}\le\eta(\delta)$, and
letting then $\delta\to0$ gives $C_\varepsilon[u,w]\to0$ in $L^1(J\times B_R)$.
\end{proof}

\begin{lemma}
\label{lem:transport}
Let $T>0$ and $t_0\in[0,T)$. Let $w\in L^\infty(0,T;W^{1,2}_{\mathrm{loc}})$
with $\divop w=0$, let $u\in X_2(T)$ with
$uw\in L^1((0,T)\times\R^2)$, and let $u_0\in (L^1\cap L^2)(\R^2)$. Assume that $u$
satisfies Equation~\eqref{eq:te} on $[t_0,T)$ with field $w$ and datum $u_0$.
Then, for every $\Ren\in\LipO$:
\begin{enumerate}[label=\textup{(\roman*)}]
\item $\Ren(u)$ satisfies Equation~\eqref{eq:te} on $[t_0,T)$ with field $w$
  and datum $\Ren(u_0)$;
\item $\int\Ren(u(t))\dd x=\int\Ren(u_0)\dd x$ for almost every $t\in(t_0,T)$.
\end{enumerate}
\end{lemma}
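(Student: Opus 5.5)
Our plan is the standard DiPerna--Lions renormalization argument: mollify in space, pass the commutator to zero with Lemma~\ref{lem:commutator}, renormalize the smooth-in-$x$ equation, and then remove the mollification. We work on $[t_0,T)$; by shifting time we may think of it as $[0,T-t_0)$.

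First, for $\Ren\in C^1(\R)$ with $\Ren'$ bounded and $\Ren(0)=0$, we put $u_\varepsilon=u\ast\rho_\varepsilon$. Testing Equation~\eqref{eq:te} with $\varphi(t,\cdot)\ast\check\rho_\varepsilon$ shows that, in $\mathcal D'((t_0,T)\times\R^2)$,
\[
\partial_tu_\varepsilon+w\cdot\nabla u_\varepsilon=C_\varepsilon[u,w],
\]
with initial datum $u_0\ast\rho_\varepsilon$ in the weak sense. Here $u_\varepsilon$ is smooth in $x$, with $\nabla u_\varepsilon\in L^\infty_tL^\infty_{\mathrm{loc}}$. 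From the equation, $\partial_tu_\varepsilon\in L^1_{\mathrm{loc}}$, so $u_\varepsilon$ is absolutely continuous in $t$ for a.e.\ $x$. This lets us apply the chain rule and obtain
\[
\partial_t\Ren(u_\varepsilon)+w\cdot\nabla\Ren(u_\varepsilon)=\Ren'(u_\varepsilon)\,C_\varepsilon[u,w],
\]
again with datum $\Ren(u_0\ast\rho_\varepsilon)$. To get the weak form we use $\divop w=0$: $w\cdot\nabla\Ren(u_\varepsilon)=\divop(w\Ren(u_\varepsilon))$. Handling the time regularity near $t_0$ is the most delicate bookkeeping point. We would do this by mollifying in time as well, using $\rho^{\mathrm t}_\delta$ and the cutoff $\varkappa_h$ exactly as in Lemma~\ref{lem:weak-cont}(c). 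Alternatively, we pair directly with $\varphi$ and integrate the absolutely continuous function $t\mapsto\int\Ren(u_\varepsilon)\varphi$.

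Next we let $\varepsilon\to0$ in the weak formulation against a fixed $\varphi\in\DT$:
\begin{itemize}
\item $\Ren(u_\varepsilon)\to\Ren(u)$ in $L^1_{\mathrm{loc}}$, and $\Ren(u_0\ast\rho_\varepsilon)\to\Ren(u_0)$ in $L^1_{\mathrm{loc}}$, since $\Ren$ is Lipschitz.
\item $w\Ren(u_\varepsilon)\to w\Ren(u)$ in $L^1_{\mathrm{loc}}$, because $w\in L^\infty L^2_{\mathrm{loc}}$ and $u_\varepsilon\to u$ in $L^2_{\mathrm{loc}}$ for a.e.\ $t$, with dominated convergence in $t$.
\item The right side is bounded by $\|\Ren'\|_\infty\int|C_\varepsilon||\varphi|\to0$ by Lemma~\ref{lem:commutator}.
\end{itemize}
This gives (i) for $C^1$ functions $\Ren$ with bounded derivative. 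For a general $\Ren\in\LipO$ we approximate by $\Ren_k=\Ren\ast\rho^{\mathrm t}_{1/k}-(\Ren\ast\rho^{\mathrm t}_{1/k})(0)$. These have a uniform Lipschitz bound and converge locally uniformly to $\Ren$, with $|\Ren_k(s)|\le L|s|$. Dominated convergence, using $u,uw\in L^1$, then passes to the limit in all terms.

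For (ii), we apply (i) with the test function $\varphi(t,x)=\alpha(t)\varsigma_R(x)$, where $\alpha\in C_c^\infty((-\infty,T))$. This gives
\[
\int_{t_0}^T\alpha'\Big(\int\Ren(u)\varsigma_R\Big)\dd t+\int_{t_0}^T\alpha\int\Ren(u)\,w\cdot\nabla\varsigma_R+\alpha(t_0)\int\Ren(u_0)\varsigma_R=0.
\]
The middle term is at most $\|\Ren'\|_\infty\|\nabla\varsigma\|_\infty R^{-1}\|uw\|_{L^1}\|\alpha\|_\infty\to0$ as $R\to\infty$. The other terms converge by dominated convergence, since $\Ren(u)\in L^\infty_tL^1_x$. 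Hence $B(t)=\int\Ren(u(t))$ lies in $L^\infty(t_0,T)$ and satisfies the hypothesis of Lemma~\ref{lem:dbr} with $b_0=\int\Ren(u_0)$, so $B=b_0$ a.e.

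We expect the main obstacle to be the rigorous justification of the chain rule for $u_\varepsilon$, whose regularity in time is only that of a distributional derivative. We would handle it by also mollifying in $t$, which turns the time commutator into a trivially vanishing term since $w$ does not need time regularity there. Alternatively, we would note that $\partial_tu_\varepsilon\in L^1_t L^1_{\mathrm{loc},x}$ directly from the equation.
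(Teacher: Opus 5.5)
Your argument is the paper's. You mollify in $x$, and you use $\partial_tu_\varepsilon=-(\divop(wu))\ast\rho_\varepsilon\in L^1_{\mathrm{loc}}$, so $u_\varepsilon\in W^{1,1}_{\mathrm{loc}}$ and the chain rule applies with no time mollification. You let $\varepsilon\to0$ with Lemma~\ref{lem:commutator}, approximate a general $\Ren\in\LipO$ by $\Ren_k$, and prove (ii) with $\alpha(t)\varsigma_R(x)$ and Lemma~\ref{lem:dbr}.

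You diverge only at the initial time, and there you assert rather than prove the one nontrivial point: that the renormalized mollified equation holds ``again with datum $\Ren(u_0\ast\rho_\varepsilon)$''. The paper sidesteps this. It redefines $u=u_0$ and $w=0$ on $(t_0-1,t_0]$ and checks that the extended pair solves $\partial_tu+\divop(wu)=0$ in $\mathcal D'((t_0-1,T)\times\R^2)$. It runs the whole renormalization on this open interval. It then recovers the datum by testing with $\kappa_0\varphi$, since $w=0$ and $\Ren(u)=\Ren(u_0)$ below $t_0$. This also covers a point you pass over: Lemma~\ref{lem:commutator} gives convergence in $L^1_{\mathrm{loc}}$ of an \emph{open} time interval, while your $\varphi\in\DT$ may be nonzero at $t=t_0$.

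Your route can be completed, but the step must be written out. Fix $x$ and test \eqref{eq:te} with $\alpha(t)\rho_\varepsilon(x-\cdot)$. Then apply Lemma~\ref{lem:dbr} to the bounded function $u_\varepsilon(\cdot,x)+\int_{t_0}^{\cdot}(\divop(wu))\ast\rho_\varepsilon(s,x)\dd s$. This shows that the absolutely continuous representative of $t\mapsto u_\varepsilon(t,x)$ equals $(u_0\ast\rho_\varepsilon)(x)$ at $t_0$. Only with this does integrating $\Ren(u_\varepsilon)$ by parts in $t$ produce the term $\int\Ren(u_0\ast\rho_\varepsilon)\varphi(t_0)\dd x$.

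For your cutoff variant, use the weakly continuous representative of Lemma~\ref{lem:weak-cont}, started at $t_0$. For it, $u_\varepsilon(t,x)\to(u_0\ast\rho_\varepsilon)(x)$ for every $x$ as $t\downarrow t_0$, with a uniform bound. That is what makes the $\varkappa_h'$ term converge to the datum. Mollifying in time is unnecessary, and at $t_0$ itself it is impossible without values of $u$ before $t_0$, which is exactly what the paper's extension supplies.
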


\begin{proof}
We first prove (i) for $\Ren\in C^1(\R)$ with $|\Ren'|\le L$ and $\Ren(0)=0$.
Put $I=(t_0-1,T)$, and redefine $w$ and $u$ on $(t_0-1,t_0]$ by $w=0$ and
$u=u_0$; on $(t_0,T)$ they are unchanged. For $\varphi\in C_c^\infty(I\times\R^2)$, extended by zero to an
element of $\DT$, the interval $(t_0-1,t_0)$ contributes
$\int_{t_0-1}^{t_0}\!\int u_0\partial_t\varphi=\int u_0\varphi(t_0)$ to
$\int_I\!\int u(\partial_t\varphi+w\cdot\nabla\varphi)$, and by
Equation~\eqref{eq:te} the interval $(t_0,T)$ contributes
$-\int u_0\varphi(t_0)$. Thus $\partial_tu+\divop(wu)=0$ in
$\mathcal D'(I\times\R^2)$, and the extended pair satisfies the hypotheses of
Lemma~\ref{lem:commutator} on $I$.

Let $u_\varepsilon=u\ast\rho_\varepsilon$. Then $\nabla u_\varepsilon\in
L^\infty_{\mathrm{loc}}(I\times\R^2)$, and convolving the equation
$\partial_tu+\divop(wu)=0$ in $x$ with $\rho_\varepsilon$ gives
$\partial_tu_\varepsilon=-\big(\divop(wu)\big)\ast\rho_\varepsilon\in L^1_{\mathrm{loc}}(I\times\R^2)$, where,
as $wu\in L^1_{\mathrm{loc}}(I\times\R^2)$, the right side is the function defined at the
start of this subsection,
so $u_\varepsilon\in W^{1,1}_{\mathrm{loc}}(I\times\R^2)$ and
$\partial_tu_\varepsilon+w\cdot\nabla u_\varepsilon=C_\varepsilon[u,w]$. For
almost every $t$, $\Ren(u_\varepsilon(t))$ is a $C^1$ function of $x$ with
locally bounded gradient, so
$\divop\big(w\Ren(u_\varepsilon)\big)=w\cdot\nabla\Ren(u_\varepsilon)$ because
$\divop w=0$; with the chain rule in $W^{1,1}_{\mathrm{loc}}$ this gives
\[
\partial_t\Ren(u_\varepsilon)+\divop\big(w\Ren(u_\varepsilon)\big)=\Ren'(u_\varepsilon)C_\varepsilon[u,w]
\qquad\text{in }\mathcal D'(I\times\R^2).
\]
As $\varepsilon\to0$, the right side tends to $0$ in $L^1_{\mathrm{loc}}(I\times\R^2)$ by
Lemma~\ref{lem:commutator}, since $|\Ren'|\le L$. For almost every $t$, $u_\varepsilon(t)\to u(t)$ in
$L^2(B_R)$, with the bound $2\|u\|_{L^\infty(I;L^2(\R^2))}$; since
$|\Ren(u_\varepsilon)-\Ren(u)|\le L|u_\varepsilon-u|$ and
$w\in L^\infty(I;L^2(B_R))$, dominated convergence gives
$\Ren(u_\varepsilon)\to\Ren(u)$ and $w\Ren(u_\varepsilon)\to w\Ren(u)$ in
$L^1_{\mathrm{loc}}(I\times\R^2)$. Therefore $\partial_t\Ren(u)+\divop\big(w\Ren(u)\big)=0$
in $\mathcal D'(I\times\R^2)$. Now let $\varphi\in\DT$ and
$\kappa_0(t)=\varkappa\big(3(t-t_0)+2\big)$, so that $\kappa_0=0$ on
$(-\infty,t_0-2/3]$ and $\kappa_0=1$ on $[t_0-1/3,\infty)$. Test the last
equation with $\kappa_0\varphi\in C_c^\infty(I\times\R^2)$. On $(t_0-1,t_0)$,
$w=0$ and $\Ren(u)=\Ren(u_0)$, so this interval contributes
$\int_{t_0-1}^{t_0}\!\int\Ren(u_0)\partial_t(\kappa_0\varphi)=\int\Ren(u_0)\varphi(t_0)$,
while $\kappa_0=1$ on $(t_0,T)$. This is (i) for $\Ren$.

Let now $\Ren\in\LipO$ have Lipschitz constant $L$, and put
$\Ren_j=\Ren\ast\rho^{\mathrm t}_{1/j}-(\Ren\ast\rho^{\mathrm t}_{1/j})(0)$.
Then $\Ren_j\in C^\infty(\R)$, $|\Ren_j'|\le L$, $\Ren_j(0)=0$, and
$|\Ren_j-\Ren|\le2L/j$, because $|\Ren\ast\rho^{\mathrm t}_{1/j}-\Ren|\le L/j$
and $\Ren(0)=0$. Since $|\Ren(u)|\le L|u|$, all integrals in
Equation~\eqref{eq:te} for $\Ren(u)$ converge. For $\varphi\in\DT$ with
$\supp\varphi\subset(-\infty,T']\times B_R$, $T'<T$, the left sides of
Equation~\eqref{eq:te} for $\Ren_j(u)$ (which vanish) and for $\Ren(u)$ differ
by at most
$\frac{2L}{j}\big(\int_{t_0}^{T'}\!\int_{B_R}(|\partial_t\varphi|+|w||\nabla\varphi|)+\int_{B_R}|\varphi(t_0)|\big)$,
which tends to $0$ because $w$ is integrable on $(t_0,T')\times B_R$. This
proves (i).

For (ii), let $\Ren\in\LipO$ have Lipschitz constant $L$, and let
$\alpha\in C_c^\infty((-\infty,T))$. Since $\Ren(u)$ solves
Equation~\eqref{eq:te} by (i), we test it against $\alpha(t)\varsigma_R(x)$ and get
\[
\int_{t_0}^T\alpha'(t)\Big(\int\Ren(u(t))\varsigma_R\dd x\Big)\dd t
+\alpha(t_0)\int\Ren(u_0)\varsigma_R\dd x
=-\int_{t_0}^T\alpha(t)\int\Ren(u)\,w\cdot\nabla\varsigma_R\dd x\dd t ,
\]
whose right side is at most
$\|\alpha\|_{L^\infty(\R)}L\|\nabla\varsigma\|_{L^\infty(\R^2)}R^{-1}\|uw\|_{L^1((0,T)\times\R^2)}$.
As $R\to\infty$, dominated convergence, with $|\Ren(u)|\le L|u|$ and
$u\in L^\infty(0,T;L^1(\R^2))$, gives $\int_{t_0}^T\alpha'B+\alpha(t_0)b_0=0$ with
$B(t)=\int\Ren(u(t))$, a measurable function bounded by
$L\|u\|_{L^\infty(0,T;L^1(\R^2))}$, and $b_0=\int\Ren(u_0)$. Lemma~\ref{lem:dbr} gives
(ii).
\end{proof}

\begin{lemma}
\label{lem:renorm-all-t}
Let $T>0$ and let $\omega$ be a renormalized weak solution of the Euler
equations in $X_2(T)$ with datum $\omega^{\mathrm{in}}$. Then for every
$t\in[0,T)$ and every $\Ren\in\LipO$,
\begin{equation}
\label{eq:conservation}
\int\Ren(\omega(t))\dd x=\int\Ren(\omega^{\mathrm{in}})\dd x .
\end{equation}
Consequently, for every $t\in[0,T)$,
$\|\omega(t)\|_{L^q(\R^2)}=\|\omega^{\mathrm{in}}\|_{L^q(\R^2)}$ for every $q\in[1,2]$, and
$\omega(t)$ is equimeasurable with $\omega^{\mathrm{in}}$. Moreover
$\omega\in C([0,T);L^q(\R^2))$ for every $q\in[1,2]$, and, with
$v=\KBS\ast\omega$, for every $R>0$ and every $t\in[0,T)$,
\begin{equation}
\label{eq:tail}
\int_{\{|x|\ge2R\}}|\omega(t)|\dd x\le\int_{\{|x|\ge R\}}|\omega^{\mathrm{in}}|\dd x
+\frac{\|\nabla\varsigma\|_{L^\infty(\R^2)}}{R}\,\|\omega v\|_{L^1((0,T)\times\R^2)} ,
\end{equation}
where $\|\omega v\|_{L^1((0,T)\times\R^2)}\le C_{\mathrm v}TM^2$ with
$M=\sup_{t\in[0,T]}\|\omega(t)\|_{(L^1\cap L^2)(\R^2)}$.
\end{lemma}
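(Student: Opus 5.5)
The plan is to apply Lemma~\ref{lem:transport} with $u=\omega$, $w=v$, $u_0=\omega^{\mathrm{in}}$ and $t_0=0$. The hypotheses were checked in Section~\ref{sec:weak-rep}: $v\in L^\infty(0,T;W^{1,2}(B_R))$, $\divop v=0$, and $\omega v\in L^1((0,T)\times\R^2)$. For every $\Ren\in\LipO$, Lemma~\ref{lem:transport}(ii) then gives \eqref{eq:conservation} for almost every $t$. The task is to upgrade this to every $t\in[0,T)$ and to obtain strong continuity.

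First take $\Ren(s)=s^2$, truncated to be Lipschitz. Concretely, use $\Ren_K(s)=\min\{s^2,K^2+2K(|s|-K)\}$; monotone convergence as $K\to\infty$ gives $\|\omega(t)\|_{L^2}=\|\omega^{\mathrm{in}}\|_{L^2}$ for almost every $t$. Since $\omega$ is weakly continuous in $L^2$ (Lemma~\ref{lem:weak-cont}), take $t_j\to t$ through good times. Weak lower semicontinuity gives $\|\omega(t)\|_{L^2}\le\|\omega^{\mathrm{in}}\|_{L^2}$.

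For the reverse inequality, I would use Lemma~\ref{lem:weak-cont}(c) to restart at time $t$ with datum $\omega(t)$. Then Lemma~\ref{lem:transport}(ii) with $t_0=t$ gives $\|\omega(s)\|_{L^2}=\|\omega(t)\|_{L^2}$ for almost every $s>t$. Those $s$ are also good times, so the norms are equal. This also handles $t=0$ trivially.

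Equality of norms plus weak continuity gives strong $L^2$ continuity, so $\omega\in C([0,T);L^2)$. Given strong $L^2$ continuity and the Lipschitz bound $|\Ren(a)-\Ren(b)|\le L|a-b|$, I will deduce \eqref{eq:conservation} for every $t$. The $L^2$ convergence handles compact sets, and uniform smallness of tails comes from \eqref{eq:tail}; so I prove \eqref{eq:tail} first.

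For the tail bound, apply Lemma~\ref{lem:transport}(i) with $\Ren=|\cdot|$ and test against $\alpha(t)(1-\varsigma_R)$. This is not compactly supported, so instead test with $\varsigma_{R'}-\varsigma_R$ and let $R'\to\infty$, or equivalently use $\int|\omega|$ conservation minus the localized identity. As in the proof of Lemma~\ref{lem:weak-cont}(b), this gives, for every $t$ after using weak continuity and lower semicontinuity of $\int|\omega(t)|(1-\varsigma_R)$,
\[
\int|\omega(t)|(1-\varsigma_R)\le\int|\omega^{\mathrm{in}}|(1-\varsigma_R)+\frac{\|\nabla\varsigma\|_{L^\infty}}{R}\|\omega v\|_{L^1}.
\]
Since $1-\varsigma_R\ge\mathbf 1_{|x|\ge2R}$ and $1-\varsigma_R\le\mathbf 1_{|x|\ge R}$, this gives \eqref{eq:tail}. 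The bound $\|\omega v\|_{L^1}\le C_{\mathrm v}TM^2$ follows from Lemma~\ref{lem:velocity}(a).

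Next combine local $L^2$ convergence with the uniform tail bound. This gives $\omega(t_j)\to\omega(t)$ in $L^1$, so $\omega\in C([0,T);L^1)$. Interpolation then gives continuity in $L^q$ for $q\in[1,2]$. With $\Ren$ Lipschitz, both sides of \eqref{eq:conservation} are continuous in $t$, which upgrades the identity from almost every $t$ to every $t$.

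Norm equality for $q\in[1,2]$ follows by taking $\Ren$ to be truncations of $|s|^q$ and using monotone convergence. Equimeasurability follows from taking $\Ren$ to be Lipschitz approximations of $\mathbf 1_{s>\sigma}$, for instance $\min\{1,(s-\sigma)_+/h\}$, and letting $h\to0$; symmetric choices handle $\{\omega<-\sigma\}$.

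The main obstacle is the passage from almost every $t$ to every $t$, specifically the reverse norm inequality. I plan to resolve it with the restart from Lemma~\ref{lem:weak-cont}(c), which needs $\omega(t)\in(L^1\cap L^2)$; that is provided by Lemma~\ref{lem:weak-cont}(a).
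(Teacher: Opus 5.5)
Your argument is correct, up to one small slip noted at the end. It uses the same ingredients as the paper, arranged in a different order. The shared ingredients are:
\begin{itemize}
\item the restart at $t$ via Lemma~\ref{lem:weak-cont}(c), compared with the original equation at a time that is non-exceptional for both applications of Lemma~\ref{lem:transport}(ii);
\item equality of norms plus weak continuity, giving strong $L^2$ continuity;
\item the tail estimate obtained by testing the equation for $|\omega|$ against $\varsigma_R$;
\item the near/far splitting for $L^1$ continuity.
\end{itemize}

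The paper applies the restart directly to an arbitrary $\Ren\in\LipO$; nothing in your $L^2$ step uses the particular choice $\Ren\approx s^2$. This gives \eqref{eq:conservation} at every $t$ at once, and norms, equimeasurability and $L^2$ continuity follow. For the tail, the paper uses the $L^2$ continuity to identify the weakly continuous representative of $|\omega|$ from Lemma~\ref{lem:weak-cont} with $t\mapsto|\omega(t)|$. Lemma~\ref{lem:weak-cont}(b) together with conservation of $\int|\omega(t)|$ then gives an exact identity for every $t$.

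Because you restrict the restart to the $L^2$ norm, you must prove \eqref{eq:tail} first for almost every $t$, extend it by lower semicontinuity, and only then recover \eqref{eq:conservation} for general $\Ren$ from $L^1$ continuity. This works: lower semicontinuity of $t\mapsto\int|\omega(t)|(1-\varsigma_R)$ follows from your strong $L^2$ continuity and monotone convergence in a cutoff. It is, however, a detour that buys nothing. Likewise, your preliminary weak lower semicontinuity inequality is redundant once the restart gives equality.

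The slip: $\min\{s^2,K^2+2K(|s|-K)\}$ is not the truncation you intend. Since $s^2-(2K|s|-K^2)=(|s|-K)^2\ge0$, this expression equals $2K|s|-K^2$ everywhere, which is $-K^2\neq0$ at $s=0$. Take $\min\{s^2,K^2\}$ instead, or define $\Ren_K$ piecewise as $s^2$ for $|s|\le K$ and $2K|s|-K^2$ for $|s|>K$.
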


\begin{proof}
The number $M$ is finite by Lemma~\ref{lem:weak-cont}(a), and
$\|\omega(t)v(t)\|_{L^1(\R^2)}\le C_{\mathrm v}M^2$ for almost every $t$ by
Lemma~\ref{lem:velocity}(a); this gives the bound on
$\|\omega v\|_{L^1((0,T)\times\R^2)}$. By the remark at the start of
Section~\ref{sec:weak-rep},
$v\in L^\infty(0,T;W^{1,2}(B_R))$ for every $R>0$ and $\divop v=0$.

We first prove Equation~\eqref{eq:conservation}. Let $t_0\in[0,T)$ and
$\Ren\in\LipO$. By Lemma~\ref{lem:weak-cont}(a) and (c),
$\omega(t_0)\in (L^1\cap L^2)(\R^2)$, and $\omega$ satisfies Equation~\eqref{eq:te} on
$[t_0,T)$ with field $v$ and datum $\omega(t_0)$, as well as on $[0,T)$ with
datum $\omega^{\mathrm{in}}$. Lemma~\ref{lem:transport}(ii), applied on
$[t_0,T)$ and on $[0,T)$ and compared at a time $t\in(t_0,T)$ that is
exceptional for neither, gives
$\int\Ren(\omega(t_0))=\int\Ren(\omega(t))=\int\Ren(\omega^{\mathrm{in}})$. For
$s,\delta>0$ the functions $z\mapsto\min\{1,(\pm z-s)_+/\delta\}$ lie in
$\LipO$ and increase to $\mathbf 1_{\{\pm z>s\}}$ as $\delta\downarrow0$, so
monotone convergence gives the equimeasurability. Integration of the
distribution function, $\int|g|^q=\int_0^\infty qs^{q-1}\big(|\{g>s\}|+|\{g<-s\}|\big)\dd s$,
$1\le q<\infty$, gives the norms.

Next, $\omega\in C([0,T);L^2(\R^2))$: the norm $\|\omega(t)\|_{L^2(\R^2)}$ is
constant on $[0,T)$, and $\omega$ is weakly continuous into $L^2(\R^2)$, so
$\|\omega(t)-\omega(s)\|_{L^2(\R^2)}^2=\|\omega(t)\|_{L^2(\R^2)}^2+\|\omega(s)\|_{L^2(\R^2)}^2-2\int\omega(t)\omega(s)\to0$
as $t\to s$. By Lemma~\ref{lem:transport}(i) with $\Ren=|\cdot|$, $|\omega|$
satisfies Equation~\eqref{eq:te} on $[0,T)$ with field $v$ and datum
$|\omega^{\mathrm{in}}|$. So Lemma~\ref{lem:weak-cont} applies with
$u=|\omega|$, $w=v$ and $u_0=|\omega^{\mathrm{in}}|$; since
$t\mapsto|\omega(t)|$ is continuous into $L^2(\R^2)$ on $[0,T)$, it agrees there with
the representative of that lemma, because two weakly continuous functions that
agree for almost every $t$ agree for every $t$. Part (b) of that lemma with
$\psi=\varsigma_R$, together with Equation~\eqref{eq:conservation} for
$\Ren=|\cdot|$, gives for every $t\in[0,T)$
\[
\int|\omega(t)|(1-\varsigma_R)\dd x
=\int|\omega^{\mathrm{in}}|(1-\varsigma_R)\dd x-\int_0^t\!\!\int|\omega|\,v\cdot\nabla\varsigma_R\dd x\dd s .
\]
Since $\mathbf 1_{\{|x|\ge2R\}}\le1-\varsigma_R\le\mathbf 1_{\{|x|\ge R\}}$ and
$|\nabla\varsigma_R|\le\|\nabla\varsigma\|_{L^\infty(\R^2)}/R$, this gives
Equation~\eqref{eq:tail}. Its right side does not depend on $t$ and tends to $0$
as $R\to\infty$. We split the $L^1(\R^2)$ norm into the parts on $B_{2R}$ and
outside it: for $s,t\in[0,T)$, by the Cauchy-Schwarz inequality on $B_{2R}$ and
Equation~\eqref{eq:tail} for $\omega(t)$ and $\omega(s)$,
\[
\|\omega(t)-\omega(s)\|_{L^1(\R^2)}
\le|B_{2R}|^{1/2}\|\omega(t)-\omega(s)\|_{L^2(\R^2)}
+2\sup_{\tau\in[0,T)}\int_{\{|x|\ge2R\}}|\omega(\tau)|\dd x .
\]
Given $\eta>0$, choose $R$ so that the last term is less than $\eta/2$; then the
$L^2(\R^2)$ continuity makes the first term less than $\eta/2$ for $t$ close to
$s$. Hence $\omega\in C([0,T);L^1(\R^2))$. Interpolation between $L^1(\R^2)$ and
$L^2(\R^2)$ gives $\omega\in C([0,T);L^q(\R^2))$ for $q\in(1,2)$.
\end{proof}

\subsection{Radial Data and the Criterion}

Recall from Section~\ref{sec:prelim} that, for $g\in L^1(\R^2)$,
$\Circ_g(r)=\int_{B_r}g$ is the integral of $g$ over the disk $B_r$; for smooth
$g$ it is the circulation of $\KBS\ast g$ around the circle $\{|x|=r\}$.

\begin{lemma}
\label{lem:radial-stationary}
Let $4/3<q<\infty$, $T>0$, and let $f\in (L^1\cap L^q)(\R^2)$ be radial. Then
$\KBS\ast f(x)=\Circ_f(|x|)\,x^\perp/(2\pi|x|^2)$ for almost every $x$, and
$\omega(t)=f$, $t\in[0,T]$, is a renormalized weak solution in $X_q(T)$ with
datum $f$.
\end{lemma}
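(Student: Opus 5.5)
The proof has two parts: a formula for the velocity of a radial vorticity, and a check that the stationary function satisfies the weak and renormalized identities.

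\textbf{The velocity formula.} First take $f$ radial and in $C_c^\infty(\R^2)$. By Lemma~\ref{lem:BS-basic}(d), $v=\KBS\ast f=\nabla^\perp\psi$ with $\psi=\frac1{2\pi}\log|\cdot|\ast f$. Since $f$ is radial, $\psi$ is radial, so $v=\psi'(r)\,\etheta$. Because $\Delta\psi=f$, integrating $(r\psi')'=rf$ gives $2\pi r\psi'(r)=\Circ_f(r)$. This holds once we know $r\psi'(r)\to0$ as $r\to0$, which is true since $\psi$ is smooth. Hence $v(x)=\Circ_f(|x|)x^\perp/(2\pi|x|^2)$.

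For general radial $f\in(L^1\cap L^q)(\R^2)$, approximate by $f_n=(f\mathbf 1_{B_n})\ast\rho_{1/n}$ with a radial mollifier $\rho$. These $f_n$ are radial, smooth, compactly supported, and converge to $f$ in $L^1\cap L^q$. By Lemma~\ref{lem:velocity}(a), or Lemma~\ref{lem:BS-basic}(e) using only $L^1$ convergence, $\KBS\ast f_n\to\KBS\ast f$ in $L^1_{\mathrm{loc}}$. Meanwhile $\Circ_{f_n}(r)\to\Circ_f(r)$ uniformly in $r$, since $|\Circ_{f_n}-\Circ_f|\le\|f_n-f\|_{L^1}$. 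So the right sides converge locally uniformly away from the origin, and also in $L^1_{\mathrm{loc}}$, because they are bounded by $\|f\|_{L^1}/(2\pi|x|)$. Passing to an a.e.-convergent subsequence gives the formula almost everywhere.

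\textbf{The stationary solution.} Take $\omega(t)=f$, which lies in $X_q(T)$ trivially. For $\varphi\in\DT$, the time term is $\int_0^T\!\int f\,\partial_t\varphi=-\int f\varphi(0)$, which cancels the datum term. It remains to show, for every $\Ren\in C^1$ with $\Ren'$ bounded and $\Ren(0)=0$ (including $\Ren=\mathrm{id}$), that
\[
\int\Ren(f)\,v\cdot\nabla\varphi(t,\cdot)\dd x=0\quad\text{for each }t .
\]
This integral converges absolutely: $|\Ren(f)|\le C|f|$ and Lemma~\ref{lem:velocity}(a) applies.

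In polar coordinates $v\cdot\nabla\varphi=\Circ_f(r)(2\pi r^2)^{-1}\partial_\theta\varphi$, and $\Ren(f)$ depends only on $r$. By Fubini, for almost every $r$ we have $\int_0^{2\pi}\partial_\theta\varphi\dd\theta=0$, so the $\theta$-integral vanishes. Fubini is justified because
\[
\int|\Ren(f)||v||\nabla\varphi|\le C\int|f|\big(|\KBS|\ast|f|\big)\|\nabla\varphi\|_\infty<\infty,
\]
using the identity $|v|=|\KBS\ast f|\le|\KBS|\ast|f|$ almost everywhere.

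This gives the weak identity~\eqref{eq:weak} and the renormalized identity~\eqref{eq:renorm}.

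\textbf{Main obstacle.} The only delicate point is the passage to general $f$ in the velocity formula: making sure the convolution and the circulation formula agree almost everywhere, and that the integrability needed for Fubini holds. Both follow from Lemma~\ref{lem:BS-basic}(e) and Lemma~\ref{lem:velocity}(a).
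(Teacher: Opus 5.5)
Your proof is correct. The stationarity part is the same as the paper's: polar coordinates, $x^\perp\cdot\nabla=\partial_\theta$, Fubini justified by Lemma~\ref{lem:velocity}(a), and $\int_0^{2\pi}\partial_\theta\varphi\,\dd\theta=0$. The velocity formula, however, you obtain by a different route.

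The paper does not approximate. It takes any radial $h\in L^1(\R^2)$ and any $x$ at which the Biot--Savart integral converges absolutely, which is almost every $x$ by Lemma~\ref{lem:BS-basic}(e). It writes $\KBS(z)=i/(2\pi\bar z)$ and integrates in polar coordinates $y=se^{i\vartheta}$. It then evaluates the angular integral $\int_0^{2\pi}(\bar x-se^{-i\vartheta})^{-1}\dd\vartheta$ by geometric series: the integral is $2\pi/\bar x$ for $s<|x|$ and $0$ for $s>|x|$ (the shell theorem). This yields $\Circ_h(|x|)\,x^\perp/(2\pi|x|^2)$ pointwise in a single computation.

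You instead prove the smooth case by the Gauss-law argument $(r\psi')'=rf$, using Lemma~\ref{lem:BS-basic}(d). You then extend to general $f$ by density, using the $L^1_{\mathrm{loc}}$-continuity of $g\mapsto\KBS\ast g$ on $L^1(\R^2)$ and the bound $|\Circ_{f_n}-\Circ_f|\le\|f_n-f\|_{L^1(\R^2)}$. This requires a radial mollifier, which you correctly chose, and an extra limiting step. In return it makes the physical meaning transparent and reuses the stream-function identities already established.

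Both arguments use only $f\in L^1(\R^2)$ for the formula. The paper's is shorter and gives the identity at every point of absolute convergence without an approximation layer.
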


\begin{proof}
We first prove the formula for radial $h=h_0(|\cdot|)\in L^1(\R^2)$. We
identify $\R^2$ with $\C$, so that $z^\perp=iz$ and $\KBS(z)=i/(2\pi\bar z)$.
By Lemma~\ref{lem:BS-basic}(e), for almost every $x\ne0$ the integral defining
$\KBS\ast h(x)$ converges absolutely, and in polar coordinates
$y=se^{i\vartheta}$,
\[
\KBS\ast h(x)=\frac{i}{2\pi}\int_0^\infty h_0(s)\,s
\int_0^{2\pi}\frac{\dd\vartheta}{\bar x-se^{-i\vartheta}}\dd s .
\]
The inner integral equals $2\pi/\bar x$ for $s<|x|$, by the uniformly
convergent expansion
$(\bar x-se^{-i\vartheta})^{-1}=\bar x^{-1}\sum_{l\ge0}(s/\bar x)^le^{-il\vartheta}$,
and $0$ for $s>|x|$, by the uniformly convergent expansion
$(\bar x-se^{-i\vartheta})^{-1}=-s^{-1}e^{i\vartheta}\sum_{l\ge0}(\bar x/s)^le^{il\vartheta}$,
integrated term by term: $\int_0^{2\pi}e^{il\vartheta}\dd\vartheta=0$ for every
integer $l\ne0$, so only the term $l=0$ of the first expansion survives, and no
term of the second.
Since $\Circ_h(r)=2\pi\int_0^rh_0(s)s\dd s$, we get
\[
\KBS\ast h(x)=\frac{i}{\bar x}\cdot\frac{\Circ_h(|x|)}{2\pi}
=\Circ_h(|x|)\frac{x^\perp}{2\pi|x|^2} .
\]

Now let $f=f_0(|\cdot|)$, $v_f=\KBS\ast f$, and $\Ren\in C^1(\R)$ with
$|\Ren'|\le L$ and $\Ren(0)=0$. The constant function $\omega(t)=f$ lies in
$X_q(T)$ and is weakly continuous, $\Ren(f)=\Ren(f_0(|\cdot|))$ is radial,
and $|\Ren(f)v_f|\le L|f||v_f|$ is
integrable by Lemma~\ref{lem:velocity}(a). Let $\varphi\in\DT$. In polar
coordinates $x=r(\cos\theta,\sin\theta)$ we have
$x^\perp\cdot\nabla=\partial_\theta$, so the formula for $v_f$ gives
$v_f\cdot\nabla\varphi(t)=\Circ_f(r)(2\pi r^2)^{-1}\partial_\theta\varphi(t)$,
and Fubini's theorem gives, for every $t$,
\[
\int\Ren(f)\,v_f\cdot\nabla\varphi(t)\dd x
=\int_0^\infty\Ren(f_0(r))\frac{\Circ_f(r)}{2\pi r}
\Big(\int_0^{2\pi}\partial_\theta\varphi(t)\dd\theta\Big)\dd r=0 .
\]
Since $\varphi(t)=0$ for $t$ near $T$, we also have
$\int_0^T\!\int\Ren(f)\partial_t\varphi\dd x\dd t=-\int\Ren(f)\varphi(0)\dd x$. Hence
Equation~\eqref{eq:renorm} holds, and Equation~\eqref{eq:weak} is the case
$\Ren(s)=s$.
\end{proof}

\begin{definition}
\label{def:adm}
Let $f\in (L^1\cap L^2)(\R^2)$. A
\textit{$2$-admissible sequence for $f$ on $[0,\infty)$} is a sequence of pairs
$(f^{(n)},\omega^{(n)})_{n\ge1}$ such that $f^{(n)}\in (L^1\cap L^2)(\R^2)$,
$f^{(n)}\to f$ in $(L^1\cap L^2)(\R^2)$, and $\omega^{(n)}$ is a renormalized weak
solution on $[0,\infty)$ with datum $f^{(n)}$, in the sense of
Definition~\ref{def:weak} with $q=2$.
\end{definition}

\begin{theorem}
\label{thm:criterion}
Let $f\in (L^1\cap L^2)(\R^2)$ be radial, and let
$(f^{(n)},\omega^{(n)})$ be a $2$-admissible sequence for $f$ on $[0,\infty)$.
Assume that for some $T>0$
\begin{equation}
\label{eq:defect}
c:=\limsup_{n\to\infty}\sup_{0\le t\le T}\|\omega^{(n)}(t)-f\|_{L^1(\R^2)}>0 .
\end{equation}
Then there exists a subsequence $(n_k)$, together with a function
$\omega_\infty\colon[0,\infty)\to (L^1\cap L^2)(\R^2)$, with the following properties.
\begin{enumerate}[label=\textup{(\roman*)}]
\item $\omega_\infty$ is a renormalized weak solution on $[0,\infty)$ with datum $f$,
  in the sense of Definition~\ref{def:weak} with $q=2$.
\item For every $t\in[0,\infty)$, $\omega_\infty(t)$ is equimeasurable with $f$.
  Hence $\|\omega_\infty(t)\|_{L^q(\R^2)}=\|f\|_{L^q(\R^2)}$ for every $q\in[1,\infty)$,
  and, whenever $f\in L^q(\R^2)$ with $q\in(4/3,\infty)$, $\omega_\infty\in X_q(T')$ for
  every $T'>0$.
\item $\omega_\infty\in C([0,\infty);L^q(\R^2))$ whenever $f\in L^q(\R^2)$ with
  $q\in[1,\infty)$.
\item $\sup_{0\le t\le T}\|\omega^{(n_k)}(t)-\omega_\infty(t)\|_{L^1(\R^2)}\to0$.
\item $\sup_{0\le t\le T}\|\omega_\infty(t)-f\|_{L^1(\R^2)}=c$. The set
  $\mathcal O=\{t\in[0,\infty):\omega_\infty(t)\ne f\}$ is open, $0\notin\mathcal O$,
  and $\mathcal O\cap(0,T]\ne\emptyset$. With $t_0=\inf\mathcal O\in[0,T)$,
  $\omega_\infty$ and the
  stationary solution $f$ are distinct elements of $X_q(T')$ for every
  $T'>t_0$, whenever $f\in L^q(\R^2)$ with $q\in(4/3,\infty)$.
\item If $\omega^{(n)}(t)$ is $3$-fold rotationally symmetric for all $n$ and $t$, then so is
  $\omega_\infty(t)$ for every $t$.
\end{enumerate}
\end{theorem}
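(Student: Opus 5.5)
The plan is to run the DiPerna--Lions compactness argument, using equimeasurability to upgrade weak convergence to strong convergence.

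\emph{Uniform bounds.} By Lemma~\ref{lem:renorm-all-t}, for every $n$ and $t$ the function $\omega^{(n)}(t)$ is equimeasurable with $f^{(n)}$. Hence $\|\omega^{(n)}(t)\|_{L^q}=\|f^{(n)}\|_{L^q}$ for $q\in[1,2]$, and these norms are bounded uniformly since $f^{(n)}\to f$. The tail estimate~\eqref{eq:tail} gives tightness $\sup_{n,\,t\le T'}\int_{|x|\ge 2R}|\omega^{(n)}(t)|\to0$ as $R\to\infty$, for each $T'$. This uses that $\int_{|x|\ge R}|f^{(n)}|$ is uniformly small, which follows from $L^1$ convergence.

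\emph{Weak limit.} Lemma~\ref{lem:weak-cont}(b) gives equi-Lipschitz continuity in time of $t\mapsto\int\omega^{(n)}(t)\psi$ for $\psi\in C_c^\infty$, with constants uniform in $n$ thanks to Lemma~\ref{lem:velocity}(a). An Arzel\`a--Ascoli and diagonal argument over a countable dense family $\mathcal F$ and over $T'\in\N$ then yields a subsequence with $\omega^{(n_k)}(t)\rightharpoonup\omega_\infty(t)$ weakly in $L^2$, uniformly on compact time intervals when tested against each $\psi$. The limit $\omega_\infty$ is weakly continuous, with $\omega_\infty(0)=f$.

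\emph{Passage to the limit.} Lemma~\ref{lem:velocity}(b) gives $v^{(n_k)}(t)\to v_\infty(t)$ strongly in $L^2(B_R)$ for each $t$, with uniform bounds, so dominated convergence in $t$ lets us pass to the limit in $\int\omega^{(n)}v^{(n)}\cdot\nabla\varphi$. Hence $\omega_\infty$ is a weak solution in $X_2$. Lemma~\ref{lem:transport} with $u=\omega_\infty$ and $w=v_\infty$ then makes it renormalized. Here $v_\infty\in L^\infty W^{1,2}_{\mathrm{loc}}$ by Lemma~\ref{lem:velocity}(c), and $\omega_\infty v_\infty\in L^1$ by Lemma~\ref{lem:velocity}(a). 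This proves (i). Lemma~\ref{lem:renorm-all-t} gives $\|\omega_\infty(t)\|_{L^2}=\|f\|_{L^2}=\lim\|\omega^{(n_k)}(t)\|_{L^2}$.

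\emph{Strong convergence (main obstacle).} Weak convergence together with convergence of the norms gives $\omega^{(n_k)}(t)\to\omega_\infty(t)$ strongly in $L^2$ for each $t$, and tightness upgrades this to $L^1$. The difficulty is making it uniform on $[0,T]$, as (iv) requires. I would argue by contradiction. Suppose $t_k\to t$ with $\|\omega^{(n_k)}(t_k)-\omega_\infty(t_k)\|_{L^1}\ge\eta$. Uniform weak equicontinuity gives $\omega^{(n_k)}(t_k)\rightharpoonup\omega_\infty(t)$ weakly, and $\|\omega^{(n_k)}(t_k)\|_{L^2}=\|f^{(n_k)}\|_{L^2}\to\|\omega_\infty(t)\|_{L^2}$, so the convergence is strong in $L^2$ and, by tightness, in $L^1$. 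Strong continuity of $\omega_\infty$ from Lemma~\ref{lem:renorm-all-t} then gives $\omega_\infty(t_k)\to\omega_\infty(t)$, a contradiction. Thus (iv) holds.

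\emph{Remaining items.} For (ii), strong $L^1$ convergence preserves distribution functions at continuity points, and both sides are right continuous, so $\omega_\infty(t)$ is equimeasurable with $f$ (via $f^{(n)}\to f$). The $L^q$ norms follow by the layer-cake formula, and membership in $X_q$ follows from measurability plus the norm bounds. For (iii), $L^1$ continuity combined with the uniform $L^{q+1}$ bound (if $f\in L^{q+1}$), or with equimeasurability, which gives uniform integrability of $|\omega|^q$, yields $L^q$ continuity by interpolation and truncation.

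For (v), the equality $\sup_{t\le T}\|\omega_\infty(t)-f\|_{L^1}=c$ follows from (iv) applied along a subsequence realizing the $\limsup$. I would choose that subsequence at the start, before extracting the convergent one. The set $\mathcal O$ is open by $L^1$ continuity, $0\notin\mathcal O$, and $c>0$ forces $\mathcal O\cap(0,T]\neq\emptyset$. Distinctness in $X_q(T')$ for $T'>t_0$ holds because $\mathcal O\cap(t_0,T')$ is a nonempty open set, hence of positive measure.

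For (vi), the set of $3$-fold rotationally symmetric functions is weakly closed, so the weak limit inherits the symmetry.
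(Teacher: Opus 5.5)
Your proposal is correct and follows the paper's proof essentially step by step: you choose the subsequence realizing the $\limsup$ first, run Arzel\`a--Ascoli over the countable family $\mathcal F$, pass to the limit via Lemma~\ref{lem:velocity}(b), renormalize the limit via Lemma~\ref{lem:transport}, get strong convergence from the $L^2$ norm identity plus the tail estimate~\eqref{eq:tail}, and prove (iv) by the same contradiction argument along converging times. The only deviation is in (ii), where you recover equimeasurability from strong $L^1$ convergence and convergence of distribution functions at continuity points; this works, but the paper reads it off directly from Lemma~\ref{lem:renorm-all-t} applied to the renormalized limit $\omega_\infty$, which you had already invoked for the $L^2$ norm identity.
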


\begin{proof}
Write $\omega_n=\omega^{(n)}$ and $f_n=f^{(n)}$. We first construct the limit.
Pass to a subsequence along which $\sup_{t\le T}\|\omega_n(t)-f\|_{L^1(\R^2)}\to c$,
and put $T_j=T+j$ for $j\ge1$. For $t\ge0$ choose $j$ with $T_j>t$; by
Definition~\ref{def:adm}, $\omega_n$ is a renormalized weak solution in
$X_2(T_j)$, so Lemma~\ref{lem:renorm-all-t} gives
$\|\omega_n(t)\|_{L^r(\R^2)}=\|f_n\|_{L^r(\R^2)}$ for $r\in\{1,2\}$. Hence
$M:=\sup_n\sup_{t\ge0}\|\omega_n(t)\|_{(L^1\cap L^2)(\R^2)}=\sup_n\|f_n\|_{(L^1\cap L^2)(\R^2)}<\infty$,
since $f_n$ converges in $(L^1\cap L^2)(\R^2)$; in particular
$c\le M+\|f\|_{L^1(\R^2)}<\infty$.

Write $v_n=\KBS\ast\omega_n$, and let $\mathcal F$ be as in the proof of
Lemma~\ref{lem:weak-cont}. Since
$\|v_n(t)\|_{L^2(B_R)}\le C(2,R)M$ by Lemma~\ref{lem:velocity}(a),
Lemma~\ref{lem:weak-cont}(b) on $[0,T_j]$ shows that the functions
$t\mapsto\int\omega_n(t)\psi$, $\psi\in\mathcal F$, are bounded by
$M\|\psi\|_{L^2(\R^2)}$ and Lipschitz on every $[0,T_j]$ with a constant that depends
neither on $n$ nor on $j$. By the Arzel\`a-Ascoli theorem and a diagonal
argument over $\psi\in\mathcal F$ and $j$, a subsequence $(n_k)$ converges
uniformly on every $[0,T_j]$ for every $\psi\in\mathcal F$; below, $n$ runs
through $(n_k)$. Since
$|\int(\omega_n(t)-\omega_m(t))\psi|\le2M\|\psi-\psi'\|_{L^2(\R^2)}+|\int(\omega_n(t)-\omega_m(t))\psi'|$
for $\psi\in L^2(\R^2)$ and $\psi'\in\mathcal F$, and $\mathcal F$ is dense in $L^2(\R^2)$,
$\int\omega_n(t)\psi$ is uniformly Cauchy on every $[0,T_j]$ for every
$\psi\in L^2(\R^2)$. For each $t$ the limit is a linear functional of norm at most
$M$, hence $\psi\mapsto\int\omega_\infty(t)\psi$ for a unique
$\omega_\infty(t)\in L^2(\R^2)$. This gives the uniform weak convergence
\[
\sup_{t\le T_j}\Big|\int(\omega_n(t)-\omega_\infty(t))\psi\Big|\to0
\qquad\text{for every }\psi\in L^2(\R^2)\text{ and every }j,
\]
and $\omega_\infty$ is weakly continuous into $L^2(\R^2)$, as a locally uniform limit
of the continuous functions $t\mapsto\int\omega_n(t)\psi$.

Thus $\omega_n(t)\rightharpoonup\omega_\infty(t)$ weakly in $L^2(\R^2)$ for every $t$.
Fix $t\ge0$. Weak lower semicontinuity of the $L^2(\R^2)$ norm gives
$\|\omega_\infty(t)\|_{L^2(\R^2)}\le\liminf_n\|\omega_n(t)\|_{L^2(\R^2)}$. For the $L^1(\R^2)$
norm, put $\lambda=\liminf_n\|\omega_n(t)\|_{L^1(\R^2)}$ and let $\varepsilon>0$. There
is a further subsequence $(\omega_{n_i}(t))_{i\ge1}$ with
$\|\omega_{n_i}(t)\|_{L^1(\R^2)}\le\lambda+\varepsilon$ for every $i$; its terms are
real, and it still converges weakly in $L^2(\R^2)$ to $\omega_\infty(t)$. The first
conclusion of Lemma~\ref{lem:velocity}(b), with $p=2$, applied to this
subsequence with $\lambda+\varepsilon$ in the role of the bound $M$ there, gives
$\|\omega_\infty(t)\|_{L^1(\R^2)}\le\lambda+\varepsilon$. As $\varepsilon>0$ is
arbitrary, $\|\omega_\infty(t)\|_{L^1(\R^2)}\le\liminf_n\|\omega_n(t)\|_{L^1(\R^2)}$. Hence
$\|\omega_\infty(t)\|_{L^1(\R^2)}+\|\omega_\infty(t)\|_{L^2(\R^2)}\le\liminf_n\big(\|\omega_n(t)\|_{L^1(\R^2)}+\|\omega_n(t)\|_{L^2(\R^2)}\big)\le M$,
so $\sup_{t\ge0}\|\omega_\infty(t)\|_{(L^1\cap L^2)(\R^2)}\le M$.
For $\phi\in L^\infty(\R^2)$, $t\mapsto\int\omega_\infty(t)\phi$ is the pointwise
limit of the continuous functions $t\mapsto\int\omega_\infty(t)\phi\mathbf 1_{B_\ell}$
as $\ell\to\infty$. So $\omega_\infty$ is weakly measurable into $L^1(\R^2)$ and into
$L^2(\R^2)$, hence strongly measurable into both by Pettis's theorem
(Section~\ref{sec:prelim}), as $L^1(\R^2)$ and $L^2(\R^2)$ are separable; with the bound by
$M$ and the convention on intersections in Section~\ref{sec:prelim},
$\omega_\infty\in X_2(T_j)$ for every $j$.

Lemma~\ref{lem:velocity}(b) with $p=2$ and the uniform weak convergence give
$v_n(t)\to v(t):=\KBS\ast\omega_\infty(t)$ in $L^2(B_R)$ for every $t$, and
$\|v_n(t)-v(t)\|_{L^2(B_R)}\le2C(2,R)M$; so dominated convergence in $t$ gives
$\int_0^{T_j}\|v_n-v\|_{L^2(B_R)}\dd t\to0$ for every $j$ and $R>0$. Now let
$\varphi\in\mathcal D_{T_j}$ with $\supp\varphi\subset\R\times B_R$. By the
uniform weak convergence and dominated convergence in $t$, with the bound
$2M\|\partial_t\varphi(t)\|_{L^2(\R^2)}$,
$\int_0^{T_j}\!\int\omega_n\partial_t\varphi\to\int_0^{T_j}\!\int\omega_\infty\partial_t\varphi$;
and $\int f_n\varphi(0)\to\int f\varphi(0)$, because $f_n\to f$ in $L^2(\R^2)$. In
\[
\int_0^{T_j}\!\!\int(\omega_nv_n-\omega_\infty v)\cdot\nabla\varphi
=\int_0^{T_j}\!\!\int\omega_n(v_n-v)\cdot\nabla\varphi
+\int_0^{T_j}\!\!\int(\omega_n-\omega_\infty)\,v\cdot\nabla\varphi ,
\]
the first term on the right is at most
$M\|\nabla\varphi\|_{L^\infty(\R\times\R^2)}\int_0^{T_j}\|v_n-v\|_{L^2(B_R)}\dd t\to0$.
In the second term on the right, the inner integral tends to $0$ for each $t$ by the uniform weak
convergence, because $v(t)\cdot\nabla\varphi(t)\in L^2(\R^2)$, and is bounded by
$2C(2,R)M^2\|\nabla\varphi\|_{L^\infty(\R\times\R^2)}$; so dominated convergence applies.
Thus Equation~\eqref{eq:weak} for $\omega_n$ on $[0,T_j)$ passes to the limit,
and $\omega_\infty$ is a weak solution in $X_2(T_j)$ with datum $f$ for every
$j$; being weakly continuous, it is its own representative in the sense of
Lemma~\ref{lem:weak-cont}. For $T'>0$ choose $T_j>T'$; since
$\mathcal D_{T'}\subset\mathcal D_{T_j}$, the restriction of $\omega_\infty$ to
$[0,T']$ is a weak solution in $X_2(T')$ with datum $f$, and it is
renormalized: by the remark at the start of Section~\ref{sec:weak-rep},
Lemma~\ref{lem:transport}(i) with $t_0=0$, $u=\omega_\infty$ and
$w=\KBS\ast\omega_\infty$ gives Equation~\eqref{eq:te} on $[0,T')$ for
$\Ren(\omega_\infty)$ with datum $\Ren(f)$, for every $\Ren\in\LipO$, in
particular for every $\Ren\in C^1(\R)$ with $\Ren'$ bounded and $\Ren(0)=0$;
this is Equation~\eqref{eq:renorm}. This proves (i).

We next prove (ii) and (iii). By Lemma~\ref{lem:renorm-all-t}, applied to
$\omega_\infty$ on each $[0,T_j)$, $\omega_\infty(t)$ and $f$ are equimeasurable
for every $t\ge0$, and integration of the distribution function gives
$\|\omega_\infty(t)\|_{L^q(\R^2)}=\|f\|_{L^q(\R^2)}$ for $q\in[1,\infty)$. For (iii), the
case $q\in[1,2]$ is part of Lemma~\ref{lem:renorm-all-t}. Let $s_i\to t$ in
$[0,\infty)$, and put $g_i=\omega_\infty(s_i)$ and $g=\omega_\infty(t)$. Then
$g_i\rightharpoonup g$ weakly in $L^2(\R^2)$ by weak continuity of $\omega_\infty$, and $\|g_i\|_{L^2(\R^2)}=\|g\|_{L^2(\R^2)}$, so
$\|g_i-g\|_{L^2(\R^2)}^2=2\|g\|_{L^2(\R^2)}^2-2\int g_ig\to0$. Let $q\in(2,\infty)$ with
$f\in L^q(\R^2)$, and let $K>0$. Where $|g_i|\le K$ and $|g|\le K$ we have
$|g_i-g|^q\le(2K)^{q-2}|g_i-g|^2$. On the remaining set, where $|g_i|>K$ or
$|g|>K$, we have
$|g_i-g|^q\le2^q\max\{|g_i|,|g|\}^q\le2^q\big(|g_i|^q\mathbf 1_{\{|g_i|>K\}}+|g|^q\mathbf 1_{\{|g|>K\}}\big)$.
Integrating these two bounds gives
\[
\|g_i-g\|_{L^q(\R^2)}^q\le(2K)^{q-2}\|g_i-g\|_{L^2(\R^2)}^2
+2^q\Big(\int_{\{|g_i|>K\}}|g_i|^q\dd x+\int_{\{|g|>K\}}|g|^q\dd x\Big).
\]
The last two integrals depend only on distribution functions. For $h\in L^q(\R^2)$,
write $|h|^q=K^q+\int_K^{|h|}qs^{q-1}\dd s$ on $\{|h|>K\}$ and integrate in $x$;
Tonelli's theorem gives
\[
\int_{\{|h|>K\}}|h|^q\dd x=K^q\,|\{|h|>K\}|+\int_K^\infty qs^{q-1}\,|\{|h|>s\}|\dd s .
\]
Since $g_i$ and $g$ are equimeasurable with $f$ by (ii), the measure
$|\{|h|>s\}|=|\{h>s\}|+|\{h<-s\}|$ is the same for $h=g_i$, $g$ and $f$, for
every $s>0$. Hence both integrals equal $\int_{\{|f|>K\}}|f|^q\dd x$, and
\[
\|g_i-g\|_{L^q(\R^2)}^q\le(2K)^{q-2}\|g_i-g\|_{L^2(\R^2)}^2+2^{q+1}\int_{\{|f|>K\}}|f|^q\dd x ,
\]
and letting $i\to\infty$ and then $K\to\infty$ gives $g_i\to g$ in $L^q(\R^2)$.
This proves (iii). Now let
$q\in(4/3,\infty)$ with $f\in L^q(\R^2)$ and $T'>0$. By (iii) and the norm
identities, $\omega_\infty$ is continuous from $[0,T']$ into $(L^1\cap L^q)(\R^2)$, so
$\omega_\infty\in X_q(T')$. This proves (ii).

We now prove (iv). Fix $j$ with $T<T_j$. We claim: if $(n_i)$ is
a subsequence of $(n_k)$ and $t_i\in[0,T]$ with $t_i\to t$, then
$\omega_{n_i}(t_i)\to\omega_\infty(t)$ in $L^1(\R^2)$. Granting the
claim, if (iv) failed, there would be $\epsilon>0$, a
subsequence $(n_i)$ and $t_i\in[0,T]$ with
$\|\omega_{n_i}(t_i)-\omega_\infty(t_i)\|_{L^1(\R^2)}\ge\epsilon$; along a further
subsequence $t_i\to t\in[0,T]$, and the claim and (iii) give
$\|\omega_{n_i}(t_i)-\omega_\infty(t_i)\|_{L^1(\R^2)}\to0$, a contradiction.

To prove the claim, write $n$ for $n_i$. For $\psi\in L^2(\R^2)$,
\[
\Big|\int(\omega_n(t_i)-\omega_\infty(t))\psi\Big|
\le\sup_{s\le T_j}\Big|\int(\omega_n(s)-\omega_\infty(s))\psi\Big|
+\Big|\int(\omega_\infty(t_i)-\omega_\infty(t))\psi\Big|\to0 ,
\]
by the uniform weak convergence on $[0,T_j]$ and weak continuity of
$\omega_\infty$. By the norm identities for $\omega_n$ above and part (ii),
$\|\omega_n(t_i)\|_{L^2(\R^2)}=\|f_n\|_{L^2(\R^2)}\to\|f\|_{L^2(\R^2)}=\|\omega_\infty(t)\|_{L^2(\R^2)}$;
expanding $\|\omega_n(t_i)-\omega_\infty(t)\|_{L^2(\R^2)}^2$ shows that
$\omega_n(t_i)\to\omega_\infty(t)$ in $L^2(\R^2)$, hence in $L^1(B_{2R})$ for every
$R>0$. Equation~\eqref{eq:tail} on $[0,T_j)$,
applied to $\omega_n$ and to $\omega_\infty$, gives
\[
\int_{\{|x|\ge2R\}}\big(|\omega_n(t_i)|+|\omega_\infty(t)|\big)\dd x
\le\|f_n-f\|_{L^1(\R^2)}+2\int_{\{|x|\ge R\}}|f|\dd x
+\frac{2\|\nabla\varsigma\|_{L^\infty(\R^2)}}{R}C_{\mathrm v}T_jM^2 .
\]
Letting $n\to\infty$ and then $R\to\infty$ proves the claim.

We now prove (v). By (iv),
\[
\Big|\sup_{t\le T}\|\omega_n(t)-f\|_{L^1(\R^2)}-\sup_{t\le T}\|\omega_\infty(t)-f\|_{L^1(\R^2)}\Big|
\le\sup_{t\le T}\|\omega_n(t)-\omega_\infty(t)\|_{L^1(\R^2)}\to0 ,
\]
so the second supremum equals $c>0$. By (iii) with $q=1$,
$t\mapsto\|\omega_\infty(t)-f\|_{L^1(\R^2)}$ is continuous on $[0,\infty)$, so
$\mathcal O$ is relatively open in $[0,\infty)$, and it meets $[0,T]$. Since
$\omega_\infty(0)=f$ by Lemma~\ref{lem:weak-cont}(a), $0\notin\mathcal O$; so
$\mathcal O$ is open in $\R$, it contains some $t_1\in(0,T]$, and
$0\le t_0=\inf\mathcal O<t_1\le T$, because $\mathcal O$ contains a
neighborhood of $t_1$. For $T'>t_0$ the set $\mathcal O\cap(0,T')$ is
nonempty and open, hence of positive measure, and $\omega_\infty(t)\ne f$ on it.
Thus, for every $q\in(4/3,\infty)$ with $f\in L^q(\R^2)$, $\omega_\infty$, which lies
in $X_q(T')$ by (ii), and the constant function $f$, which lies in $X_q(T')$
and is a solution by Lemma~\ref{lem:radial-stationary}, are distinct
elements of $X_q(T')$.

Finally we prove (vi). Let $t\ge0$ and $\psi\in C_c^\infty(\R^2)$. Since
$|\det\Orot|=1$ and $\omega_n(t)\circ\Orot=\omega_n(t)$ almost everywhere, by
the $3$-fold rotational symmetry of $\omega_n(t)$, the uniform weak convergence gives
\begin{align*}
\int\omega_\infty(t,\Orot x)\psi(x)\dd x&=\int\omega_\infty(t,y)\psi(\Orot^{-1}y)\dd y
=\lim_n\int\omega_n(t,y)\psi(\Orot^{-1}y)\dd y\\
&=\lim_n\int\omega_n(t,\Orot x)\psi(x)\dd x=\int\omega_\infty(t,x)\psi(x)\dd x ,
\end{align*}
so $\omega_\infty(t)\circ\Orot=\omega_\infty(t)$ almost everywhere.
\end{proof}

\begin{corollary}
\label{cor:defect}
Let $f\in (L^1\cap L^q)(\R^2)$ for every $q\in[1,\infty)$ be radial. Let $m\ge1$
be an integer, $\chi\in C_c^\infty((0,\infty))$ real, and
$\Psi(x)=\chi(|x|)\,(\bar x)^m$ (with $x$ written as a complex number). Let
$(f^{(n)},\omega^{(n)})$ be a $2$-admissible sequence for $f$ on $[0,\infty)$
such that $\omega^{(n)}(t)$ is $3$-fold rotationally symmetric for all $n$ and $t$. Assume
that there are $c_0>0$, $\bar T\in(0,\infty)$ and times
$T^{(n)}\in[0,\bar T]$ with
\[
\Big|\int\omega^{(n)}(T^{(n)})\,\Psi\Big|\ge c_0\qquad\text{for all large }n.
\]
Then there is a renormalized weak solution $\omega_\infty$
on $[0,\infty)$ with datum $f$, in the sense of Definition~\ref{def:weak} with
$q=2$, such that $\omega_\infty(t)$ is equimeasurable
with $f$ and $3$-fold rotationally symmetric for every $t\ge0$,
$\omega_\infty\in C([0,\infty);L^q(\R^2))$ for every $q\in[1,\infty)$, and, for
some $t_0\in[0,\bar T)$, $\omega_\infty$ and $f$ are distinct elements of
$X_q(T')$ for every $T'>t_0$ and every $q\in(4/3,\infty)$.
\end{corollary}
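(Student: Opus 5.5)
The plan is to reduce the corollary to Theorem~\ref{thm:criterion} by checking its defect hypothesis \eqref{eq:defect} with $T=\bar T$. Since $f$ is radial, its angular mode $m$ vanishes for $m\ge1$. Hence $\int f\Psi=\int_0^\infty\chi(r)r^{m+1}\int_0^{2\pi}f_0(r)e^{-im\theta}\dd\theta\dd r=0$, by Fubini in polar coordinates, which is legitimate because $\Psi$ is bounded with compact support. Therefore, for all large $n$,
\[
c_0\le\Big|\int(\omega^{(n)}(T^{(n)})-f)\Psi\Big|\le\|\Psi\|_{L^\infty(\R^2)}\,\|\omega^{(n)}(T^{(n)})-f\|_{L^1(\R^2)}.
\]
Here $\|\Psi\|_{L^\infty}\le\sup|\chi(r)|r^m<\infty$, and this bound is positive since otherwise $\Psi=0$ would contradict $c_0>0$. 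Taking the supremum over $t\in[0,\bar T]$ and then the limsup gives $c\ge c_0/\|\Psi\|_{L^\infty}>0$. The pointwise-in-time evaluation $\omega^{(n)}(T^{(n)})$ makes sense because we use the weakly continuous representatives of Section~\ref{sec:weak-rep}.

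Next I apply Theorem~\ref{thm:criterion} with $T=\bar T$. This requires $f\in(L^1\cap L^2)(\R^2)$ radial, which holds by hypothesis, and the sequence is $2$-admissible by assumption. The theorem gives $\omega_\infty$ with the following properties. By (i), it is a renormalized weak solution on $[0,\infty)$ with datum $f$ and $q=2$. By (ii), $\omega_\infty(t)$ is equimeasurable with $f$ at every time. By (iii), $\omega_\infty\in C([0,\infty);L^q)$ for all $q\in[1,\infty)$, since $f\in L^q$ for every such $q$. By (vi), the $3$-fold rotational symmetry passes to the limit.

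For the distinctness, (v) gives $t_0=\inf\mathcal O\in[0,\bar T)$. It also shows that $\omega_\infty$ and $f$ are distinct elements of $X_q(T')$ for every $T'>t_0$ and every $q\in(4/3,\infty)$, again because $f\in L^q$ for all $q$. This is exactly the conclusion of the corollary.

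The only step requiring any care is verifying that the moment of the radial $f$ vanishes and that $\Psi$ is bounded, so that the lower bound on the moment transfers to an $L^1$ defect. This is routine, and I expect no genuine obstacle: the corollary is a direct repackaging of Theorem~\ref{thm:criterion}.
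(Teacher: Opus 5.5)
Your proposal is correct and follows the paper's proof essentially line by line. You show that $\int f\Psi=0$ for radial $f$, turn the lower bound on the moment into the defect \eqref{eq:defect} with $T=\bar T$ and $c\ge c_0/\|\Psi\|_{L^\infty(\R^2)}$, and then read off the conclusions from parts (i), (ii), (iii), (v) and (vi) of Theorem~\ref{thm:criterion}.
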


\begin{proof}
Write $f=f_0(|\cdot|)$. Since $\chi$ has compact support in $(0,\infty)$, the
function $\Psi$ lies in $C_c^\infty(\R^2;\C)$, and in polar coordinates
$\int f\Psi=\int_0^\infty f_0(r)\chi(r)r^{m+1}\big(\int_0^{2\pi}e^{-im\theta}\dd\theta\big)\dd r=0$,
because $m\ne0$. Hence, for all large $n$,
\[
c_0\le\Big|\int(\omega^{(n)}(T^{(n)})-f)\Psi\Big|
\le\|\Psi\|_{L^\infty(\R^2)}\sup_{t\le\bar T}\|\omega^{(n)}(t)-f\|_{L^1(\R^2)} .
\]
Thus $\|\Psi\|_{L^\infty(\R^2)}>0$, and Equation~\eqref{eq:defect} holds with
$T=\bar T$, with $c\ge c_0/\|\Psi\|_{L^\infty(\R^2)}$.
Theorem~\ref{thm:criterion} gives a subsequence and $\omega_\infty$. By its part
(i), $\omega_\infty$ is a renormalized weak solution on $[0,\infty)$ with datum
$f$, with $q=2$. Part (ii) gives equimeasurability, part (iii) gives
$\omega_\infty\in C([0,\infty);L^q(\R^2))$ for every $q\in[1,\infty)$, and part (v)
gives, with the set $\mathcal O$ defined there, $t_0=\inf\mathcal O\in[0,\bar T)$
and the distinctness in $X_q(T')$ for every $T'>t_0$ and $q\in(4/3,\infty)$.
Part (vi) gives the $3$-fold rotational symmetry of $\omega_\infty(t)$.
\end{proof}

\begin{remark}
\label{rem:conditional}
Corollary~\ref{cor:defect} is a conditional non-uniqueness theorem. Let $f$ be
radial with $f\in (L^1\cap L^q)(\R^2)$ for every $q\in[1,\infty)$. Suppose that one
can produce data $f^{(n)}\to f$ in $(L^1\cap L^2)(\R^2)$ and $3$-fold rotationally
symmetric renormalized weak solutions $\omega^{(n)}$ on $[0,\infty)$ with these
data and vorticity in $L^\infty_t(L^1\cap L^2)_x$, such that one fixed moment
$\int\omega^{(n)}(T^{(n)})\Psi$ of the form above has modulus at least $c_0>0$ at
times $T^{(n)}\le\bar T$, with $c_0$ and $\bar T$ independent of $n$. Then the
Euler equations with datum $f$ have two distinct renormalized weak solutions: the
stationary solution $f$ and $\omega_\infty$, which differ in $X_q(T')$ for every
$T'>t_0$ and every $q\in(4/3,\infty)$. Since $\int f\Psi=0$ for radial $f$, the
hypothesis says that the approximate solutions leave $f$ by a fixed amount, in
this moment, within a time bounded independently of $n$. Part~II
\cite{PartII} produces such a sequence for the datum of Theorem~\ref{thm:main}.
\end{remark}

\section{The Ring and Its Linearization}
\label{sec:ring}

We fix a certain exact radial steady state supported in an annulus, which we
call the \textbf{Ring}, and study its linearization, that of the Euler equations
at the Ring plus a point vortex of circulation $2\pi G$ at the origin.

\subsection{The Ring}
\label{sec:ring-def}

\begin{definition}
\label{def:ring}
Put
\[
\begin{gathered}
a_0=-7304210039150669\cdot2^{-54},\qquad d=5764607523034235\cdot2^{-58},\\
\xiL=-4528212286209965\cdot2^{-52},\qquad \xia=-4077852323472915\cdot2^{-52},\\
\xib=\tfrac12,\qquad \xiR=5404319552844595\cdot2^{-53},
\end{gathered}
\]
and $H(x)=(1+e^{-2x})^{-1}$ for $x\in\R$. Fix, once and for all, a function
$\chi\in C_c^\infty(\R)$ with values in $[0,1]$ that is nondecreasing on
$(-\infty,\xia]$, equal to $1$ on $[\xia,\xib]$, nonincreasing on
$[\xib,\infty)$, and supported in $[\xiL,\xiR]$ (such functions exist by
Lemma~\ref{lem:ring-facts}). Put
\[
\begin{gathered}
\prof(\xi)=\chi(\xi)\,H\Big(\frac{\xi-a_0}{d}\Big)H\Big(-\frac{\xi}{d}\Big)\quad(\xi\in\R),\\
\omc(x)=\prof(\log|x|)\quad(x\ne0),\qquad \omc(0)=0,
\end{gathered}
\]
\[
\Rc=e^{\xiR},\qquad \Phi(y)=\omc(\Rc y),\qquad a=e^{\xiL-\xiR},\qquad
\Gone=\tfrac1{50}\Rc^{-2}.
\]
To four significant figures, $a_0\approx-0.4055$, $d\approx0.02000$,
$\xiL\approx-1.005$, $\xia\approx-0.9055$, $\xiR\approx0.6000$,
$\Rc\approx1.822$, $a\approx0.2008$ and $\Gone\approx0.006024$.
We call $\Phi$ \textbf{the ring}. The functions $\omc$ and $\Phi$ describe the
same ring in the variables $x$ and $y$, related by $x=\Rc y$; we say that
$\omc$ is the ring in \textbf{reference units} and $\Phi$ the ring in
\textbf{ring units}. From now on $m=3$, except where an integer $m\ge1$ is
quantified explicitly.
\end{definition}

Figure~\ref{fig:ring} shows the ring, for the function $\chi$ constructed in the
proof of Lemma~\ref{lem:ring-facts}.

\begin{figure}[ht]
\centering
\includegraphics[width=0.85\textwidth]{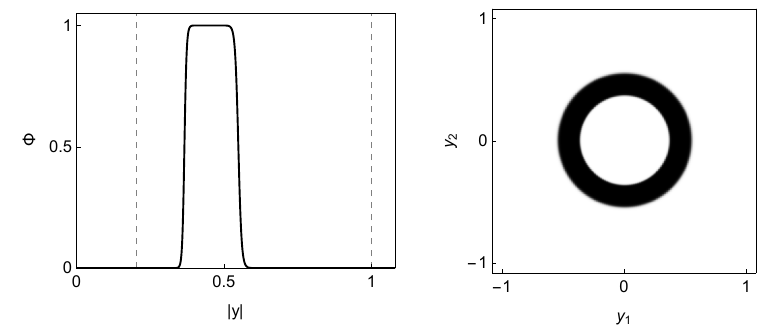}
\caption{The ring $\Phi$ of Definition~\ref{def:ring} in ring units, for the
function $\chi$ constructed in the proof of Lemma~\ref{lem:ring-facts}. Left:
the radial profile $\Phi(|y|)$; the dashed lines mark $|y|=a$ and $|y|=1$, between
which $\Phi$ is supported. Right: $\Phi$ on the plane, darker where $\Phi$ is larger
(black on the plateau, where $\Phi$ is close to $1$).
Drawn with Mathematica from the exact formulas; the figure is an illustration and
is not used in any proof.}
\label{fig:ring}
\end{figure}

\begin{lemma}
\label{lem:ring-facts}
A function $\chi$ as in Definition~\ref{def:ring} exists. Moreover
$a_0<0<d$, $\xiL<\xia\le a_0/2\le\xib<\xiR$, $\ell d<1$ for every integer
$1\le\ell\le9$, and $0<a<1$. The function $\omc$ lies in $C_c^\infty(\R^2)$, is
radial, satisfies $0\le\omc<1$, and $\supp\omc\subset\{e^{\xiL}\le|x|\le e^{\xiR}\}$.
Consequently $\Phi\in C_c^\infty(\R^2)$ is radial, $0\le\Phi<1$, $\Phi\not\equiv0$,
and $\supp\Phi\subset\{a\le|y|\le1\}$.
\end{lemma}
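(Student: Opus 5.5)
The proof has three parts: verify the numerical inequalities among the dyadic-rational constants, construct a cutoff $\chi$ by a standard mollified-step argument, and read off the properties of $\omc$ and $\Phi$ from the explicit formula.

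\emph{Numerical inequalities.} Each constant is an exact dyadic rational, so every inequality can be checked by exact rational arithmetic; the four-digit values of Definition~\ref{def:ring} already show the relevant gaps.
\begin{itemize}
\item $a_0<0<d$ holds because the numerators have the indicated signs.
\item $\xiL\approx-1.005<\xia\approx-0.9055$.
\item $\xia\le a_0/2\approx-0.2028$ and $a_0/2\le\xib=\frac12$.
\item $\xib=\frac12<\xiR\approx0.6$.
\item $9d\approx0.18<1$, which gives $\ell d<1$ for every $1\le\ell\le9$.
\item $a=e^{\xiL-\xiR}$ with $\xiL-\xiR<0$, so $0<a<1$.
\end{itemize}
For each inequality I would write the two sides over the common denominator $2^{58}$ and compare integers, so no rounding enters.

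\emph{Construction of $\chi$.} Fix a smooth nondecreasing step $\varkappa$ as in Section~\ref{sec:compact}, with $\varkappa=0$ on $(-\infty,0]$ and $\varkappa=1$ on $[1,\infty)$. Set
\[
\chi(\xi)=\varkappa\Big(\frac{\xi-\xiL}{\xia-\xiL}\Big)\,\varkappa\Big(\frac{\xiR-\xi}{\xiR-\xib}\Big).
\]
This is well defined because $\xiL<\xia$ and $\xib<\xiR$.
\begin{itemize}
\item On $(-\infty,\xib]$ the second factor is $1$, so $\chi$ is the first factor there. It is nondecreasing and equals $1$ from $\xia$ on.
\item On $[\xia,\infty)$ the first factor is $1$. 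The second factor is nonincreasing and equals $1$ up to $\xib$.
\item Hence $\chi$ is nondecreasing on $(-\infty,\xia]$, equal to $1$ on $[\xia,\xib]$, and nonincreasing on $[\xib,\infty)$.
\item The values of $\chi$ lie in $[0,1]$, $\chi$ is smooth, and $\chi$ vanishes outside $[\xiL,\xiR]$.
\end{itemize}

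\emph{Properties of $\omc$.} Since $0<H<1$ and $0\le\chi\le1$, we get $0\le\prof<1$. The function $\prof$ is smooth and supported in $[\xiL,\xiR]$. On the annulus $\{e^{\xiL}\le|x|\le e^{\xiR}\}$ the map $x\mapsto\log|x|$ is smooth. Outside the annulus $\omc$ vanishes, and in particular it vanishes on a neighborhood of $0$ and near infinity. Therefore $\omc=\prof\circ\log|\cdot|$ lies in $C_c^\infty(\R^2)$. It is radial, satisfies $0\le\omc<1$, and has the stated support.

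\emph{Properties of $\Phi$.} Rescaling by $\Rc=e^{\xiR}$ maps the support onto $\{e^{\xiL-\xiR}\le|y|\le1\}=\{a\le|y|\le1\}$. Nonnegativity, smoothness, radiality and the bound $\Phi<1$ are preserved under this rescaling. For $\Phi\not\equiv0$, take $\xi\in[\xia,\xib]$, where $\chi=1$; there $\prof(\xi)=H(\cdot)H(\cdot)>0$.

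The only non-routine step is the exact check of the inequalities among the dyadic constants. Even that is only bookkeeping, so I expect no real obstacle.
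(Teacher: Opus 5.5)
Your proof is correct and follows the paper's argument essentially step by step: the same product of two smooth steps for $\chi$ (the paper builds its step explicitly as $S(x)=e(x)/(e(x)+e(1-x))$ with $e(x)=e^{-1/x}$ instead of invoking $\varkappa$), exact integer comparisons of the dyadic numerators, and positivity of $\prof$ on the plateau to get $\Phi\not\equiv0$. The one point to state explicitly is that your description of $\chi$ on $(-\infty,\xia]$ and on $[\xib,\infty)$ uses $\xia\le\xib$; this follows from the inequalities you verified.
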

\begin{proof}
Let $e(x)=e^{-1/x}$ for $x>0$ and $e(x)=0$ for $x\le0$, and
$S(x)=e(x)/(e(x)+e(1-x))$. Then $S\in C^\infty(\R)$ is nondecreasing, $S=0$ on
$(-\infty,0]$ and $S=1$ on $[1,\infty)$. The function
\[
\chi(\xi)=S\Big(\frac{\xi-\xiL}{\xia-\xiL}\Big)\,S\Big(\frac{\xiR-\xi}{\xiR-\xib}\Big)
\]
is smooth with values in $[0,1]$. Its second factor is $1$ for $\xi\le\xib$ and
its first factor is $1$ for $\xi\ge\xia$; since $\xia\le\xib$, $\chi$ is the
first factor on $(-\infty,\xia]$, $1$ on $[\xia,\xib]$, the second factor on
$[\xib,\infty)$, and $0$ outside $[\xiL,\xiR]$. It has the required properties
once the order of the data is checked.

We compare the data as fractions with a common power of two. The signs of $a_0$
and $d$ are those of their numerators; $4528212286209965>4077852323472915$ gives
$\xiL<\xia$; $a_0/2=-7304210039150669\cdot2^{-55}$ and
$\xia=-32622818587783320\cdot2^{-55}$ give $\xia<a_0/2<0<\xib$; and
$\xib=4503599627370496\cdot2^{-53}<\xiR$. For $\ell d<1$ it suffices to take
$\ell=9$: $9\cdot5764607523034235=\nobreak51881467707308115<2^{58}=\nobreak288230376151711744$.
Since $\xiL<\xiR$, $0<a<1$.

Since $\prof$ is smooth and supported in $[\xiL,\xiR]$, $\omc$ is smooth on
$\R^2\setminus\{0\}$ and vanishes on $B_{e^{\xiL}}$, so $\omc\in C_c^\infty(\R^2)$
with the stated support. Since $0<H<1$ and $0\le\chi\le1$, $0\le\omc<1$. The
statements on $\Phi$ follow from $e^{\xiL}/\Rc=a$, $e^{\xiR}/\Rc=1$, and
$\Phi\not\equiv0$ because ${\prof(\xia)=H((\xia-a_0)/d)H(-\xia/d)>0}$.
\end{proof}

In ring units we use the closed annuli
\[
N=\{\tfrac a2\le|y|\le2\},\qquad N'=\{\tfrac a4\le|y|\le3\};
\]
so $\{a\le|y|\le1\}\subset N\subset N'$, and the ring $\Phi$ is supported in $N$.

\subsection{The Linearization and the Evans Function}

\begin{definition}
\label{def:sector-setting}
A \textbf{mode setting} consists of: a real radial function
$\bomega\in C_c^\infty(\R^2)$, written also as a function of $r=|x|$; numbers
$0<r_1\le r_2$ such that the radial derivative $\partial_r\bomega$ is
supported in $[r_1,r_2]$, and $\xi_1=\log r_1$,
$\xi_2=\log r_2$; a number $G\in\R$; and an annulus
$N_*=\{\rho_-\le|x|\le\rho_+\}$ with $0<\rho_-<r_1$ and $r_2<\rho_+$. In a mode
setting we put
\[
\begin{gathered}
\Omega(r;G)=r^{-2}\Big(\frac{\Circ_{\bomega}(r)}{2\pi}+G\Big),\qquad
\kcoef_\ell(r)=\frac{i\ell\,\partial_r\bomega(r)}{r},\\
(\Green_\ell g)(r)=-\frac1{2|\ell|}\int_0^\infty\Big(\frac{\min(r,\rho)}{\max(r,\rho)}\Big)^{|\ell|}g(\rho)\,\rho\dd\rho
\end{gathered}
\]
for $\ell\in\Z\setminus\{0\}$. For an integer $s\ge0$, $Y^s$ is the space of
$w\in H^s(\R^2;\C)$ with $\supp w\subset N_*$, and $Y^s_\ell$ is the space of
radial profiles $g$ with $\supp g\subset[\rho_-,\rho_+]$ and
$g(r)e^{i\ell\theta}\in H^s(\R^2)$, normed by $\|g(r)e^{i\ell\theta}\|_{H^s}$.
As a set, $Y^0_\ell=L^2([\rho_-,\rho_+],r\dd r)$, and the two norms differ by
the factor $\sqrt{2\pi}$. The \textbf{linearization} at $(\bomega,G)$, that is,
the linearization of the Euler equations~\eqref{eq:euler} at the steady state
$\bomega$ plus a point vortex of circulation $2\pi G$ at the origin, is
\[
\Lin(G)w=-\Omega(\cdot;G)\,\partial_\theta w-(\KBS\ast w)\cdot\nabla\bomega .
\]
The \textbf{mode operators} are $\Lsec{\ell}(G)g=-i\ell\Omega g+\kcoef_\ell\,\Green_\ell g$
for $\ell\ne0$ and $\Lsec{0}=0$. For $\ell\ge1$, $\Re\lambda>0$ and $\xi\in\R$,
the \textbf{Rayleigh potential} is
\[
\Vric_\ell(\xi;\lambda,G)=\ell^2+\frac{i\ell\,\nu(\xi)}{\lambda+i\ell\,\Omega(e^\xi;G)},\qquad
\nu(\xi)=r\partial_r\bomega(r)\big|_{r=e^\xi},
\]
and $\Mres_\lambda=(\lambda+i\ell\Omega(\cdot;G))^{-1}$. This is the resolvent at
$\lambda$ of the transport part $-i\ell\Omega$ of $\Lsec{\ell}(G)$, well defined
because $\Omega$ is real; with it, the equation $(\lambda-\Lsec{\ell}(G))g=h$
reads $g=\Mres_\lambda(h+\kcoef_\ell\Green_\ell g)$. The equation
$\partial_\xi^2\psi=\Vric_\ell(\cdot;\lambda,G)\,\psi$ is the Rayleigh equation of the mode
$\ell$ in the variable $\xi=\log r$.
\end{definition}

Here $\Circ_{\bomega}(r)=\int_{B_r}\bomega=2\pi\int_0^r\bomega(\rho)\rho\dd\rho$,
so $\Omega(r;G)$, smooth on $(0,\infty)$, is the angular velocity of the flow of
$\bomega$ and a point vortex of circulation $2\pi G$ at the origin. The
\textbf{mode profiles} $g_\ell$ of $w\in Y^0$ are its angular modes
(Section~\ref{sec:prelim}), $w=\sum_\ell g_\ell(r)e^{i\ell\theta}$ in $L^2(\R^2)$, and
$P_\ell w=g_\ell(r)e^{i\ell\theta}=\frac1{2\pi}\int_0^{2\pi}e^{-i\ell\phi}\,w\circ\Orot_\phi\dd\phi$.
Rotations preserve $N_*$ and are bounded on $H^s(\R^2)$, uniformly in the angle:
by the chain rule, for $|\gamma|=j$ the derivative $\partial^\gamma(w\circ\Orot_\phi)$ is a
sum of at most $2^j$ terms $c\,(\partial^\beta w)\circ\Orot_\phi$ with $|\beta|=j$ and
$|c|\le1$, so $\|w\circ\Orot_\phi\|_{H^s}\le C_s\|w\|_{H^s}$ with $C_s$ depending only
on $s$. (For $s\ge2$ they are in general not isometries of the norm of
Section~\ref{sec:prelim}.) By Minkowski's integral inequality, $P_\ell$ is therefore
a bounded projection on every $Y^s$, with norm at most $C_s$, and
$g_\ell\in Y^s_\ell$ for $w\in Y^s$.

We work in the variable $\xi=\log r$, the usual logarithmic change of variables,
which maps the half-line $r>0$ onto $\R$. Since $\partial_\xi=r\partial_r$, the
radial part of the Laplacian on the mode $\ell$ satisfies
\[
\partial_r^2\psi+r^{-1}\partial_r\psi-\ell^2r^{-2}\psi=r^{-2}\big(\partial_\xi^2\psi-\ell^2\psi\big).
\]
So $r^2$ times this operator is $\partial_\xi^2-\ell^2$, which has constant
coefficients and no first-order term, and the singular point $r=0$ of the radial
equation moves to $\xi=-\infty$. Since $\nu$ vanishes outside
$[\xi_1,\xi_2]$, so does $\Vric_\ell-\ell^2$; there the solutions of the Rayleigh
equation are the combinations of $e^{\ell\xi}=r^\ell$ and $e^{-\ell\xi}=r^{-\ell}$.
A dilation $x\mapsto cx$ becomes the translation $\xi\mapsto\xi+\log c$; this is
how Equation~\eqref{eq:evans-units} below compares the ring in reference units
with the ring in ring units.

The Evans function of Definition~\ref{def:evans} detects the eigenvalues of
$\Lsec{\ell}(G)$ in $\{\Re\lambda>0\}$, for $\ell\ge1$. If
$\Lsec{\ell}(G)g=\lambda g$ with $g\ne0$, then $g=\Mres_\lambda\kcoef_\ell\psi$
for the stream function $\psi=\Green_\ell g$. By the proof of
Lemma~\ref{lem:resolvent} below, $\psi$, as a function of $\xi$, is then a nonzero
solution of the Rayleigh equation with $\psi=O(e^{\ell\xi})$ as $\xi\to-\infty$
and $\psi=O(e^{-\ell\xi})$ as $\xi\to+\infty$; these conditions say that
$\psi(r)e^{i\ell\theta}$ is regular at the origin and decays at infinity. On
$(-\infty,\xi_1]$ the first condition leaves only the multiples of $e^{\ell\xi}$,
whose Cauchy data at $\xi_1$ are multiples of $(1,\ell)$; this is the
normalization of $\psi_\ell$ in Definition~\ref{def:evans}. On $[\xi_2,\infty)$,
$\psi_\ell=Ae^{\ell(\xi-\xi_2)}+Be^{-\ell(\xi-\xi_2)}$ with
$2\ell A=\partial_\xi\psi_\ell(\xi_2)+\ell\psi_\ell(\xi_2)$, so $\psi_\ell$ satisfies the
second condition exactly when $A=0$. Thus
$\Evans_\ell(\lambda;G)=2\ell e^{-\ell(\xi_2-\xi_1)}A$ measures the growing part
$r^\ell$ of $\psi_\ell$ at infinity, and Lemma~\ref{lem:resolvent} shows that its
zeros in $\{\Re\lambda>0\}$ are exactly the eigenvalues there.

The factor $e^{-\ell(\xi_2-\xi_1)}$ makes $\Evans_\ell$ equal to minus the
Wronskian of the solutions that equal $e^{\ell\xi}$ near $-\infty$ and
$e^{-\ell\xi}$ near $+\infty$ (Lemma~\ref{lem:evans-basic}(2) below). So
$\Evans_\ell$ depends only on $\bomega$, $G$ and $\lambda$, not on the choice of
$r_1$ and $r_2$, and for $\bomega=0$ it equals $2\ell$. Since $\Evans_\ell$ is
holomorphic in $\lambda$ (Lemma~\ref{lem:evans-basic}(1)) and is computed from an
initial value problem on the compact interval $[\xi_1,\xi_2]$, its zeros can be
counted with the argument principle. Part~II verifies \textup{(N1)} and
\textup{(N2)} of Hypothesis~\ref{hyp:N} below in this way, by certified winding
numbers of $\Evans_3$.

\begin{definition}
\label{def:evans}
In a mode setting, for an integer $\ell\ge1$ and $\Re\lambda>0$, let
$\psi_\ell(\cdot;\lambda,G)$ be the solution on $\R$ of
$\partial_\xi^2\psi=\Vric_\ell(\cdot;\lambda,G)\psi$ with $\psi(\xi_1)=1$, $\partial_\xi\psi(\xi_1)=\ell$,
and put
\[
\Evans_\ell(\lambda;G)=e^{-\ell(\xi_2-\xi_1)}\big(\partial_\xi\psi_\ell(\xi_2)+\ell\,\psi_\ell(\xi_2)\big),
\]
where $\psi_\ell=\psi_\ell(\cdot;\lambda,G)$. We call $\Evans_\ell$ the
\textbf{Evans function}.
\end{definition}

\begin{lemma}
\label{lem:sector}
In a mode setting the following hold.
\begin{enumerate}[label=\textup{(\arabic*)}]
\item For $\ell\ne0$ and $g\in Y^0_\ell$, $\Green_\ell g$ is the unique
  $\psi\in C^1((0,\infty))$ with $\partial_r\psi$ locally absolutely continuous that solves
  $\partial_r^2\psi+r^{-1}\partial_r\psi-\ell^2r^{-2}\psi=g$ almost everywhere on $(0,\infty)$ and is
  $O(r^{|\ell|})$ as $r\to0$ and $O(r^{-|\ell|})$ as $r\to\infty$; equivalently,
  $(\Green_\ell g)(r)e^{i\ell\theta}=\Delta^{-1}(g(r)e^{i\ell\theta})$, where
  $\Delta^{-1}$ is the convolution with $\frac1{2\pi}\log|x|$. The operator
  $\Green_\ell$ is bounded from $Y^0_\ell$ to $C([\rho_-,\rho_+])$.
\item $\Lin(G)$ preserves angular modes: for an integer $s\ge0$ and
  $w=\sum_\ell g_\ell(r)e^{i\ell\theta}\in Y^{s+1}$,
  $P_\ell\Lin(G)w=(\Lsec{\ell}(G)g_\ell)(r)e^{i\ell\theta}$ for every $\ell$; for the
  coupling term this holds for every $w\in Y^0$:
  $P_\ell\big(-(\KBS\ast w)\cdot\nabla\bomega\big)=(\kcoef_\ell\Green_\ell g_\ell)(r)e^{i\ell\theta}$.
  Moreover $\Lsec{-\ell}(G)\bar g=\overline{\Lsec{\ell}(G)g}$, and the coupling term
  commutes with rotations.
\item For $\ell\ge1$ and $\Re\lambda>0$, $|\lambda+i\ell\Omega(r;G)|\ge\Re\lambda$ for
  every $r>0$; $\Vric_\ell$ is smooth in $\xi$, holomorphic in $\lambda$, and
  equal to $\ell^2$ outside $[\xi_1,\xi_2]$.
\item For $\ell\ne0$, $\Lsec{\ell}(G)$ is a bounded operator on $Y^0_\ell$.
  Consequently the series
  $e^{t\Lsec{\ell}(G)}=\sum_{n\ge0}t^n\Lsec{\ell}(G)^n/n!$ converges in operator
  norm for every $t\in\R$ and defines a group of bounded operators on $Y^0_\ell$,
  continuous in $t$ in operator norm, with
  $\frac{\dd}{\dd t}e^{t\Lsec{\ell}(G)}=\Lsec{\ell}(G)e^{t\Lsec{\ell}(G)}
  =e^{t\Lsec{\ell}(G)}\Lsec{\ell}(G)$; in particular $\Lsec{\ell}(G)$ generates the
  $C_0$-semigroup $(e^{t\Lsec{\ell}(G)})_{t\ge0}$.
\end{enumerate}
\end{lemma}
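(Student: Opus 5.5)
We prove the four parts in order; (2) rests on (1), and (4) on (1)--(3).

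For (1), we solve the radial ODE by variation of parameters. On each angular mode the homogeneous equation $\partial_r^2\psi+r^{-1}\partial_r\psi-\ell^2r^{-2}\psi=0$ has the solutions $r^{|\ell|}$ and $r^{-|\ell|}$, with Wronskian $-2|\ell|r^{-1}$. Hence the kernel $-\frac1{2|\ell|}(\min/\max)^{|\ell|}\rho$ is the Green function that selects $r^{|\ell|}$ near $0$ and $r^{-|\ell|}$ near $\infty$. Since $g$ is supported in $[\rho_-,\rho_+]$, we have $\Green_\ell g=c_-r^{|\ell|}$ for $r<\rho_-$ and $\Green_\ell g=c_+r^{-|\ell|}$ for $r>\rho_+$. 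Differentiating the integral gives the $C^1$ regularity and the equation almost everywhere. Uniqueness holds because any difference is $Ar^{|\ell|}+Br^{-|\ell|}$, and the two growth conditions force $A=B=0$. Boundedness into $C([\rho_-,\rho_+])$ follows from Cauchy--Schwarz in $L^2(r\dd r)$, since the kernel is bounded by $1$.

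For the identification with $\Delta^{-1}$, we expand $\frac1{2\pi}\log|x-y|$ exactly as in the multipole expansion after Definition~\ref{def:multipole}:
\[
\log|x-y|=\log\max(|x|,|y|)-\Re\sum_{m\ge1}m^{-1}\big(\min/\max\big)^m e^{\pm im(\cdot)}.
\]
Integrating this against $g(\rho)e^{i\ell\vartheta}$ keeps only the term $m=|\ell|$, which reproduces the kernel of $\Green_\ell$. The expansion converges conditionally on the diagonal $|x|=|y|$, so we would first do the computation for smooth $g$ and then extend by density, using the bound in $C([\rho_-,\rho_+])$.

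For (2), we write $\Lin(G)w$ as the sum of a transport term and a coupling term. The transport term $-\Omega\partial_\theta w$ has a radial coefficient, so it maps $g_\ell e^{i\ell\theta}$ to $-i\ell\Omega g_\ell e^{i\ell\theta}$; this is where $w\in Y^{s+1}$ is needed. For the coupling term we use $\KBS\ast w=\nabla^\perp\Delta^{-1}w$ (Lemma~\ref{lem:BS-basic}(d), extended by density) and $\nabla\bomega=\partial_r\bomega\,\er$. Then
\[
-(\KBS\ast w)\cdot\nabla\bomega=r^{-1}\partial_\theta\psi\,\partial_r\bomega,
\]
and on the mode $\ell$ this equals $\kcoef_\ell\Green_\ell g_\ell$ by (1). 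The coupling term is continuous on $Y^0$: by Lemma~\ref{lem:BS-basic}(c) it is bounded from $L^2$ into $L^2$, because $\nabla\bomega$ is bounded. So the identity passes from smooth $w$ to all of $Y^0$.

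The conjugation identity $\Lsec{-\ell}(G)\bar g=\overline{\Lsec{\ell}(G)g}$ holds because $\Omega$, $\bomega$ and the kernel of $\Green_\ell$ are real and depend on $\ell$ only through $|\ell|$, while $\kcoef_\ell$ and $-i\ell\Omega$ are odd imaginary in $\ell$. The coupling term commutes with rotations because $\KBS(\Orot z)=\Orot\KBS(z)$ and $\bomega$ is radial.

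For (3), we use that $\Omega$ is real, so $\Re(\lambda+i\ell\Omega)=\Re\lambda$. The potential $\Vric_\ell$ is smooth in $\xi$ and holomorphic in $\lambda$ because its denominator never vanishes. It equals $\ell^2$ outside $[\xi_1,\xi_2]$ because $\nu$ vanishes there.

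For (4), $\Omega$ and $\kcoef_\ell$ are bounded on $[\rho_-,\rho_+]$ because $\rho_->0$, and $\Green_\ell$ is bounded by (1). Since $\kcoef_\ell$ is supported in $[r_1,r_2]\subset[\rho_-,\rho_+]$, the operator $\Lsec{\ell}(G)$ preserves the support condition and is bounded on $Y^0_\ell$. The exponential series then converges in operator norm, and the group property, norm continuity and derivative formula are the standard facts for bounded generators.

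The main obstacle is the term-by-term integration of the logarithm expansion on the diagonal $|x|=|y|$ in (1). We avoid it by the smooth-$g$ density argument above, or alternatively by checking directly that $\Green_\ell g\,e^{i\ell\theta}$ and $\Delta^{-1}(ge^{i\ell\theta})$ both solve $\Delta u=ge^{i\ell\theta}$ with the same decay, and applying the uniqueness from (1).
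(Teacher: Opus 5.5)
Your proof is correct, and on most points it follows the paper. The variation-of-parameters formula, the form $c_\mp r^{\pm|\ell|}$ outside the support, uniqueness via $Ar^{|\ell|}+Br^{-|\ell|}$, and the Cauchy--Schwarz bound into $C([\rho_-,\rho_+])$ all match. So do the identity $-(\KBS\ast w)\cdot\nabla\bomega=r^{-1}\partial_r\bomega\,\partial_\theta\Delta^{-1}w$, extended by $L^2$-continuity via Lemma~\ref{lem:BS-basic}(c), and the bounded-generator argument in (4).

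The one place you diverge is the identification $(\Green_\ell g)e^{i\ell\theta}=\Delta^{-1}(ge^{i\ell\theta})$. Your main route integrates the multipole expansion term by term. The paper uses what you list as the fallback. For smooth $g$, $\Psi=(\Green_\ell g)e^{i\ell\theta}$ is smooth, being a multiple of $x^{|\ell|}$ or $\bar x^{|\ell|}$ near $0$, and satisfies $\Delta\Psi=w$. Since $\int w=0$, $\Psi_0=\frac1{2\pi}\log|\cdot|\ast w$ is $O(|x|^{-1})$, and Liouville's theorem kills the harmonic difference. Density then gives general $g$.

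Two remarks on your version.

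\emph{The diagonal is not an obstacle.} For fixed $x$ you only need term-by-term integration in $\vartheta$ at fixed $s=|y|\ne|x|$, where the series converges uniformly. The circle $s=|x|$ is a null set, and Fubini applies because $\log|x-\cdot|\,g$ is integrable. So the expansion route works directly for every $g\in Y^0_\ell$. Smoothing $g$ would not improve the convergence of the kernel expansion in any case.

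\emph{The decay in your fallback is weaker than stated.} A priori $\Delta^{-1}w$ is only known to be $O(|x|^{-1})$, not $O(r^{-|\ell|})$. So ``the same decay'' should read ``bounded near $0$ and $o(1)$ at infinity.'' Rotation covariance of the radial kernel shows that $\Delta^{-1}w$ is a profile times $e^{i\ell\theta}$. With that, your $Ar^{|\ell|}+Br^{-|\ell|}$ argument handles these weaker conditions equally well, as does Liouville in the plane.
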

\begin{proof}
(1) Write $n=|\ell|$. For $g\in Y^0_\ell$ the integrals below converge absolutely
by the Cauchy-Schwarz inequality, and
\[
(\Green_\ell g)(r)=-\frac1{2n}\Big[r^{-n}\int_0^r\rho^{n+1}g(\rho)\dd\rho+r^{n}\int_r^\infty\rho^{1-n}g(\rho)\dd\rho\Big]
\]
is continuous in $r>0$. The kernel has modulus at most $\frac1{2n}$ and $g$
vanishes outside $[\rho_-,\rho_+]$, so by the Cauchy-Schwarz inequality in
$L^2([\rho_-,\rho_+],\rho\dd\rho)$, for every $r>0$,
\[
|(\Green_\ell g)(r)|\le\frac1{2n}\int_{\rho_-}^{\rho_+}|g|\rho\dd\rho
\le\frac1{2n}\Big(\int_{\rho_-}^{\rho_+}\rho\dd\rho\Big)^{1/2}\|g\|_{L^2(r\dd r)}
\le\frac{\rho_+}{2n}\|g\|_{L^2(r\dd r)},
\]
as $\int_{\rho_-}^{\rho_+}\rho\dd\rho=\frac12(\rho_+^2-\rho_-^2)\le\rho_+^2$;
this is the last assertion. Differentiating, the terms from the variable limits
cancel, so $\psi=\Green_\ell g\in C^1$ with
$\partial_r\psi=\frac12\big[r^{-n-1}\int_0^r\rho^{n+1}g-r^{n-1}\int_r^\infty\rho^{1-n}g\big]$
absolutely continuous, and $\partial_r^2\psi+r^{-1}\partial_r\psi-n^2r^{-2}\psi=g$ almost everywhere.
Since $g$ vanishes outside $[\rho_-,\rho_+]$, for $r\le\rho_-$ the first integral in
the formula for $\psi$ vanishes and the second is $\int_{\rho_-}^{\rho_+}\rho^{1-n}g\dd\rho$,
and for $r\ge\rho_+$ the second vanishes and the first is
$\int_{\rho_-}^{\rho_+}\rho^{n+1}g\dd\rho$. So
\[
\psi(r)=c_-r^n\ (r\le\rho_-),\qquad \psi(r)=c_+r^{-n}\ (r\ge\rho_+),\qquad
c_\mp=-\frac1{2n}\int_{\rho_-}^{\rho_+}\rho^{1\mp n}g(\rho)\dd\rho ,
\]
and $\psi$ satisfies the growth conditions. For uniqueness, let $\tilde\psi$ be a
second solution with these properties and $D=\psi-\tilde\psi$. Then $D\in C^1$,
$\partial_rD$ is locally absolutely continuous, and $\partial_r^2D=-r^{-1}\partial_rD+n^2r^{-2}D$ almost everywhere;
the right side is continuous, so $D\in C^2((0,\infty))$ solves the homogeneous
equation classically. In the variable $\xi=\log r$ this equation reads
$\partial_\xi^2D=n^2D$ (by the identity displayed before
Definition~\ref{def:evans}), whose solutions are $Ae^{n\xi}+Be^{-n\xi}$; hence
$D=Ar^n+Br^{-n}$ for constants $A$ and $B$. As $r\to0$, $D=O(r^n)$, so
$B=r^n(D-Ar^n)=O(r^{2n})$ and $B=0$; then, as $r\to\infty$, $Ar^n=D=O(r^{-n})$, so
$A=0$. Thus $D=0$. For the equivalence, let first $g\in C_c^\infty((\rho_-,\rho_+))$, $w=g(r)e^{i\ell\theta}$,
$\Psi=(\Green_\ell g)(r)e^{i\ell\theta}$ and $\Psi_0=\frac1{2\pi}\log|\cdot|\ast w$.
Near the origin $\Psi$ is a multiple of the harmonic polynomial $x^n$ or $\bar
x^n$ ($x$ read as a complex number), so $\Psi\in C^\infty(\R^2)$, and $\Delta\Psi=w$
because $\Delta(\psi(r)e^{i\ell\theta})=(\partial_r^2\psi+r^{-1}\partial_r\psi-\ell^2r^{-2}\psi)e^{i\ell\theta}$.
Also $\Delta\Psi_0=w$, and $\Psi_0(x)=O(|x|^{-1})$ as $|x|\to\infty$. Indeed,
since $\ell\ne0$,
$\int w=\int_0^\infty g(r)r\dd r\int_0^{2\pi}e^{i\ell\theta}\dd\theta=0$, so
\[
\Psi_0(x)=\frac1{2\pi}\int\big(\log|x-y|-\log|x|\big)\,w(y)\dd y .
\]
Read $x$ and $y$ as complex numbers and let $|x|\ge2\rho_+$. On $\supp w$,
$|y|\le\rho_+\le|x|/2$, so $s=|1-y/x|\ge\frac12$ and $|s-1|\le|y|/|x|$; since
$|\log s|\le2|s-1|$ for $s\ge\frac12$, we get
$|\log|x-y|-\log|x||=|\log s|\le2|y|/|x|$ and
$|\Psi_0(x)|\le\frac{\rho_+}{\pi|x|}\|w\|_{L^1(\R^2)}$. So $\Psi-\Psi_0$ is harmonic and tends to $0$ at infinity, and Liouville's theorem
gives $\Psi=\Psi_0$. General $g\in Y^0_\ell$ follow by density, both sides being
continuous from $L^2([\rho_-,\rho_+],r\dd r)$ into $L^2_{\rm loc}(\R^2)$.

(2) With the conventions of Section~\ref{sec:prelim},
$\KBS\ast w=\nabla^\perp\Delta^{-1}w$ and $(\KBS\ast w)\cdot\er=-r^{-1}\partial_\theta\Delta^{-1}w$.
For $w=g(r)e^{i\ell\theta}$, $g\in C_c^\infty((\rho_-,\rho_+))$, $\ell\ne0$, part
(1) gives
$-(\KBS\ast w)\cdot\nabla\bomega=-\partial_r\bomega(r)\,(\KBS\ast w)\cdot\er
=\frac{i\ell\,\partial_r\bomega}{r}(\Green_\ell g)e^{i\ell\theta}=(\kcoef_\ell\Green_\ell g)e^{i\ell\theta}$;
for $\ell=0$, $\Delta^{-1}w$ is radial, so $\KBS\ast w=\nabla^\perp\Delta^{-1}w$
is a multiple of $\etheta$, while $\nabla\bomega=\partial_r\bomega\,\er$ is a multiple of $\er$,
and the term vanishes. Both sides are
continuous from $Y^0_\ell$ into $L^2(\R^2)$. For the left side, $w$ vanishes outside
$B_{\rho_+}$ and $\nabla\bomega$ is bounded and supported in $B_{\rho_+}$, so
Lemma~\ref{lem:BS-basic}(c) with $\rho=R=\rho_+$ gives
$\|(\KBS\ast w)\cdot\nabla\bomega\|_{L^2(\R^2)}\le2\rho_+\|\nabla\bomega\|_{L^\infty(\R^2)}\|w\|_{L^2(\R^2)}$;
for the right side, use (1). So the formula holds on $Y^0_\ell$. The coupling term commutes with rotations, since
$\KBS(\Orot_\phi x)=\Orot_\phi\KBS(x)$ and $\nabla\bomega$ is radial times $\er$;
hence it commutes with $P_\ell$, which gives the formula on $Y^0$. For $w\in Y^1$,
$P_\ell(-\Omega\partial_\theta w)=-i\ell\Omega g_\ell e^{i\ell\theta}$ by
integration by parts in $\theta$. The conjugation relation holds
because $\Omega$, $\bomega$, $\KBS$ and the kernel of $\Green_\ell=\Green_{-\ell}$ are
real.

(3) $\Re(\lambda+i\ell\Omega)=\Re\lambda$ because $\Omega$ is real; $\nu$ is smooth
and vanishes outside $[\xi_1,\xi_2]$, and $\Omega(e^\xi;G)$ is smooth in $\xi$.

(4) $\Omega(\cdot;G)$ and $\kcoef_\ell$ are bounded on $[\rho_-,\rho_+]$ and
$\kcoef_\ell$ vanishes outside $[r_1,r_2]$; so multiplication by $\Omega$, and
$\kcoef_\ell\Green_\ell$ by the last assertion of (1), are bounded on $Y^0_\ell$.
Write $L=\Lsec{\ell}(G)$. The series converges absolutely in operator norm, since
$\|t^nL^n/n!\|\le(|t|\,\|L\|)^n/n!$, uniformly for $t$ in compact sets. The group law
$e^{(t+t')L}=e^{tL}e^{t'L}$ follows from the Cauchy product of these absolutely
convergent series, and termwise differentiation of the power series in $t$, which
converges in operator norm, gives the derivative; so $L$ is the generator.
\end{proof}

Recall from Definition~\ref{def:evans} that, for an integer $\ell\ge1$ and
$\Re\lambda>0$, $\psi_\ell(\cdot;\lambda,G)$ is the solution on $\R$ of the Rayleigh
equation $\partial_\xi^2\psi=\Vric_\ell(\cdot;\lambda,G)\,\psi$ with $\psi(\xi_1)=1$ and
$\partial_\xi\psi(\xi_1)=\ell$.

\begin{lemma}
\label{lem:evans-basic}
In a mode setting, let $\ell\ge1$.
\begin{enumerate}[label=\textup{(\arabic*)}]
\item $\psi_\ell(\xi;\lambda,G)$ and its $\xi$-derivative are holomorphic in
  $\lambda\in\{\Re\lambda>0\}$, locally uniformly in $\xi$, and so is
  $\Evans_\ell(\cdot;G)$.
\item Put $\psi_L=e^{\ell\xi_1}\psi_\ell(\cdot;\lambda,G)$, so that
  $\psi_L(\xi)=e^{\ell\xi}$ for $\xi\le\xi_1$, and let $\psi_R$ be the solution of
  $\partial_\xi^2\psi=\Vric_\ell\psi$ with $\psi_R(\xi)=e^{-\ell\xi}$ for $\xi\ge\xi_2$
  \textup{(}$L$ and $R$ for the left end $\xi\le\xi_1$ and the right end
  $\xi\ge\xi_2$, where these solutions are explicit and decay\textup{)}. The
  Wronskian $\Wr(\lambda)=\psi_L\,\partial_\xi\psi_R-\partial_\xi\psi_L\,\psi_R$ is independent of $\xi$,
  and $\Wr(\lambda)=-\Evans_\ell(\lambda;G)$.
\end{enumerate}
For the ring in reference units, $\bomega=\omc$, $(\xi_1,\xi_2)=(\xiL,\xiR)$,
the Evans function of Definition~\ref{def:evans} is the function $\Evans_\ell$ of
Hypothesis~\ref{hyp:N} below.
\end{lemma}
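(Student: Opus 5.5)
For (1), I will treat the Rayleigh equation as a first-order linear system $Y'=A(\xi;\lambda)Y$, with $Y=(\psi,\partial_\xi\psi)$ and $A=\begin{pmatrix}0&1\\ \Vric_\ell&0\end{pmatrix}$.

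By Lemma~\ref{lem:sector}(3), $A$ is smooth in $\xi$ and holomorphic in $\lambda$ on $\{\Re\lambda>0\}$. It is also locally bounded there: $|\Vric_\ell-\ell^2|\le\ell\|\nu\|_\infty/\Re\lambda$, which is uniform on compact sets of $\lambda$.

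The solution with the fixed Cauchy data $(1,\ell)$ at $\xi_1$ is the limit of Picard iterates $Y_{k+1}(\xi)=Y(\xi_1)+\int_{\xi_1}^\xi A Y_k$. I will show that these iterates:
\begin{itemize}
\item are holomorphic in $\lambda$, by induction, since integrals in $\xi$ of holomorphic integrands that are locally bounded in $\lambda$ are holomorphic (Morera's theorem);
\item converge uniformly for $\xi$ in compact sets and $\lambda$ in compact subsets of the half-plane, by the usual factorial bound $(C|\xi-\xi_1|)^k/k!$ with $C$ a local bound for $\|A\|$.
\end{itemize}
Uniform limits of holomorphic functions are holomorphic, which gives the claim for $\psi_\ell$ and $\partial_\xi\psi_\ell$. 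The same then holds for $\Evans_\ell$, since it is a linear combination of their values at $\xi_2$.

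For (2), $\psi_L$ first needs its explicit form. By Lemma~\ref{lem:sector}(3), $\Vric_\ell=\ell^2$ on $(-\infty,\xi_1]$. The function $e^{\ell\xi}$ solves the equation there and has the same Cauchy data at $\xi_1$ as $e^{\ell\xi_1}\psi_\ell$, namely $(e^{\ell\xi_1},\ell e^{\ell\xi_1})$. By uniqueness for the linear ODE, $\psi_L=e^{\ell\xi}$ on $\xi\le\xi_1$.

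The solution $\psi_R$ is defined by its Cauchy data $(e^{-\ell\xi_2},-\ell e^{-\ell\xi_2})$ at $\xi_2$. It equals $e^{-\ell\xi}$ on $[\xi_2,\infty)$ for the same reason.

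The Wronskian is constant because there is no first-order term: $\Wr'=\psi_L\Vric_\ell\psi_R-\Vric_\ell\psi_L\psi_R=0$. I will evaluate it at $\xi_2$, where $\psi_R=e^{-\ell\xi_2}$ and $\partial_\xi\psi_R=-\ell e^{-\ell\xi_2}$:
\[
\Wr=e^{-\ell\xi_2}\big(-\ell\psi_L(\xi_2)-\partial_\xi\psi_L(\xi_2)\big)
=-e^{\ell\xi_1-\ell\xi_2}\big(\partial_\xi\psi_\ell(\xi_2)+\ell\psi_\ell(\xi_2)\big)=-\Evans_\ell .
\]

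For the final sentence, I will check that the ring $\omc$ with $(\xi_1,\xi_2)=(\xiL,\xiR)$ forms a mode setting. By Lemma~\ref{lem:ring-facts}, $\omc$ is smooth, radial and supported in $\{e^{\xiL}\le|x|\le e^{\xiR}\}$, so $\partial_r\omc$ is supported in $[e^{\xiL},e^{\xiR}]$; any annulus slightly larger than this serves as $N_*$. After that the identification is purely a matter of definitions.

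I expect the only step that needs any care to be the locally uniform holomorphy in (1), specifically the uniform bound on $A$ near the boundary of compact sets. Since $\Re\lambda$ is bounded below on each compact subset of the half-plane, this is routine.
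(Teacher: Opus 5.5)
Your proposal is correct and follows the paper's proof essentially step by step. For (1) it uses the same Picard iteration with a factorial bound, where the lower bound on $\Re\lambda$ over compact sets controls $A$; for (2) it uses constancy of the Wronskian (no first-order term) evaluated at $\xi_2$. The one thing you leave implicit is the short check in the final assertion, which the paper writes out: substituting $\rho=e^{\xi'}$ gives $\Circ_{\omc}(e^\xi)/(2\pi)=\int_{-\infty}^{\xi}\prof(\xi')e^{2\xi'}\dd\xi'$, hence $\Omega(e^\xi;G)=\Omega_G(\xi)$, and $\nu=\partial_\xi\prof$, so the Rayleigh equations agree on $[\xiL,\xiR]$.
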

\begin{proof}
(1) On $(-\infty,\xi_1]$, $\Vric_\ell=\ell^2$, so $\psi_\ell=e^{\ell(\xi-\xi_1)}$
there, independently of $\lambda$. On $[\xi_1,\infty)$, $Z=(\psi,\partial_\xi\psi)$
solves $\partial_\xi Z=AZ$ with
$A=\begin{psmallmatrix}0&1\\ \Vric_\ell&0\end{psmallmatrix}$; let
$Z_0=(1,\ell)$ and $Z_{n+1}(\xi)=Z_0+\int_{\xi_1}^\xi AZ_n$. For compact
$K\subset\{\Re\lambda>0\}$ and $\bar\xi>\xi_1$, Lemma~\ref{lem:sector}(3) gives $|A|\le
C_A=1+\ell^2+\ell\|\nu\|_{L^\infty(\R)}/\min_K\Re\lambda$ on $[\xi_1,\bar\xi]\times K$, and by
induction
$|Z_{n+1}-Z_n|\le|Z_0|(C_A(\xi-\xi_1))^{n+1}/(n+1)!$. The
iterates are continuous in $(\xi,\lambda)$ and holomorphic in $\lambda$ and converge
uniformly on $[\xi_1,\bar\xi]\times K$, so their limit $(\psi_\ell,\partial_\xi\psi_\ell)$ is
holomorphic in $\lambda$, uniformly in $\xi\in[\xi_1,\bar\xi]$. The same holds for $\psi_R$ of (2), with
data at $\xi_2$.

(2) The equation has no first-order term, so $\partial_\xi\Wr=0$. At $\xi_2$,
$\partial_\xi\psi_R=-\ell\psi_R=-\ell e^{-\ell\xi_2}$, so
$\Wr=-e^{-\ell\xi_2}(\partial_\xi\psi_L+\ell\psi_L)(\xi_2)
=-e^{-\ell(\xi_2-\xi_1)}(\partial_\xi\psi_\ell+\ell\psi_\ell)(\xi_2)=-\Evans_\ell(\lambda;G)$.

For the last assertion, $\omc$, $r_1=e^{\xiL}$, $r_2=e^{\xiR}$ and
$N_*=\{\frac12e^{\xiL}\le|x|\le2e^{\xiR}\}$ form a mode setting by
Lemma~\ref{lem:ring-facts}; $\nu=\partial_\xi\prof$, and
$\Circ_{\omc}(e^\xi)/(2\pi)=\int_0^{e^\xi}\omc(\rho)\rho\dd\rho
=\int_{-\infty}^\xi\prof(\xi')e^{2\xi'}\dd\xi'$, so $\Omega(e^\xi;G)=\Omega_G(\xi)$, with
$\Omega_G$ as in Hypothesis~\ref{hyp:N}.
So on $[\xiL,\xiR]$ the equation, the data and the formula for $\Evans_\ell$ are
those of Hypothesis~\ref{hyp:N}.
\end{proof}

For a closed operator $A$ and an isolated point $\lambda_0$ of its spectrum, the
\textbf{Riesz projection} at $\lambda_0$ is
$\frac1{2\pi i}\oint_{|\lambda-\lambda_0|=\epsilon}(\lambda-A)^{-1}\dd\lambda$, for any
$\epsilon>0$ such that the closed disk of radius $\epsilon$ meets the spectrum only
at $\lambda_0$; its rank is the \textbf{algebraic multiplicity} of $\lambda_0$, and
$\lambda_0$ is \textit{algebraically simple} if this rank is $1$.

\begin{lemma}
\label{lem:resolvent}
In a mode setting, let $\ell\ge1$ and $\Re\lambda>0$.
\begin{enumerate}[label=\textup{(\arabic*)}]
\item If $\Evans_\ell(\lambda;G)\ne0$, then $\lambda-\Lsec{\ell}(G)$ is invertible
  on $Y^0_\ell$ and on every $Y^s_\ell$, and for $g\in Y^0_\ell$,
  \[
  \begin{gathered}
  \big((\lambda-\Lsec{\ell})^{-1}g\big)(r)=\Mres_\lambda(r)\big(g(r)+\kcoef_\ell(r)\,\psi_g(\log r)\big),\\
  \psi_g(\xi)=\frac{1}{\Wr(\lambda)}\Big[\psi_R(\xi)\int_{-\infty}^{\xi}\psi_LF\dd\xi'+\psi_L(\xi)\int_{\xi}^{\infty}\psi_RF\dd\xi'\Big],
  \end{gathered}
  \]
  where $\psi_L$, $\psi_R$ and $\Wr$ are as in Lemma~\ref{lem:evans-basic}(2),
  and $F(\xi)=e^{2\xi}(\Mres_\lambda g)(e^\xi)$.
\item If $\Evans_\ell(\lambda;G)=0$, then $\lambda$ is an eigenvalue of
  $\Lsec{\ell}(G)$ on $Y^0_\ell$ and on every $Y^s_\ell$, with eigenfunction
  $g_\lambda=\Mres_\lambda\kcoef_\ell\,\psi_\ell(\log r;\lambda,G)\ne0$, which is
  smooth and supported in $[r_1,r_2]$.
\end{enumerate}
In particular, for each $\ell\ge1$ the spectrum of $\Lsec{\ell}(G)$ in
$\{\Re\lambda>0\}$ is the zero set of $\Evans_\ell(\cdot;G)$ and does not depend
on $s$; for $\ell\le-1$ it is the complex conjugate of that of $\Lsec{-\ell}(G)$.
\end{lemma}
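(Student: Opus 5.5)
The plan is to reduce the eigenvalue problem $(\lambda-\Lsec{\ell}(G))g=h$ to an inhomogeneous Rayleigh equation for the stream function $\psi=\Green_\ell g$, and to solve that equation by variation of constants with $\psi_L,\psi_R$ of Lemma~\ref{lem:evans-basic}(2).

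\textbf{Reduction.} Suppose $g\in Y^0_\ell$ solves $(\lambda-\Lsec{\ell})g=h$. Then $g=\Mres_\lambda(h+\kcoef_\ell\psi)$ with $\psi=\Green_\ell g$. By Lemma~\ref{lem:sector}(1), $\psi$ solves $r^{-2}(\partial_\xi^2-\ell^2)\psi=g$ in $\xi=\log r$. Since $\kcoef_\ell=i\ell\nu e^{-2\xi}$, substituting gives
\[
\partial_\xi^2\psi-\Vric_\ell\psi=F,\qquad F=e^{2\xi}\Mres_\lambda h .
\]
We have $\Mres_\lambda$ bounded by $1/\Re\lambda$ (Lemma~\ref{lem:sector}(3)), and $F$ is supported in $[\log\rho_-,\log\rho_+]$. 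The growth conditions of Lemma~\ref{lem:sector}(1) become $\psi=O(e^{\ell\xi})$ at $-\infty$ and $\psi=O(e^{-\ell\xi})$ at $+\infty$.

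\textbf{Part (1).} If $\Wr=-\Evans_\ell\ne0$, the variation-of-constants formula $\psi_g$ in the statement is a solution with these growth properties. Outside $\supp F$ it is a multiple of $\psi_L$ (at the left end) or of $\psi_R$ (at the right end), hence $e^{\pm\ell\xi}$ there. A difference of two such solutions solves the homogeneous equation and decays at both ends, so it is a multiple of both $\psi_L$ and $\psi_R$. As $\Wr\ne0$ these are independent, so the difference is zero.

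I then define $Rh=\Mres_\lambda(h+\kcoef_\ell\psi_h)$ and verify $\Green_\ell Rh=\psi_h$. Indeed $r^{-2}(\partial_\xi^2-\ell^2)\psi_h=\Mres_\lambda h+r^{-2}(\Vric_\ell-\ell^2)\psi_h=Rh$, and the uniqueness in Lemma~\ref{lem:sector}(1) gives the identity. Consequently $(\lambda-\Lsec{\ell})Rh=h$, and the uniqueness argument above gives injectivity. Boundedness on $Y^0_\ell$ follows since $h\mapsto\psi_h$ is bounded into $C([\rho_-,\rho_+])$.

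For $Y^s_\ell$, the ingredients are as follows.
\begin{itemize}
\item $\Mres_\lambda$ is multiplication by a smooth function of $r$, so it maps $Y^s_\ell$ to itself.
\item $\kcoef_\ell\psi_h$ is smooth and compactly supported in $[r_1,r_2]\subset(\rho_-,\rho_+)$, with $C^k$ norms controlled by $\|h\|_{Y^0_\ell}$, since $\psi_L,\psi_R$ are smooth.
\item Supports stay inside $[\rho_-,\rho_+]$.
\end{itemize}
The point requiring some care is that the $H^s$ norm of $g(r)e^{i\ell\theta}$ away from the origin is equivalent to a weighted $H^s$ norm of the profile, so radial multipliers act boundedly. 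I expect this regularity bookkeeping to be the main obstacle, though it is routine.

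\textbf{Part (2).} If $\Evans_\ell=0$, then $\Wr=0$, so $\psi_\ell$ is proportional to $\psi_R$ and hence decays at both ends. Set $g_\lambda=\Mres_\lambda\kcoef_\ell\psi_\ell(\log r)$. It is smooth and supported in $[r_1,r_2]$, and $r^{-2}(\partial_\xi^2-\ell^2)\psi_\ell=g_\lambda$. By uniqueness in Lemma~\ref{lem:sector}(1), $\Green_\ell g_\lambda=\psi_\ell$, so $\Lsec{\ell}g_\lambda=-i\ell\Omega g_\lambda+\kcoef_\ell\psi_\ell=\lambda g_\lambda$.

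It remains to show $g_\lambda\ne0$. If $g_\lambda=0$, then $\psi_\ell=\Green_\ell 0=0$, contradicting $\psi_\ell(\xi_1)=1$. Smoothness and compact support put $g_\lambda$ in every $Y^s_\ell$.

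\textbf{Final assertion.} Parts (1) and (2) together show that the spectrum in $\{\Re\lambda>0\}$ is the zero set of $\Evans_\ell$, independently of $s$. The statement for $\ell\le-1$ follows from the conjugation relation $\Lsec{-\ell}\bar g=\overline{\Lsec{\ell}g}$ in Lemma~\ref{lem:sector}(2).
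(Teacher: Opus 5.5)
Your reduction to the boundary value problem for $\psi=\Green_\ell g$ matches the paper's proof, as do the variation-of-constants solution, the uniqueness/injectivity argument and part (2), and they are correct. There is one gap, in the $Y^s_\ell$ step.

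\textbf{The gap.} You assert that $\kcoef_\ell\psi_h$ is smooth, with $C^k$ norms controlled by $\|h\|_{Y^0_\ell}$, ``since $\psi_L,\psi_R$ are smooth''. This is false for $k\ge2$.

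\begin{itemize}
\item In the formula for $\psi_h$, the coefficients $\int_{-\infty}^{\xi}\psi_LF\dd\xi'$ and $\int_\xi^\infty\psi_RF\dd\xi'$ are antiderivatives of $F=e^{2\xi}\Mres_\lambda h$. They are no smoother than that.
\item Equivalently, $\partial_\xi^2\psi_h=\Vric_\ell\psi_h+F$.
\item Since $[\xi_1,\xi_2]\subset[\log\rho_-,\log\rho_+]$, $F$ is in general nonzero on the support of $\kcoef_\ell$.
\end{itemize}

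So for $h\in Y^0_\ell$ you only get $|\psi_h|+|\partial_\xi\psi_h|\le C\|h\|_{Y^0_\ell}$ and $\partial_\xi^2\psi_h\in L^2$. On the support of $\kcoef_\ell$, $\psi_h$ gains exactly two derivatives over $F$, not infinitely many. In particular, no bound of higher Sobolev norms of $\kcoef_\ell\psi_h$ by $\|h\|_{Y^0_\ell}$ can hold.

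\textbf{The repair.} Use the bootstrap that the paper uses.

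\begin{enumerate}
\item For $h\in Y^s_\ell$, polar coordinates are smooth near $N_*$, so $h\in H^s$ as a profile. Since $\Mres_\lambda$ and $e^{2\xi}$ are smooth, $\|F\|_{H^s([\xi_1,\xi_2])}\le C\|h\|_{Y^s_\ell}$.
\item Induction on $\partial_\xi^2\psi_h=\Vric_\ell\psi_h+F$, starting from the $C^1$ bound, gives $\|\psi_h\|_{H^{s+2}([\xi_1,\xi_2])}\le C\|h\|_{Y^s_\ell}$.
\item Since $\kcoef_\ell$ is smooth and supported in $[r_1,r_2]$, this yields $\|\kcoef_\ell(r)\psi_h(\log r)e^{i\ell\theta}\|_{H^{s+2}}\le C\|h\|_{Y^s_\ell}$.
\item Together with $\Mres_\lambda$ being a bounded multiplier on $Y^s_\ell$, the inverse is bounded on $Y^s_\ell$.
\end{enumerate}

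The resulting bound is in terms of $\|h\|_{Y^s_\ell}$, not $\|h\|_{Y^0_\ell}$. With this change your argument coincides with the paper's.
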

\begin{proof}
For $\eta\in Y^0_\ell$ and $\psi=\Green_\ell\eta$, the equation
$(\lambda-\Lsec{\ell})\eta=g$ reads $\eta=\Mres_\lambda(g+\kcoef_\ell\psi)$. Put
$\tilde\psi(\xi)=\psi(e^\xi)$. By the identity displayed before
Definition~\ref{def:evans}, the radial equation of Lemma~\ref{lem:sector}(1) reads
$\partial_\xi^2\tilde\psi-\ell^2\tilde\psi=e^{2\xi}\eta(e^\xi)$. At $r=e^\xi$,
$e^{2\xi}\kcoef_\ell=r^2\cdot i\ell\,\partial_r\bomega/r=i\ell\nu$, so
$e^{2\xi}\Mres_\lambda\kcoef_\ell=i\ell\nu\Mres_\lambda=\Vric_\ell-\ell^2$, all
read at $r=e^\xi$; hence $u=\tilde\psi$ solves
\begin{equation}
\label{eq:bvp}
\partial_\xi^2u-\Vric_\ell u=F,\qquad u=O(e^{\ell\xi})\ (\xi\to-\infty),\qquad
u=O(e^{-\ell\xi})\ (\xi\to+\infty).
\end{equation}
Conversely, if $\tilde\psi$ solves \eqref{eq:bvp}, put $\psi(r)=\tilde\psi(\log r)$
and $\eta=\Mres_\lambda(g+\kcoef_\ell\psi)$. Then $\eta\in Y^0_\ell$, and
$\partial_\xi^2\tilde\psi-\ell^2\tilde\psi=e^{2\xi}\eta(e^\xi)$, so $\psi$ solves the
radial equation of Lemma~\ref{lem:sector}(1) with the growth conditions there;
hence $\psi=\Green_\ell\eta$ and $(\lambda-\Lsec{\ell})\eta=g$.

(1) By Lemma~\ref{lem:evans-basic}(2), $\Wr\ne0$. Put
$A(\xi)=\int_{-\infty}^\xi\psi_LF\dd\xi'$ and $B(\xi)=\int_\xi^\infty\psi_RF\dd\xi'$,
so that $\Wr\psi_g=\psi_RA+\psi_LB$, $\partial_\xi A=\psi_LF$ and
$\partial_\xi B=-\psi_RF$. Differentiating, the terms from the variable limits
cancel: $\Wr\,\partial_\xi\psi_g=\partial_\xi\psi_R\,A+\partial_\xi\psi_L\,B+(\psi_R\psi_L-\psi_L\psi_R)F
=\partial_\xi\psi_R\,A+\partial_\xi\psi_L\,B$. Differentiating again and using
$\partial_\xi^2\psi_L=\Vric_\ell\psi_L$ and $\partial_\xi^2\psi_R=\Vric_\ell\psi_R$,
\[
\Wr\,\partial_\xi^2\psi_g=\Vric_\ell(\psi_RA+\psi_LB)+(\psi_L\,\partial_\xi\psi_R-\partial_\xi\psi_L\,\psi_R)F
=\Wr\,(\Vric_\ell\psi_g+F)
\]
almost everywhere, by the definition of $\Wr$; so $\partial_\xi^2\psi_g=\Vric_\ell\psi_g+F$.
Since $F$ vanishes outside $[\log\rho_-,\log\rho_+]$, $\psi_g$ is a multiple of
$\psi_L=e^{\ell\xi}$ below $\min(\xi_1,\log\rho_-)$ and of $\psi_R=e^{-\ell\xi}$ above
$\max(\xi_2,\log\rho_+)$; so $\psi_g$ solves \eqref{eq:bvp}. As shown at the start
of the proof, this means that the function
$r\mapsto\Mres_\lambda(r)\big(g(r)+\kcoef_\ell(r)\psi_g(\log r)\big)$ solves the
resolvent equation. If $(\lambda-\Lsec{\ell})\eta=0$, then
$\xi\mapsto(\Green_\ell\eta)(e^\xi)$ solves \eqref{eq:bvp} with $F=0$,
so it is $O(e^{\ell\xi})$ solving $\partial_\xi^2\psi=\ell^2\psi$ on $(-\infty,\xi_1]$, hence a
multiple of $\psi_L$, and likewise of $\psi_R$; as $\Wr\ne0$, it vanishes, and
$\eta=\Mres_\lambda\kcoef_\ell\Green_\ell\eta=0$. For the bounds: $\Mres_\lambda$ is
smooth on $(0,\infty)$ by Lemma~\ref{lem:sector}(3), so it is a bounded
multiplier on every $Y^s_\ell$. Since $r\dd r=e^{2\xi}\dd\xi$,
$|\Mres_\lambda|\le1/\Re\lambda$ (Lemma~\ref{lem:sector}(3)) and $g$ vanishes outside
$[\rho_-,\rho_+]$, the Cauchy-Schwarz inequality, as in the proof of
Lemma~\ref{lem:sector}(1), gives
\[
\|F\|_{L^1(\R)}=\int_0^\infty|\Mres_\lambda g|\,r\dd r
\le\frac1{\Re\lambda}\int_{\rho_-}^{\rho_+}|g|\,r\dd r
\le\frac{\rho_+}{\Re\lambda}\|g\|_{L^2(r\dd r)}
=\frac{\rho_+}{\sqrt{2\pi}\,\Re\lambda}\|g\|_{Y^0_\ell},
\]
Put $C_L=\sup_{\xi\le\xi_2}(|\psi_L|+|\partial_\xi\psi_L|)$ and
$C_R=\sup_{\xi\ge\xi_1}(|\psi_R|+|\partial_\xi\psi_R|)$; they are finite, as
$\psi_L=e^{\ell\xi}$ for $\xi\le\xi_1$, $\psi_R=e^{-\ell\xi}$ for $\xi\ge\xi_2$, and
both are $C^1$ on $[\xi_1,\xi_2]$. For $\xi\in[\xi_1,\xi_2]$,
$|A(\xi)|\le C_L\|F\|_{L^1(\R)}$ and $|B(\xi)|\le C_R\|F\|_{L^1(\R)}$, so the
formulas for $\Wr\psi_g$ and $\Wr\,\partial_\xi\psi_g$ above give
$|\psi_g|+|\partial_\xi\psi_g|\le2C_LC_R|\Wr|^{-1}\|F\|_{L^1(\R)}\le C\|g\|_{Y^0_\ell}$ on
$[\xi_1,\xi_2]$, with $C$ depending on $\lambda$. If $g\in Y^s_\ell$, then
$g\in H^s((0,\infty))$, polar coordinates being smooth near $N_*$, so
$\|F\|_{H^s([\xi_1,\xi_2])}\le C\|g\|_{Y^s_\ell}$, and from $\partial_\xi^2\psi_g=\Vric_\ell\psi_g+F$
by induction $\|\psi_g\|_{H^{s+2}([\xi_1,\xi_2])}\le C\|g\|_{Y^s_\ell}$. As
$\kcoef_\ell$ is smooth and supported in $[r_1,r_2]$,
$\|\kcoef_\ell(r)\psi_g(\log r)e^{i\ell\theta}\|_{H^{s+2}}\le C\|g\|_{Y^s_\ell}$. Hence
$(\lambda-\Lsec{\ell})^{-1}$ is bounded on $Y^0_\ell$ and on every $Y^s_\ell$.

(2) Now $\Wr=0$. The functions $\psi_L$ and $\psi_R$ solve the same linear
second-order equation, and their Wronskian vanishes, so they are linearly
dependent; as $\psi_R\ne0$, $\psi_L=c\,\psi_R$ for a constant $c$. Hence
$\psi_\ell=e^{-\ell\xi_1}\psi_L=c\,e^{-\ell\xi_1}e^{-\ell\xi}$ for $\xi\ge\xi_2$, so
$\psi_\ell=O(e^{-\ell\xi})$ as $\xi\to+\infty$: the vanishing of $\Wr$ is what gives
the decay at $+\infty$. As $\psi_\ell=e^{\ell(\xi-\xi_1)}$ for $\xi\le\xi_1$, the
condition at $-\infty$ holds as well, so $\psi_\ell$ solves \eqref{eq:bvp} with
$F=0$, that is, with $g=0$. As shown at the start of the proof, if $u$ solves \eqref{eq:bvp}, then
$\eta(r)=\Mres_\lambda(r)\big(g(r)+\kcoef_\ell(r)\,u(\log r)\big)$ satisfies
$(\lambda-\Lsec{\ell})\eta=g$ and $(\Green_\ell\eta)(r)=u(\log r)$; with $g=0$ and
$u=\psi_\ell$ this gives $\eta=g_\lambda$, so $\Lsec{\ell}g_\lambda=\lambda g_\lambda$ and
$(\Green_\ell g_\lambda)(r)=\psi_\ell(\log r;\lambda,G)$, and
$g_\lambda\ne0$ since $\psi_\ell(\xi_1)=1$. As
$g_\lambda$ is smooth and supported in $[r_1,r_2]$, it lies in every $Y^s_\ell$.

The spectral statement for $\ell\ge1$ follows from (1) and (2). For $\ell\le-1$,
complex conjugation is an isometry of $Y^s_{-\ell}$ onto
$Y^s_\ell$ that conjugates $\lambda-\Lsec{\ell}$ to $\bar\lambda-\Lsec{-\ell}$, by
Lemma~\ref{lem:sector}(2).
\end{proof}

\begin{theorem}
\label{thm:evans-simple}
In a mode setting, let $\ell\ge1$ and let $\lamst$ be a zero of
$\Evans_\ell(\cdot;G)$ with $\Re\lamst>0$. Put $\psi_*=\psi_\ell(\cdot;\lamst,G)$,
$g_*=\Mres_{\lamst}\kcoef_\ell\,\psi_*(\log r)$ and
\[
\Jsimp=\int_0^\infty g_*(r)\,\psi_*(\log r)\,\Mres_{\lamst}(r)\,r\dd r
=i\ell\int_{\R}\frac{\nu(\xi)\,\psi_*(\xi)^2}{(\lamst+i\ell\Omega(e^\xi;G))^2}\dd\xi .
\]
\begin{enumerate}[label=\textup{(\arabic*)}]
\item $\psi_*(\xi_2)\ne0$ and
  $e^{\ell(\xi_2-\xi_1)}\,\psi_*(\xi_2)\,\partial_\lambda\Evans_\ell(\lamst;G)=-\Jsimp$.
\item If $\partial_\lambda\Evans_\ell(\lamst;G)\ne0$, then $\Jsimp\ne0$ by \textup{(1)},
  and the Riesz projection of $\Lsec{\ell}(G)$ at $\lamst$, on $Y^0_\ell$ and on
  each $Y^s_\ell$, is
  \[
  \Riesz g=\ellst(g)\,g_*,\qquad
  \ellst(g)=\frac1{\Jsimp}\int_0^\infty g(r)\,\psi_*(\log r)\,\Mres_{\lamst}(r)\,r\dd r ;
  \]
  it has rank one, so $\lamst$ is an algebraically simple eigenvalue.
\end{enumerate}
\end{theorem}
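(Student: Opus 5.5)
Part (1) comes from differentiating the Rayleigh equation in $\lambda$ and pairing the result with $\psi_*$ by a Wronskian identity; part (2) from a residue computation on the explicit resolvent of Lemma~\ref{lem:resolvent}(1).

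For (1), first $\psi_*(\xi_2)\ne0$. Since $\Evans_\ell(\lamst;G)=0$, the proof of Lemma~\ref{lem:resolvent}(2) gives $\psi_*=c\,e^{-\ell\xi}$ for $\xi\ge\xi_2$. Then $\psi_*(\xi_2)=0$ would force $c=0$, so $\psi_*\equiv0$ by uniqueness for the ODE, contradicting $\psi_*(\xi_1)=1$.

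Next put $\dot\psi=\partial_\lambda\psi_\ell(\cdot;\lambda,G)|_{\lamst}$. It exists and is smooth by Lemma~\ref{lem:evans-basic}(1), vanishes on $(-\infty,\xi_1]$ (where $\psi_\ell$ is independent of $\lambda$), and solves
\[
\partial_\xi^2\dot\psi-\Vric_\ell\dot\psi=\partial_\lambda\Vric_\ell\,\psi_*,\qquad
\partial_\lambda\Vric_\ell=-\frac{i\ell\nu}{(\lamst+i\ell\Omega)^2}.
\]
Multiply by $\psi_*$ and subtract $\dot\psi$ times the equation for $\psi_*$. Integrating over $[\xi_1,\xi_2]$ then gives
\[
\big[\psi_*\partial_\xi\dot\psi-\dot\psi\,\partial_\xi\psi_*\big]_{\xi_1}^{\xi_2}=-i\ell\int\frac{\nu\psi_*^2}{(\lamst+i\ell\Omega)^2}\dd\xi=-\Jsimp .
\]
The boundary term at $\xi_1$ vanishes. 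At $\xi_2$ we use $\partial_\xi\psi_*=-\ell\psi_*$, so the term equals $\psi_*(\xi_2)\big(\partial_\xi\dot\psi+\ell\dot\psi\big)(\xi_2)$. Differentiating Definition~\ref{def:evans} in $\lambda$ gives $\big(\partial_\xi\dot\psi+\ell\dot\psi\big)(\xi_2)=e^{\ell(\xi_2-\xi_1)}\partial_\lambda\Evans_\ell(\lamst;G)$, which proves (1).

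It remains to check that the two expressions for $\Jsimp$ agree. Substitute $g_*=\Mres\kcoef_\ell\psi_*$, note $r\dd r=e^{2\xi}\dd\xi$, and use $e^{2\xi}\kcoef_\ell=i\ell\nu$.

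For (2), the zeros of the holomorphic function $\Evans_\ell$ are isolated, so $\lamst$ is an isolated point of the spectrum (Lemma~\ref{lem:resolvent}). Take a small circle around $\lamst$ on which $\Evans_\ell\ne0$ and integrate the formula of Lemma~\ref{lem:resolvent}(1). The term $\Mres_\lambda g$ is holomorphic near $\lamst$, as $\Re\lamst>0$, so only $\Mres_\lambda\kcoef_\ell\psi_g$ contributes. In $\psi_g$, the functions $\psi_L,\psi_R,F,\Mres_\lambda$ are holomorphic in $\lambda$, uniformly on compact $\xi$-sets (Lemma~\ref{lem:evans-basic}(1)), and $1/\Wr=-1/\Evans_\ell$ has a simple pole since $\partial_\lambda\Evans_\ell(\lamst)\ne0$. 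The residue is therefore
\[
-\frac{1}{\partial_\lambda\Evans_\ell(\lamst)}\,\Mres_{\lamst}\kcoef_\ell\Big[\psi_R\int_{-\infty}^\xi\psi_LF+\psi_L\int_\xi^\infty\psi_RF\Big]_{\lamst}.
\]
At $\lamst$ we have $\psi_L=c\,\psi_R$ with $\psi_L=e^{\ell\xi_1}\psi_*$. Comparing at $\xi_2$ gives $\psi_R=e^{\ell\xi_1}\psi_*/(c)$ with $c=e^{\ell\xi_1}\psi_*(\xi_2)e^{\ell\xi_2}$. Hence the bracket equals $\psi_R(\xi)\int_\R\psi_LF=\frac{e^{\ell\xi_1}}{c}\,\psi_*(\xi)\,e^{\ell\xi_1}\int\psi_*F$, and $\int\psi_*F=\int g\,\psi_*(\log r)\Mres_{\lamst}\,r\dd r$. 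Collecting constants with (1), the factor $e^{2\ell\xi_1}/(c\,\partial_\lambda\Evans_\ell)=e^{-\ell(\xi_2-\xi_1)}/(\psi_*(\xi_2)\partial_\lambda\Evans_\ell)=-1/\Jsimp$ should appear. With the sign from $\frac1{2\pi i}\oint(\lambda-L)^{-1}$ this gives $\Riesz g=\ellst(g)g_*$. Since $\Jsimp\ne0$ by (1), $\Riesz$ has rank one.

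The main obstacle is this sign and constant bookkeeping, together with justifying the residue. The contour integral must converge in $Y^s_\ell$ norm and not only pointwise, so that the resulting operator is the Riesz projection on every $Y^s_\ell$. For this I would use the uniform bounds of the proof of Lemma~\ref{lem:resolvent}(1), applied to the holomorphic numerator, on a compact neighbourhood of $\lamst$. The range is spanned by $g_*$, which lies in every $Y^s_\ell$.
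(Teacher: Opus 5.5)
Your proposal is correct and follows the paper's proof. For (1) you use the same Wronskian identity for $\dot\psi=\partial_\lambda\psi_\ell$, integrated up to $\xi_2$; for (2) you use the same residue computation on the explicit resolvent of Lemma~\ref{lem:resolvent}(1), with $\psi_L\propto\psi_R$ at $\lamst$, and your constant bookkeeping checks out.

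The only difference is how the residue step is justified. The paper identifies the $Y^0_\ell$-valued contour integral with the scalar contour integral at almost every $r$, via Riemann sums along a subsequence. It then obtains the statement on $Y^s_\ell$ by noting that the resolvents there are restrictions of those on $Y^0_\ell$, so the uniform $Y^s_\ell$ bounds on the numerator that you plan are not needed.
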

\begin{proof}
The two expressions for $\Jsimp$ agree because $r\dd r=e^{2\xi}\dd\xi$ and
$e^{2\xi}g_*=i\ell \nu\Mres_{\lamst}\psi_*$. Write $\Mres_*=\Mres_{\lamst}$.

(1) Put $\dot\psi=\partial_\lambda\psi_\ell(\cdot;\lambda,G)$ at $\lambda=\lamst$. It
vanishes on $(-\infty,\xi_1]$, where $\psi_\ell(\xi;\lambda,G)=e^{\ell(\xi-\xi_1)}$ for
every $\lambda$: there $\Vric_\ell=\ell^2$, and the data $\psi(\xi_1)=1$,
$\partial_\xi\psi(\xi_1)=\ell$ do not depend on $\lambda$ (proof of
Lemma~\ref{lem:evans-basic}(1)). Moreover, since differentiation in $\lambda$ and in $\xi$
commute by Lemma~\ref{lem:evans-basic}(1) and the integral form of the equation, it
solves $\partial_\xi^2\dot\psi=\Vric_\ell\dot\psi+\partial_\lambda\Vric_\ell\,\psi_*$ with
$\partial_\lambda\Vric_\ell=-i\ell \nu\Mres_*^2$. Hence
$\partial_\xi(\partial_\xi\dot\psi\,\psi_*-\dot\psi\,\partial_\xi\psi_*)=\partial_\lambda\Vric_\ell\,\psi_*^2$. Since
$\Evans_\ell(\lamst;G)=0$, $\partial_\xi\psi_*(\xi_2)=-\ell\psi_*(\xi_2)$, so at $\xi_2$
\[
\partial_\xi\dot\psi\,\psi_*-\dot\psi\,\partial_\xi\psi_*=\psi_*(\xi_2)(\partial_\xi\dot\psi+\ell\dot\psi)(\xi_2)
=\psi_*(\xi_2)e^{\ell(\xi_2-\xi_1)}\partial_\lambda\Evans_\ell(\lamst;G),
\]
while at $-\infty$ it vanishes. Integrating the identity
$\partial_\xi(\partial_\xi\dot\psi\,\psi_*-\dot\psi\,\partial_\xi\psi_*)=\partial_\lambda\Vric_\ell\,\psi_*^2$
in $\xi$ from $-\infty$ to $\xi_2$, as $\supp \nu\subset[\xi_1,\xi_2]$, gives $\psi_*(\xi_2)e^{\ell(\xi_2-\xi_1)}\allowbreak\partial_\lambda\Evans_\ell(\lamst;G)
=-i\ell\int \nu\Mres_*^2\psi_*^2\dd\xi=-\Jsimp$.
If $\psi_*(\xi_2)=0$, then $\partial_\xi\psi_*(\xi_2)=0$ and $\psi_*\equiv0$ by uniqueness,
contradicting $\psi_*(\xi_1)=1$.

(2) Let $D$ be a closed disk about $\lamst$ in $\{\Re\lambda>0\}$ containing no
other zero of $\Evans_\ell(\cdot;G)$; it exists because
$\partial_\lambda\Evans_\ell(\lamst;G)\ne0$. By Lemma~\ref{lem:resolvent}, $\lamst$
is isolated in the spectrum on $Y^0_\ell$ and on each $Y^s_\ell$. For $g\in Y^0_\ell$
and $\Re\lambda>0$ let $\mathcal N(\lambda)g$ be the bracket in the formula for
$\psi_g$ in Lemma~\ref{lem:resolvent}(1), that is,
\[
(\mathcal N(\lambda)g)(\xi)=\psi_R(\xi)\int_{-\infty}^{\xi}\psi_LF\dd\xi'+\psi_L(\xi)\int_{\xi}^{\infty}\psi_RF\dd\xi',
\qquad F(\xi)=e^{2\xi}(\Mres_\lambda g)(e^\xi),
\]
with $\psi_L$, $\psi_R$ taken at $\lambda$. Then $\mathcal N(\lambda)g=\Wr(\lambda)\psi_g
=-\Evans_\ell(\lambda;G)\psi_g$ wherever $\Evans_\ell(\lambda;G)\ne0$, so for
$\lambda\in D\setminus\{\lamst\}$
\[
\big((\lambda-\Lsec{\ell})^{-1}g\big)(r)=\Mres_\lambda(r)g(r)
-\frac{\Mres_\lambda(r)\kcoef_\ell(r)\,(\mathcal N(\lambda)g)(\log r)}{\Evans_\ell(\lambda;G)} .
\]
Unlike $\psi_g$, $\mathcal N(\lambda)g$ involves no division by $\Evans_\ell$: for fixed
$r$, $\Mres_\lambda(r)g(r)$ and $(\mathcal N(\lambda)g)(\log r)$ are holomorphic in
$\lambda$ on all of $\{\Re\lambda>0\}$, across $\lamst$ too, because $\psi_L$,
$\psi_R$ and $F$ are, with integrable bounds. So the only singularity of the
resolvent at $\lamst$ is the factor $1/\Evans_\ell$.
Since $D$ meets the spectrum only at $\lamst$, the Riesz projection is
$\Riesz g=\frac1{2\pi i}\oint_{\partial D}(\lambda-\Lsec{\ell})^{-1}g\dd\lambda$, the
$Y^0_\ell$-valued integral of a continuous function of $\lambda\in\partial D$. For
each $\lambda$ we use the representative of $(\lambda-\Lsec{\ell})^{-1}g$ given by
the formula above, which is defined at every $r$ and, for fixed $r$, continuous in
$\lambda\in\partial D$. The Riemann sums of the $Y^0_\ell$-valued integral converge
to $\Riesz g$ in $Y^0_\ell$, hence almost everywhere along a subsequence; at each
$r$ they are Riemann sums of the scalar integral
$\frac1{2\pi i}\oint_{\partial D}((\lambda-\Lsec{\ell})^{-1}g)(r)\dd\lambda$ and
converge to it. So $(\Riesz g)(r)$ equals this scalar integral for almost every
$r$. In it, $\Mres_\lambda(r)g(r)$ is holomorphic on $D$ and contributes $0$ by
Cauchy's theorem. In the other term, the numerator
$h_r(\lambda)=\Mres_\lambda(r)\kcoef_\ell(r)(\mathcal N(\lambda)g)(\log r)$ is
holomorphic on $D$, and $\Evans_\ell(\cdot;G)$ is holomorphic on $D$ with the single
zero $\lamst$, which is simple; so the integrand has at most a simple pole at $\lamst$, with
residue $-h_r(\lamst)/\partial_\lambda\Evans_\ell(\lamst;G)$. By the residue
theorem, for almost every $r$,
\[
(\Riesz g)(r)=-\frac{\Mres_*(r)\kcoef_\ell(r)\,(\mathcal N(\lamst)g)(\log r)}{\partial_\lambda\Evans_\ell(\lamst;G)} .
\]
At $\lamst$, $\Wr=0$, so $\psi_R=\kappa\psi_L$ with
$\kappa=e^{-\ell\xi_2}/(e^{\ell\xi_1}\psi_*(\xi_2))$, and
\[
\mathcal N(\lamst)g=\kappa\psi_L\int_\R\psi_LF\dd\xi'
=\kappa e^{2\ell\xi_1}\psi_*\int_0^\infty g(\rho)\,\psi_*(\log\rho)\Mres_*(\rho)\,\rho\dd\rho .
\]
Therefore
\[
\Riesz g=-\frac{e^{-\ell(\xi_2-\xi_1)}}{\psi_*(\xi_2)\,\partial_\lambda\Evans_\ell(\lamst;G)}
\Big(\int_0^\infty g\,\psi_*\Mres_*\,\rho\dd\rho\Big)g_*=\ellst(g)\,g_*,
\]
since
$\Jsimp=-e^{\ell(\xi_2-\xi_1)}\psi_*(\xi_2)\,\partial_\lambda\Evans_\ell(\lamst;G)$ by
(1). Since $\ellst(g_*)=\Jsimp/\Jsimp=1$, $\Riesz$ has rank one. The resolvents on $Y^s_\ell$ are restrictions of those on $Y^0_\ell$, so
the Riesz projection on $Y^s_\ell$ is the restriction of $\Riesz$, which maps into
$\mathrm{span}\{g_*\}\subset Y^s_\ell$.
\end{proof}

\begin{lemma}
\label{lem:adjoint}
Under the assumptions of Theorem~\ref{thm:evans-simple}, assume that $\lamst$ is
a simple zero, and put
$\pout_*=\int_0^\infty\rho^{\ell+1}g_*(\rho)\dd\rho$ and
$\pin_*=\int_0^\infty\rho^{1-\ell}g_*(\rho)\dd\rho$, so that
$\pout_*=\pout_\ell[g_*e^{i\ell\theta}]$ and $\pin_*=\pin_\ell[g_*e^{i\ell\theta}]$.
\begin{enumerate}[label=\textup{(\arabic*)}]
\item $\ellst(g_*)=1$, $\ellst(\Lsec{\ell}g)=\lamst\ellst(g)$ for $g\in Y^0_\ell$,
  and $\ellst(e^{t\Lsec{\ell}}g)=e^{\lamst t}\ellst(g)$ for $t\ge0$, where
  $e^{t\Lsec{\ell}}$ is the semigroup generated by $\Lsec{\ell}(G)$ on $Y^0_\ell$
  \textup{(}Lemma~\ref{lem:sector}(4)\textup{)}.
\item $\pin_*=-2\ell e^{-\ell\xi_1}$ and $\pout_*=-2\ell e^{\ell\xi_2}\psi_*(\xi_2)$.
\item Put $b(r)=-\frac i2r^{-\ell-1}\partial_r\bomega(r)$ and $\beta=\ellst(b)$. Then
  $\beta=-\pin_*/(2\ell\Jsimp)$.
\item $\pout_*\ne0$, $\pin_*\ne0$ and $\beta\ne0$.
\end{enumerate}
\end{lemma}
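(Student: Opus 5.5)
The plan is to read everything off the explicit formulas already established: the rank-one Riesz projection of Theorem~\ref{thm:evans-simple}(2), the identity $\Green_\ell g_*=\psi_*(\log r)$ from the proof of Lemma~\ref{lem:resolvent}(2), and the explicit exterior and interior form of $\Green_\ell g$ from the proof of Lemma~\ref{lem:sector}(1). Throughout, $\lamst$ is a simple zero, so $\partial_\lambda\Evans_\ell(\lamst;G)\ne0$. Theorem~\ref{thm:evans-simple} then gives $\Jsimp\ne0$, $\psi_*(\xi_2)\ne0$ and $\Riesz g=\ellst(g)\,g_*$.

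For (1), the identity $\ellst(g_*)=\Jsimp/\Jsimp=1$ is immediate from the definitions of $\ellst$ and $\Jsimp$. Since $\Lsec{\ell}(G)$ is bounded on $Y^0_\ell$ (Lemma~\ref{lem:sector}(4)), it commutes with its resolvent, and hence with the contour integral $\Riesz$. So $\ellst(\Lsec{\ell}g)\,g_*=\Riesz\Lsec{\ell}g=\Lsec{\ell}\Riesz g=\ellst(g)\lamst g_*$, and $g_*\ne0$ gives the second identity. Likewise $e^{t\Lsec{\ell}}$ is a norm-convergent power series in $\Lsec{\ell}$, so it commutes with $\Riesz$. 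Using $e^{t\Lsec{\ell}}g_*=e^{\lamst t}g_*$ (termwise from the eigenvalue equation), we get $\ellst(e^{t\Lsec{\ell}}g)\,g_*=e^{t\Lsec{\ell}}\Riesz g=e^{\lamst t}\ellst(g)\,g_*$.

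For (2), I will use $(\Green_\ell g_*)(r)=\psi_*(\log r)$. By the proof of Lemma~\ref{lem:sector}(1), $\Green_\ell g_*=c_-r^\ell$ for $r\le\rho_-$ and $=c_+r^{-\ell}$ for $r\ge\rho_+$, with $c_-=-\pin_*/(2\ell)$ and $c_+=-\pout_*/(2\ell)$. On the other side, $\psi_*(\xi)=e^{\ell(\xi-\xi_1)}$ for $\xi\le\xi_1$, which forces $c_-=e^{-\ell\xi_1}$. For $\xi\ge\xi_2$, the vanishing of $\Evans_\ell$ means $\partial_\xi\psi_*(\xi_2)=-\ell\psi_*(\xi_2)$, so $\psi_*(\xi)=\psi_*(\xi_2)e^{-\ell(\xi-\xi_2)}$ there, which forces $c_+=\psi_*(\xi_2)e^{\ell\xi_2}$. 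Comparing constants gives both formulas.

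For (3), I compute the integrand of $\ellst(b)$: $b\,\psi_*\Mres_*\,r=-\frac i2r^{-\ell}\partial_r\bomega\,\psi_*\Mres_*$. I compare it with $\pin_*=\int\rho^{1-\ell}\Mres_*\kcoef_\ell\psi_*\dd\rho=i\ell\int\rho^{-\ell}\partial_r\bomega\,\Mres_*\psi_*\dd\rho$. This gives $\int b\psi_*\Mres_* r\dd r=-\pin_*/(2\ell)$, hence $\beta=-\pin_*/(2\ell\Jsimp)$.

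For (4), $\pin_*\ne0$ is visible from (2). $\pout_*\ne0$ follows from $\psi_*(\xi_2)\ne0$ in Theorem~\ref{thm:evans-simple}(1). $\beta\ne0$ then follows from (3), since $\Jsimp$ is finite and nonzero. The only delicate points are the commutation of $\Riesz$ with $\Lsec{\ell}$ and with the semigroup, and matching the constants $c_\pm$ with the normalization of $\psi_*$. Both are routine, so I expect no real obstacle.
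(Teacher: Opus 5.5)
Your proposal is correct. Parts (2)--(4) follow the paper's argument: you match $\Green_\ell g_*=\psi_*(\log r)$ against the exterior and interior forms $c_\pm r^{\mp\ell}$, use the pointwise identity $(\partial_r\bomega)\psi_*\Mres_{\lamst}=rg_*/(i\ell)$, and take $\psi_*(\xi_2)\ne0$ and $\Jsimp\ne0$ from Theorem~\ref{thm:evans-simple}(1). You should add the one-line check, by integrating in $\theta$, that $\pout_*=\pout_\ell[g_*e^{i\ell\theta}]$ and $\pin_*=\pin_\ell[g_*e^{i\ell\theta}]$, since the statement asserts these too.

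Part (1) takes a different route from the paper. The paper does not use the Riesz projection there. It computes the adjoint $\Lsec{\ell}^*h=i\ell\Omega h+\mathbf 1_{[\rho_-,\rho_+]}\Green_\ell(\bar\kcoef_\ell h)$ for the pairing $\int f\bar h\,r\dd r$, using that the kernel of $\Green_\ell$ is real and symmetric. It then verifies $\Lsec{\ell}^*\bar X=\overline{\lamst}\bar X$ for $X=\psi_*(\log r)\Mres_{\lamst}$, by conjugating $(\lamst+i\ell\Omega)X=\Green_\ell(\kcoef_\ell X)$. Since $\ellst=\Jsimp^{-1}\langle\cdot,\bar X\rangle$, this gives $\ellst\circ\Lsec{\ell}=\lamst\ellst$. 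The semigroup identity then follows by differentiating $t\mapsto\ellst(e^{t\Lsec{\ell}}g)$, which requires checking separately that $\ellst$ is bounded.

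You instead derive everything from the rank-one formula $\Riesz g=\ellst(g)g_*$ of Theorem~\ref{thm:evans-simple}(2). The key fact is that $\Riesz$ commutes with the bounded operator $\Lsec{\ell}$, and hence with its exponential series. This is valid and shorter, and boundedness of $\ellst$ comes for free from $|\ellst(g)|\,\|g_*\|=\|\Riesz g\|\le\|\Riesz\|\,\|g\|$. The price is that it relies on the whole residue computation behind Theorem~\ref{thm:evans-simple}(2).

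The paper's computation is elementary and independent of that theorem, and it shows slightly more. The unnormalized functional $\langle\cdot,\bar X\rangle$ intertwines $\Lsec{\ell}$ with multiplication by $\lamst$ at every zero of $\Evans_\ell$ in $\{\Re\lambda>0\}$, simple or not. Simplicity enters only through the normalization $\Jsimp\ne0$.
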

\begin{proof}
(1) Write $\Mres_*=\Mres_{\lamst}$ and put $X(r)=\psi_*(\log r)\Mres_*(r)$, so that $\kcoef_\ell X=g_*$ and
$(\lamst+i\ell\Omega)X=\psi_*=\Green_\ell g_*$ (see the proof of
Lemma~\ref{lem:resolvent}(2)), and
$\langle f,g\rangle=\int f\bar g\,r\dd r$, which is $(2\pi)^{-1}$ times the inner
product of $Y^0_\ell$. By Definition~\ref{def:sector-setting},
$(\Green_\ell g)(r)=\int_0^\infty K_\ell(r,\rho)\,g(\rho)\,\rho\dd\rho$ with
$K_\ell(r,\rho)=-\frac1{2|\ell|}\big(\min(r,\rho)/\max(r,\rho)\big)^{|\ell|}$. This
integral kernel, taken with respect to the measure $\rho\dd\rho$, is real and
symmetric, $K_\ell(r,\rho)=K_\ell(\rho,r)$. As $K_\ell$ is bounded, Fubini's
theorem gives, for the operator $\Green_\ell$, $\langle\Green_\ell f,h\rangle=\langle f,\Green_\ell h\rangle$
for $f,h\in L^2([\rho_-,\rho_+],r\dd r)$, and $\overline{\Green_\ell h}=\Green_\ell\bar h$.
The operator $\Lsec{\ell}$ is bounded on $Y^0_\ell$ (Lemma~\ref{lem:sector}(4)); we
compute its adjoint $\Lsec{\ell}^*$ for $\langle\cdot,\cdot\rangle$. For
$f,h\in Y^0_\ell$, $\langle-i\ell\Omega f,h\rangle=\langle f,i\ell\Omega h\rangle$ as
$\Omega$ is real, and
$\langle\kcoef_\ell\Green_\ell f,h\rangle=\langle\Green_\ell f,\bar\kcoef_\ell h\rangle
=\langle f,\Green_\ell(\bar\kcoef_\ell h)\rangle$. The function
$\Green_\ell(\bar\kcoef_\ell h)$ is defined on all of $(0,\infty)$, but $f$ vanishes
outside $[\rho_-,\rho_+]$, so only its restriction to $[\rho_-,\rho_+]$ enters, and
that restriction lies in $Y^0_\ell$. Hence
$\Lsec{\ell}^*h=i\ell\Omega h+\mathbf 1_{[\rho_-,\rho_+]}\Green_\ell(\bar\kcoef_\ell h)$.
Conjugating $(\lamst+i\ell\Omega)X=\Green_\ell(\kcoef_\ell X)$ gives
$\Lsec{\ell}^*\bar X=\overline{\lamst}\bar X$ on $[\rho_-,\rho_+]$. Since
$\ellst(g)=\Jsimp^{-1}\langle g,\bar X\rangle$, we get $\ellst(g_*)=1$ and
$\ellst(\Lsec{\ell}g)=\Jsimp^{-1}\langle g,\overline{\lamst}\bar X\rangle=\lamst\ellst(g)$.
The functional $\ellst$ is bounded on $Y^0_\ell$, that is, a continuous linear map
$Y^0_\ell\to\C$: $X$ is continuous on $[\rho_-,\rho_+]$, as $\psi_*$ is continuous and
$|\Mres_*|\le1/\Re\lamst$ (Lemma~\ref{lem:sector}(3)), so the Cauchy-Schwarz
inequality gives
$|\ellst(g)|\le|\Jsimp|^{-1}\|X\|_{L^2([\rho_-,\rho_+],r\dd r)}\|g\|_{L^2(r\dd r)}$.
By Lemma~\ref{lem:sector}(4), $t\mapsto e^{t\Lsec{\ell}}g$ is differentiable in
$Y^0_\ell$ with derivative $\Lsec{\ell}e^{t\Lsec{\ell}}g$, so the function
$t\mapsto\ellst(e^{t\Lsec{\ell}}g)$ has derivative
$\ellst(\Lsec{\ell}e^{t\Lsec{\ell}}g)=\lamst\ellst(e^{t\Lsec{\ell}}g)$, which gives the
last identity.

(2) As in the proof of (1), $\Green_\ell g_*=\psi_*(\log r)$, and $g_*$ vanishes
outside $[r_1,r_2]$. For $r\ge r_2$ the definition of $\Green_\ell$ gives
$\Green_\ell g_*(r)=-\frac1{2\ell}r^{-\ell}\int\rho^{\ell+1}g_*\dd\rho$, and for $r\le r_1$
it gives $-\frac1{2\ell}r^\ell\int\rho^{1-\ell}g_*\dd\rho$. Compare with
$\psi_*(\xi)=e^{\ell(\xi-\xi_1)}$ for $\xi\le\xi_1$ and
$\psi_*(\xi)=\psi_*(\xi_2)e^{-\ell(\xi-\xi_2)}$ for $\xi\ge\xi_2$. The identities
$\pout_*=\pout_\ell[g_*e^{i\ell\theta}]$ and $\pin_*=\pin_\ell[g_*e^{i\ell\theta}]$
follow from Definition~\ref{def:multipole} by integrating in $\theta$.

(3) From $g_*=\Mres_*\kcoef_\ell\psi_*$, $(\partial_r\bomega)\psi_*\Mres_*=rg_*/(i\ell)$, so
\[
\beta=\frac{1}{\Jsimp}\int b\,\psi_*\Mres_*\,r\dd r=-\frac{i}{2\Jsimp}\int r^{-\ell}(\partial_r\bomega)\psi_*\Mres_*\dd r
=-\frac{1}{2\ell\Jsimp}\int r^{1-\ell}g_*\dd r=-\frac{\pin_*}{2\ell\Jsimp}.
\]

(4) $\pin_*\ne0$ by (2). If $\pout_*=0$, then $\psi_*(\xi_2)=0$ by (2),
contradicting Theorem~\ref{thm:evans-simple}(1). As $\lamst$ is a simple zero, $\Jsimp\ne0$ by
Theorem~\ref{thm:evans-simple}(1), and $\beta\ne0$ by (3).
\end{proof}

\subsection{An A Priori Bound for Unstable Eigenvalues}

\begin{lemma}
\label{lem:edge}
In a mode setting, let $\mathcal U_+=\{r:\partial_r\bomega(r)>0\}$ and
$\mathcal U_-=\{r:\partial_r\bomega(r)<0\}$, and assume that $r<\rho$ for all
$r\in\mathcal U_+$ and $\rho\in\mathcal U_-$. For $\ell\ge1$ put
\[
\Zedge_\ell=\int_{\mathcal U_+}\int_{\mathcal U_-}\Big(\frac r\rho\Big)^{2\ell}|\partial_r\bomega(r)|\,|\partial_r\bomega(\rho)|\dd\rho\dd r .
\]
If $g\in Y^0_{\pm\ell}$, $g\ne0$, $\Lsec{\pm\ell}(G)g=\lambda g$ and
$\Re\lambda>0$, then $\Re\lambda\le\frac12\Zedge_\ell^{1/2}$, and $\Zedge_\ell$ is
nonincreasing in $\ell$.
\end{lemma}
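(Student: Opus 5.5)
The plan is an energy identity of Howard type, written in the variable $u=\psi/(\lambda+i\ell\Omega)$ and weighted by $|\partial_r\bomega|$. By Lemma~\ref{lem:sector}(2) the case $-\ell$ reduces to $+\ell$: conjugation sends an eigenpair $(\lambda,g)$ of $\Lsec{-\ell}(G)$ to the eigenpair $(\bar\lambda,\bar g)$ of $\Lsec{\ell}(G)$, and $\Re\bar\lambda=\Re\lambda$. So take $\ell\ge1$ and write $\lambda=\sigma+i\tau$ with $\sigma>0$.

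\textbf{Step 1: reduce to an equation for $u$.} Put $\psi=\Green_\ell g$. The eigen-equation reads $(\lambda+i\ell\Omega)g=\kcoef_\ell\psi=i\ell\,\partial_r\bomega\,\psi/r$. Hence $g$ vanishes outside $\supp\partial_r\bomega$, and $g=i\ell\,\partial_r\bomega\,u/r$ with
\[
u=\psi/(\lambda+i\ell\Omega),
\]
which is continuous by Lemma~\ref{lem:sector}(1),(3). Since $g\ne0$, the quantity $\int|\partial_r\bomega||u|^2\dd r$ is positive. Substituting into $\psi=\Green_\ell g$ and writing $\Green_\ell$ through its real symmetric kernel $K_\ell(r,\rho)$ (with respect to $\rho\dd\rho$, as in the proof of Lemma~\ref{lem:adjoint}) gives
\[
(\lambda+i\ell\Omega(r))u(r)=i\ell\int K_\ell(r,\rho)\,\partial_r\bomega(\rho)\,u(\rho)\dd\rho .
\]

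\textbf{Step 2: the weighted identity.} Multiply by $|\partial_r\bomega(r)|\,\overline{u(r)}$, integrate in $r$, and take real parts. The $\Omega$ term is purely imaginary, so the left side becomes $\sigma\int|\partial_r\bomega||u|^2\dd r$. On the right, write $|\partial_r\bomega(r)|\,\partial_r\bomega(\rho)=s(r)s(\rho)\,|\partial_r\bomega(r)||\partial_r\bomega(\rho)|$, where $s=\pm1$ is the sign on $\mathcal U_\pm$. The right side is $-\ell\,\Im$ of
\[
\iint s(r)s(\rho)\,|\partial_r\bomega(r)||\partial_r\bomega(\rho)|\,K_\ell\,\bar u(r)u(\rho)\dd\rho\dd r
\]
(after accounting for the weight convention). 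The contributions from $\mathcal U_+\times\mathcal U_+$ and $\mathcal U_-\times\mathcal U_-$ are Hermitian forms, because $K_\ell$ is real and symmetric. Their imaginary parts therefore vanish. Only the cross terms over $\mathcal U_+\times\mathcal U_-$ and $\mathcal U_-\times\mathcal U_+$ survive. By the ordering assumption, $r<\rho$ there, so $|K_\ell|=(r/\rho)^\ell/(2\ell)$. The two cross terms are conjugate up to sign, and together they give at most
\[
2\ell\cdot\frac1{2\ell}\int_{\mathcal U_+}\int_{\mathcal U_-}\Big(\frac r\rho\Big)^{\ell}|\partial_r\bomega(r)||\partial_r\bomega(\rho)||u(r)||u(\rho)|\dd\rho\dd r .
\]

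\textbf{Step 3: Cauchy--Schwarz and AM--GM.} Apply Cauchy--Schwarz on $\mathcal U_+\times\mathcal U_-$, splitting the integrand as $(r/\rho)^\ell|\partial_r\bomega(r)|^{1/2}|\partial_r\bomega(\rho)|^{1/2}$ times $|\partial_r\bomega(r)|^{1/2}|u(r)|\,|\partial_r\bomega(\rho)|^{1/2}|u(\rho)|$. This bounds the cross term by $\Zedge_\ell^{1/2}(A_+A_-)^{1/2}$, where $A_\pm=\int_{\mathcal U_\pm}|\partial_r\bomega||u|^2$. Then $(A_+A_-)^{1/2}\le\frac12(A_++A_-)$, and dividing by $A_++A_->0$ gives $\sigma\le\frac12\Zedge_\ell^{1/2}$.

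\textbf{Monotonicity.} On $\mathcal U_+\times\mathcal U_-$ we have $r/\rho<1$, so $(r/\rho)^{2\ell}$ decreases in $\ell$ pointwise, and hence so does $\Zedge_\ell$.

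The main obstacle is the bookkeeping in Step 2: getting the signs and constants of the cross term exactly right, including the $\sqrt{2\pi}$ between the norms and the measure in which $K_\ell$ is symmetric, so that the constant comes out as exactly $\frac12$.
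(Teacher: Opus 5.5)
Your identity is the paper's identity in other variables. Since $g=\kcoef_\ell u=i\ell\,\partial_r\bomega\,u/r$, one has $|\kcoef_\ell|^2Wr=\ell^2|\partial_r\bomega|$ for the paper's weight $W=r/|\partial_r\bomega|$. So multiplying $(\lambda+i\ell\Omega)u=\psi$ by $|\partial_r\bomega|\bar u$ gives exactly $\ell^{-2}$ times the paper's identity, which comes from multiplying $\lambda g=-i\ell\Omega g+\kcoef_\ell\psi$ by $\bar gWr$. Under this correspondence your $A_++A_-$ is $\ell^{-2}\|g\|_W^2$, and the paper's cross term $\langle\Green_\ell g_b,g_a\rangle$ is $-\ell^2Y$ for the $Y$ defined below. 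Your Cauchy--Schwarz/AM--GM step and the monotonicity argument coincide with the paper's. No factor $2\pi$ ever enters, because you have already absorbed the measure through $g(\rho)\rho\dd\rho=i\ell\,\partial_r\bomega(\rho)u(\rho)\dd\rho$.

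There is, however, one false identity in Step 2, exactly at the spot you flagged. The correct factorization is $|\partial_r\bomega(r)|\,\partial_r\bomega(\rho)=s(\rho)\,|\partial_r\bomega(r)|\,|\partial_r\bomega(\rho)|$, not $s(r)s(\rho)\,|\partial_r\bomega(r)||\partial_r\bomega(\rho)|$.

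This error is not harmless. Put $Y=\iint_{\mathcal U_+\times\mathcal U_-}|\partial_r\bomega(r)||\partial_r\bomega(\rho)|K_\ell(r,\rho)\overline{u(r)}u(\rho)\dd\rho\dd r$. With the factor $s(r)s(\rho)$, both cross blocks would carry the sign $-1$, so they would be $-Y$ and $-\overline Y$. The whole double integral would then be real, and the identity would read $\sigma(A_++A_-)=0$. That would rule out unstable eigenvalues for every profile, which is false (Proposition~\ref{prop:SU} exhibits one).

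With the correct factor $s(\rho)$, the block $\mathcal U_+\times\mathcal U_+$ enters with $+$ and the block $\mathcal U_-\times\mathcal U_-$ with $-$; both are still real Hermitian forms. The cross blocks are now $-Y$ and $+\overline Y$, which is what your phrase ``conjugate up to sign'' requires. Then
\[
\sigma(A_++A_-)=-\ell\,\Im(\overline Y-Y)=2\ell\,\Im Y\le2\ell|Y|\le\int_{\mathcal U_+}\int_{\mathcal U_-}\Big(\frac r\rho\Big)^{\ell}|\partial_r\bomega(r)||\partial_r\bomega(\rho)||u(r)||u(\rho)|\dd\rho\dd r ,
\]
and your Step 3 gives $\sigma\le\frac12\Zedge_\ell^{1/2}$ with the constant exactly $\frac12$. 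With this one-line correction the proof is complete.
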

\begin{proof}
By Lemma~\ref{lem:resolvent}, the eigenvalues of $\Lsec{-\ell}$ in
$\{\Re\lambda>0\}$ are the conjugates of those of $\Lsec{\ell}$, so we may take the
sign $+$. Let $\psi=\Green_\ell g$, which is continuous, and
$\langle f,h\rangle=\int f\bar h\,r\dd r$. Since $\Re\lambda>0$, $\lambda+i\ell\Omega\ne0$
(Lemma~\ref{lem:sector}(3)), so $g=\Mres_\lambda\kcoef_\ell\psi$ vanishes outside
$\{\partial_r\bomega\ne0\}$; there put $W=r/|\partial_r\bomega|$ and
$\|g\|_W^2=\int|g|^2Wr\dd r=\int\ell^2|\partial_r\bomega|\,|\psi|^2|\lambda+i\ell\Omega|^{-2}\dd r$,
which is finite, and positive as $g\ne0$. Multiplying $\lambda g=-i\ell\Omega g+\kcoef_\ell\psi$
by $\bar gWr$, integrating, and using $\kcoef_\ell Wr=i\ell\,(\sgn\partial_r\bomega)\,r$, we get
$\lambda\|g\|_W^2=-i\ell\int\Omega|g|^2Wr\dd r+i\ell\int(\sgn\partial_r\bomega)\,\psi\bar g\,r\dd r$,
and the first term is imaginary. With $g_a=g\mathbf 1_{\mathcal U_+}$,
$g_b=g\mathbf 1_{\mathcal U_-}$,
\[
\int(\sgn\partial_r\bomega)\,\psi\bar g\,r\dd r=\langle\Green_\ell g_a,g_a\rangle-\langle\Green_\ell g_b,g_b\rangle
+\langle\Green_\ell g_b,g_a\rangle-\langle\Green_\ell g_a,g_b\rangle .
\]
The kernel $K_\ell$ of $\Green_\ell$ is real and symmetric (proof of
Lemma~\ref{lem:adjoint}(1)), so the first two terms are real
and the last is the conjugate of $X_{ab}=\langle\Green_\ell g_b,g_a\rangle$; hence
$\Re\lambda\,\|g\|_W^2=-2\ell\Im X_{ab}$. For $r\in\mathcal U_+$, $\rho\in\mathcal U_-$
we have $r<\rho$, so $|K_\ell(r,\rho)|=\frac1{2\ell}(r/\rho)^\ell$. Writing
$|g_a|r=(|g_a|^2Wr)^{1/2}|\partial_r\bomega|^{1/2}$, likewise for $g_b$, the Cauchy-Schwarz
inequality on $\mathcal U_+\times\mathcal U_-$ gives
$|X_{ab}|\le\frac1{2\ell}\Zedge_\ell^{1/2}\|g_a\|_W\|g_b\|_W\le\frac1{4\ell}\Zedge_\ell^{1/2}\|g\|_W^2$,
by the AM-GM inequality $\|g_a\|_W\|g_b\|_W\le\frac12(\|g_a\|_W^2+\|g_b\|_W^2)=\frac12\|g\|_W^2$
($g_a$ and $g_b$ have disjoint supports and $g=g_a+g_b$); so
$\Re\lambda\le\frac12\Zedge_\ell^{1/2}$. Since $r/\rho<1$ on $\mathcal U_+\times\mathcal U_-$, the
integrand of $\Zedge_\ell$ decreases in $\ell$.
\end{proof}

\begin{lemma}
\label{lem:closed-form}
Let $\chi$, $\prof$ and $\omc$ be as in Definition~\ref{def:ring}. Then:
\begin{enumerate}[label=\textup{(\arabic*)}]
\item $\partial_\xi\prof\ge0$ on $(-\infty,a_0/2)$ and $\partial_\xi\prof\le0$ on $(a_0/2,\infty)$; so
  $\omc$ satisfies the hypothesis of Lemma~\ref{lem:edge};
\item for every integer $\ell\ge1$ with $\ell d<1$, the quantity $\Zedge_\ell$ of
  Lemma~\ref{lem:edge} for $\bomega=\omc$ is $\Zedge_\ell=\Zedge^a_\ell\Zedge^b_\ell$ with
  \[
  \Zedge^a_\ell=\int_{\{\partial_\xi\prof>0\}}e^{2\ell(\xi-a_0/2)}\partial_\xi\prof(\xi)\dd\xi,\qquad
  \Zedge^b_\ell=\int_{\{\partial_\xi\prof<0\}}e^{-2\ell(\xi-a_0/2)}|\partial_\xi\prof(\xi)|\dd\xi,
  \]
  and $\Zedge^a_\ell,\Zedge^b_\ell\le e^{\ell a_0}\Thl{\ell}+\epsst$, where
  $\Thl{\ell}=\pi\ell d/\sin(\pi\ell d)$ and
  $\epsst=\max\{H((\xi_a-a_0)/d),\,H(-\xi_b/d)\}$.
\end{enumerate}
\end{lemma}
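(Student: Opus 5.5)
The plan is to differentiate the explicit profile, read off the sign of $\partial_\xi\prof$ from a logarithmic derivative, and then factor $\Zedge_\ell$ by the change of variables $r=e^\xi$. Each factor is bounded by a cutoff contribution plus an explicit integral of $H'$, which can be computed in closed form.

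For (1), write $\prof=\chi P$ with $P(\xi)=H_1(\xi)H_2(\xi)$, where $H_1(\xi)=H((\xi-a_0)/d)$ and $H_2(\xi)=H(-\xi/d)$. Since $(\log H)'(x)=2e^{-2x}/(1+e^{-2x})=2(1-H(x))$, we get
\[
\partial_\xi\log P(\xi)=\tfrac2d\big(1-H_1(\xi)\big)-\tfrac2d\big(1-H_2(\xi)\big)=\tfrac2d\big(H(-\xi/d)-H((\xi-a_0)/d)\big).
\]
As $H$ is strictly increasing, this is positive exactly when $-\xi>\xi-a_0$, that is, when $\xi<a_0/2$. So $P'>0$ on $(-\infty,a_0/2)$ and $P'<0$ on $(a_0/2,\infty)$.

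By Definition~\ref{def:ring}, $\chi'\ge0$ on $(-\infty,\xia]$, $\chi'=0$ on $[\xia,\xib]$ and $\chi'\le0$ on $[\xib,\infty)$. Lemma~\ref{lem:ring-facts} gives $\xia\le a_0/2\le\xib$. Hence in $\partial_\xi\prof=\chi'P+\chi P'$ both terms are $\ge0$ left of $a_0/2$ and $\le0$ right of it. Since $\partial_r\omc(r)=r^{-1}\partial_\xi\prof(\log r)$, this places $\mathcal U_+$ inside $\{r<e^{a_0/2}\}$ and $\mathcal U_-$ inside $\{r>e^{a_0/2}\}$, which is the hypothesis of Lemma~\ref{lem:edge}.

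For (2), substitute $r=e^\xi$ and $\rho=e^\eta$ in $\Zedge_\ell$. Then $|\partial_r\omc(r)|\,\dd r=|\partial_\xi\prof(\xi)|\,\dd\xi$, and
\[
(r/\rho)^{2\ell}=e^{2\ell(\xi-a_0/2)}\,e^{-2\ell(\eta-a_0/2)}.
\]
The double integral over $\mathcal U_+\times\mathcal U_-$ therefore factors as $\Zedge^a_\ell\Zedge^b_\ell$.

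To bound $\Zedge^a_\ell$, note that by (1) it equals the integral of $e^{2\ell(\xi-a_0/2)}\partial_\xi\prof$ over $(-\infty,a_0/2)$. Split $\partial_\xi\prof=\chi'P+\chi P'$ there.

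\emph{The $\chi'$ term.} It is supported in $[\xiL,\xia]$, where the weight is $\le1$ and $P\le H_1\le H_1(\xia)=H((\xia-a_0)/d)\le\epsst$, because $H_1$ is increasing and $H_2<1$. Since $\int\chi'=1$ on that interval, this term contributes at most $\epsst$.

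\emph{The $\chi P'$ term.} Use $\chi\le1$ together with $P'=H_1'H_2+H_1H_2'\le H_1'$, which holds because $H_2'\le0$ and $H_2<1$. The integrand $e^{2\ell(\xi-a_0/2)}H_1'(\xi)$ is positive, so the range may be extended to all of $\R$. Substituting $x=(\xi-a_0)/d$ gives
\[
e^{\ell a_0}\int_\R e^{2\ell d x}H'(x)\dd x ,\qquad H'(x)=\tfrac12\operatorname{sech}^2x .
\]

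The remaining step, and the only genuine computation, is the classical identity
\[
\int_\R e^{2sx}\operatorname{sech}^2x\dd x=\frac{2\pi s}{\sin\pi s},\qquad |s|<1 .
\]
I would prove it by the substitution $u=e^{2x}$, which turns the integral into $2\int_0^\infty u^s(1+u)^{-2}\dd u=2\,\mathrm B(1+s,1-s)=2\Gamma(1+s)\Gamma(1-s)$, followed by Euler's reflection formula. This requires $|s|<1$, which with $s=\ell d$ is the assumption $\ell d<1$. It yields $\Zedge^a_\ell\le e^{\ell a_0}\Thl{\ell}+\epsst$.

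The bound for $\Zedge^b_\ell$ is symmetric. The $\chi'$ term lives on $[\xib,\xiR]$, where $P\le H_2(\xib)=H(-\xib/d)\le\epsst$. For the other term use $|P'|\le|H_2'|$ to the right of $a_0/2$, and substitute $x=-\xi/d$ to reach the same Beta integral with the same prefactor $e^{\ell a_0}$.
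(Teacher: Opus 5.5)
Your proposal is correct and follows the paper's proof essentially step by step. It uses the same logarithmic derivative of $P$ for the sign, the same factorization through $e^{2\ell(\xi-\xi')}=e^{2\ell(\xi-a_0/2)}e^{-2\ell(\xi'-a_0/2)}$, the same split $\partial_\xi\prof=(\partial_\xi\chi)P+\chi\,\partial_\xi P$ with the bounds $P\le H((\xia-a_0)/d)$ and $\partial_\xi P\le\frac1{2d}\operatorname{sech}^2((\xi-a_0)/d)$, and the same Beta-function and reflection-formula evaluation. The only difference is cosmetic: you reach $\mathrm B(1+s,1-s)$ by the substitution $u=e^{2x}$, whereas the paper quotes the $\cosh$ form of that integral from DLMF.
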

\begin{proof}
By Lemma~\ref{lem:ring-facts}, $a_0<0<d$ and $\xiL<\xia\le a_0/2\le\xib<\xiR$.
Put $\xi_c=a_0/2$ and $P(\xi)=H(\frac{\xi-a_0}d)H(-\frac\xi d)$, so $\prof=\chi P$.
Since $H(x)=\frac12(1+\tanh x)$, $\partial_xH(x)=2H(x)H(-x)=\frac12\operatorname{sech}^2x$ and
$\partial_\xi\log P(\xi)=\frac2d\big[H(-\frac{\xi-a_0}d)-H(\frac\xi d)\big]$, which is
positive if and only if $-(\xi-a_0)>\xi$, i.e.\ $\xi<\xi_c$, as $H$ is increasing.
For $\xi<\xi_c$, $\partial_\xi\chi\ge0$ (by monotonicity on $(-\infty,\xi_a]$, and $\partial_\xi\chi=0$
on $[\xi_a,\xi_c]$), $\chi\ge0$ and $\partial_\xi P>0$, so $\partial_\xi\prof=(\partial_\xi\chi)P+\chi\,\partial_\xi P\ge0$; for
$\xi>\xi_c$ likewise $\partial_\xi\prof\le0$. As $\partial_r\omc(r)=(\partial_\xi\prof)(\log r)/r$,
$\{\partial_r\omc>0\}\subset(0,e^{\xi_c}]$ and $\{\partial_r\omc<0\}\subset[e^{\xi_c},\infty)$ are
disjoint; this proves (1).

(2) In the double integral defining $\Zedge_\ell$ (Lemma~\ref{lem:edge}), with
$r\in\mathcal U_+$ and $\rho\in\mathcal U_-$, substitute $\xi=\log r$ and
$\xi'=\log\rho$. As $\partial_r\omc(r)=(\partial_\xi\prof)(\log r)/r$, the sets
$\mathcal U_\pm$ become $\{\pm\partial_\xi\prof>0\}$,
$|\partial_r\omc(r)|\dd r=|\partial_\xi\prof(\xi)|\dd\xi$,
$|\partial_r\omc(\rho)|\dd\rho=|\partial_\xi\prof(\xi')|\dd\xi'$ and
$r/\rho=e^{\xi-\xi'}$, so
\[
\Zedge_\ell=\int_{\{\partial_\xi\prof>0\}}\int_{\{\partial_\xi\prof<0\}}
e^{2\ell(\xi-\xi')}\,|\partial_\xi\prof(\xi)|\,|\partial_\xi\prof(\xi')|\dd\xi'\dd\xi ,
\]
and the factorization follows from
$e^{2\ell(\xi-\xi')}=e^{2\ell(\xi-\xi_c)}e^{-2\ell(\xi'-\xi_c)}$. For the bound on
$\Zedge^a_\ell$, write $\partial_\xi\prof=(\partial_\xi\chi)P+\chi\,\partial_\xi P$ and
bound the two terms on $(-\infty,\xi_c]$, which contains $\{\partial_\xi\prof>0\}$.
First,
$\partial_\xi P=\frac1d(\partial_xH)(\frac{\xi-a_0}d)H(-\frac\xi d)-\frac1dH(\frac{\xi-a_0}d)(\partial_xH)(-\frac\xi d)$.
The second term is $\le0$, as $H,\partial_xH\ge0$, and we drop it; in the first,
$H(-\frac\xi d)\le1$ and $\partial_xH=\frac12\operatorname{sech}^2$. So
$\partial_\xi P\le\frac1{2d}\operatorname{sech}^2\big(\frac{\xi-a_0}d\big)$ for every $\xi$,
and, as $0\le\chi\le1$, also
$\chi\,\partial_\xi P\le\frac1{2d}\operatorname{sech}^2\big(\frac{\xi-a_0}d\big)$.
Second, by Definition~\ref{def:ring}, $\chi$ is nondecreasing on $(-\infty,\xi_a]$ and
equal to $1$ on $[\xi_a,\xi_b]$, which contains $[\xi_a,\xi_c]$; so on
$(-\infty,\xi_c]$ we have $\partial_\xi\chi\ge0$, and $\partial_\xi\chi\ne0$ only for
$\xi<\xi_a$. For $\xi\le\xi_a$, $P(\xi)\le H(\frac{\xi-a_0}d)\le H(\frac{\xi_a-a_0}d)$, as
$H\le1$ and $H$ is increasing; so
$(\partial_\xi\chi)P\le H(\frac{\xi_a-a_0}d)\,\partial_\xi\chi$ on $(-\infty,\xi_c]$.
Both bounds are nonnegative. We integrate the first, times $e^{2\ell(\xi-\xi_c)}$,
over all of $\R$, and for the second we use $e^{2\ell(\xi-\xi_c)}\le1$ on
$(-\infty,\xi_c]$ and $\int_{-\infty}^{\xi_c}\partial_\xi\chi\dd\xi=\chi(\xi_c)=1$. The
substitution $\xi=a_0+dx$ gives
\[
\Zedge^a_\ell\le e^{2\ell(a_0-\xi_c)}\int_\R e^{2\ell dx}\,\tfrac12\operatorname{sech}^2x\dd x
+H\Big(\frac{\xi_a-a_0}d\Big)=e^{\ell a_0}\Thl{\ell}+H\Big(\frac{\xi_a-a_0}d\Big),
\]
by $\int_\R e^{\alpha x}\operatorname{sech}^2x\dd x=\pi\alpha/\sin(\pi\alpha/2)$ for
$0<|\alpha|<2$, with $\alpha=2\ell d$. As
$\operatorname{sech}^2$ is even,
$\int_\R e^{\alpha x}\operatorname{sech}^2x\dd x=2\int_0^\infty\cosh(\alpha x)(\cosh x)^{-2}\dd x$,
which is $2B(1+\frac\alpha2,1-\frac\alpha2)$ by
\cite[\href{https://dlmf.nist.gov/5.12.E7}{(5.12.7)}]{DLMF} with $a=1$ and
$b=\frac\alpha2$ (admissible as $|\alpha|<2$). By
\cite[\href{https://dlmf.nist.gov/5.12.E1}{(5.12.1)}]{DLMF},
$2B(1+\frac\alpha2,1-\frac\alpha2)=2\Gamma(1+\frac\alpha2)\Gamma(1-\frac\alpha2)
=\alpha\Gamma(\frac\alpha2)\Gamma(1-\frac\alpha2)$, and the reflection formula
$\Gamma(z)\Gamma(1-z)=\pi/\sin(\pi z)$
\cite[\href{https://dlmf.nist.gov/5.5.E3}{(5.5.3)}]{DLMF} gives $\pi\alpha/\sin(\pi\alpha/2)$. The bound for $\Zedge^b_\ell$ is the mirror
image: on $[\xi_c,\infty)$, which contains $\{\partial_\xi\prof<0\}$,
$-\partial_\xi P\le\frac1{2d}\operatorname{sech}^2\big(\frac\xi d\big)$ (now the first term
of $\partial_\xi P$ is $\ge0$ and is dropped, and $H(\frac{\xi-a_0}d)\le1$);
$\partial_\xi\chi\le0$ there, and $\partial_\xi\chi\ne0$ only for $\xi>\xi_b$, where
$P\le H(-\xi_b/d)$; and with $\xi=dy$,
$e^{-2\ell(dy-\xi_c)}=\nobreak e^{\ell a_0}e^{-2\ell dy}$.
\end{proof}

\subsection{Mode Bounds and the Semigroup}

For $\ell\in\Z$ and a profile $g$ supported in $[\rho_-,\rho_+]$ put
\[
\|g\|_{k,\ell}^2=\sum_{j=0}^{k}(1+\ell^2)^{k-j}\|\partial_r^jg\|^2_{L^2([\rho_-,\rho_+],r\dd r)},
\]
and let $\|g\|_{k,\ell;[\rho_-,\rho_+]}$ be the same expression for a profile $g$
on $(0,\infty)$ that need not be supported in $[\rho_-,\rho_+]$.

\begin{lemma}
\label{lem:norm-equiv}
In a mode setting, for every integer $k\ge0$ there are $c^{\rm eq}_k,C^{\rm eq}_k>0$, depending
only on $k$, $\rho_-$ and $\rho_+$, such that every
$w=\sum_\ell g_\ell(r)e^{i\ell\theta}\in Y^k$ satisfies
$c^{\rm eq}_k\sum_\ell\|g_\ell\|^2_{k,\ell}\le\|w\|^2_{H^k}\le C^{\rm eq}_k\sum_\ell\|g_\ell\|^2_{k,\ell}$.
\end{lemma}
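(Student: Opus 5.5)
The plan is to reduce the $H^k$ norm of $w$ to a weighted sum over angular modes by working in polar coordinates on the annulus $N_*$, where $r\in[\rho_-,\rho_+]$ stays away from $0$ and $\infty$. Write $\partial_1,\partial_2$ in terms of $\partial_r$ and $r^{-1}\partial_\theta$:
\[
\partial_1=\cos\theta\,\partial_r-\frac{\sin\theta}{r}\partial_\theta,\qquad
\partial_2=\sin\theta\,\partial_r+\frac{\cos\theta}{r}\partial_\theta .
\]
The first step is a comparison, for $w\in Y^k$ (or first for smooth $w$ supported in $N_*$, then by density), of
\[
\|w\|_{H^k}^2\quad\text{with}\quad \mathcal P_k(w)=\sum_{j+i\le k}\|\partial_r^j\partial_\theta^i w\|^2_{L^2(r\dd r\dd\theta)} .
\]
Each $\partial^\gamma$ with $|\gamma|\le k$ is a combination of $\partial_r^j\partial_\theta^i$ with $j+i\le|\gamma|$, with coefficients smooth and bounded on $[\rho_-,\rho_+]$ by constants depending on $k,\rho_\pm$. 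Conversely, $\partial_r=\cos\theta\,\partial_1+\sin\theta\,\partial_2$ and $\partial_\theta=-x_2\partial_1+x_1\partial_2$ have bounded smooth coefficients on $N_*$, and so do their iterates. This gives $c\,\mathcal P_k(w)\le\|w\|_{H^k}^2\le C\,\mathcal P_k(w)$, and the lower-order terms generated by commuting cause no trouble because all orders up to $k$ are included on both sides.

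The second step is Parseval in $\theta$. Since $\partial_r^j\partial_\theta^iw=\sum_\ell(i\ell)^i\,\partial_r^jg_\ell\,e^{i\ell\theta}$, we get
\[
\mathcal P_k(w)=2\pi\sum_\ell\sum_{j+i\le k}\ell^{2i}\,\|\partial_r^jg_\ell\|^2_{L^2(r\dd r)} .
\]
For fixed $j$, the inner sum over $i\le k-j$ of $\ell^{2i}$ is comparable to $(1+\ell^2)^{k-j}$, with constants depending only on $k$: the bound $\sum_{i=0}^{m}\ell^{2i}\le(1+\ell^2)^m$ comes from the binomial expansion, and $(1+\ell^2)^m\le 2^m\sum_{i\le m}\ell^{2i}$ holds since $(1+\ell^2)^m\le 2^m\max(1,\ell^{2m})$. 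Hence $\mathcal P_k(w)$ is comparable to $\sum_\ell\|g_\ell\|^2_{k,\ell}$.

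The only point needing care, and the main obstacle, is justifying that $g_\ell$ has $k$ radial derivatives in $L^2$ with $\partial_r^j g_\ell=(\partial_r^jw)_\ell$, and the termwise Parseval identity, for $w\in H^k$ rather than smooth $w$. I would prove both inequalities for $w\in C_c^\infty$ supported in a slightly larger annulus and extend by density. Approximating $w\in Y^k$ by mollifications keeps the supports in a fixed compact annulus, where the constants are unchanged. Both sides are continuous under $H^k$ convergence: the left side trivially, and the right side because it is dominated by $\|w\|_{H^k}^2$ via the smooth case. The limit identifications of $\partial_r^jg_\ell$ hold in $L^2$, so the inequalities pass to the limit.
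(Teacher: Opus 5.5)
Your proposal is correct and follows essentially the same route as the paper: expressing each $\partial^\gamma$ through the polar derivatives $\partial_r^j\partial_\theta^i$ with coefficients bounded on $N_*$ (and conversely), applying Parseval in $\theta$, and using the binomial comparison $2^{-m}(1+\ell^2)^m\le\sum_{i\le m}\ell^{2i}\le(1+\ell^2)^m$. Your density argument, which justifies $\partial_r^jg_\ell=(\partial_r^jw)_\ell$ for $w\in H^k$ rather than only for smooth $w$, is a detail the paper leaves implicit.
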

\begin{proof}
Near $N_*$, $\partial_r=\cos\theta\,\partial_1+\sin\theta\,\partial_2$,
$\partial_\theta=-x_2\partial_1+x_1\partial_2$,
$\partial_1=\cos\theta\,\partial_r-\frac{\sin\theta}r\partial_\theta$ and
$\partial_2=\sin\theta\,\partial_r+\frac{\cos\theta}r\partial_\theta$. Iterating, each
$\partial^\alpha$, $|\alpha|=j$, is a combination of the
$\partial_r^i\partial_\theta^n$, $i+n\le j$, with coefficients bounded on $N_*$ in
terms of $j$, $\rho_\pm$, and conversely. As $\supp w\subset N_*$, $\|w\|_{H^k}^2$ is
comparable to $\sum_{i,n\ge0,\ i+n\le k}\|\partial_r^i\partial_\theta^nw\|^2_{L^2(\R^2)}
=2\pi\sum_\ell\sum_{i,n\ge0,\ i+n\le k}\ell^{2n}\|\partial_r^ig_\ell\|^2_{L^2(r\dd r)}$ (Parseval in
$\theta$). Finally $2^{-p}(1+x)^p\le\sum_{n=0}^{p}x^n\le(1+x)^p$ for $x\ge0$, by the binomial
theorem, as $1\le\binom pn\le2^p$; take
$x=\ell^2$, $p=k-i$.
\end{proof}

\begin{corollary}
\label{cor:K-bound}
In a mode setting, let $s\ge0$ be an integer. There is
$C_K=C_K(s,\rho_-,\rho_+,\|\partial_r\bomega\|_{C^s})$ such that
$\|\kcoef_\ell\Green_\ell g\|_{s,\ell}\le C_K(1+\ell^2)^{-1/2}\|g\|_{s,\ell}$ for every
$\ell\ne0$, every $G$, and every $g\in H^s((0,\infty))$ supported in
$[\rho_-,\rho_+]$.
\end{corollary}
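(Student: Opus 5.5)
We need $\|\kcoef_\ell\Green_\ell g\|_{s,\ell}\le C_K(1+\ell^2)^{-1/2}\|g\|_{s,\ell}$, uniformly in $\ell$. Since $\kcoef_\ell=i\ell\,\partial_r\bomega/r$ carries a factor $\ell$, we need $\Green_\ell$ to gain roughly $(1+\ell^2)^{-1}$ in the weighted norm. The plan is an energy estimate for the radial ODE, localized to $[\rho_-,\rho_+]$.

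\textbf{Step 1: the base case $j=0$.} The kernel $K_\ell(r,\rho)=-\frac1{2|\ell|}(\min/\max)^{|\ell|}$ satisfies a Schur bound with respect to $\rho\dd\rho$ on $[\rho_-,\rho_+]$. Write $r=e^\xi$ and $\rho=e^{\xi'}$, so that $\rho\dd\rho=e^{2\xi'}\dd\xi'$ and the kernel is $\frac1{2|\ell|}e^{-|\ell||\xi-\xi'|}$. Its integral in $\xi'$ is at most $|\ell|^{-2}$ times factors $\rho_\pm^2$. Hence
\[
\|\Green_\ell g\|_{L^2([\rho_-,\rho_+],r\dd r)}\le C\ell^{-2}\|g\|_{L^2(r\dd r)} .
\]
Alternatively, this follows from the identity $\Delta\Green_\ell g=g$, pairing with $\overline{\Green_\ell g}$ and using the $\ell^2r^{-2}$ term. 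Similarly, the formula for $\partial_r\Green_\ell g$ in the proof of Lemma~\ref{lem:sector}(1), with kernel of size $\frac12(\min/\max)^{|\ell|}/\rho$, gives
\[
\|\partial_r\Green_\ell g\|\le C|\ell|^{-1}\|g\| .
\]

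\textbf{Step 2: higher derivatives.} For $j\ge2$, use the ODE $\partial_r^2\psi=g-r^{-1}\partial_r\psi+\ell^2r^{-2}\psi$ on $[\rho_-,\rho_+]$. Inductively,
\[
\|\partial_r^j\psi\|_{L^2([\rho_-,\rho_+])}\le C\sum_{i\le j-2}\|\partial_r^ig\|\,(1+\ell^2)^{(j-2-i)/2}+C(1+\ell^2)^{j/2}\ell^{-2}\|g\| ,
\]
where the last term comes from repeatedly substituting $\ell^2\psi$. Putting this into the weighted norm $\|\cdot\|_{s,\ell;[\rho_-,\rho_+]}$, each term is bounded by $(1+\ell^2)^{-1}\|g\|_{s,\ell}$. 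This is because the weights $(1+\ell^2)^{s-j}$ exactly balance the powers of $\ell$. Thus
\[
\|\Green_\ell g\|_{s,\ell;[\rho_-,\rho_+]}\le C(1+\ell^2)^{-1}\|g\|_{s,\ell} .
\]
The main bookkeeping obstacle is keeping the $\ell$-powers straight. I would organize it by proving instead
\[
\|\Green_\ell g\|_{j+2,\ell;[\rho_-,\rho_+]}\le C\|g\|_{j,\ell}
\]
by induction on $j$. Note that $\|h\|_{j+2,\ell}\ge(1+\ell^2)\|h\|_{j,\ell}$, and the ODE gives $\partial_r^2\psi$ in terms of $g$, $\partial_r\psi$ and $\ell^2\psi$. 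The base case is $j=0$: $(1+\ell^2)^2\|\psi\|^2+(1+\ell^2)\|\partial_r\psi\|^2+\|\partial_r^2\psi\|^2\le C\|g\|^2$, which follows from Step 1 and the ODE. Since $\partial_r$ does not commute with $\Green_\ell$, the induction differentiates the ODE rather than the kernel.

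\textbf{Step 3: multiplying by $\kcoef_\ell$.} The factor $\kcoef_\ell$ is $i\ell$ times the smooth function $\partial_r\bomega/r$, supported in $[r_1,r_2]\subset(\rho_-,\rho_+)$. Leibniz's rule gives
\[
\|\kcoef_\ell h\|_{s,\ell}\le C|\ell|\,\|\partial_r\bomega\|_{C^s}\,\|h\|_{s,\ell;[\rho_-,\rho_+]} ,
\]
with $C$ depending on $s$ and $\rho_\pm$. The factor $1/r$ is harmless since $r\ge\rho_-$, and lower-order derivatives carry smaller weights, which only helps. Combining Steps 2 and 3 gives the bound $C|\ell|(1+\ell^2)^{-1}\le C(1+\ell^2)^{-1/2}$. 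The bound is independent of $G$, which does not enter $\kcoef_\ell\Green_\ell$ at all.
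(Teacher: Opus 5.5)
Your proposal is correct, but it argues differently from the paper.

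The paper passes to $\xi=\log r$, where $\Green_\ell$ becomes convolution of $\Xi=e^{2\xi}\tilde g$ with $-\frac1{2|\ell|}e^{-|\ell||\xi|}$. That kernel has Fourier symbol $-(\ell^2+\zeta^2)^{-1}$. Since the weight $\sum_{j\le k}(1+\ell^2)^{k-j}\zeta^{2j}$ is comparable to $(1+\ell^2+\zeta^2)^k$, Plancherel gives the full elliptic gain $|\tilde\psi|_{s+2,\ell}\le C_s|\Xi|_{s,\ell}$ on the whole line in one step. Only afterwards does the paper restrict to $[\log\rho_-,\log\rho_+]$ and convert back to $\|\cdot\|_{s+2,\ell;[\rho_-,\rho_+]}$, before the same Leibniz step as your Step 3.

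You instead prove only the two lowest-order bounds, $\|\Green_\ell g\|\le C\ell^{-2}\|g\|$ and $\|\partial_r\Green_\ell g\|\le C|\ell|^{-1}\|g\|$, by Schur/Young in $\xi$ or by the energy identity. You then recover all higher weighted derivatives from the ODE $\partial_r^2\psi=g-r^{-1}\partial_r\psi+\ell^2r^{-2}\psi$ on $[\rho_-,\rho_+]$. Your induction $\|\Green_\ell g\|_{j+2,\ell;[\rho_-,\rho_+]}\le C\|g\|_{j,\ell}$ does close. It uses $\|\psi\|_{j+2,\ell}^2=(1+\ell^2)\|\psi\|_{j+1,\ell}^2+\|\partial_r^{j+2}\psi\|^2$, and each term $\ell^2\partial_r^m\psi$ with $m\le j$ costs at most $(1+\ell^2)^{1/2}\|\psi\|_{j+1,\ell}\le C(1+\ell^2)^{1/2}\|g\|_{j-1,\ell}\le C\|g\|_{j,\ell}$. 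Differentiating the ODE is legitimate, since the ODE itself bootstraps $\psi\in H^{j+2}([\rho_-,\rho_+])$ from $g\in H^j$.

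The paper's route is shorter and gives explicit constants, but it relies on the exact translation-invariant kernel in $\xi$. Yours is purely real-variable and local, and it would survive a variable-coefficient perturbation of the radial operator. The price is the bookkeeping of $\ell$-powers that you describe.

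One harmless slip: the kernel of $\partial_r\Green_\ell$ with respect to $\rho\dd\rho$ is $\frac1{2r}\sgn(r-\rho)\big(\min(r,\rho)/\max(r,\rho)\big)^{|\ell|}$, with $1/r$ rather than $1/\rho$. On $[\rho_-,\rho_+]$ the two are comparable, so nothing changes.
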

\begin{proof}
Let $\ell\ne0$ and let $g\in H^s((0,\infty))$ be supported in $[\rho_-,\rho_+]$. We
first bound $\Green_\ell g$. Put $\xi=\log r$, $\tilde g(\xi)=g(e^\xi)$, $\tilde\psi(\xi)=(\Green_\ell g)(e^\xi)$ and
$\Xi=e^{2\xi}\tilde g\in H^s(\R)$, compactly supported. For
$r=e^\xi$ and $\rho=e^\eta$, the kernel of $\Green_\ell$ (proof of
Lemma~\ref{lem:adjoint}(1)) is
$K_\ell(r,\rho)=-\frac1{2|\ell|}\big(\min(r,\rho)/\max(r,\rho)\big)^{|\ell|}=-\frac1{2|\ell|}e^{-|\ell||\xi-\eta|}$,
since $\min(r,\rho)/\max(r,\rho)=e^{-|\xi-\eta|}$, and $\rho\dd\rho=e^{2\eta}\dd\eta$. So
$\tilde\psi=-\frac1{2|\ell|}e^{-|\ell||\cdot|}\ast\Xi$. We use the Fourier transform
$\hat u(\zeta)=\int_\R e^{-i\xi\zeta}u(\xi)\dd\xi$ on $\R$, for which
$\frac1{2|\ell|}e^{-|\ell||\xi|}$ has Fourier transform $(\ell^2+\zeta^2)^{-1}$; hence
the Fourier transform of $\tilde\psi$ is $\zeta\mapsto-\hat\Xi(\zeta)/(\ell^2+\zeta^2)$.
With this normalization, Plancherel's theorem reads
$\|u\|^2_{L^2(\R)}=\frac1{2\pi}\int_\R|\hat u|^2\dd\zeta$, which is where the single
factor $\frac1{2\pi}$ below comes from, and $\partial_\xi^ju$ has Fourier transform
$(i\zeta)^j\hat u$. For $u$ on $\R$ and an integer $k\ge0$ put
$|u|_{k,\ell}^2=\sum_{j=0}^{k}(1+\ell^2)^{k-j}\|\partial_\xi^ju\|^2_{L^2(\R)}$; then
\[
|u|^2_{k,\ell}=\frac1{2\pi}\int_\R\sum_{j=0}^{k}(1+\ell^2)^{k-j}\zeta^{2j}\,|\hat u(\zeta)|^2\dd\zeta .
\]
Multiplying the inequality
$2^{-k}(1+x)^k\le\sum_{n=0}^{k}x^n\le(1+x)^k$ of the proof of Lemma~\ref{lem:norm-equiv}
by $A^k$, with $x=B/A$, gives
$\sum_{j=0}^{k}A^{k-j}B^j\le(A+B)^k\le2^k\sum_{j=0}^{k}A^{k-j}B^j$ for $A>0$, $B\ge0$;
we take $A=1+\ell^2$ and $B=\zeta^2$. The first of these inequalities, with $k=s+2$, gives the
first step below; the formula for the Fourier transform of $\tilde\psi$ and
$1+\ell^2+\zeta^2\le2(\ell^2+\zeta^2)$ (as $\ell\ne0$) give the second; and the second
inequality, with $k=s$, gives the third:
\[
|\tilde\psi|^2_{s+2,\ell}\le\frac1{2\pi}\int(1+\ell^2+\zeta^2)^{s+2}\frac{|\hat\Xi|^2}{(\ell^2+\zeta^2)^2}\dd\zeta
\le\frac4{2\pi}\int(1+\ell^2+\zeta^2)^s|\hat\Xi|^2\dd\zeta\le4\cdot2^s|\Xi|^2_{s,\ell}.
\] On
$\tilde I=[\log\rho_-,\log\rho_+]$, $\partial_r=e^{-\xi}\partial_\xi$ and $\partial_\xi=r\partial_r$,
so $\partial_r^j$ is a combination of $\partial_\xi^i$, $i\le j$, with coefficients
bounded on $\tilde I$, and conversely; with $r\dd r=e^{2\xi}\dd\xi$ and
$(1+\ell^2)^{k-j}\le(1+\ell^2)^{k-i}$ for $i\le j$, for every integer $k\ge0$ the norms
$\|\cdot\|_{k,\ell;[\rho_-,\rho_+]}$ and $|\cdot|_{k,\ell}$ restricted to $\tilde I$ are
equivalent, with constants depending on $k$, $\rho_-$ and $\rho_+$. Multiplication
by $e^{2\xi}$ is bounded for $|\cdot|_{s,\ell}$ on functions supported in $\tilde I$,
by the Leibniz rule and the same monotonicity of the weights. Therefore
$\|\Green_\ell g\|_{s+2,\ell;[\rho_-,\rho_+]}\le C|\tilde\psi|_{s+2,\ell}\le C|\Xi|_{s,\ell}
\le C|\tilde g|_{s,\ell}\le C\|g\|_{s,\ell}$, with $C=C(s,\rho_-,\rho_+)$.

The operator $\kcoef_\ell\Green_\ell$ does not involve $G$, and
$\kcoef_\ell\Green_\ell g$ is supported in $I_A=[r_1,r_2]$. Since $1/r\le1/\rho_-$ on
$I_A$, the Leibniz rule gives
\[
\|\partial_r^j(r^{-1}(\partial_r\bomega)\psi)\|_{L^2(I_A,r\dd r)}\le C\sum_{i=0}^{j}\|\partial_r^i\psi\|_{L^2(I_A,r\dd r)}
\]
with $C$ depending on $s$, $\rho_-$ and $\|\partial_r\bomega\|_{C^s}$. Using
$(1+\ell^2)^{s-j}\le(1+\ell^2)^{s-i}$ for $i\le j$, and then
$(1+\ell^2)^{s+2-i}=(1+\ell^2)^2(1+\ell^2)^{s-i}$,
\begin{align*}
\|\kcoef_\ell\Green_\ell g\|_{s,\ell}^2&\le C\ell^2\sum_{i=0}^{s}(1+\ell^2)^{s-i}\|\partial_r^i\Green_\ell g\|^2_{L^2(I_A,r\dd r)}\\
&\le C\ell^2(1+\ell^2)^{-2}\|\Green_\ell g\|^2_{s+2,\ell;[\rho_-,\rho_+]}\le C(1+\ell^2)^{-1}\|g\|^2_{s,\ell},
\end{align*}
since $I_A\subset[\rho_-,\rho_+]$, and by the bound above.
\end{proof}

\begin{lemma}
\label{lem:semigroup-construct}
In a mode setting, let $s\ge0$ be an integer and $G_+\ge0$, and let $|G|\le G_+$. Put
$\Lin^{\rm tr}=-\Omega\partial_\theta$ and $\mathcal Kw=-(\KBS\ast w)\cdot\nabla\bomega$.
\begin{enumerate}[label=\textup{(\arabic*)}]
\item $(e^{t\Lin^{\rm tr}}w)(r,\theta)=w(r,\theta-\Omega(r;G)t)$ is a $C_0$-group on $Y^s$,
  and there is $C_s$, depending only on $s$, $\bomega$, $\rho_\pm$ and $G_+$, with
  $\|e^{t\Lin^{\rm tr}}\|_{Y^s\to Y^s}\le C_s(1+|t|)^s$ and
  $\|e^{-i\ell\Omega t}g\|_{s,\ell}\le C_s(1+|t|)^s\|g\|_{s,\ell}$ for all
  $\ell\in\Z$, $|G|\le G_+$ and $t\in\R$; $\mathcal K$ is bounded on $Y^s$.
\item The Dyson-Phillips series $e^{t\Lin}=\sum_{n\ge0}S_n(t)$, $S_0(t)=e^{t\Lin^{\rm tr}}$,
  $S_{n+1}(t)=\int_0^tS_0(t-\tau)\mathcal KS_n(\tau)\dd\tau$, converges in operator
  norm uniformly on compact $t$-sets and defines a $C_0$-semigroup on $Y^s$ that
  satisfies both Duhamel formulas and preserves each mode. Its generator
  $\hat\Lin$ satisfies $Y^{s+1}\subset D(\hat\Lin)$ and $\hat\Lin w=\Lin(G)w$ for
  $w\in Y^{s+1}$, with $\|\Lin(G)w\|_{Y^s}\le C\|w\|_{Y^{s+1}}$.
\item On each mode, $e^{t\Lin}|_{Y^s_\ell}=\exp(t\Lsec{\ell}(G))$, the norm-continuous
  semigroup generated by the bounded operator $\Lsec{\ell}(G)$ on $Y^s_\ell$.
\end{enumerate}
\end{lemma}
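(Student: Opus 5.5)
We will prove the three parts in order, working throughout with the mode decomposition and the norms $\|\cdot\|_{k,\ell}$ of Lemma~\ref{lem:norm-equiv}.

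\emph{Part (1).} First we check that the explicit formula $w(r,\theta-\Omega(r;G)t)$ defines a group on $Y^s$. It preserves the support in $N_*$, and it acts on the mode profiles by $g_\ell\mapsto e^{-i\ell\Omega t}g_\ell$.

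For the bound, we apply the Leibniz rule to $\partial_r^j(e^{-i\ell\Omega t}g)$. Every $r$-derivative that falls on the exponential produces a factor $i\ell t\,\partial_r^{p}\Omega$. Here $\Omega$ and its derivatives are bounded on $[\rho_-,\rho_+]$ uniformly for $|G|\le G_+$, because $\Omega=r^{-2}(\Circ_{\bomega}/2\pi+G)$. A term of the expansion with $q$ such factors and $j-q$ derivatives on $g$ is therefore bounded by $C(|\ell||t|)^{q}\|\partial_r^{i}g\|$ with $i\le j-q$. In the weighted norm $\|\cdot\|_{s,\ell}$, the weight $(1+\ell^2)^{s-j}$ combined with $\ell^{2q}$ is at most $(1+\ell^2)^{s-i}$. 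This gives $\|e^{-i\ell\Omega t}g\|_{s,\ell}\le C_s(1+|t|)^s\|g\|_{s,\ell}$. Summing over $\ell$ with Lemma~\ref{lem:norm-equiv} gives the $Y^s$ bound.

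Strong continuity holds on the dense set of smooth $w$, and the uniform bound on compact $t$-sets extends it to all of $Y^s$. Boundedness of $\mathcal K$ on $Y^s$ follows from Corollary~\ref{cor:K-bound} mode by mode, using Lemma~\ref{lem:sector}(2) for the identity $P_\ell\mathcal K w=(\kcoef_\ell\Green_\ell g_\ell)e^{i\ell\theta}$. The support of $\mathcal K w$ lies in $[r_1,r_2]\subset N_*$.

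\emph{Part (2).} This is the standard bounded perturbation theorem for semigroups. Put $M=C_s(1+T)^s$ and use induction: $\|S_n(t)\|\le M^{n+1}\|\mathcal K\|^n t^n/n!$ on $[0,T]$. This yields convergence in operator norm, uniformly on compact $t$-sets, strong continuity, and the semigroup law by the usual Cauchy-product and Fubini argument. It also yields both Duhamel formulas, and the generator is $\Lin^{\rm tr}+\mathcal K$ with domain $D(\Lin^{\rm tr})$. Each $S_n$ commutes with $P_\ell$, because $S_0$ and $\mathcal K$ do (Lemma~\ref{lem:sector}(2)), so the semigroup preserves modes.

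For $w\in Y^{s+1}$, differentiating the explicit formula in $t$ at $0$ gives $-\Omega\partial_\theta w$. The convergence holds in $Y^s$, via the difference quotient bound from the mean value theorem and density of smooth $w$. Hence $Y^{s+1}\subset D(\Lin^{\rm tr})$ and $\hat\Lin w=\Lin(G)w$. The bound $\|\Omega\partial_\theta w\|_{H^s}\le C\|w\|_{H^{s+1}}$ is immediate because $\Omega$ is smooth on $N_*$.

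\emph{Part (3).} Restricted to $Y^s_\ell$, the semigroup is a $C_0$-semigroup whose generator contains the restriction of $\hat\Lin$, which is $-i\ell\Omega+\kcoef_\ell\Green_\ell=\Lsec{\ell}(G)$ on $Y^{s+1}_\ell$. The operator $\Lsec{\ell}(G)$ is bounded on $Y^s_\ell$, since multiplication by $\Omega$ is bounded there. It therefore generates the norm-continuous semigroup of Lemma~\ref{lem:sector}(4). To conclude equality, we use either of two routes. One is to compare Dyson--Phillips series directly, since $S_0=e^{-i\ell\Omega t}$ on the mode. The other is the uniqueness argument: for $g\in Y^s_\ell$ consider $\tau\mapsto e^{(t-\tau)\Lsec{\ell}}e^{\tau\Lin}g$. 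Its derivative vanishes first on the dense set $Y^{s+1}_\ell$, and the identity then extends by continuity.

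The main obstacle is the uniform-in-$\ell$ weighted estimate in part (1). The point is to see that the polynomial growth $(1+|t|)^s$ comes exactly from trading each factor $\ell$ against one unit of the weight $(1+\ell^2)^{1/2}$. All remaining steps are textbook perturbation theory.
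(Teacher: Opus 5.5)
Your proposal is correct and follows essentially the paper's proof: the same Leibniz/Fa\`a di Bruno trade of each factor $\ell t$ against one power of $(1+\ell^2)^{1/2}$ in (1), and the Dyson--Phillips series with commutation with $P_\ell$ in (2), where you cite the Cauchy-product/Fubini semigroup law and the bounded perturbation theorem while the paper proves the semigroup law by uniqueness for the Volterra equation and obtains $Y^{s+1}\subset D(\hat\Lin)$ from the second Duhamel formula; in (3) your first route matches the paper's Duhamel-uniqueness comparison on each mode. Rely on that first route in (3), or differentiate $\tau\mapsto e^{(t-\tau)\Lin}e^{\tau\Lsec{\ell}(G)}g$ instead: your second route as written needs $e^{\tau\Lin}$ to preserve $Y^{s+1}_\ell$, which you have not shown, whereas $e^{\tau\Lsec{\ell}(G)}$ visibly does because $\Lsec{\ell}(G)$ is bounded on $Y^{s+1}_\ell$.
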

In (3), $Y^s_\ell$ is identified with $P_\ell Y^s$ through $g\mapsto ge^{i\ell\theta}$;
$\Lsec{\ell}(G)$ is bounded on $Y^s_\ell$ because multiplication by $\Omega$ is, and
$\kcoef_\ell\Green_\ell$ is by Corollary~\ref{cor:K-bound} and
Lemma~\ref{lem:norm-equiv}; so, by the argument in the proof of
Lemma~\ref{lem:sector}(4), $\Lsec{\ell}(G)$ generates a norm-continuous group
$\exp(t\Lsec{\ell}(G))$ on $Y^s_\ell$, with derivative
$\Lsec{\ell}(G)\exp(t\Lsec{\ell}(G))=\exp(t\Lsec{\ell}(G))\Lsec{\ell}(G)$.
\begin{proof}
(1) On $Y^s_\ell$, $e^{t\Lin^{\rm tr}}$ is multiplication by $e^{i\varphi}$,
$\varphi=-\ell\Omega(r;G)t$. By the Leibniz and Fa\`a di Bruno formulas,
$\partial_r^j(e^{i\varphi}g)=e^{i\varphi}\sum_{i=0}^{j}\binom jiB_i\,\partial_r^{j-i}g$,
where the complete Bell polynomial $B_i=B_i(i\partial_r\varphi,\dots,i\partial_r^i\varphi)$ is a sum
of at most $C_i$ products of at most $i$ factors $\partial_r^q\varphi=-\ell t\,\partial_r^q\Omega$.
On $[\rho_-,\rho_+]$,
$\|\Omega(\cdot;G)\|_{C^s}\le\|\Omega(\cdot;0)\|_{C^s}+G_+\|r^{-2}\|_{C^s}$, so
$|B_i|\le C(1+|\ell||t|)^i\le C2^{i/2}(1+|t|)^i(1+\ell^2)^{i/2}$ and
\[
(1+\ell^2)^{s-j}\|\partial_r^j(e^{i\varphi}g)\|^2\le C(1+|t|)^{2s}\sum_{i=0}^{j}(1+\ell^2)^{s-(j-i)}\|\partial_r^{j-i}g\|^2;
\]
summing over $j\le s$ gives the mode bound. The map
$(r,\theta)\mapsto(r,\theta-\Omega(r;G)t)$ is a smooth diffeomorphism near $N_*$
preserving $N_*$, so $e^{t\Lin^{\rm tr}}$ maps $Y^s$ to itself, and
Lemma~\ref{lem:norm-equiv} gives the bound on $Y^s$ after enlarging $C_s$ by the
factor $(C^{\rm eq}_s/c^{\rm eq}_s)^{1/2}$. For $w\in Y^s$ and each $\ell$,
$\|(e^{-i\ell\Omega t}-1)g_\ell\|_{s,\ell}\to0$ as $t\to0$ by dominated convergence in
the formula above, with the summable bound $(C_s(1+|t|)^s+1)^2\|g_\ell\|^2_{s,\ell}$;
so $e^{t\Lin^{\rm tr}}w\to w$ in $H^s$ by Lemma~\ref{lem:norm-equiv}. The group law
holds because $\Omega$ does not depend on $\theta$. By Lemma~\ref{lem:sector}(2),
$\mathcal K$ acts on mode profiles as $\kcoef_\ell\Green_\ell$, so
$\|\mathcal Kw\|_{H^s}\le(C^{\rm eq}_s/c^{\rm eq}_s)^{1/2}C_K\|w\|_{H^s}$ by
Corollary~\ref{cor:K-bound} and Lemma~\ref{lem:norm-equiv}, and
$\supp\mathcal Kw\subset\supp\partial_r\bomega\subset N_*$.

(2) By induction $\|S_n(t)\|\le C_s^{n+1}(1+t)^{s(n+1)}\|\mathcal K\|^nt^n/n!$, so the
series converges in operator norm uniformly on compact sets, and its sum $T(t)$ is
strongly continuous, as each $S_n$ is. Summing the recursion,
$T(t)=S_0(t)+\int_0^tS_0(t-\tau)\mathcal KT(\tau)\dd\tau$. Fix $t'\ge0$; we show
that both $T(\cdot+t')$ and $T(\cdot)T(t')$ solve
\[
\Psi(t)=S_0(t)T(t')+\int_0^tS_0(t-\tau)\mathcal K\Psi(\tau)\dd\tau\qquad(t\ge0).
\]
For $T(\cdot)T(t')$, multiply the identity for $T(t)$ on the right by $T(t')$. For
$T(\cdot+t')$, write the identity at time $t+t'$ and split the integral at $\tau=t'$.
On $[0,t']$, $S_0(t+t'-\tau)=S_0(t)S_0(t'-\tau)$ by the group law of $S_0$, and the
bounded operator $S_0(t)$ can be taken out of the integral; on $[t',t+t']$,
substitute $\tau=t'+\sigma$. This gives
\[
T(t+t')=S_0(t)\Big(S_0(t')+\int_0^{t'}S_0(t'-\tau)\mathcal KT(\tau)\dd\tau\Big)
+\int_0^tS_0(t-\sigma)\mathcal KT(\sigma+t')\dd\sigma ,
\]
and the bracket is $T(t')$, which is the equation for $\Psi=T(\cdot+t')$. The difference
$D$ of two strongly continuous, locally bounded solutions satisfies
$\|D(t)w\|\le C\int_0^t\|D(\tau)w\|\dd\tau$, so $D=0$ by iteration; hence $T$ is a
$C_0$-semigroup. By induction and Fubini's theorem, $S_n(t)$ is the integral of
$S_0(t-\tau_n)\mathcal KS_0(\tau_n-\tau_{n-1})\cdots\mathcal KS_0(\tau_1)$ over
$0<\tau_1<\dots<\tau_n<t$, so also $S_{n+1}(t)=\int_0^tS_n(t-\tau)\mathcal KS_0(\tau)\dd\tau$,
which gives the second Duhamel formula
$T(t)=S_0(t)+\int_0^tT(t-\tau)\mathcal KS_0(\tau)\dd\tau$. The $P_\ell$ commute with
$S_0$ and $\mathcal K$, which commute with rotations ($S_0$ because $\Omega$ is
radial, $\mathcal K$ by Lemma~\ref{lem:sector}(2)), hence with $T(t)$. For
$w\in Y^{s+1}$, $\|\Lin^{\rm tr}w\|_{H^s}\le C\|w\|_{H^{s+1}}$, as $\Omega$ and the
coefficients of $\partial_\theta$ are smooth near $N_*$, and
$(e^{h\Lin^{\rm tr}}w-w)/h=\frac1h\int_0^he^{\tau\Lin^{\rm tr}}\Lin^{\rm tr}w\dd\tau\to\Lin^{\rm tr}w$
in $Y^s$, since $\Omega\partial_\theta$ commutes with $e^{\tau\Lin^{\rm tr}}$. By the
second Duhamel formula,
$(T(h)w-w)/h=(e^{h\Lin^{\rm tr}}w-w)/h+\frac1h\int_0^hT(h-\tau)\mathcal Ke^{\tau\Lin^{\rm tr}}w\dd\tau
\to\Lin^{\rm tr}w+\mathcal Kw=\Lin(G)w$.

(3) On $Y^s_\ell$, $S_0(t)=\exp(-i\ell\Omega t)$ and $\mathcal K=\kcoef_\ell\Green_\ell$,
so $T(t)|_{Y^s_\ell}$ solves the first Duhamel equation for these bounded
operators. So does $\exp(t\Lsec{\ell})$, since
\[
\tfrac{\dd}{\dd\tau}\big[\exp(-(t-\tau)i\ell\Omega)\exp(\tau\Lsec{\ell})\big]
=\exp(-(t-\tau)i\ell\Omega)\kcoef_\ell\Green_\ell\exp(\tau\Lsec{\ell}).
\]
By uniqueness they coincide.
\end{proof}

\begin{proposition}
\label{prop:large-sectors}
In a mode setting, let $s\ge0$ be an integer, $\gamma>0$ and $G_+\ge0$. Let
$C_s$ be as in Lemma~\ref{lem:semigroup-construct}\textup{(1)}, $C_K$ as in
Corollary~\ref{cor:K-bound}, $c_s(\gamma)=\sup_{t\ge0}(1+t)^se^{-\gamma t}$, and let
$\mathcal M_\gamma\ge1$ satisfy $C_sC_K(1+\mathcal M_\gamma^2)^{-1/2}\Ip{s}{\gamma}\le\frac12$,
with $\Ip{s}{\gamma}$ as in Equation~\eqref{eq:Ip}. Then
for every $\ell\in\Z$ with $|\ell|\ge\mathcal M_\gamma$, every $G$ with $|G|\le G_+$ and
every $t\ge0$,
$\|e^{t\Lsec{\ell}(G)}\|_{s,\ell\to s,\ell}\le2C_s\,c_s(\gamma)\,e^{\gamma t}$.
\end{proposition}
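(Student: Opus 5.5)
Work on one mode $\ell$ with $|\ell|\ge\mathcal M_\gamma$ and fixed $G$, $|G|\le G_+$. Write $S(t)=e^{t\Lsec{\ell}(G)}$, $S_0(t)=e^{-i\ell\Omega t}$ and $K=\kcoef_\ell\Green_\ell$, all bounded on the profiles with finite $\|\cdot\|_{s,\ell}$. Lemma~\ref{lem:semigroup-construct}(3) identifies $S(t)$ with the mode restriction of the Dyson--Phillips semigroup, so $S$ satisfies the Duhamel formula
\[
S(t)g=S_0(t)g+\int_0^tS_0(t-\tau)KS(\tau)g\dd\tau .
\]
The same identity also follows directly by differentiating $\tau\mapsto S_0(t-\tau)S(\tau)$, as in the proof of Lemma~\ref{lem:semigroup-construct}(3). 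The plan is a Gronwall/bootstrap in the weighted norm $\sup_{t\ge0}e^{-\gamma t}\|S(t)g\|_{s,\ell}$.

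First, bound the two ingredients uniformly in $\ell$ and $G$. Lemma~\ref{lem:semigroup-construct}(1) gives $\|S_0(t)\|_{s,\ell\to s,\ell}\le C_s(1+t)^s$. Corollary~\ref{cor:K-bound} gives $\|K\|_{s,\ell\to s,\ell}\le C_K(1+\ell^2)^{-1/2}\le C_K(1+\mathcal M_\gamma^2)^{-1/2}$, and this bound does not depend on $G$. For $g$ supported in $[\rho_-,\rho_+]$, the function $S(\tau)g$ stays supported there, so the corollary applies at every time.

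Next, set $m(t)=\sup_{0\le\tau\le t}e^{-\gamma\tau}\|S(\tau)g\|_{s,\ell}$. This is finite because $S$ is norm-continuous by Lemma~\ref{lem:sector}(4). Taking norms in the Duhamel formula gives
\[
e^{-\gamma t}\|S(t)g\|_{s,\ell}\le C_s(1+t)^se^{-\gamma t}\|g\|_{s,\ell}+C_sC_K(1+\ell^2)^{-1/2}m(t)\int_0^t(1+t-\tau)^se^{-\gamma(t-\tau)}\dd\tau .
\]
After the substitution $u=t-\tau$, the integral is at most $\Ip{s}{\gamma}$. The first term is at most $C_sc_s(\gamma)\|g\|_{s,\ell}$. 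Since $\Ip{s}{\gamma}$ is increasing in the upper limit, the same bound holds at every $\tau\le t$. Taking the supremum over $\tau\le t$ and using the choice of $\mathcal M_\gamma$ then yields $m(t)\le C_sc_s(\gamma)\|g\|_{s,\ell}+\frac12m(t)$. Because $m(t)<\infty$, this can be absorbed to give $m(t)\le2C_sc_s(\gamma)\|g\|_{s,\ell}$, which is the claim.

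The main obstacle is justifying that the Duhamel identity holds in the $\|\cdot\|_{s,\ell}$ norm, with $K$ controlled in that norm rather than only in $Y^0_\ell$. Corollary~\ref{cor:K-bound} and Lemma~\ref{lem:semigroup-construct}(1)--(3) supply exactly this, so I expect the argument to reduce to the bookkeeping above. Care is needed only to note that none of the constants depend on $\ell$ or $G$.
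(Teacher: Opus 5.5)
Your proposal is correct and follows the paper's proof essentially verbatim. It uses the first Duhamel formula from Lemma~\ref{lem:semigroup-construct}(3), the bounds $C_s(1+t)^s$ and $C_K(1+\ell^2)^{-1/2}$, and absorbs half of the weighted supremum. The only cosmetic difference is that you work with a fixed $g$ rather than with operator norms.

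One small citation slip: the supremum is finite because $\Lsec{\ell}(G)$ is bounded on $Y^s_\ell$ for $\|\cdot\|_{s,\ell}$, which is the remark after Lemma~\ref{lem:semigroup-construct}. Lemma~\ref{lem:sector}(4) is stated only on $Y^0_\ell$.
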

\begin{proof}
Fix such $\ell$ and $G$. Since $\Lsec{\ell}$ is bounded for $\|\cdot\|_{s,\ell}$
(remark after Lemma~\ref{lem:semigroup-construct}),
$Y_T:=\sup_{0\le t\le T}e^{-\gamma t}\|e^{t\Lsec{\ell}}\|_{s,\ell\to s,\ell}$ is finite
for $T<\infty$. By Lemma~\ref{lem:semigroup-construct}(3) and the first Duhamel
formula,
$e^{t\Lsec{\ell}}=e^{-i\ell\Omega t}+\int_0^te^{-i\ell\Omega(t-\tau)}\kcoef_\ell\Green_\ell e^{\tau\Lsec{\ell}}\dd\tau$,
so for $t\le T$, by Lemma~\ref{lem:semigroup-construct}(1) and
Corollary~\ref{cor:K-bound},
\begin{align*}
e^{-\gamma t}\|e^{t\Lsec{\ell}}\|&\le C_s(1+t)^se^{-\gamma t}
+\int_0^tC_s(1+t-\tau)^se^{-\gamma(t-\tau)}\,\frac{C_K}{(1+\ell^2)^{1/2}}\,e^{-\gamma\tau}\|e^{\tau\Lsec{\ell}}\|\dd\tau\\
&\le C_sc_s(\gamma)+\tfrac12Y_T,
\end{align*}
by the choice of $\mathcal M_\gamma$ and $\int_0^t(1+u)^se^{-\gamma u}\dd u\le\Ip{s}{\gamma}$.
Taking the supremum over $t\le T$ gives $Y_T\le2C_sc_s(\gamma)$, for all $T$.
\end{proof}

For $\mathcal Z\subset\Z$ let $Y^s_{\mathcal Z}$ be the closed subspace of
$w\in Y^s$ with $g_\ell=0$ for $\ell\notin\mathcal Z$; it is invariant under
$e^{t\Lin}$ by Lemma~\ref{lem:semigroup-construct}(2).

\begin{lemma}
\label{lem:sector-sum}
In a mode setting, let $s\ge0$ be an integer, $G_+\ge0$, $|G|\le G_+$,
$\mathcal Z\subset\Z$, and let $\hat\Lin_{\mathcal Z}$ be the generator of the restriction of
$e^{t\Lin}$ to $Y^s_{\mathcal Z}$. Then:
\begin{enumerate}[label=\textup{(\arabic*)}]
\item $w\in D(\hat\Lin_{\mathcal Z})$ if and only if $w\in Y^s_{\mathcal Z}$ and
  $\sum_{\ell\in\mathcal Z}\|\Lsec{\ell}(G)g_\ell\|^2_{s,\ell}<\infty$, and then
  $P_\ell\hat\Lin_{\mathcal Z} w=(\Lsec{\ell}(G)g_\ell)e^{i\ell\theta}$ for every $\ell$;
\item let $\gamma>0$, let $\mathcal M_\gamma$ be as in Proposition~\ref{prop:large-sectors},
  and let $\Re\lambda>\gamma$. For $|\ell|\ge\mathcal M_\gamma$, $\lambda$ lies in the
  resolvent set of $\Lsec{\ell}(G)$ on $Y^s_\ell$, and
  $\|(\lambda-\Lsec{\ell}(G))^{-1}\|_{s,\ell\to s,\ell}\le2C_sc_s(\gamma)/(\Re\lambda-\gamma)$;
\item for $\Re\lambda>\gamma>0$: $\lambda$ lies in the resolvent set of $\hat\Lin_{\mathcal Z}$
  if and only if $\lambda$ lies in the resolvent set of $\Lsec{\ell}(G)$ on
  $Y^s_\ell$ for every $\ell\in\mathcal Z$ with $|\ell|<\mathcal M_\gamma$; and then
  $P_\ell(\lambda-\hat\Lin_{\mathcal Z})^{-1}w=((\lambda-\Lsec{\ell}(G))^{-1}g_\ell)e^{i\ell\theta}$
  for every $\ell\in\mathcal Z$;
\item if $\sigma_+\subset\{\Re\lambda>\gamma\}$ is a compact set that is open and
  closed in the spectrum of $\hat\Lin_{\mathcal Z}$, then the Riesz projection $\Riesz_{\sigma_+}$
  of $\hat\Lin_{\mathcal Z}$ for $\sigma_+$ satisfies
  $P_\ell\Riesz_{\sigma_+}w=(\Riesz_{\ell}g_\ell)e^{i\ell\theta}$, where $\Riesz_\ell$ is
  the Riesz projection of $\Lsec{\ell}(G)$ on $Y^s_\ell$ for
  $\sigma_+\cap\spec\Lsec{\ell}(G)$ \textup{(}and $\Riesz_\ell=0$ if this set is
  empty\textup{)}.
\end{enumerate}
\end{lemma}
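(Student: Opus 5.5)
The plan is to diagonalize everything by angular modes. The basic input is Lemma~\ref{lem:norm-equiv}: $Y^s_{\mathcal Z}$ is isomorphic, with equivalent norms, to the $\ell^2$-direct sum $\bigoplus_{\ell\in\mathcal Z}(Y^s_\ell,\|\cdot\|_{s,\ell})$. Under this isomorphism $e^{t\Lin}$ acts diagonally as $\bigoplus_\ell e^{t\Lsec{\ell}(G)}$, by Lemma~\ref{lem:semigroup-construct}(2),(3).

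For (1), suppose first that $w\in D(\hat\Lin_{\mathcal Z})$. Apply the bounded projection $P_\ell$ to the difference quotient $(e^{h\Lin}w-w)/h$. Since $P_\ell$ commutes with the semigroup, the limit is
$$P_\ell\hat\Lin_{\mathcal Z}w=\lim_{h\downarrow0}(e^{h\Lsec{\ell}}g_\ell-g_\ell)/h=\Lsec{\ell}g_\ell .$$
The sum $\sum_\ell\|\Lsec{\ell}g_\ell\|_{s,\ell}^2$ is then finite by Lemma~\ref{lem:norm-equiv}, because $\hat\Lin_{\mathcal Z}w\in Y^s$.

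Conversely, suppose $\sum_\ell\|\Lsec{\ell}g_\ell\|^2_{s,\ell}<\infty$, and let $u\in Y^s_{\mathcal Z}$ be the element with modes $\Lsec{\ell}g_\ell$. Mode by mode we have
$$e^{t\Lsec{\ell}}g_\ell-g_\ell=\int_0^te^{\tau\Lsec{\ell}}\Lsec{\ell}g_\ell\dd\tau .$$
Sum these identities over $\ell$, using the uniform local bound on $\|e^{\tau\Lin}\|$. This gives $e^{t\Lin}w-w=\int_0^te^{\tau\Lin}u\dd\tau$, and strong continuity then yields $w\in D(\hat\Lin_{\mathcal Z})$ with $\hat\Lin_{\mathcal Z}w=u$.

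For (2), use Proposition~\ref{prop:large-sectors}. For $|\ell|\ge\mathcal M_\gamma$ it gives $\|e^{t\Lsec{\ell}}\|\le2C_sc_s(\gamma)e^{\gamma t}$. The Laplace transform $\int_0^\infty e^{-\lambda t}e^{t\Lsec{\ell}}\dd t$ then converges absolutely for $\Re\lambda>\gamma$, inverts $\lambda-\Lsec{\ell}$ (standard for the bounded generator $\Lsec{\ell}$), and has norm at most $2C_sc_s(\gamma)/(\Re\lambda-\gamma)$.

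For (3), there are only finitely many $\ell$ with $|\ell|<\mathcal M_\gamma$.
\begin{itemize}
\item If each of those $\lambda-\Lsec{\ell}$ is invertible, define $R$ modewise by $(\lambda-\Lsec{\ell})^{-1}g_\ell$. The uniform bound from (2) for large $|\ell|$, together with the finitely many bounded inverses, makes $R$ bounded on $Y^s_{\mathcal Z}$ via Lemma~\ref{lem:norm-equiv}. By (1), $R$ maps into $D(\hat\Lin_{\mathcal Z})$ and is a two-sided inverse of $\lambda-\hat\Lin_{\mathcal Z}$.
\item If instead some $\lambda-\Lsec{\ell_0}$ with $\ell_0\in\mathcal Z$ fails to be invertible, we embed $Y^s_{\ell_0}$ into $Y^s_{\mathcal Z}$ as an invariant summand on which $\hat\Lin_{\mathcal Z}$ acts as $\Lsec{\ell_0}$ (again by (1)). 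If $\lambda-\hat\Lin_{\mathcal Z}$ had a bounded inverse, that inverse would commute with $P_{\ell_0}$, and its restriction would invert $\lambda-\Lsec{\ell_0}$, a contradiction.
\end{itemize}
The modewise formula for the resolvent is then immediate.

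For (4), write the Riesz projection as a contour integral over a contour $\Gamma$ in $\{\Re\lambda>\gamma\}$ around $\sigma_+$, inside the resolvent set. Since $P_\ell$ is bounded, it passes inside the integral, and by (3) the integrand becomes $(\lambda-\Lsec{\ell})^{-1}g_\ell$. For each $\ell$, $\Gamma$ encloses exactly $\sigma_+\cap\spec\Lsec{\ell}$. This holds because $\spec\hat\Lin_{\mathcal Z}\cap\{\Re\lambda>\gamma\}$ is the union of these sets by (3), and the modes with $|\ell|\ge\mathcal M_\gamma$ have no spectrum there by (2). So the contour integral is $\Riesz_\ell g_\ell$, and it is $0$ when $\Gamma$ encloses no spectrum of $\Lsec{\ell}$.

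The main obstacle I expect is the converse direction in (1) and, for (3), showing that the modewise resolvent actually lands in the domain. Both require passing modewise identities to the full space, and this is exactly where the uniform-in-$\ell$ bounds from Lemma~\ref{lem:norm-equiv} and Proposition~\ref{prop:large-sectors} are used.
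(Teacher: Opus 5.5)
Your proposal is correct and follows essentially the same route as the paper. You identify the semigroup mode by mode through Lemma~\ref{lem:semigroup-construct}(3) and Lemma~\ref{lem:norm-equiv} for (1), and use the Laplace-transform resolvent with the bound of Proposition~\ref{prop:large-sectors} for (2). For (3) you build the resolvent mode by mode, with (1) giving domain membership and injectivity and commutation with $P_\ell$ giving the converse, and for (4) you pass $P_\ell$ through the contour integral; this is exactly how the paper argues.
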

\begin{proof}
(1) Since $P_\ell$ is bounded and commutes with $e^{t\Lin}$, for
$w\in D(\hat\Lin_{\mathcal Z})$ Lemma~\ref{lem:semigroup-construct}(3) gives
$P_\ell\hat\Lin_{\mathcal Z} w=\lim_{h\to0}h^{-1}(\exp(h\Lsec{\ell})g_\ell-g_\ell)e^{i\ell\theta}
=(\Lsec{\ell}g_\ell)e^{i\ell\theta}$, and
$\sum_\ell\|\Lsec{\ell}g_\ell\|^2_{s,\ell}\le(c^{\rm eq}_s)^{-1}\|\hat\Lin_{\mathcal Z} w\|^2_{H^s}$ by
Lemma~\ref{lem:norm-equiv}. Conversely, if the sum is finite,
$v:=\sum_\ell(\Lsec{\ell}g_\ell)e^{i\ell\theta}$ converges in $Y^s_{\mathcal Z}$ by
Lemma~\ref{lem:norm-equiv}, and for $t>0$
\[
e^{t\Lin}w-w=\sum_\ell\Big(\int_0^t\exp(\tau\Lsec{\ell})\Lsec{\ell}g_\ell\dd\tau\Big)e^{i\ell\theta}
=\int_0^te^{\tau\Lin}v\dd\tau;
\]
dividing by $t$ and letting $t\to0$ shows $w\in D(\hat\Lin_{\mathcal Z})$.

(2) We use the semigroup formula for the resolvent. Let $A$ be a bounded operator
with $\|e^{tA}\|\le Ke^{\gamma t}$ for $t\ge0$, and let $\Re\lambda>\gamma$. Then
$R=\int_0^\infty e^{-\lambda t}e^{tA}\dd t$ converges in operator norm,
$\|R\|\le K/(\Re\lambda-\gamma)$, and $R=(\lambda-A)^{-1}$: integrating
$\frac{\dd}{\dd t}\big(e^{-\lambda t}e^{tA}\big)=-e^{-\lambda t}e^{tA}(\lambda-A)
=-(\lambda-A)e^{-\lambda t}e^{tA}$ over $[0,\infty)$ gives $R(\lambda-A)=(\lambda-A)R=1$.
By Proposition~\ref{prop:large-sectors}, $A=\Lsec{\ell}(G)$ on $Y^s_\ell$ satisfies this
with $K=2C_sc_s(\gamma)$ when $|\ell|\ge\mathcal M_\gamma$, which gives (2).

(3) If $\lambda$ is in the resolvent set of $\hat\Lin_{\mathcal Z}$, then by (1) $P_\ell$ maps
$D(\hat\Lin_{\mathcal Z})$ into itself and commutes with $\hat\Lin_{\mathcal Z}$ there, so
$(\lambda-\hat\Lin_{\mathcal Z})^{-1}$ commutes with $P_\ell$ and its restriction to the mode
$\ell$ inverts $\lambda-\Lsec{\ell}$. Conversely, under the condition, (2) and the
finiteness of $\{\ell\in\mathcal Z:|\ell|<\mathcal M_\gamma\}$ give
$K:=\sup_{\ell\in\mathcal Z}\|(\lambda-\Lsec{\ell})^{-1}\|_{s,\ell\to s,\ell}<\infty$.
For $w\in Y^s_{\mathcal Z}$, $u=\sum_\ell((\lambda-\Lsec{\ell})^{-1}g_\ell)e^{i\ell\theta}$
converges in $Y^s_{\mathcal Z}$, with
$\|u\|_{H^s}\le(C^{\rm eq}_s/c^{\rm eq}_s)^{1/2}K\|w\|_{H^s}$
(Lemma~\ref{lem:norm-equiv}). Since $\Lsec{\ell}u_\ell=\lambda u_\ell-g_\ell$, the sum in
(1) is finite, so $u\in D(\hat\Lin_{\mathcal Z})$ and $(\lambda-\hat\Lin_{\mathcal Z})u=w$; and
$(\lambda-\hat\Lin_{\mathcal Z})u=0$ forces $(\lambda-\Lsec{\ell})u_\ell=0$ for all $\ell$ by (1),
so $u=0$.

(4) Let $\Gamma\subset\{\Re\lambda>\gamma\}$ be a contour in the resolvent set of
$\hat\Lin_{\mathcal Z}$ winding once around $\sigma_+$ and not around the rest of the spectrum.
Since $P_\ell$ is bounded,
$P_\ell\Riesz_{\sigma_+}w=\frac1{2\pi i}\oint_\Gamma P_\ell(\lambda-\hat\Lin_{\mathcal Z})^{-1}w\dd\lambda
=\big(\frac1{2\pi i}\oint_\Gamma(\lambda-\Lsec{\ell})^{-1}g_\ell\dd\lambda\big)e^{i\ell\theta}$
by (3). The spectrum of $\Lsec{\ell}$ inside $\Gamma$ is
$\sigma_+\cap\spec\Lsec{\ell}$, by (3), which gives the claim.
\end{proof}

\subsection{The Numerical Hypothesis}

The rest of the paper assumes the following statements, which are proved in
Appendix~A of Part~II~\cite{PartII} (Theorem~1.3 there).

\begin{hypothesis}
\label{hyp:N}
Let $a_0,d,\xiL,\xia,\xib,\xiR$, $H$, $\chi$ and $\prof$ be as in
Definition~\ref{def:ring}. For $G\ge0$ and $\xi\in\R$ put
\[
\Omega_G(\xi)=e^{-2\xi}\Big(G+\int_{-\infty}^{\xi}\prof(\xi')e^{2\xi'}\dd\xi'\Big).
\]
For an integer $\ell\ge1$, $G\ge0$ and $\lambda\in\C$ with $\Re\lambda>0$, let
$\psi=\psi_\ell(\cdot;\lambda,G)$ be the solution on $[\xiL,\xiR]$ of
\[
\partial_\xi^2\psi=\Big(\ell^2+\frac{i\ell\,\partial_\xi\prof(\xi)}{\lambda+i\ell\,\Omega_G(\xi)}\Big)\psi,\qquad
\psi(\xiL)=1,\quad\partial_\xi\psi(\xiL)=\ell,
\]
and put
\[
\Evans_\ell(\lambda;G)=e^{-\ell(\xiR-\xiL)}\big(\partial_\xi\psi(\xiR)+\ell\psi(\xiR)\big),
\]
a holomorphic function of $\lambda$ in $\{\Re\lambda>0\}$
(Lemma~\ref{lem:evans-basic}). Put $\Thl{\ell}=\pi\ell d/\sin(\pi\ell d)$ and
$\epsst=\max\{H((\xia-a_0)/d),\,H(-\xib/d)\}$. Then:
\begin{enumerate}[label=\textup{(N\arabic*)}]
\item for every $G\in[0,\frac1{50}]$, the function $\Evans_3(\cdot;G)$ has
  exactly one zero, counted with multiplicity, in the closed half-plane
  $\{\Re\lambda\ge\frac9{200}\}$;
\item for every $G\in[0,\frac1{50}]$, the function $\Evans_3(\cdot;G)$ has no
  zero in the closed strip $\{\frac9{200}\le\Re\lambda\le\frac{211}{2500}\}$;
\item for $\ell\in\{6,9\}$, $e^{\ell a_0}\Thl{\ell}+\epsst<\frac9{100}$.
\end{enumerate}
\end{hypothesis}

By Lemma~\ref{lem:ring-facts}, $6d<1$ and $9d<1$, so $\Thl{6}$ and $\Thl{9}$ are
finite positive numbers.

\subsection{Consequences of the Numerical Hypothesis}

We use the ring in both normalizations of Definition~\ref{def:ring}. In reference
units the ring is $\omc(x)=\prof(\log|x|)$, given directly by the explicit profile
$\prof$, and Hypothesis~\ref{hyp:N}, which Part~II verifies with computer
assistance, is stated in these units, for $G\in[0,\frac1{50}]$. In ring units the
ring $\Phi$ is supported in $\{a\le|y|\le1\}$, with outer radius $1$; this is the
normalization of the building block of the datum~\eqref{eq:datum}, whose rings are
the rescaled copies $A_k\Phi(\cdot/r_k)$, and of the spaces $X^s$ and the operator
$\Lin(G)$ that Part~II uses. Equation~\eqref{eq:evans-units} below translates
between the two.

In each case we fix a mode setting (Definition~\ref{def:sector-setting}); recall
that $(\xi_1,\xi_2)=(\log r_1,\log r_2)$, where $[r_1,r_2]$ contains the support of
$\partial_r\bomega$. In reference units, $\bomega=\omc$, whose profile $\prof$ is
supported in $[\xiL,\xiR]$; we take $(\xi_1,\xi_2)=(\xiL,\xiR)$ and
$N_*=\{\frac12e^{\xiL}\le|x|\le2e^{\xiR}\}$. In ring units, $\bomega=\Phi$; since
$\log|x|=\log|y|+\xiR$, the profile of $\Phi$ in $\xi=\log|y|$ is
$\prof(\xi+\xiR)$, supported in $[\xiL-\xiR,0]$, so we take
$(\xi_1,\xi_2)=(\xiL-\xiR,0)$, that is $(r_1,r_2)=(a,1)$, and $N_*=N$. Both are
mode settings by Lemma~\ref{lem:ring-facts}. We write $\Evans_\ell$ and $\Evans^\Phi_\ell$
for the two Evans functions; likewise, superscripts $\omc$ and $\Phi$ on $\Omega$
and $\nu$ indicate the profile. Since $\Phi=\omc(\Rc\,\cdot)$,
$\Circ_\Phi(r)=\Rc^{-2}\Circ_{\omc}(\Rc r)$, so
\[
\Omega^\Phi(r;G)=r^{-2}\Big(\frac{\Rc^{-2}\Circ_{\omc}(\Rc r)}{2\pi}+G\Big)
=(\Rc r)^{-2}\Big(\frac{\Circ_{\omc}(\Rc r)}{2\pi}+\Rc^2G\Big)=\Omega^{\omc}(\Rc r;\Rc^2G),
\]
and $\nu^\Phi(\xi)=\nu^{\omc}(\xi+\xiR)$, as $\Rc=e^{\xiR}$. Hence the Rayleigh
potential for $(\Phi,G)$ at $\xi$ is the one for $(\omc,\Rc^2G)$ at $\xi+\xiR$. So
if $\psi_\ell(\cdot;\lambda,\Rc^2G)$ is the solution of Definition~\ref{def:evans}
for $(\omc,\Rc^2G)$, normalized at $\xiL$, then $\psi_\ell(\cdot+\xiR;\lambda,\Rc^2G)$
is the one for $(\Phi,G)$, normalized at $\xiL-\xiR$, and the formula for the Evans
function gives
\begin{equation}
\label{eq:evans-units}
\Evans^\Phi_\ell(\lambda;G)=\Evans_\ell(\lambda;\Rc^2G)\qquad(\ell\ge1,\ \Re\lambda>0,\ G\in\R).
\end{equation}

\begin{corollary}
\label{cor:sector-3}
Assume \textup{(N1)} and \textup{(N2)}. For every $G\in[0,\frac1{50}]$, the
function $\Evans_3(\cdot;G)$ of Hypothesis~\ref{hyp:N} has exactly one zero
$\lamst^{\mathrm c}(G)$ in $\{\Re\lambda\ge\frac9{200}\}$; it is simple and
$\Re\lamst^{\mathrm c}(G)>\frac{211}{2500}$.
\end{corollary}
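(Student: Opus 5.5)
The proof combines (N1) and (N2) with elementary facts about zeros of holomorphic functions. Fix $G\in[0,\frac1{50}]$. By Lemma~\ref{lem:evans-basic}(1) and the last assertion of that lemma, $\Evans_3(\cdot;G)$ is holomorphic on $\{\Re\lambda>0\}$. It therefore has isolated zeros of finite multiplicity in the closed half-plane $\{\Re\lambda\ge\frac9{200}\}$, which is contained in $\{\Re\lambda>0\}$. This is what makes "counted with multiplicity" in (N1) meaningful.

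By (N1), the sum of the multiplicities of the zeros in $\{\Re\lambda\ge\frac9{200}\}$ equals one. Each zero has multiplicity at least one, so there is exactly one zero there, which we call $\lamst^{\mathrm c}(G)$, and its multiplicity is one. A zero of multiplicity one is by definition simple, meaning $\partial_\lambda\Evans_3(\lamst^{\mathrm c}(G);G)\ne0$.

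It remains to locate the zero. Since $\lamst^{\mathrm c}(G)$ lies in $\{\Re\lambda\ge\frac9{200}\}$ and, by (N2), not in the closed strip $\{\frac9{200}\le\Re\lambda\le\frac{211}{2500}\}$, we get $\Re\lamst^{\mathrm c}(G)>\frac{211}{2500}$.

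There is no real obstacle. The only point needing care is to read "exactly one zero, counted with multiplicity" in (N1) as a statement about the total multiplicity. That reading gives existence, uniqueness and simplicity all at once.
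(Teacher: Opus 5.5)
Your proposal is correct and matches the paper's proof: (N1), read as total multiplicity one, gives exactly one zero and makes it simple, and (N2) rules out the closed strip, so $\Re\lamst^{\mathrm c}(G)>\frac{211}{2500}$. One small point: your remark that the zeros are isolated and of finite multiplicity needs $\Evans_3(\cdot;G)\not\equiv0$, and that follows from (N1) itself.
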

\begin{proof}
By (N1), $\Evans_3(\cdot;G)$ has exactly one zero $\lamst^{\mathrm c}(G)$ in
$\{\Re\lambda\ge\frac9{200}\}$, counted with multiplicity, so this zero is simple; by (N2),
$\Re\lamst^{\mathrm c}(G)>\frac{211}{2500}$.
\end{proof}

For the rest of the paper, $X^s$ denotes the space of $3$-fold rotationally symmetric
$w\in H^s(\R^2;\C)$ with $\supp w\subset N=\{\frac a2\le|y|\le2\}$; thus
$X^s=Y^s_{3\Z}$ for the ring in ring units, and $X^s_\ell=Y^s_\ell$ for $\ell\in3\Z$.
On $X^s$, $\Lin(G)$ denotes the linearization at $(\Phi,G)$ in ring units, that is,
the generator of the restriction of $e^{t\Lin}$ to $X^s$; by
Lemma~\ref{lem:semigroup-construct}(2) it extends the differential operator on
$X^{s+1}$, and Lemma~\ref{lem:sector-sum} describes it mode by mode.

\begin{proposition}
\label{prop:SU}
Assume \textup{(N1)}--\textup{(N3)}, and for $G\in[0,\Gone]$ let
$\lamst(G)=\lamst^{\mathrm c}(\Rc^2G)$ be as in Corollary~\ref{cor:sector-3}. Then
$\Re\lamst(G)>\frac{211}{2500}$, and for every $G\in[0,\Gone]$ and every integer
$s\ge0$: $\spec(\Lsec{3}(G)|_{X^s_3})\cap\{\Re\lambda>\frac9{200}\}=\{\lamst(G)\}$, and
$\lamst(G)$ is an algebraically simple eigenvalue of $\Lsec{3}(G)$ on $X^s_3$;
$\spec(\Lsec{-3}(G)|_{X^s_{-3}})\cap\{\Re\lambda>\frac9{200}\}=\{\overline{\lamst(G)}\}$;
$\spec(\Lsec{\ell}(G)|_{X^s_\ell})\cap\{\Re\lambda>\frac9{200}\}=\emptyset$ for
$\ell\in3\Z\setminus\{3,-3\}$;
\[
\spec(\Lin(G)|_{X^s})\cap\{\Re\lambda>\tfrac9{200}\}=\{\lamst(G),\overline{\lamst(G)}\};
\]
and the Riesz projection of $\Lin(G)$ on $X^s$ for this set acts as the Riesz
projection of $\Lsec{3}(G)$ at $\lamst(G)$ on the mode $3$, as the Riesz
projection of $\Lsec{-3}(G)$ at $\overline{\lamst(G)}$ on the mode $-3$, and as
$0$ on the other modes.
\end{proposition}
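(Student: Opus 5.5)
The plan is to work mode by mode, using Lemma~\ref{lem:sector-sum} to assemble the modes. First, for $G\in[0,\Gone]$ we have $\Rc^2G\in[0,\frac1{50}]$ by the definition $\Gone=\frac1{50}\Rc^{-2}$. So Corollary~\ref{cor:sector-3} applies at $\Rc^2G$, and Equation~\eqref{eq:evans-units} identifies $\Evans^\Phi_3(\cdot;G)=\Evans_3(\cdot;\Rc^2G)$. Hence $\Evans^\Phi_3(\cdot;G)$ has exactly one zero in $\{\Re\lambda\ge\frac9{200}\}$, namely $\lamst(G)$; it is simple and satisfies $\Re\lamst(G)>\frac{211}{2500}$.

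\textbf{Modes $\pm3$.} Lemma~\ref{lem:resolvent} identifies the spectrum of $\Lsec{3}(G)$ on $X^s_3=Y^s_3$ in $\{\Re\lambda>0\}$ with the zero set of $\Evans^\Phi_3$, for every $s$. This gives $\spec\cap\{\Re\lambda>\frac9{200}\}=\{\lamst(G)\}$. Since the zero is simple, Theorem~\ref{thm:evans-simple}(2) gives a rank-one Riesz projection on each $Y^s_3$, i.e.\ algebraic simplicity. The mode $-3$ follows by the conjugation statement at the end of Lemma~\ref{lem:resolvent}, together with Lemma~\ref{lem:sector}(2).

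\textbf{Modes $\ell\in3\Z\setminus\{\pm3\}$.} By conjugation it suffices to take $\ell\ge6$, since $\ell=0$ gives $\Lsec0=0$. Any eigenvalue with $\Re\lambda>0$ satisfies $\Re\lambda\le\frac12\Zedge_\ell^{1/2}$ by Lemma~\ref{lem:edge}, because $\Phi$ satisfies the monotonicity hypothesis by Lemma~\ref{lem:closed-form}(1) (dilation preserves it). Here $\Zedge_\ell$ is nonincreasing in $\ell$, so $\Zedge_\ell\le\Zedge_6$.

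We need to check that $\Zedge$ is dilation invariant. The quantity $\Zedge_\ell$ is built from $(r/\rho)^{2\ell}$ and $|\partial_r\bomega|\,\dd r$, and both are invariant under $r\mapsto\Rc r$. So $\Zedge_\ell$ for $\Phi$ equals $\Zedge_\ell$ for $\omc$.

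By Lemma~\ref{lem:closed-form}(2), valid since $6d<1$, and (N3), we get $\Zedge_6=\Zedge^a_6\Zedge^b_6<(\frac9{100})^2$. Hence $\Re\lambda<\frac12\cdot\frac9{100}=\frac9{200}$. Since eigenvalues in $\{\Re\lambda>0\}$ are exactly the spectrum there (Lemma~\ref{lem:resolvent}), the spectrum of $\Lsec\ell$ in $\{\Re\lambda>\frac9{200}\}$ is empty.

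\textbf{Assembling the full operator.} Fix $\gamma\in(0,\frac9{200})$ and apply Lemma~\ref{lem:sector-sum}(3) with $\mathcal Z=3\Z$. For $\Re\lambda>\frac9{200}$, $\lambda$ lies in the resolvent set of $\Lin(G)|_{X^s}$ if and only if it lies in the resolvent set of each of the finitely many $\Lsec\ell$ with $|\ell|<\mathcal M_\gamma$. Here $\mathcal M_\gamma$ comes from Proposition~\ref{prop:large-sectors} with $G_+=\Gone$. By the mode analysis above, this gives spectrum exactly $\{\lamst(G),\overline{\lamst(G)}\}$.

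\textbf{Riesz projection.} The set $\sigma_+=\{\lamst,\overline{\lamst}\}$ is compact and open-and-closed in the spectrum within $\{\Re\lambda>\gamma\}$. To see this, note that the spectrum in $\{\Re\lambda>\gamma\}$ is a union of finitely many mode spectra. Each of these is discrete in $\{\Re\lambda>0\}$, being the zeros of a holomorphic, not identically zero Evans function (not identically zero since otherwise (N1) fails). Moreover, those spectra are kept off $\{\Re\lambda>\frac9{200}\}$, so $\sigma_+$ is separated from the rest. Then Lemma~\ref{lem:sector-sum}(4) gives the mode-wise description of the Riesz projection: the projection at $\lamst$ on mode $3$, at $\overline{\lamst}$ on mode $-3$, and $0$ elsewhere, since $\sigma_+$ meets no other mode spectrum.

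\textbf{Main obstacle.} The delicate point is this separation of $\sigma_+$. The rest of the spectrum, in $\{\gamma<\Re\lambda\le\frac9{200}\}$, may accumulate toward the line $\Re\lambda=\frac9{200}$. This cannot happen near $\sigma_+$, because $\Re\lamst>\frac{211}{2500}>\frac9{200}$ and only finitely many modes contribute, each with discrete spectrum there. So a contour in $\{\Re\lambda>\frac9{200}\}$ around $\sigma_+$ works.
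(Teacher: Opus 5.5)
Your proposal is correct and follows essentially the paper's proof: modes $\pm3$ via Lemma~\ref{lem:resolvent}, Equation~\eqref{eq:evans-units}, Corollary~\ref{cor:sector-3} and Theorem~\ref{thm:evans-simple}(2); the other modes via Lemmas~\ref{lem:edge} and~\ref{lem:closed-form} with (N3); and the assembly and Riesz projection via Lemma~\ref{lem:sector-sum}(3),(4). The one variation is that you bound every $|\ell|\ge6$ by $\Zedge_6$ using monotonicity, which needs only the case $\ell=6$ of (N3), whereas the paper uses $\ell'=9$ for $|\ell|\ge9$. Your ``main obstacle'' is not really one: once $\spec(\Lin(G)|_{X^s})\cap\{\Re\lambda>\frac9{200}\}=\{\lamst(G),\overline{\lamst(G)}\}$ is known, this finite set is open and closed in the spectrum simply because the half-plane is open, so the discreteness discussion can be dropped (its justification via (N1) would in any case only cover $\ell=\pm3$).
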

\begin{proof}
Fix $G\in[0,\Gone]$ and $s\ge0$; then $\Rc^2G\in[0,\frac1{50}]$. For $\ell=\pm3$: by
Lemma~\ref{lem:resolvent}, run in the mode setting of $\Phi$, the spectrum of
$\Lsec{\pm3}(G)$ on $X^s_{\pm3}$ in $\{\Re\lambda>0\}$ consists of the zeros of
$\Evans^\Phi_3(\cdot;G)$ and their conjugates respectively, all eigenvalues. By
\eqref{eq:evans-units} and the last assertion of Lemma~\ref{lem:evans-basic},
$\Evans^\Phi_3(\cdot;G)$ is the function $\Evans_3(\cdot;\Rc^2G)$ of
Hypothesis~\ref{hyp:N}. So by Corollary~\ref{cor:sector-3} the part of this
spectrum in $\{\Re\lambda\ge\frac9{200}\}$ is $\{\lamst(G)\}$ for $\Lsec{3}(G)$ and
$\{\overline{\lamst(G)}\}$ for $\Lsec{-3}(G)$, where
$\lamst(G)=\lamst^{\mathrm c}(\Rc^2G)$ is a simple zero of $\Evans^\Phi_3(\cdot;G)$
and $\Re\lamst(G)>\frac{211}{2500}$; and Theorem~\ref{thm:evans-simple}(2) shows
that $\lamst(G)$ is algebraically simple on $X^s_3$. Let $|\ell|=6$ or
$|\ell|\ge9$, and let $\ell'=6$ if $|\ell|=6$ and $\ell'=9$ if $|\ell|\ge9$. By
Lemma~\ref{lem:resolvent}, every point $\lambda$ of
$\spec(\Lsec{\ell}(G)|_{X^s_\ell})$ in $\{\Re\lambda>0\}$ is an eigenvalue, with an
eigenfunction in $X^s_\ell=Y^s_\ell\subset Y^0_\ell$. By
Lemma~\ref{lem:closed-form}(1), $\omc$ satisfies the hypothesis of
Lemma~\ref{lem:edge}, and so does $\Phi$, as $\partial_r\Phi(r)=\Rc\,(\partial_r\omc)(\Rc r)$; the
substitution $u=\Rc r$ shows that $\Zedge_\ell$ is the same for both. By
Lemma~\ref{lem:edge}, the monotonicity of $\Zedge_\ell$ (as $\ell'\le|\ell|$),
Lemma~\ref{lem:closed-form}(2) with $\ell'$ in place of $\ell$ (admissible since
$\ell'd<1$, Lemma~\ref{lem:ring-facts}) and (N3),
$\Re\lambda\le\tfrac12\Zedge_{|\ell|}^{1/2}\le\tfrac12\Zedge_{\ell'}^{1/2}
=\tfrac12(\Zedge^a_{\ell'}\Zedge^b_{\ell'})^{1/2}
\le\tfrac12\big(e^{\ell'a_0}\Thl{\ell'}+\epsst\big)<\tfrac12\cdot\tfrac9{100}$;
and $\Lsec{0}=0$. This proves the mode statements. Apply Lemma~\ref{lem:sector-sum} with
$\mathcal Z=3\Z$, $G_+=\Gone$, $\gamma=\frac9{200}$. Every $\lambda$ with
$\Re\lambda>\frac9{200}$ other than $\lamst(G)$, $\overline{\lamst(G)}$ is in the
resolvent set of every $\Lsec{\ell}(G)$, hence of $\Lin(G)$ by
Lemma~\ref{lem:sector-sum}(3), while $\lamst(G)$ and $\overline{\lamst(G)}$ are
eigenvalues with eigenfunctions $g_*e^{3i\theta}$ and $\overline{g_*}e^{-3i\theta}$,
where $g_*=\Mres_{\lamst(G)}\kcoef_3\,\psi_3(\log r;\lamst(G),G)$ is the
eigenfunction of Lemma~\ref{lem:resolvent}(2) in the mode setting of $\Phi$.
This finite set is open and closed in the spectrum, and
Lemma~\ref{lem:sector-sum}(4) gives its Riesz projection.
\end{proof}

The next definition collects the eigendata that Part~II uses. Two of them, $\beta(G)$
and $\pout_*(G)$, describe how a ring interacts with the other rings of the datum.
Let $g$ be a real vorticity supported in a disk $\{|y|\le\rho\}$ with $\rho<a$, inside
the ring. By the expansion following Definition~\ref{def:multipole}, the part of its
stream function in the angular modes $\pm3$ is
$-\Re\big(\frac13\pout_3[g]\,(\bar y)^{-3}\big)$ on the ring, and since
$(\bar y)^{-3}=r^{-3}e^{3i\theta}$, the corresponding velocity $h$ has radial
component $h\cdot\er=-r^{-1}\partial_\theta\big(-\Re(\frac13\pout_3[g]r^{-3}e^{3i\theta})\big)
=\Re\big(i\,\pout_3[g]\,r^{-4}e^{3i\theta}\big)$. The forcing
$-h\cdot\nabla\Phi=-(h\cdot\er)\,\partial_r\Phi$ that this field exerts on the ring
therefore has mode-$3$ part $\pout_3[g]\,b(r)e^{3i\theta}$, with
$b(r)=-\frac i2r^{-4}\partial_r\Phi(r)$. So $be^{3i\theta}$ is the forcing of the ring,
in the mode $3$, by an interior field of unit outer moment, and $\beta(G)$ below is
its component along the unstable eigenfunction. Conversely, $\pout_*(G)$ is the outer
moment of the unstable eigenfunction, which determines the field that a perturbed
ring exerts on the rings outside it. In the relay of Part~II, the kick that a ring
gives to the next ring outward has unstable component $\pout_3[g]\,\beta(G)$, where
$g$ is the inner ring rescaled to the units of the outer one; near saturation the
perturbation of the inner ring is a multiple of its unstable eigenfunction, so
$\pout_3[g]$ is a multiple of $\pout_*$. This is why Lemma~\ref{lem:uniform} bounds
$|\beta|\,|\pout_*|$ from below.

\begin{definition}
\label{def:eigendata}
Assume \textup{(N1)}--\textup{(N3)}, and let $G\in[0,\Gone]$. With the mode
setting of $\Phi$ in ring units and $\ell=3$, let $\lamst(G)$ be as in
Proposition~\ref{prop:SU}, $g_*=g_*(G)$ and $\Jsimp(G)$ as in
Theorem~\ref{thm:evans-simple}, $n_*(G)=\|g_*e^{3i\theta}\|_{L^2(\R^2)}$, and
\[
\phist(G)=n_*(G)^{-1}g_*(r)e^{3i\theta},\qquad \sigma(G)=\Re\lamst(G).
\]
Let $\ellst(G)$ be the functional on $X^0$ given by
$\ellst(G)(w)=n_*(G)\,\ellst(w_3)$, where $w_3$ is the mode-$3$ profile of $w$
and $\ellst$ is the functional of Theorem~\ref{thm:evans-simple}; so
$\ellst(G)(\phist(G))=1$ and $\ellst(G)(\overline{\phist(G)})=0$. The same
formula defines $\ellst(G)$ on $L^2(N)$. Put
$\beta(G)=\ellst(G)\big(b\,e^{3i\theta}\big)$ with $b(r)=-\frac i2r^{-4}\partial_r\Phi(r)$,
$\pout_*(G)=\pout_3[\phist(G)]$ (Definition~\ref{def:multipole}), let $\Riesz(G)$ be the Riesz projection of
Proposition~\ref{prop:SU}, so that $\Riesz(G)w=2\Re\big(\ellst(G)(w)\phist(G)\big)$
for real $w$, and put
\[
z=\lamst(0),\qquad \sigma=\Re z=\sigma(0),\qquad \sigb=\min_{G\in[0,\Gone]}\sigma(G).
\]
\end{definition}

The formula for $\Riesz(G)$ follows from Proposition~\ref{prop:SU} and
Theorem~\ref{thm:evans-simple}(2): on the mode $3$ the Riesz projection is
$w_3\mapsto\ellst(w_3)g_*=\ellst(G)(w)\,n_*^{-1}g_*$, and for real $w$ the mode
$-3$ part is the conjugate of the mode $3$ part (Lemma~\ref{lem:sector}(2)). By
the linearity of $\pout_3$ and $\ellst$, $\pout_*(G)=\pout_*/n_*(G)$ and
$\beta(G)=n_*(G)\beta$, with $\pout_*$ and $\beta$ of Lemma~\ref{lem:adjoint} for
$\Phi$, $\ell=3$ and $\lamst=\lamst(G)$. The minimum defining $\sigb$
exists by Proposition~\ref{prop:U}(a) below. With $\Mres_*=\Mres_{\lamst(G)}$, the
functional $\ellst(G)$ on $L^2(N)$ is represented by the restriction to $N$ of
$x\mapsto\frac{n_*}{2\pi\overline{\Jsimp}}\,\overline{\psi_*(\log|x|)\Mres_*(|x|)}\,e^{3i\theta}$,
which lies in $X^0$, so its norms on $L^2(N)$ and on $X^0$ agree.

\begin{proposition}
\label{prop:U}
Assume \textup{(N1)}--\textup{(N3)}. Then:
\begin{enumerate}[label=\textup{(\alph*)}]
\item $G\mapsto\lamst(G)$ is continuous on $[0,\Gone]$; in particular
  $G\mapsto\sigma(G)$ is continuous, $\sigb$ is attained, and $\sigb>\frac{211}{2500}$;
\item the functions $G\mapsto|\pout_*(G)|$, $|\beta(G)|$, $\|\ellst(G)\|_{(X^0)^*}$
  and $\|\phist(G)\|_{H^k}$ ($k\ge0$) are continuous on $[0,\Gone]$, and the first
  two are positive.
\end{enumerate}
\end{proposition}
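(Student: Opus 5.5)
Part (a) rests on Hurwitz's theorem applied to the Evans function, viewed as a continuous family of holomorphic functions in $\lambda$ indexed by $G$. First I will show that $(\lambda,G)\mapsto\Evans^\Phi_3(\lambda;G)$ is jointly continuous on $\{\Re\lambda>0\}\times[0,\Gone]$ and holomorphic in $\lambda$. The Rayleigh potential $\Vric_3(\xi;\lambda,G)$ is continuous in $(\xi,\lambda,G)$, since $\Omega(r;G)$ is affine in $G$ and $|\lambda+3i\Omega|\ge\Re\lambda$ by Lemma~\ref{lem:sector}(3). The Picard iteration in the proof of Lemma~\ref{lem:evans-basic}(1) converges uniformly on compact sets in $(\xi,\lambda,G)$. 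Hence $\psi_3$, $\partial_\xi\psi_3$ and $\Evans^\Phi_3$ are jointly continuous.

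Now fix $G_0$ and a small circle $\partial D$ around $\lamst(G_0)$ inside the open strip $\{\Re\lambda>\frac{211}{2500}\}$. Since $\lamst(G_0)$ is the only zero in $\{\Re\lambda\ge\frac9{200}\}$, $\Evans^\Phi_3$ does not vanish on $\partial D$. By uniform continuity in $G$, for $G$ near $G_0$ the argument principle (or Rouché) gives exactly one zero inside $D$. By Proposition~\ref{prop:SU} that zero must be $\lamst(G)$, so $\lamst$ is continuous. Continuity of $\sigma$, attainment of $\sigb$ on the compact interval, and $\sigb>\frac{211}{2500}$ then follow from Proposition~\ref{prop:SU}.

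For (b) I will express every quantity through $\psi_*(\xi;G)=\psi_3(\xi;\lamst(G),G)$, $\Mres_*=(\lamst(G)+3i\Omega(\cdot;G))^{-1}$ and $\kcoef_3$. These data are jointly continuous in $(\xi,G)$, together with all $\xi$-derivatives, on the compact interval $[\xi_1,\xi_2]$. For derivatives I differentiate the Rayleigh equation repeatedly; each derivative is a polynomial in $\psi,\partial_\xi\psi$ with coefficients continuous in $(\xi,\lambda,G)$, and I compose with the continuous map $G\mapsto\lamst(G)$. Consequently $g_*(G)=\Mres_*\kcoef_3\psi_*$ is continuous into $C^k([a,1])$ for each $k$. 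Since $g_*$ is supported in $[a,1]$, a fixed annulus, this gives continuity into $H^k$. The same holds for $n_*(G)$, which is nonzero because $g_*\ne0$ (Lemma~\ref{lem:resolvent}(2)), and therefore for $\phist(G)$ and $\|\phist(G)\|_{H^k}$. The quantity $\Jsimp(G)$ is an integral of a continuous integrand over a fixed compact interval, so it is continuous, and it is nonzero by Theorem~\ref{thm:evans-simple} and simplicity. The functional $\ellst(G)$ is represented on $X^0$ by the function displayed after Definition~\ref{def:eigendata}. That representer is continuous in $L^2(N)$, so its dual norm is continuous.

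Finally, $\pout_*(G)=\pout_*/n_*(G)$ and $\beta(G)=n_*(G)\beta$ are continuous. Here $\pout_*=-6e^{3\xi_2}\psi_*(\xi_2)$ and $\beta=-\pin_*/(6\Jsimp)$ with $\pin_*=-6e^{-3\xi_1}$, by Lemma~\ref{lem:adjoint}(2),(3). Both are nonzero by Lemma~\ref{lem:adjoint}(4), so their moduli are positive.

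The main obstacle is the joint continuity with derivatives in $(\lambda,G)$ and the Hurwitz step; the rest is bookkeeping. The key point in the Hurwitz step is that the circle $\partial D$ can be chosen inside the zero-free region guaranteed by (N1) at $G_0$, so that nonvanishing on $\partial D$ persists for nearby $G$ by uniform continuity on the compact set $\partial D\times[G_0-\delta,G_0+\delta]$.
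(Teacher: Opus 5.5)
Your proposal is correct and follows the paper's proof essentially step by step: joint continuity of the Evans function from the Picard iteration with a $G$-independent bound, zero counting on a small circle around the simple zero, and then continuity of $\psi_*$, $g_*$, $n_*$, $\Jsimp$ and the representer of $\ellst(G)$, with positivity from Lemma~\ref{lem:adjoint}(4). The differences are cosmetic. You work with $\Evans^\Phi_3$ in ring units, whereas the paper uses $\Evans_3$ in reference units and transfers via $\lamst(G)=\lamst^{\mathrm c}(\Rc^2G)$. Your Rouch\'e option also spares you the continuity of $\partial_\lambda\Evans$, which the paper obtains from Cauchy's formula.
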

\begin{proof}
(a) In reference units put $E(\lambda,G)=\Evans_3(\lambda;G)$, the function of
Hypothesis~\ref{hyp:N} by Lemma~\ref{lem:evans-basic}. In the Picard
iteration of Lemma~\ref{lem:evans-basic}(1), with $G\in[0,\frac1{50}]$ as a
parameter, the iterates are continuous in $(\xi,\lambda,G)$ and holomorphic in
$\lambda$, and the bound $C_A$ does not depend on $G$, by Lemma~\ref{lem:sector}(3).
So $E$ is continuous on $\{\Re\lambda>0\}\times[0,\frac1{50}]$ and holomorphic in
$\lambda$, and $\partial_\lambda E$ is continuous there by Cauchy's formula. Fix
$G_0\in[0,\frac1{50}]$ and the simple zero $\lambda_0=\lamst^{\mathrm c}(G_0)$ (the
eigenvalue in reference units of Corollary~\ref{cor:sector-3}, the superscript
$\mathrm c$ referring to $\omc$; in ring units $\lamst(G)=\lamst^{\mathrm c}(\Rc^2G)$) of
$E(\cdot,G_0)$, let $\delta_1>0$ be such that
$\{|\lambda-\lambda_0|\le\delta_1\}\subset\{\Re\lambda>\frac9{200}\}$ contains no
other zero of $E(\cdot,G_0)$, and let $0<\delta<\delta_1$. By continuity and
compactness there is $\epsilon>0$ with $E\ne0$ on
$\{|\lambda-\lambda_0|=\delta\}\times I_\epsilon$, where
$I_\epsilon=\{G\in[0,\frac1{50}]:|G-G_0|<\epsilon\}$. On the interval $I_\epsilon$
the number of zeros $\frac1{2\pi i}\oint_{|\lambda-\lambda_0|=\delta}\partial_\lambda E/E\dd\lambda$
of $E(\cdot,G)$ in the disk is continuous, integer-valued and $1$ at $G_0$, hence
$1$. This zero
lies in $\{\Re\lambda>\frac9{200}\}$, so it is $\lamst^{\mathrm c}(G)$ by (N1), and
$|\lamst^{\mathrm c}(G)-\lambda_0|<\delta$ for $G\in I_\epsilon$. So $\lamst^{\mathrm c}$
is continuous on $[0,\frac1{50}]$, and (a) follows from
$\lamst(G)=\lamst^{\mathrm c}(\Rc^2G)$, $\sigma(G)>\frac{211}{2500}$
(Proposition~\ref{prop:SU}) and the compactness of $[0,\Gone]$.

(b) In ring units, let $\psi(\xi;\lambda,G)$ be the solution of
Definition~\ref{def:evans} for $\Phi$, normalized at $\xi_1=\xiL-\xiR$. By the
Picard argument and the equation, $\psi$ and all its $\xi$-derivatives are
continuous in $(\xi,\lambda,G)$. Hence $\psi_*(G):=\psi(\cdot;\lamst(G),G)$ and
$g_*(G)=\Mres_{\lamst(G)}\kcoef_3\psi_*(G)$, supported in $[r_1,r_2]=[a,1]$, are
continuous in $G$ in every $C^k([a,1])$. So $n_*(G)$ and
$\Jsimp(G)=\int g_*\psi_*\Mres_*r\dd r$ are continuous, $n_*>0$ by
Lemma~\ref{lem:resolvent}(2), and $\Jsimp\ne0$ by Theorem~\ref{thm:evans-simple}(1),
as $\lamst(G)$ is a simple zero of $\Evans^\Phi_3(\cdot;G)$ (proof of
Proposition~\ref{prop:SU}). By
Lemma~\ref{lem:adjoint}(2) and (3) and the linearity of $\pout_3$ and
$\ellst$, with $\xi_2=0$,
\[
\pout_*(G)=-\frac{6\psi_*(\xi_2)}{n_*},\quad
\beta(G)=-\frac{n_*\,\pin_3[g_*e^{3i\theta}]}{6\Jsimp}=\frac{e^{-3\xi_1}n_*}{\Jsimp};
\]
these are continuous, and nonzero by Lemma~\ref{lem:adjoint}(4). Also
$\|\phist(G)\|_{H^k}$ is continuous, as $g_*(G)$ is continuous in $C^k([a,1])$ and
$n_*>0$, and so is
$\|\ellst(G)\|_{(X^0)^*}=\frac{n_*}{2\pi|\Jsimp|}\|\psi_*(\log|x|)\Mres_*(|x|)\|_{L^2(N)}$,
by the representation after Definition~\ref{def:eigendata}.
\end{proof}

\begin{proposition}
\label{prop:semigroup}
Assume \textup{(N1)}--\textup{(N3)}. Let $s\ge0$ be an integer and
$\sigma'\in(\frac9{200},\sigb)$. There is $C_S=C_S(s,\sigma')$, depending only on
$s$, $\sigma'$ and the ring, such that for every $G\in[0,\Gone]$ and $t\ge0$:
\[
\begin{gathered}
\|e^{t\Lin(G)}(1-\Riesz(G))\|_{X^s\to X^s}\le C_Se^{\sigma't},\qquad
\|\Riesz(G)\|_{X^s\to X^s}\le C_S,\\
\|e^{t\Lin(G)}\|_{X^s\to X^s}\le C_Se^{\sigma(G)t},
\end{gathered}
\]
and, for real $w\in X^s$ and for $w\in X^0$ respectively,
\[
e^{t\Lin(G)}\Riesz(G)w=2\Re\big(\ellst(G)(w)e^{\lamst(G)t}\phist(G)\big),\qquad
\ellst(G)(e^{t\Lin(G)}w)=e^{\lamst(G)t}\ellst(G)(w).
\]
\end{proposition}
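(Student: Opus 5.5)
I would work mode by mode on $X^s=Y^s_{3\Z}$, using the norm equivalence of Lemma~\ref{lem:norm-equiv}: it suffices to bound $\|e^{t\Lsec{\ell}(G)}(1-\Riesz_\ell)\|_{s,\ell\to s,\ell}$ uniformly in $\ell\in3\Z$ and $G\in[0,\Gone]$, where $\Riesz_\ell$ is the mode Riesz projection (nonzero only for $\ell=\pm3$, by Proposition~\ref{prop:SU} and Lemma~\ref{lem:sector-sum}(4)), and then to sum. Fix $\gamma\in(\frac9{200},\sigma')$. The modes split into two groups.

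\emph{Large modes.} For $|\ell|\ge\mathcal M_\gamma$ with $G_+=\Gone$, Proposition~\ref{prop:large-sectors} gives $\|e^{t\Lsec{\ell}}\|\le2C_sc_s(\gamma)e^{\gamma t}\le C e^{\sigma' t}$, uniformly in $G$, and $\Riesz_\ell=0$.

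\emph{Finitely many small modes.} For each $\ell\in3\Z$ with $|\ell|<\mathcal M_\gamma$, $\Lsec{\ell}(G)$ is a bounded operator on $Y^s_\ell$, and its norm is bounded uniformly in $G$. I would use the inverse Laplace transform, or equivalently a Dunford contour representation, of $e^{t\Lsec{\ell}}(1-\Riesz_\ell)$ along the vertical line $\Re\lambda=\sigma'$, closed far out. Boundedness of $\Lsec{\ell}$ lets me write
\[
e^{tL}(1-\Riesz_\ell)=\frac1{2\pi i}\oint_{\Gamma}e^{\lambda t}(\lambda-L)^{-1}\dd\lambda ,
\]
where $\Gamma$ is a contour enclosing $\spec L\setminus\{\lamst,\overline{\lamst}\}$. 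The spectrum with $\Re\lambda>\frac9{200}$ other than $\lamst$ is empty, so a rectangle with right side on $\Re\lambda=\sigma'$ and left side at $\Re\lambda=-\|L\|-1$ works. The norm of the resolvent on this contour is bounded uniformly in $G$. To see this, I would use continuity of $(\lambda,G)\mapsto(\lambda-\Lsec{\ell}(G))^{-1}$ on the compact set $\Gamma\times[0,\Gone]$: $\Lsec{\ell}(G)$ depends continuously on $G$ in operator norm, since $\Omega$ depends affinely on $G$, and the resolvent exists on the contour for every $G$. The right side avoids $\lamst(G)$ because $\sigma'<\sigb\le\Re\lamst(G)$.

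The left side of the rectangle contributes at most $e^{-t}$ times a constant, and the right side at most $Ce^{\sigma' t}$; the horizontal sides have fixed length. For $t\ge0$ this gives $\|e^{t\Lsec{\ell}}(1-\Riesz_\ell)\|\le C e^{\sigma' t}$. The same compactness argument, applied to a small circle around $\lamst(G)$ that varies continuously by Proposition~\ref{prop:U}(a), gives $\|\Riesz_\ell\|\le C$. The covering of $[0,\Gone]$ by finitely many such neighbourhoods is the step I expect to be the main nuisance: the uniformity in $G$ requires a finite subcover with a fixed circle in each piece.

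Summing over modes through Lemma~\ref{lem:norm-equiv} gives the first two bounds. For the formulas, Theorem~\ref{thm:evans-simple}(2) and Lemma~\ref{lem:adjoint}(1) give $\ellst(e^{t\Lsec{3}}g)=e^{\lamst t}\ellst(g)$ on the mode $3$, which is the second identity. On the range of $\Riesz$, with the mode $-3$ handled by conjugation (Lemma~\ref{lem:sector}(2)) and with $\Riesz(G)w=2\Re(\ellst(G)(w)\phist(G))$, this gives $e^{t\Lin}\Riesz w=2\Re(\ellst(w)e^{\lamst t}\phist)$. Writing $e^{t\Lin}=e^{t\Lin}\Riesz+e^{t\Lin}(1-\Riesz)$ then yields the third bound, using $\|\phist(G)\|_{H^s}$ and $\|\ellst(G)\|$ bounded by the continuity in Proposition~\ref{prop:U}(b), together with $\sigma'<\sigma(G)$.
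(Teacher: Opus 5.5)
Your proposal is correct and follows the paper's proof essentially step for step: the reduction to modes via Lemma~\ref{lem:norm-equiv}, Proposition~\ref{prop:large-sectors} for large $|\ell|$, a fixed Dunford rectangle with compactness of $\Gamma\times[0,\Gone]$ for the finitely many small modes, Lemma~\ref{lem:adjoint}(1) for the identities, and the splitting $e^{t\Lin}=e^{t\Lin}\Riesz+e^{t\Lin}(1-\Riesz)$ for the third bound. The only deviation is the bound on $\Riesz(G)$, where the finite-subcover argument you anticipate is unnecessary: the paper reads it off from the rank-one formula $\Riesz_3g=\ellst(g)g_*$ together with Proposition~\ref{prop:U}(b), which you already use for the third bound (alternatively, $1-\Riesz_\ell$ is the $t=0$ case of your fixed $\Gamma$-integral); also, take $\max\{\mathcal M_\gamma,4\}$ as the threshold for the large modes, as the paper does, so that $\Riesz_\ell=0$ there is immediate.
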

\begin{proof}
We use the setting of $\Phi$ in ring units, with $G_+=\Gone$. By
Lemma~\ref{lem:semigroup-construct}(3), Proposition~\ref{prop:SU} and
Lemma~\ref{lem:norm-equiv} (which costs the factor
$(C^{\rm eq}_s/c^{\rm eq}_s)^{1/2}$), it suffices to bound
$\|\exp(t\Lsec{\ell})(1-\Riesz_\ell)\|_{s,\ell\to s,\ell}$ uniformly in $\ell\in3\Z$ and
$G$, where $\Riesz_{\pm3}$ are the mode Riesz projections and $\Riesz_\ell=0$
otherwise. For the fixed $\sigma'$, let $\mathcal M_{\sigma'}$ be the number
$\mathcal M_\gamma$ of Proposition~\ref{prop:large-sectors} with $\gamma=\sigma'$, and let
$\mathcal M_0=\max\{\mathcal M_{\sigma'},4\}$ be the larger of the two numbers
$\mathcal M_{\sigma'}$ and $4$; the $4$ ensures that $|\ell|\ge\mathcal M_0$ excludes the
modes $\pm3$. For $|\ell|\ge\mathcal M_0$ (so $\Riesz_\ell=0$) that
proposition gives the bound $2C_sc_s(\sigma')e^{\sigma't}$, and
$\exp(t\Lsec{0})=1$. For the finitely many $\ell\in3\Z$ with $0<|\ell|<\mathcal M_0$,
$\Lsec{\ell}(G)=\Lsec{\ell}(0)-i\ell Gr^{-2}$ is bounded and affine in $G$, so
$R_*:=1+\max_{0<|\ell|<\mathcal M_0}\sup_{G\in[0,\Gone]}\|\Lsec{\ell}(G)\|_{s,\ell}<\infty$. Fix
$\sigma''\in(\frac9{200},\sigma')$ and let $\Gamma$ be the boundary of
$[-R_*,\sigma'']\times[-R_*,R_*]$. By Proposition~\ref{prop:SU}, the spectrum of
$\Lsec{\ell}(G)$ in $\{\Re\lambda\ge\sigma''\}$ is empty, or $\{\lamst(G)\}$ for
$\ell=3$, or $\{\overline{\lamst(G)}\}$ for $\ell=-3$, with
$\Re\lamst(G)\ge\sigb>\sigma''$; the rest lies in
$\{|\lambda|<R_*,\ \Re\lambda<\sigma''\}$, inside $\Gamma$. So $\Gamma$ lies in the
resolvent set. For a bounded operator $A$ and $\varrho>\|A\|$,
$\frac1{2\pi i}\oint_{|\lambda|=\varrho}e^{t\lambda}(\lambda-A)^{-1}\dd\lambda=e^{tA}$ by
the Neumann series; take $A=\Lsec{\ell}(G)$, $\varrho=2R_*$, so that the circle encloses $\Gamma$. By Cauchy's theorem the
circle may be replaced by $\Gamma$ and a small circle around the eigenvalue
outside $\Gamma$, if any; on the range of the rank-one projection $\Riesz_{\pm3}$
the resolvent is $(\lambda-\lamst)^{-1}$ (or its conjugate), and on the range of
$1-\Riesz_{\pm3}$ it is holomorphic near $\lamst$. Hence
\[
\exp(t\Lsec{\ell})(1-\Riesz_\ell)=\frac1{2\pi i}\oint_\Gamma e^{t\lambda}(\lambda-\Lsec{\ell}(G))^{-1}\dd\lambda,
\]
of norm at most $\frac{|\Gamma|}{2\pi}e^{\sigma''t}\sup_{\Gamma}\|(\lambda-\Lsec{\ell}(G))^{-1}\|_{s,\ell}$,
and the supremum over $\Gamma\times[0,\Gone]$ and $0<|\ell|<\mathcal M_0$ is finite because
inversion is continuous on invertible operators and $\Gamma\times[0,\Gone]$ is compact. This gives the first bound.

Since $\Riesz_3g=\ellst(g)g_*$, the norm
$\|\Riesz(G)\|_{X^s\to X^s}$ is at most $C\|\ellst(G)\|_{(X^0)^*}\|\phist(G)\|_{H^s}$, which is
bounded on $[0,\Gone]$ by Proposition~\ref{prop:U}(b). The mode-$3$ part of
$e^{t\Lin}\Riesz w$ is $\exp(t\Lsec{3})\ellst(w_3)g_*=e^{\lamst t}\ellst(w_3)g_*$, and
for real $w$ the mode $-3$ part is its conjugate (Lemma~\ref{lem:sector}(2));
this gives the formula, and for
complex $w$ the bound $\|e^{t\Lin}\Riesz\|_{X^s\to X^s}\le Ce^{\sigma(G)t}$, so the
third bound follows as $\sigma'<\sigb\le\sigma(G)$. Finally
$\ellst(G)(e^{t\Lin}w)=n_*\ellst(\exp(t\Lsec{3})w_3)=e^{\lamst t}\ellst(G)(w)$ by
Lemma~\ref{lem:semigroup-construct}(3) and Lemma~\ref{lem:adjoint}(1). The constants
depend only on $s$, $\sigma'$ and the ring.
\end{proof}

\begin{lemma}
\label{lem:uniform}
Assume \textup{(N1)}--\textup{(N3)}. Here $\Phi$ is the ring in ring units
(Definition~\ref{def:ring}), $N$ and $N'$ are the annuli of Section~\ref{sec:ring-def},
and all maxima and minima are taken over $G\in[0,\Gone]$; they are attained, since the functions involved are continuous
on this compact interval (Proposition~\ref{prop:U}).
\begin{enumerate}[label=\textup{(\arabic*)}]
\item The constants
  \[
  c_N=\min|\beta|\cdot\min|\pout_*|,\qquad
  C_N=\max|\beta|\cdot\max|\pout_*|,\qquad
  \cout=2\pi\min|\pout_*|
  \]
  satisfy $0<c_N\le C_N<\infty$ and $\cout>0$.
\item The number $\|\ellst\|:=\max_G\|\ellst(G)\|_{(X^0)^*}$ is finite, and so is
  $\max_G\|\phist(G)\|_{H^k}$ for every integer $k\ge0$.
\item Put
  \[
  \begin{gathered}
  \Cls=\|\ellst\|\,\|\nabla\Phi\|_{L^2(\R^2)},\qquad \Cp=\frac4{\sqrt\pi},\\
  \Cnab(k)=2^k\Big(\frac{(k+1)(k+2)}2\Big)^{1/2}\|\Phi\|_{H^{k+1}}\qquad(k\ge0),
  \end{gathered}
  \]
  with the norms of $C^k(N')$ and $H^k$ fixed in Section~\ref{sec:prelim}. Then for
  every $G\in[0,\Gone]$, every integer $k\ge0$, every vector field
  $h\in C^k(N';\R^2)$ and every $g\in L^2(N)$,
  \[
  \begin{gathered}
  |\ellst(G)(-h\cdot\nabla\Phi)|\le\Cls\|h\|_{C^0(N')},\qquad
  |\pout_3[g]|\le\Cp\|g\|_{L^2(N)},\\
  \|h\cdot\nabla\Phi\|_{H^k}\le\Cnab(k)\|h\|_{C^k(N')} .
  \end{gathered}
  \]
  None of these constants depends on $G$; $\Cls$ and $\Cp$ do not depend on $k$,
  while $\Cnab(k)$ does.
\end{enumerate}
\end{lemma}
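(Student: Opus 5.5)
Parts (1) and (2) come from Proposition~\ref{prop:U}(b) and compactness of $[0,\Gone]$. The functions $|\beta(G)|$, $|\pout_*(G)|$, $\|\ellst(G)\|_{(X^0)^*}$ and $\|\phist(G)\|_{H^k}$ are continuous on $[0,\Gone]$, so their maxima and minima are attained and finite. The first two are positive at every $G$, hence their minima are positive. This gives $c_N>0$, $\cout>0$ and $C_N<\infty$; moreover $c_N\le C_N$ because each minimum is at most the corresponding maximum. It also gives the finiteness in (2).

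For part (3) I would prove the three inequalities separately.

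\textbf{First inequality.} The field $h$ is real and $\nabla\Phi$ is supported in $\{a\le|y|\le1\}\subset N$, so $-h\cdot\nabla\Phi\in L^2(N)$. By the remark after Definition~\ref{def:eigendata}, the norm of $\ellst(G)$ on $L^2(N)$ equals its norm on $X^0$. Hence
\[|\ellst(G)(-h\cdot\nabla\Phi)|\le\|\ellst\|\,\|h\cdot\nabla\Phi\|_{L^2}\le\|\ellst\|\,\|h\|_{C^0(N')}\|\nabla\Phi\|_{L^2}.\]
The last step uses the pointwise bound $|h\cdot\nabla\Phi|\le|h|\,|\nabla\Phi|$ on $N\subset N'$.

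\textbf{Second inequality.} I would apply Cauchy--Schwarz to $\pout_3[g]=\frac1{2\pi}\int_N g\,\bar y^3$, which gives $|\pout_3[g]|\le\frac1{2\pi}\|g\|_{L^2(N)}\big(\int_{|y|\le2}|y|^6\dd y\big)^{1/2}$. The integral equals $2\pi\cdot2^8/8=64\pi$, so the bound is $\frac{8\sqrt\pi}{2\pi}=4/\sqrt\pi=\Cp$.

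\textbf{Third inequality.} Since $\nabla\Phi$ is supported in $N\subset N'$, I would expand $\partial^\gamma(h\cdot\nabla\Phi)$ for $|\gamma|\le k$ by the Leibniz rule. Each term is $\binom\gamma\beta\,\partial^\beta h\cdot\partial^{\gamma-\beta}\nabla\Phi$, and
\[\|\partial^\beta h\cdot\partial^{\gamma-\beta}\nabla\Phi\|_{L^2}\le\|h\|_{C^k(N')}\|\partial^{\gamma-\beta}\nabla\Phi\|_{L^2}\le\|h\|_{C^k(N')}\|\Phi\|_{H^{k+1}}.\]
Here the pointwise bound $|\partial^\beta h|\le|\nabla^{|\beta|}h|$ is used, together with the convention that vector Sobolev norms sum over entries. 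Summing the binomial coefficients, $\sum_{\beta\le\gamma}\binom\gamma\beta=2^{|\gamma|}\le2^k$, so $\|\partial^\gamma(h\cdot\nabla\Phi)\|_{L^2}\le2^k\|h\|_{C^k(N')}\|\Phi\|_{H^{k+1}}$. There are $(k+1)(k+2)/2$ multi-indices with $|\gamma|\le k$; summing the squares over them and taking the square root gives $\Cnab(k)$.

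None of these constants involves $G$, except through the maximum $\|\ellst\|$ already taken over $[0,\Gone]$.

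The only delicate point is the dot-product bookkeeping in the third inequality: one has to check that $|\partial^\beta h\cdot V|\le|\nabla^{|\beta|}h|\,|V|$ and that the entrywise $L^2$ sum for $\partial^{\gamma-\beta}\nabla\Phi$ is bounded by $\|\Phi\|_{H^{k+1}}$. Both follow from the conventions of Section~\ref{sec:prelim}.
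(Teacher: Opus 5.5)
Your proposal is correct and follows the paper's proof essentially line by line. Parts (1) and (2) come from the continuity and positivity in Proposition~\ref{prop:U}(b) on the compact interval $[0,\Gone]$. Part (3) uses the $L^2(N)$ representation of $\ellst(G)$, Cauchy--Schwarz with $\||y|^3\|_{L^2(N)}^2\le64\pi$, and the Leibniz rule with $\sum_{\beta\le\gamma}\binom{\gamma}{\beta}=2^{|\gamma|}$. You also note explicitly that $-h\cdot\nabla\Phi$ need not lie in $X^0$, so the equality of the $L^2(N)$ and $(X^0)^*$ norms of $\ellst(G)$ is needed; the paper leaves this point implicit.
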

\begin{proof}
(1), (2) By Proposition~\ref{prop:U}(b), $|\beta|$, $|\pout_*|$, $\|\ellst(G)\|_{(X^0)^*}$ and
$\|\phist(G)\|_{H^k}$ are continuous on $[0,\Gone]$, and $|\beta|$, $|\pout_*|$ are
positive. Continuous functions on the compact interval $[0,\Gone]$ attain their
extrema; this gives $c_N>0$, $C_N<\infty$, $\cout>0$, $\|\ellst\|<\infty$ and the
bound on $\|\phist\|_{H^k}$.

(3) Since $\supp\Phi\subset\{a\le|y|\le1\}\subset N$,
\[
|\ellst(G)(-h\cdot\nabla\Phi)|\le\|\ellst\|\,\|h\cdot\nabla\Phi\|_{L^2(\R^2)}
\le\|\ellst\|\,\|\nabla\Phi\|_{L^2(\R^2)}\|h\|_{C^0(N')} .
\]
Next, $|\pout_3[g]|\le\frac1{2\pi}\|g\|_{L^2(N)}\||y|^3\|_{L^2(N)}$ and
$\||y|^3\|^2_{L^2(N)}=2\pi\int_{a/2}^2r^7\dd r\le2\pi\cdot2^8/8=64\pi$, so
$|\pout_3[g]|\le\frac{8\sqrt\pi}{2\pi}\|g\|_{L^2(N)}=\frac4{\sqrt\pi}\|g\|_{L^2(N)}$.
Finally, for $|\alpha|\le k$, since $\nabla\Phi$ is supported in $N'$ and
$|\partial^\beta h|\le\|h\|_{C^k(N')}$ there for $|\beta|\le k$, the Leibniz rule
and the Cauchy-Schwarz inequality in $\R^2$ give
\[
\|\partial^\alpha(h\cdot\nabla\Phi)\|_{L^2(\R^2)}\le\sum_{\beta\le\alpha}\binom\alpha\beta
\sup_{N'}|\partial^\beta h|\,\big\||\partial^{\alpha-\beta}\nabla\Phi|\big\|_{L^2(\R^2)}
\le2^{|\alpha|}\|h\|_{C^k(N')}\|\Phi\|_{H^{k+1}};
\]
summing the squares over the $(k+1)(k+2)/2$ multi-indices gives the last inequality.
\end{proof}

\section*{Statements and Declarations}
The author has no competing interests to declare.

\end{document}